\newtheorem{theorem}{Theorem}[section]
\numberwithin{theorem}{section}
\newtheorem{lemma}[theorem]{Lemma}
\newtheorem{proposition}[theorem]{Proposition}
\newtheorem{corollary}[theorem]{Corollary}
\theoremstyle{definition}
\newtheorem{definition}[theorem]{Definition}
\newtheorem{assumption}[theorem]{Assumption}
\newtheorem{remark}[theorem]{Remark}
\numberwithin{equation}{section}
\renewcommand{\b}[1]{\boldsymbol{\mathrm{#1}}} 
\newcommand{\ol}[1]{\overline{#1} \!\,} 
\newcommand{\wt}{\widetilde}
\newcommand*\di{\mathop{}\!\mathrm{d}} 
\newcommand*\ii{\mathop{}\!\mathrm{i}} 
\renewcommand{\P}{\mathbb{P}}
\newcommand{\E}{\mathbb{E}}
\newcommand{\R}{\mathbb{R}}
\newcommand{\C}{\mathbb{C}}
\newcommand{\N}{\mathbb{N}}
\newcommand{\e}{\mathrm{e}}
\newcommand{\mf}[1]{\mathfrak{#1}}
\newcommand{\dt}[1]{\accentset{\approx}{#1}}
\newcommand{\p}[1]{(#1)} 
\newcommand{\pb}[1]{\bigl(#1\bigr)}
\newcommand{\pB}[1]{\Bigl(#1\Bigr)}
\newcommand{\pa}[1]{\left(#1\right)}
\newcommand{\qb}[1]{\bigl[#1\bigr]}
\newcommand{\qB}[1]{\Bigl[#1\Bigr]}
\newcommand{\qbb}[1]{\biggl[#1\biggr]}
\newcommand{\qa}[1]{\left[#1\right]}
\newcommand{\hbb}[1]{\biggl\{#1\biggr\}}
\newcommand{\abs}[1]{\lvert #1 \rvert} 
\newcommand{\absB}[1]{\Bigl\lvert #1 \Bigr\rvert}
\newcommand{\absbb}[1]{\biggl\lvert #1 \biggr\rvert}
\newcommand{\absBB}[1]{\Biggl\lvert #1 \Biggr\rvert}
\newcommand{\absa}[1]{\left\lvert #1 \right\rvert}
\newcommand{\norm}[1]{\lVert #1 \rVert} 
\DeclareMathOperator{\tr}{Tr}
\DeclareMathOperator{\re}{Re}
\DeclareMathOperator{\im}{Im}
\title{Local law and Tracy--Widom limit for sparse stochastic block models}
\author{Jong Yun Hwang \footnote{Department of Mathematical Sciences, KAIST, Daejeon, 34141, Korea
		\newline email: \texttt{jyh9006@kaist.ac.kr}} 
	\and Ji Oon Lee\footnote{Department of Mathematical Sciences, KAIST, Daejeon, 34141, Korea
		\newline email: \texttt{jioon.lee@kaist.edu}}
	\and Wooseok Yang \footnote{Department of Mathematical Sciences, KAIST, Daejeon, 34141, Korea
		\newline email: \texttt{ws.yang@kaist.ac.kr}}}
\date{\today}
\begin{document}

\maketitle

\begin{abstract}
We consider the spectral properties of sparse stochastic block models, where $N$ vertices are partitioned into $K$ balanced communities. Under an assumption that the intra-community probability and inter-community probability are of similar order, we prove a local semicircle law up to the spectral edges, with an explicit formula on the deterministic shift of the spectral edge. We also prove that the fluctuation of the extremal eigenvalues is given by the GOE Tracy--Widom law after rescaling and centering the entries of sparse stochastic block models. Applying the result to sparse stochastic block models, we rigorously prove that there is a large gap between the outliers and the spectral edge without centering.
\end{abstract}

\section{Introduction}\label{sec:intro}

Understanding an underlying network structure is one of the key problems in data science. Many real world data sets can be viewed as networks of interacting nodes, and a common approach to analyze the structure of the network is to find which nodes share similar properties so that they can be grouped into a community. Clustering, or community detection, to recognize such communities from given data sets is thus a natural and fundamental problem. 

Community detection problem is vital in understanding the real-world networks, such as biological networks and social networs. In biology and bioinformatics, community detection appears in finding functional modules in protein-protein interaction networks (\cite{CY06}), functional mapping of metabolic network (\cite{GA05a, GA05b}), analyzing gene expression data (\cite{CSC07, JTZ04}) and more. Community detection problems also naturally arise in social networks. The ``friendships'' networks of Facebook, the online social network, was studied, including anonymous Facebook users in one hundred American universities  (\cite{TKMP11, TMP12}). Communities of the network were identified, and it was found that the community structure depends strongly on their offline network, such as class year or House affiliation. There have been studies on community structures of other social networks, such as scientific collaboration networks (\cite{GN02, Newman01}).

The stochastic block model (SBM) is one of the simplest models of the network with communities. First appeared in the study of social networks (\cite{HLL83}), it consists of $N$ vertices partitioned into disjoint subsets $C_1, \dots, C_K$ and a $K \times K$ symmetric matrix $P$ of edge probabilities. The model appears in various fields of study, and numerous results have been obtained for community detection in SBM, including various algorithms (\cite{GN02,KMM13,HWX16,GV16,AS18}), phase transitions (\cite{ABH16}), and fundamental limits (\cite{RCY11}). We refer to \cite{Abbe17} for history and recent developments of the community detection problem and stochastic block models.

The spectral method is one of the most well-known approaches for the community detection of SBM. In this method, the adjacency matrix of a given graph is considered, whose extremal eigenvalues and corresponding eigenvectors contain the information on the ground truth of the model. In the simplest example of an SBM with two communities of equal size, if we denote the $N \times N$ adjacency matrix by $A$, the probability matrix $P$ is a $2 \times 2$ matrix, and the expected adjacency matrix $\E A$ has four blocks, i.e.,
\[
\E A = \left(
\begin{array}{c|c}
P_{11} & P_{12} \\
\hline
P_{21} & P_{22}
\end{array}
\right).
\]
If $P_{11}=P_{22} = p_s$ and $P_{12}=P_{21} = p_d$, the first two eigenvalues of $\E A$ are $N(p_s+p_d)/2$ and $N(p_s-p_d)/2$, and the eigenvalue $0$ has multiplicity $N-2$. If the difference $A - \E A$ is small, then the eigenstructure of $A$ is not much different from that of $\E A$, and one can recover the community structure from the second eigenvector of $A$. The spectral method is also useful in determining the number of communities $K$ when it is not known a priori (\cite{BS16,Lei16}).

In the spectral method, the perturbation $H := A - \E A$, which called centered SBM, is a random matrix, and its property, especially the behavior of its largest eigenvalue, can be precisely predicted by results from random matrix theory when $P$ does not depend on $N$. However, $H$ is different from Wigner matrices in two aspects: (1) the variances of entries are not identical, and (2) the matrix is sparse. (See Assumption \ref{assumption} for more detail on the sparsity.) The first aspect is due to that the intra-community probability $p_s$ and the inter-community probability $p_d$ are different from each other and hence the random variables have different variances. The second aspect is common in many real data, since the expected degree is much smaller than $N$ and the edge probability decays as $N$ grows. For sparse random matrices with identical off-diagonal entries, which correspond to Erd\H{o}s--R\'enyi graphs, the spectral properties were obtained in \cite{EKYY13,EKYY12,LS18}. One of the most notable aspects of sparse random matrices is that the deterministic shift of their largest eigenvalues are much larger than the size of the Tracy--Widom fluctuation. Thus, as discussed in Remark 2.14 of \cite{LS18}, in cases where the intra-community probability $p_s$ and the inter-community probability $p_d$ are both small and close to each other, we can predict that the algorithms for the community detection should reflect the shift of the largest eigenvalues if $p_s, p_d \ll N^{-1/3}$. However, to our best knowledge, it has not been proved for sparse SBM.

In this paper, we consider the spectral properties of sparse SBM with $K$ communities. We assume that the communities are of equal size, or balanced, with $P_{ii} = p_s$ and $P_{ij} = p_d$ for $i \neq j$. We further assume that the model is moderately sparse as in Assumption \ref{assumption}. 
Our main contributions are 
\begin{enumerate}[(1)]
	\item proof of local semicircle law for the centered sparse SBM that is believed to be optimal up to the edge of the spectrum (Theorems~\ref{thm:locallaw} and~\ref{thm:weak local law}), 
	\item proof of the Tracy--Widom limit for the shifted, rescaled largest eigenvalue of the centered sparse SBM (Theorem~\ref{thm:TWlimit}), and 
	\item application to the (non-centered) sparse SBM (Theorem~\ref{thm:perturbation}).
\end{enumerate}

The local semicircle law, the estimates on the resolvent of Wigner type matrices, has been the starting point in the local spectral analysis of Wigner matrices (\cite{EYY12a,EYY12}) and Erd\H{o}s--R\'enyi graphs (\cite{EKYY13,EKYY12}). We follow the classical strategy based on Schur complement formula and self-consistent equations as in \cite{EYY12a,EYY12}, which leads us to a weak local law for the resolvent entries (Theorem~\ref{thm:weak local law}). Since the weak local law is not sufficient for the proof of other properties such as the Tracy--Widom fluctuation of the extremal eigenvalues, we improve it to prove the strong local law for the normalized trace of the resolvent (Theorem~\ref{thm:locallaw}),  which is optimal up to the edge of the spectrum, by adapting the strategy of \cite{LS18,Hwang2018}.

The proof of the Tracy--Widom limit of the extremal eigenvalues is based on the Green function comparison method that utilizes a continuous interpolation as in \cite{LS18}. With the continuous flow, we can track the change of the normalized trace over time, which is offset by the (deterministic) shift of the spectral edge.

When applying the local spectral properties of Wigner matrices or Erd\H{o}s--R\'enyi graphs to the SBM, one of the main technical challenges stems from that the entries in the SBM are not identically distributed, especially the means of the entries are not equal, and thus the results from random matrix theory are not directly applicable. While the difficulty can be overcome by algorithms as in \cite{GN02}, it requires a priori knowledge on the number of clusters $K$. In this paper, we handle the issue by proving that there is a gap of order $1$ between $K$-th largest eigenvalue and $(K+1)$-st one, which is much larger than the gap between the $(K+1)$-st and the $(K+2)$-nd, when the number of clusters is $K$. This results justifies the use of the spectral method for community detection even when the SBM is sparse.

In the proof of the local law, as in \cite{LS18}, we choose a polynomial $P(m)$ of the normalized trace $m$ of the Green function, based on a recursive moment estimate. However, the fluctuation averaging mechanism, which was intrinsic in the analysis of Erd\H{o}s--R\'enyi graph, is much more complicated for the SBM due to the lack of the symmetry. Technically, it means that we need to separate the off-diagonal elements $H_{ij}$ into two cases in the cumulant expansion - one with when $i$ and $j$ are in the same community and the other when $i$ and $j$ are in different communities. With the separation, we need to consider a more complicated polynomial $P$ than the Erd\H{o}s--R\'enyi case in \cite{LS18}, and the analysis more involved also for the limiting distribution.

This paper is organized as follows: In Section \ref{sec:def}, we introduce our model and state the main results. In Section \ref{sec:strategy}, we describe the outline of our proof, with some important properties of the deterministic refinement of Wigner's semicircle law given in Appendix \ref{sec:rho wt m}. In Section \ref{sec:weak local law}, we prove a weak local semicircle law, which is used as an a priori estimate in the proof of our main results. Some technical lemmas in the proof are proved in Appendix \ref{app:weak}. In Section \ref{sec:local law}, we prove the strong local law by using the recursive moment estimates whose proofs are given in Appendix \ref{sec:recursive}, and the detailed proof of the bound on the operator norm $||H||$ provided in Appendix \ref{app:Hnorm}.   In Section \ref{sec:TW}, we prove the Tracy--Widom limit of the largest eigenvalue using the Green function comparison method, with some proofs of lemmas presented in Appendix \ref{app:TW}.

\begin{remark}[Notational remark]
	We use the symbols $O( \cdot )$ and $o( \cdot )$ for the standard big-O and little-o notation. The notations $O$, $o$, $\ll$, $\gg$ always refer to the limit $N \to \infty$ unless otherwise stated. Here, the notation $a \ll b$ means $a = o(b)$. We use $c$ and $C$ to denote positive constants that do not depend on $N$. Their values may change from line to line. For summation index, we use $\mf{i} \sim \mf{j}$ if $\mf{i}$ and $\mf{j}$ are within the same group. We write $a \sim b$ if there is $C \geq 1$ such that $C^{-1}|b| \leq |a| \leq C|b|$.  Throughout this paper we denote $z = E + \ii \eta \in \C^+$ where $E=\re z$ and $\eta=\im z$. 
\end{remark}

\section{Definition and main results}\label{sec:def}
\subsection{Models and notations} 

Let $H$ be an $N\times N$ symmetric matrix with $K^2$ blocks of same size. The blocks are based on the partition of the vertex set $[N] := \{1,2,\dots,N\}$,
\begin{equation}
[N] = V_1 \cup V_2 \cup \dots \cup V_K,
\end{equation}
where $|V_i|=N/K$. For $i,j \in \{1,2,\dots N\}$, we may consider two types of the edge probability $H_{ij}$, depending on whether $i$ and $j$ are within the same vertex set $V_\ell$ or not. More precisely, we consider the sparse block matrix model satisfying the following assumption:

\begin{assumption}[Balanced generalized sparse random matrix] \label{assumption} 
	Fix any small $\phi>0$. We assume that $H=(H_{ij})$ is a real $N \times N$ block random matrix with $K$ balanced communities, whose diagonal entries are almost surely zero and whose off-diagonal entries are independently distributed random variables, up to symmetry constraint $H_{ij}=H_{ji}$. We suppose that each $H_{ij}$ satisfies the moment conditions
	\begin{align}\label{eq:moments}
	\E H_{ij}=0, \qquad \E|H_{ij}|^2=\sigma_{ij}^2,\qquad\E|H_{ij}|^k \leq \frac{(Ck)^{ck}}{Nq^{k-2}},\qquad (k\geq 2),
	\end{align}
	with sparsity parameter $q$ satisfying
	\begin{align} \label{eq:sparsity}
	N^{\phi}\leq q \leq N^{1/2}.
	\end{align}
	Here, we further assume the normalization condition
	\begin{equation}
	\sum_{i=1}^{N}\sigma_{ij}^2 =1.
	\end{equation}
\end{assumption}
We denote by $\kappa_{ij}^{(k)}$ the $k$-th cumulant of the random variables $H_{ij}$. Under the moment condition \eqref{eq:moments},
\begin{align}
\kappa_{ij}^{(1)}=0, \qquad	|\kappa_{ij}^{(k)}|\leq \frac{(2Ck)^{2(c+1)k}}{Nq^{k-2}},\quad\qquad (k\geq 2).
\end{align}
For our model with the block structure, we abbreviate $\kappa_{ij}^{(k)}$ as
\begin{align} 
\kappa_{ij}^{(k)}=
\begin{cases}
\kappa_{s}^{(k)} & \text{ if } i \text{ and } j \text{ are within same community}, \\
\kappa_{d}^{(k)} & \text{ otherwise }.
\end{cases}
\end{align}
We will also use the normalized cumulants, $s^{(k)}$, by setting
\begin{align}
s_{(\b{\cdot})}^{(1)}:=0, \quad s_{(\b{\cdot})}^{(k)}:=Nq^{k-2}\kappa_{(\b{\cdot})}^{(k)}, \quad (k\geq 2).
\end{align}
We notice that we assume $H_{ii}=0$ a.s., although this condition can be easily removed. We also remark that we recover the sparse random matrix with i.i.d. entries (e.g., the adjacency matrix of a sparse Erd\H{o}s--R\'enyi graph) for the choice $K=1$.
For convenience, we define the parameters $\zeta$ and $\xi^{(4)}$ as  
\begin{equation}
\zeta : = \frac{s_s^{(2)} - s_d^{(2)}}{K}=\frac{N(\kappa_s^{(2)} - \kappa_d^{(2)})}{K} , \qquad \xi^{(4)} :=  \frac{s_s^{(4)}+(K-1)s_d^{(4)}}{K}.
\end{equation}

The prominent example of a balanced generalized sparse random matrix is the case where $H_{ij}$ is given by the Bernoulli random variable $A_{ij}$ with probability $p_s$ or $p_d$, depending on whether $i$ and $j$ are within same vertex set or not, respectively. For this reason, in this paper, we oftentimes use the term `centered generalized stochastic block model' or `cgSBM' as a representative of the balanced generalized sparse random matrix.	

In the rest of this subsection, we introduce some notations of basic definitions. 
\begin{definition}[High probability events]
	We say that an $N$-dependent event $\Omega \equiv \Omega^{(N)}$ holds with high probability if for any (large) $D>0$,
	\[\P(\Omega^c) \leq N^{-D}, \]
	for $N\geq N_0(D)$ sufficiently large.
\end{definition}

\begin{definition}[Stochastic domination] Let $X \equiv X^{(N)}, Y \equiv Y^{(N)}$ be $N$-dependent non-negative random variables. We say that $X$ stochastically dominates $Y$ if, for all small $\epsilon>0$ and large $D>0$,
	\begin{align}
	\mathbb{P}(X^{(N)}>N^\epsilon Y^{(N)})\leq N^{-D},
	\end{align}
	for sufficiently large $N\geq N_0(\epsilon,D)$, and we write $X\prec Y$. When $X^{(N)}$ and $Y^{(N)}$ depend on a parameter $u \in U$, then we say $X(u) \prec Y(u)$ uniformly in $u\in U$ if the threshold $N_0(\epsilon,D)$ can be chosen independently of $u$.  
\end{definition}
Throughout this paper, we choose $\epsilon>0$ sufficiently small. (More precisely, it is smaller than $\phi/10$, where $\phi >0$ is the fixed parameter in Assumption \ref{assumption} below.)

\begin{definition}[Stieltjes transform]
	For given a probability measure $\nu$, we define the Stieltjes transforms of $\nu$ as
	\[m_\nu(z):= \int \frac{\nu(\di x)}{x-z}, \qquad (z\in \C^+)  \] 
\end{definition}
For example, the  Stieltjes transform of the $\emph{semicircle measure}$,
\[ \varrho(\di x):=\frac{1}{2\pi}\sqrt{(4-x^2)_+} \di x, \]
is given by
\[ m_{sc}(z) = \int \frac{\varrho(\di x)}{x-z} = \frac{-z + \sqrt{z^2 -4}}{2}, \]
where the argument of $\sqrt{z^2-4}$ is chosen so that $m_{sc}(z) \in \C^+$ for $z \in \C^+$ and $\sqrt{z^2-4} \sim z$ as $z \to \infty $.
Clearly, we have 
\[ m_{sc}(z) + {m_{sc}(z)}^{-1}+z =0.  \] 
\begin{definition}[Green function(Resolvent)]
	Given a real symmetric matrix $H$ we define its $\emph{Green function}$ or $\emph{resolvent}$, $G(z)$, and the normalized trace of its Green function, $m^H$, by
	\begin{align}\label{greenfunction}
	G^H(z)\equiv G(z) := (H-zI)^{-1}, \qquad  m^H(z)\equiv m(z) := \frac{1}{N}\tr G^H(z), 
	\end{align}
	where $z = E + \ii \eta \in \C^+$ and $I$ is the $N\times N$ identity matrix. 
	
\end{definition}
Denoting by $\lambda_1 \geq \lambda_2 \geq \cdots \geq \lambda_N$ the ordered eigenvalues of $H$, we note that $m^H$ is the Stieltjes transform of the empirical eigenvalue measure of $H$, $\mu^H$, defined as 
\[
\mu^H := \frac{1}{N}\sum_{i=1}^{N}\delta_{\lambda_i}.
\]

Finally, we introduce the following domains in the upper-half plane
\begin{align}
\mathcal{E} &:=\{ z=E+\ii \eta \in \C^+ : |E|<3, 0 <\eta \leq 3\} ,\label{def:domain E} \\
\mathcal{D}_\ell &:= \{z=E+\ii \eta \in \C^+ : |E|<3, N^{-1+\ell} <\eta \leq 3\} \label{def:domain D}.
\end{align}

\subsection{Main results}

Our first main result is the local law for  $m^H$, the normalized trace of $G^H(z)$, up to the spectral edges.
\begin{theorem}[Strong local law] \label{thm:locallaw}
	Let $H$ satisfy Assumption \ref{assumption} with $\phi>0$. Then, there exist an algebraic function $\wt{m}:\mathbb{C}^+ \rightarrow \mathbb{C}^+$ and the deterministic number $2 \leq L < 3$
	such that the following hold:
	\begin{enumerate}
		\item[(1)] The function $\wt{m}$ is the Stieltjes transform of a deterministic probability measure $\wt{\rho}_t$, i.e., $\wt{m}(z) = m_{\wt{\rho}}(z).$ The measure $\rho$ is supported on $[-L, L]$ and $\wt{\rho}$ is absolutely continuous with respect to Lebesgue measure with a strictly positive density on $(-L, L)$.
		\item[(2)] The function $\wt{m}\equiv\wt{m}(z)$, $z\in\mathbb{C}^+$, is a solution to the polynomial equation
		\begin{align}
		P_{1,z}(\wt{m}) &:=1 + z\wt{m}+\wt{m}^2 +q^{-2}\Big(\frac{s_s^{(4)}+(K-1)s_d^{(4)}}{K}\Big)\wt{m}^4 \nonumber\\ 
		&=1 + z\wt{m}+\wt{m}^2 +q^{-2}\xi^{(4)}\wt{m}^4=0.
		\end{align}
		\item[(3)] The normalized trace $m(z)$ of the Green function $G(z)$ satisfies the local law
		\begin{align}\label{thm:stronglaw}
		|m(z) -\wt{m}(z)| \prec \frac{1}{q^2}+\frac{1}{N\eta},
		\end{align}
		uniformly on the domain $\mathcal{E}$.
	\end{enumerate}
\end{theorem}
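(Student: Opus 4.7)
The plan is to follow the established strategy for sparse Wigner-type matrices (as in \cite{EKYY13,LS18,Hwang2018}): first construct and analyze the deterministic profile $\wt{m}$, then use a weak local law as an a priori bound, and finally sharpen it via a recursive moment estimate for a carefully chosen polynomial of $m$.

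\medskip

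For parts (1)--(2), I would begin by defining $\wt{m}(z)$ implicitly as the solution to $P_{1,z}(\wt m)=0$ that lies in $\mathbb{C}^+$. Note that $P_{1,z}$ is a perturbation of the semicircle equation $1+z m+m^2=0$ by the term $q^{-2}\xi^{(4)}m^4$. Since $\xi^{(4)}=O(1)$ and $q^{-2}\leq N^{-2\phi}$, standard implicit-function arguments (tracked for $z$ near the edge) yield a unique branch $\wt{m}$ that depends analytically on $z\in\mathbb{C}^+$ and satisfies $\wt m(z)\to m_{sc}(z)$ as $q\to\infty$. To show $\wt m$ is a Stieltjes transform of a probability measure, I would verify the Nevanlinna-type conditions: $\wt m$ maps $\mathbb{C}^+$ to $\mathbb{C}^+$ and $z\wt m(z)\to -1$ as $z\to\infty$. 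The support and edge $L$ are then read off from the points where $\im\wt m$ vanishes, obtained as the real zeros of a discriminant polynomial in $z$. Detailed regularity (square-root vanishing of the density at the edge, explicit expansion of $L = 2 + cq^{-2}\xi^{(4)} + \dots$) is postponed to Appendix A of the paper.

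\medskip

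For part (3), the core is the recursive moment estimate. The strategy is to bound $\mathbb{E}[|P_{1,z}(m)|^{2D}]$ using the cumulant expansion of $\mathbb{E}[H_{ij}F(H)]$, applied with $F(H)$ being an entry of $G$ multiplied by $|P_{1,z}(m)|^{2D-1}$ and its conjugate. The linear cumulant term produces $1+zm+m^2$, the fourth cumulant produces the $q^{-2}\xi^{(4)}m^4$ correction (after averaging over pairs $(i,j)$ split into same-community and different-community contributions, weighted by $\kappa_s^{(4)}$ and $\kappa_d^{(4)}$), and higher cumulants contribute smaller errors. Substituting the weak local law from Theorem~\ref{thm:weak local law} to control all resolvent entries and off-diagonal sums, I would obtain a bound of the form
\begin{equation*}
\mathbb{E}\bigl[|P_{1,z}(m)|^{2D}\bigr]\;\leq\; \sum_{k=1}^{2D}\bigl(\Psi_*\bigr)^{k}\,\mathbb{E}\bigl[|P_{1,z}(m)|^{2D-k}\bigr],
\end{equation*}
with $\Psi_*\asymp q^{-4} + (N\eta)^{-1}\cdot(\text{stability factor})$. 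Young's inequality then yields $|P_{1,z}(m)|\prec \Psi_*$. Finally, a stability analysis of $P_{1,z}$ around $\wt m$ converts this into $|m-\wt m|\prec q^{-2}+(N\eta)^{-1}$, first on the ``easy'' regime of large $\eta$ and then propagated to the full domain $\mathcal{E}$ by a standard continuity/bootstrap argument using the Lipschitz continuity of $m-\wt m$ in $z$.

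\medskip

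I expect the main obstacle to be the bookkeeping in the fourth-cumulant term. Unlike the Erd\H os--R\'enyi case, entries $H_{ij}$ are not identically distributed: the fourth cumulants $\kappa_s^{(4)},\kappa_d^{(4)}$ differ, so sums such as $\frac{1}{N}\sum_{i,j}\kappa_{ij}^{(4)}G_{ij}^4$ do not collapse to a single multiple of $m^4$ without further averaging. The key identity needed is that, after summing the four-index resolvent products, the community-dependent weights combine into the symmetric average $\xi^{(4)}$ that appears in $P_{1,z}$; this requires using the balanced community structure and the fluctuation averaging mechanism applied separately to intra-community and inter-community index pairs. Ensuring that the ``cross'' contributions (where community indices mix non-trivially within a single monomial) are genuinely subleading---so that only the symmetric combination $\xi^{(4)}$ survives---is the delicate technical point. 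Once this combinatorics is handled, the remaining steps are structurally parallel to \cite{LS18}.
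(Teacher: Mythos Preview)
Your overall architecture (deterministic analysis of $\wt m$, weak law as input, recursive moment estimate, stability analysis) matches the paper. However, there is a genuine gap in the recursive step: the polynomial you propose to control, $P_{1,z}(m)$ alone, is not the one the paper uses, and with $P_{1}$ alone the recursion does not close at the required precision.

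The difficulty sits at the \emph{second} cumulant, not the fourth. When you expand $1+zm$ via $zG_{ii}=(HG)_{ii}-1$ and apply the cumulant expansion, the $r=1$ term is
\[
\frac{1}{N}\sum_{i,j}\kappa_{ij}^{(2)}G_{ii}G_{jj}
=(1-\zeta)m^{2}+\frac{\zeta K}{N^{2}}\sum_{i\sim j}G_{ii}G_{jj},
\]
with $\zeta=(s_s^{(2)}-s_d^{(2)})/K$. The community-restricted sum on the right cannot be replaced by $m^{2}$ with a negligible error: the cross terms vanish, but the residual $\frac{\zeta K}{N^{2}}\sum_{i\sim j}(G_{ii}-m)(G_{jj}-m)$ is only $O(\psi^{2})$ by the weak law, and $\psi^{2}\sim q^{-2}+(N\eta)^{-1}$ lacks the crucial $\im m$ factor. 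Feeding this into a moment bound for $|P_{1}(m)|^{2D}$ gives at best $|P_{1}(m)|\prec (N\eta)^{-1}$, which after the stability step yields $|m-\wt m|\prec (N\eta)^{-1/2}$ near the edge --- i.e.\ you recover the weak law, not \eqref{thm:stronglaw}. By contrast, for the fourth cumulant the same replacement error is harmless because it carries an extra $q^{-2}$ prefactor; that is why the $\xi^{(4)}$ averaging you describe goes through, but it is not the bottleneck.

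The paper's fix is to work with $P(m)=P_{1,z}(m)\cdot P_{2,z}(m)$ where
\[
P_{2,z}(m)=(z+m+\zeta m)^{2}-\zeta(1+zm+m^{2}),
\]
and to run the recursive moment estimate on $\mathbb{E}|P(m)|^{2D}$. The extra factor $P_{2}$ is engineered so that, after a further application of the resolvent identity $zG_{ii}=(HG)_{ii}-1$ to the community-restricted sum (and one more iteration), the leading contributions combine algebraically into $P_{1}P_{2}$ and the remaining community-dependent pieces are pushed down to genuinely negligible size (terms of the form $q_t^{-4}+\frac{\im m}{N\eta}$, etc.). This is the ``more complicated polynomial $P$'' alluded to in the introduction and carried out in Appendix~\ref{sec:recursive} (see the treatment of $I_{1,0}$ and the auxiliary quantities $J_{r,s}$, $\hat J$). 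Without this device, your recursion will stall at the variance term.
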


The function $\wt{m}$ was first introduced in \cite{LS18} to consider a correction term to the semicircle measure in the sparse setting. Some properties of probability measure $\wt \rho$ and its Stieltjes transform $\wt m$ are collected in Lemma \ref{lemma:rho_t}.

From the local law in \eqref{thm:stronglaw}, we can easily prove the following estimates on the \emph{local density of states of $H$}. For $E_1 < E_2$ define 
\[
\mf{n}(E_1,E_2) := \frac{1}{N}|\{i : E_1 < \lambda_i < E_2 \}|, \qquad n_{\wt \rho}(E_1, E_2) := \int_{E_1}^{E_2} \wt\rho(x) \di x.
\]

\begin{corollary}[Integrated density of states]\label{coro:local density}
	Suppose that $H$ satisfies Assumption \ref{assumption} with $\phi >0$. Let $E_1, E_2 \in \R$, $E_1 < E_2 $. Then,
	\begin{equation}
	|\mf{n}(E_1, E_2)-n_{\wt \rho}(E_1, E_2)| \prec \frac{E_1-E_2}{q^2} + \frac{1}{N}.
	\end{equation}
\end{corollary}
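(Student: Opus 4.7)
This corollary is a standard consequence of the strong local law (Theorem~\ref{thm:locallaw}), obtained via the Helffer--Sj\"ostrand functional calculus in the spirit of the rigidity arguments for Wigner matrices and sparse Erd\H{o}s--R\'enyi graphs \cite{EYY12,EKYY13}.

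First I would replace the indicator $\ind{[E_1,E_2]}$ by a smooth function $\chi$ with transition intervals of width $\eta_1$ slightly below $N^{-1}$; the weak local law (Theorem~\ref{thm:weak local law}) bounds the number of eigenvalues in any window of width $\eta_1$ by $\prec 1$, so this smoothing introduces an error of order $N^{-1}$ in $\mf{n}(E_1,E_2)$, with an analogous error for $n_{\wt\rho}$. With the almost-analytic extension $\wt\chi(x+iy) := \pb{\chi(x) + iy\chi'(x)}\sigma(y)$, where $\sigma$ is a smooth $O(1)$-scale cutoff in $y$, the Helffer--Sj\"ostrand formula yields
\begin{equation*}
\frac{1}{N}\sum_i \chi(\lambda_i) \,-\, \int \chi(x)\,\wt\rho(x)\,dx \;=\; \frac{1}{\pi}\int_{\C}\partial_{\bar z}\wt\chi(z)\,\pb{m(z)-\wt m(z)}\,dxdy.
\end{equation*}

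Expanding $2\partial_{\bar z}\wt\chi = iy\chi''(x)\sigma(y) + i\chi(x)\sigma'(y) - y\chi'(x)\sigma'(y)$, I would estimate each piece using the bound $|m-\wt m|\prec q^{-2}+(N\eta)^{-1}$. The two pieces carrying $\sigma'(y)$ are supported where $y\sim 1$, so the local law gives $|m-\wt m|\prec q^{-2}+N^{-1}$ there, and integration against $|\chi(x)|$ (of mass $\sim |E_2-E_1|$) and $|y\chi'(x)|$ produces the announced contribution of order $|E_2-E_1|/q^2 + 1/N$. For the remaining $iy\chi''(x)\sigma(y)$ piece, a direct bound would introduce a divergent factor $\eta_1^{-1}$; to avoid this I would integrate by parts in $x$, then use the Cauchy--Riemann identity $\partial_x(m-\wt m) = -i\partial_y(m-\wt m)$ (valid since $m-\wt m$ is holomorphic on $\C^+$), and integrate by parts once more in $y$, so as to re-express the piece as $\int \chi'(x)\pb{\sigma(y)+y\sigma'(y)}(m-\wt m)(x+iy)\,dxdy$, which no longer involves $\chi''$. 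The $(N\eta)^{-1}$ part then integrates to $O((\log N)/N) \prec 1/N$ and the $q^{-2}$ part to $O(q^{-2})$; the residual short-$y$ regime $y \in (0,N^{-1}]$ is controlled via the classical monotonicity of $y\mapsto y\,\mathrm{Im}\,m(x+iy)$, which propagates the bound at $y=N^{-1}$ down to $y=0$.

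The main technical obstacle is precisely the treatment of the $\chi''$ term: the integration-by-parts trick exploiting holomorphy of $m-\wt m$ is essential to eliminate any dependence on the smoothing scale $\eta_1$. No additional care is required near the spectral edges $\pm L$ because Theorem~\ref{thm:locallaw} is stated uniformly on the domain $\mathcal{E}$, which extends up to the edges.
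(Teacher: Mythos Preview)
Your approach is correct and is precisely what the paper does: the paper gives no self-contained proof but simply states that the corollary follows from Theorem~\ref{thm:locallaw} via the Helffer--Sj\"ostrand calculus, referring to Section~7.1 of \cite{EKYY13_2} for the details. Your sketch is a faithful rendering of that standard argument, including the integration-by-parts treatment of the $\chi''$ term and the monotonicity bound for small $\eta$.
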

Corollary \ref{coro:local density} easily follows from Theorem \ref{thm:locallaw} by applying the Helffer--Sj\"ostrand calculus. We refer to Section 7.1 of \cite{EKYY13_2} for more detail.

The proof of Theorem~\ref{thm:locallaw} is based on the following a priori estimates on entries of the resolvents, which we call the weak local semicircle law. While the weak law for $m$ is indeed weaker than the strong local law, Theorem \ref{thm:locallaw}, we have here an entrywise law, which is believed to be optimal. 

\begin{theorem}[Weak local semicircle law]\label{thm:weak local law}
	Suppose $H$ satisfies Assumption \ref{assumption}. Define the spectral parameter $\psi(z)$ by 
	\[
	\psi(z) := \frac{1}{q} + \frac{1}{\sqrt{N\eta}}, \qquad \qquad (z=E +\ii \eta).
	\]
	Then for any sufficiently small $\ell$,	the events 
	\begin{equation}\label{thm:weak law 1}
	\max_{i\neq j}|G_{ij}(z)|\prec  \psi(z)
	\end{equation}
	\begin{equation}\label{thm:weak law 2}
	\max_{i\neq j}|G_{ii}(z)-m|\prec \psi(z),
	\end{equation}
	and
	\begin{equation}\label{thm:weak law 3}
	|m(z)-m_{sc}(z)|\prec \frac{1}{\sqrt{q}} + \frac{1}{(N\eta)^{1/3}}
	\end{equation}
	hold uniformly on the domain $\mathcal{D}_\ell$.
\end{theorem}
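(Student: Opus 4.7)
The plan is to follow the classical Schur-complement and self-consistent equation strategy of Erd\H{o}s--Yau--Yin, together with the sparse refinements of Erd\H{o}s--Knowles--Yau--Yin, adapted to the balanced block structure. The block structure turns out to cause no real obstruction at this weak level: because the row-sum normalization $\sum_i \sigma_{ij}^2 = 1$ is independent of the community to which $j$ belongs, the self-consistent equation collapses to a single scalar equation for $m$, and the semicircular limit $m_{sc}$ still emerges as the leading-order profile.

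First, for each $i$, I would apply Schur's complement formula, using $H_{ii}=0$, to obtain
\[
\frac{1}{G_{ii}(z)} \;=\; -z \;-\; \sum_{k,l\neq i} H_{ik}\,G^{(i)}_{kl}(z)\,H_{li},
\]
where $G^{(i)}$ is the resolvent of the principal minor with row and column $i$ removed. I would then split the quadratic form into its mean and a fluctuation,
\[
\sum_{k,l\neq i} H_{ik}\,G^{(i)}_{kl}\,H_{li} \;=\; \sum_{k\neq i} \sigma_{ik}^2 \,G^{(i)}_{kk} \;+\; Z_i,
\]
and apply a sparse large-deviation estimate, tailored to the moment bound $\E|H_{ij}|^k \le (Ck)^{ck}/(Nq^{k-2})$, which yields $|Z_i| \prec q^{-1}\Lambda_d + \sqrt{\im m(z)/(N\eta)}$, with $\Lambda_d := \max_i |G_{ii}|$; these are precisely the two terms defining $\psi(z)=q^{-1}+(N\eta)^{-1/2}$. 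Writing $G^{(i)}_{kk} = G_{kk} - G_{ki}G_{ik}/G_{ii}$ and invoking $\sum_{k\neq i}\sigma_{ik}^2 = 1$ (crucially, this sum does not depend on the community of $i$), one rewrites the mean part as $m + O(\Lambda_o^2)$, where $\Lambda_o := \max_{i\neq j}|G_{ij}|$. This produces
\[
\frac{1}{G_{ii}} \;=\; -z - m + \Upsilon_i, \qquad \max_i |\Upsilon_i| \prec \psi + \Lambda_o^2.
\]

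For the off-diagonal entries I would use the identity $G_{ij} = -G_{ii}\sum_{k\neq i} H_{ik}\,G^{(i)}_{kj}$ (valid for $i\neq j$), further reduce to $G^{(ij)}$, and invoke the bilinear version of the sparse large-deviation estimate to get $\Lambda_o \prec \psi$ on the event that $\Lambda_d$ is bounded. Averaging the diagonal identities yields the self-consistent equation
\[
m^2 + zm + 1 \;=\; \mathcal{R}(z), \qquad |\mathcal{R}(z)| \prec \psi + \Lambda_o.
\]
A standard stability analysis of this quadratic about $m = m_{sc}$---quadratic in the bulk but cubic in the edge regime, since $(2m_{sc}+z)^2 = z^2-4$ vanishes like $\sqrt{\kappa+\eta}$ near $E=\pm 2$---then gives $|m - m_{sc}| \prec q^{-1/2} + (N\eta)^{-1/3}$, the exponent $1/3$ being the signature of cubic stability at the edge.

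All three estimates would be closed simultaneously by a bootstrap/continuity argument on $\mathcal{D}_\ell$. At $\eta = 3$ the trivial bound $\|G(z)\|\le \eta^{-1}$ gives $\Lambda_d$, $\Lambda_o$, and $|m-m_{sc}|$ of size $O(1)$, so the target bounds hold with large constants. One then lowers $\eta$ along a polynomially fine lattice, using the Lipschitz continuity of $z\mapsto G(z)$ on $\mathcal{D}_\ell$ and a union bound to transfer the estimates between lattice points and then to all of $\mathcal{D}_\ell$. The main technical obstacle, I expect, is the sparse large-deviation estimate for the quadratic and bilinear forms in $H_{ij}$: the heavy-tail contribution from $\E|H_{ij}|^k \le (Ck)^{ck}/(Nq^{k-2})$ is precisely what forces the extra $q^{-1}$ in $\psi(z)$, and tracking it cleanly while separating intra- and inter-community entries is the most delicate bookkeeping step. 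Beyond this, the block structure intervenes only benignly, precisely because the row-sum normalization reduces the potentially two-component self-consistent equation to a single scalar equation for $m$.
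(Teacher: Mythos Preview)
Your overall architecture---Schur complement, sparse large-deviation bounds on $Z_i$ and the bilinear forms, a self-consistent equation, dichotomy, and a continuity argument on a lattice---matches the paper. But there is a genuine gap at the step where you collapse the vector self-consistent equation to a scalar one.

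You write that, using $\sum_{k}\sigma_{ik}^2=1$, the mean part $\sum_{k\neq i}\sigma_{ik}^2 G^{(i)}_{kk}$ becomes $m+O(\Lambda_o^2)$, and hence $G_{ii}^{-1}=-z-m+\Upsilon_i$. This is false when the $\sigma_{ik}^2$ are not all equal: the row-sum condition only guarantees that $\mathbf{e}=(1,\dots,1)$ is an eigenvector of the variance matrix $B=(\sigma_{ij}^2)$ with eigenvalue $1$; it does \emph{not} make the weighted average $\sum_k \sigma_{ik}^2 G_{kk}$ equal to the unweighted average $m$. Concretely, in the balanced block model one has $\sum_k \sigma_{ik}^2 G_{kk}=(1-\zeta)m+\zeta S_i$, where $S_i$ is the average of $G_{jj}$ over the community of $i$. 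The discrepancy $\zeta(S_i-m)$ is of order $\max_j|G_{jj}-m|$, precisely the quantity \eqref{thm:weak law 2} you are trying to bound, so the argument as written is circular.

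The paper does not reduce to a scalar equation at this stage. It keeps the vector equation $v_i=m_{sc}^2\big(\sum_j\sigma_{ij}^2 v_j-\Upsilon_i\big)+\cdots$ with $v_j=G_{jj}-m_{sc}$, and then invokes a spectral-gap estimate for $I-m_{sc}^2 B$ on $\mathbf{e}^\perp$ (Lemma~\ref{lem:bound norm}, from \cite{EYY11}) to show $\max_i|v_i-[v]|$ is controlled by $\max_i|\Upsilon_i|$ and $\Lambda^2$ (Lemma~\ref{lem:lem analysi of }). Only after this is the system averaged to a scalar equation for $[v]=m-m_{sc}$ (Lemma~\ref{lem:analysis of self consis}), from which the dichotomy and continuity arguments proceed as you describe. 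Your proposal omits this spectral-gap step; without it, neither \eqref{thm:weak law 2} nor the scalar stability argument for \eqref{thm:weak law 3} goes through.
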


\begin{remark}
	We can extend Theorem \ref{thm:weak local law} to domain $\mathcal{E}$ as in the proof of Theorem 2.8 in \cite{EKYY13}. However, we do not pursue the direction in this paper.
\end{remark}

An immediate consequence of \eqref{thm:weak law 2} of Theorem \ref{thm:weak local law} is the complete delocalization of the eigenvectors.
\begin{corollary}
	Suppose that $H$ satisfies Assumption \ref{assumption} with $\phi >0$. Denote by $(u_i^H)$ the $\ell^2$-normalized eigenvectors of $H$. Then,
	\begin{equation}
	\max_{1\leq i\leq N}\norm{u_i^H}_\infty \prec \frac{1}{\sqrt{N}}.
	\end{equation}
\end{corollary}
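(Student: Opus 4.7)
The plan is to deduce the $\ell^\infty$-bound on eigenvectors from the entrywise weak local law, Theorem~\ref{thm:weak local law}, via the standard spectral-decomposition argument. Denote the eigenpairs of $H$ by $(\lambda_\alpha, u_\alpha)$, so that
\[
G_{ii}(z) = \sum_{\alpha=1}^{N} \frac{|u_\alpha(i)|^2}{\lambda_\alpha - z}, \qquad \im G_{ii}(E+\ii\eta) = \sum_{\alpha=1}^N \frac{\eta\,|u_\alpha(i)|^2}{(\lambda_\alpha - E)^2 + \eta^2}.
\]
Setting $z = \lambda_\alpha + \ii\eta$ and keeping only the $\alpha$-th term in the sum yields the pointwise inequality $|u_\alpha(i)|^2 \leq \eta\,\im G_{ii}(\lambda_\alpha + \ii\eta)$, which converts bounds on $\im G_{ii}$ at small $\eta$ directly into delocalization estimates.

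First I would bound $\im G_{ii}$. By \eqref{thm:weak law 2} of Theorem~\ref{thm:weak local law} and the triangle inequality,
\[
|G_{ii}(z)| \leq |m(z)| + O_\prec(\psi(z)),
\]
and by \eqref{thm:weak law 3} together with the uniform boundedness of $m_{sc}$ on $\mathcal{D}_\ell$, we have $|m(z)| \prec 1$ uniformly on $\mathcal{D}_\ell$. Since $\psi(z) = q^{-1} + (N\eta)^{-1/2}$ is itself bounded on $\mathcal{D}_\ell$ (as $\eta \geq N^{-1+\ell}$ and $q \geq N^{\phi}$), we conclude $\im G_{ii}(z) \prec 1$ uniformly in $i$ and in $z \in \mathcal{D}_\ell$.

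Next I would choose $\eta = N^{-1+\ell}$ for an arbitrarily small $\ell > 0$. We may assume that all eigenvalues satisfy $|\lambda_\alpha| < 3$ with high probability; this is a consequence of the strong local law (Theorem~\ref{thm:locallaw}) combined with $L < 3$, but for the present corollary one may alternatively work on the slightly smaller spectrum guaranteed by Corollary~\ref{coro:local density}. For any such $\lambda_\alpha$ the point $z_\alpha := \lambda_\alpha + \ii\eta$ lies in $\mathcal{D}_\ell$, and the spectral-decomposition bound gives
\[
|u_\alpha(i)|^2 \leq \eta\,\im G_{ii}(z_\alpha) \prec \eta = N^{-1+\ell}.
\]
Taking a union bound over the $N$ values of $\alpha$ and the $N$ values of $i$ (permissible under the $\prec$ notation by enlarging $D$) and then letting $\ell \downarrow 0$, we obtain $\max_{\alpha,i} |u_\alpha(i)|^2 \prec N^{-1}$, which is the claimed $\max_\alpha \|u_\alpha\|_\infty \prec N^{-1/2}$.

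There is no real obstacle here; the only point requiring a touch of care is that the weak law is stated uniformly on $\mathcal{D}_\ell$ but the points $z_\alpha$ are random, so one must invoke a standard net argument (evaluate on a polynomially fine grid of $E$'s in $(-3,3)$ and use Lipschitz continuity of $z \mapsto \im G_{ii}(z)$ in $E$, with Lipschitz constant $\eta^{-2}$) to transfer the uniform estimate from a deterministic set of spectral parameters to the random ones. This grid argument is routine and costs only an additional factor of $N^{O(\ell)}$, absorbed into the $\prec$.
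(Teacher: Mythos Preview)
Your proposal is correct and follows exactly the standard spectral-decomposition argument that the paper invokes by referring to Corollary~3.2 of \cite{EYY12a}. The only minor remark is that for the high-probability containment $|\lambda_\alpha|<3$ you need not appeal to the strong local law; the weak norm bound of Lemma~\ref{thm:weak matrix norm} (which follows from Theorem~\ref{thm:weak local law} and a moment estimate) already gives $\|H\|\le 2+O_\prec(q^{-1/2})<3$, keeping the corollary a consequence of the weak law alone as the paper intends.
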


For the proof, we refer to the proof of Corollary 3.2 of \cite{EYY12a}.

Together with the weak local semicircle law, a standard application of the moment method yields the following weak bound on $\norm{H}$ ; see~\emph{e.g.} Lemma 4.3 of \cite{EKYY13} and Lemma 7.2 of \cite{EYY12a}.
\begin{lemma}\label{thm:weak matrix norm}
	Suppose that $H$ satisfies Assumption \ref{assumption} with $\phi >0$. Then,
	\begin{equation}\label{eq:weak matirx norm}
	\abs{\norm{H} -2} \prec \frac{1}{q^{1/2}}.
	\end{equation}
\end{lemma}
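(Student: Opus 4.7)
The plan is to combine an upper bound $\|H\| \leq 2 + q^{-1/2+\epsilon}$ from the combinatorial moment method with a matching lower bound extracted from the weak local semicircle law of Theorem~\ref{thm:weak local law}. Both ingredients are direct adaptations of the sparse Erd\H{o}s--R\'enyi case treated in Lemma~4.3 of \cite{EKYY13} and Lemma~7.2 of \cite{EYY12a}; what requires verification is that the two-variance block structure of Assumption~\ref{assumption} does not alter the leading-order estimates.

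For the upper bound I would pick an even integer $k \asymp q^{1/2-\epsilon}$ and expand
\[
\E \|H\|^{k} \leq \E \tr H^{k} = \sum_{i_1,\dots,i_k} \E\bigl[ H_{i_1 i_2} H_{i_2 i_3} \cdots H_{i_k i_1} \bigr],
\]
classifying closed walks of length $k$ according to their edge-multiplicity skeleton. Tree walks (every edge traversed exactly twice) produce the leading contribution $N \, C_{k/2}\,(1+O(q^{-1/2}))$, where $C_{k/2}$ is the Catalan number: the row-normalization $\sum_i \sigma_{ij}^2 = 1$ collapses the inner vertex sums regardless of whether neighboring indices lie in the same block. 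Walks containing an edge of multiplicity $m \geq 3$ pick up a factor $N^{-1}q^{-(m-2)}$ from \eqref{eq:moments}, and the standard surplus-edge counting bounds the total non-tree contribution by $N(2+Cq^{-1/2})^{k}$. Markov's inequality then yields $\|H\| \leq 2 + q^{-1/2+2\epsilon}$ with high probability.

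For the lower bound, suppose for contradiction that $\|H\| \leq 2 - \delta$. Then $m(z)$ extends analytically past $z=2$ with $\im m(2+\ii\eta) \lesssim \eta/\delta^{2}$ for $\eta \ll \delta$, whereas $\im m_{sc}(2+\ii\eta) \asymp \sqrt{\eta}$. Invoking \eqref{thm:weak law 3} (extended to a neighborhood of the edge as in Theorem~2.8 of \cite{EKYY13} and noted in the remark following Theorem~\ref{thm:weak local law}), one has $|m-m_{sc}| \prec q^{-1/2}+(N\eta)^{-1/3}$, and choosing $\eta$ slightly above $q^{-1}$ forces $\sqrt{\eta} \lesssim \eta/\delta^{2} + q^{-1/2}$, hence $\delta \prec q^{-1/2}$.

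The main obstacle is purely bookkeeping in the walk expansion of the upper bound: one must check that the two variances $\{\kappa_s^{(2)},\kappa_d^{(2)}\}$ and the higher cumulants $\{\kappa_s^{(m)},\kappa_d^{(m)}\}$ enter only through the row-normalization $\sum_i \sigma_{ij}^2=1$ (for tree topologies) and through the moment bound \eqref{eq:moments} (for non-tree topologies), so that the combinatorics is identical to the Erd\H{o}s--R\'enyi case up to $O(1)$ constants. Once this is observed the proof reduces verbatim to the arguments already carried out in the cited references.
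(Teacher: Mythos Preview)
Your proposal is correct and matches the paper's approach exactly: the paper does not give a self-contained proof but simply states that the bound follows from the weak local semicircle law together with a standard moment-method argument, citing precisely Lemma~4.3 of \cite{EKYY13} and Lemma~7.2 of \cite{EYY12a}. Your observation that the block structure enters only through the row normalization $\sum_i \sigma_{ij}^2 = 1$ and the uniform moment bound~\eqref{eq:moments} is the right reason the cited arguments go through unchanged.
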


From the strong law, we can sharpen the estimate \eqref{eq:weak matirx norm} by containing the deterministic refinement to the semicircle law.

\begin{theorem}\label{thm:matrix norm}
	Suppose that $H$ satisfies Assumption \ref{assumption} with $\phi >0$. Then,
	\begin{equation}
	\abs{\norm{H} -L} \prec \frac{1}{q^4} + \frac{1}{N^{2/3}},
	\end{equation}
	where $\pm L$ are the endpoints of the support of the measure $\wt\rho$ given by
	\begin{equation}\label{eq:L}
	L= 2+\frac{\xi^{(4)}}{q^2} +O(q^{-4}).
	\end{equation}
\end{theorem}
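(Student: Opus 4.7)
The plan is to derive the edge rigidity $|\|H\|-L|\prec q^{-4}+N^{-2/3}$ from the strong local law (Theorem~\ref{thm:locallaw}) combined with the square-root edge profile of $\wt\rho$, following the strategy of Section~6 of \cite{EKYY13} and the corresponding edge rigidity argument in \cite{LS18}.

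\textbf{Step 1 (edge formula).} I would first establish \eqref{eq:L}. The endpoint $L$ is characterized by the simultaneous vanishing $P_{1,L}(\wt m(L))=0$ and $\partial_m P_{1,z}(m)\big|_{z=L,\,m=\wt m(L)}=0$. Since $\partial_m P_{1,z}(m) = z + 2m + 4q^{-2}\xi^{(4)}m^3$, the second relation gives $L = -2\wt m(L) - 4q^{-2}\xi^{(4)}\wt m(L)^3$. Substituting into $P_{1,L}(\wt m(L))=0$ yields
\[
1 - \wt m(L)^2 - 3q^{-2}\xi^{(4)}\wt m(L)^4 = 0,
\]
and Taylor expansion (using that $\wt m(L)$ is close to the Wigner value $-1$ for $q$ large) gives $\wt m(L) = -1 + \tfrac{3}{2}q^{-2}\xi^{(4)} + O(q^{-4})$, hence $L = 2 + q^{-2}\xi^{(4)} + O(q^{-4})$.

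\textbf{Step 2 (rigidity via Helffer--Sj\"ostrand).} Set $\eta_* := q^{-4} + N^{-2/3}$ and fix small $\epsilon > 0$. From the square-root edge behavior of $\wt\rho$ collected in Lemma~\ref{lemma:rho_t}, one has $\im\wt m(E+\ii\eta) \asymp \eta/\sqrt{|E-L|+\eta}$ for $E > L$ and $\asymp \sqrt{|E-L|+\eta}$ for $E < L$. To prove the upper bound $\|H\| \leq L + N^{2\epsilon}\eta_*$, choose a smooth cutoff $\chi$ equal to $1$ on $[L+N^{2\epsilon}\eta_*, 3]$ and supported in $[L+\tfrac12 N^{2\epsilon}\eta_*, 4]$; note that by Lemma~\ref{thm:weak matrix norm} all eigenvalues lie in this support region with high probability. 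The Helffer--Sj\"ostrand representation
\[
\tr\chi(H) - N\int\chi\,\wt\rho\,dx = \frac{N}{\pi}\,\re\int_{\mathbb{C}}\partial_{\bar z}\wt\chi(z)\pb{m(z)-\wt m(z)}\,d^2z,
\]
with quasi-analytic extension $\wt\chi$ at vertical scale $\eta_*$, combined with the local law $|m-\wt m|\prec q^{-2}+(N\eta)^{-1}$, yields a right-hand side of size $\prec 1$. Since $\chi$ is supported outside $\supp\wt\rho$ the second term on the left vanishes, so $\tr\chi(H) \prec 1$; as this quantity is a nonnegative integer counting eigenvalues above $L+N^{2\epsilon}\eta_*$, this rules out any such eigenvalues with high probability. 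The lower bound $\|H\| \geq L - N^{2\epsilon}\eta_*$ follows symmetrically: were it to fail, a cutoff supported on $[L - N^{2\epsilon}\eta_*, L]$ would force $\tr\chi(H) = 0$, while $N\int\chi\,\wt\rho\asymp N(N^{2\epsilon}\eta_*)^{3/2} \gg 1$, contradicting the same Helffer--Sj\"ostrand estimate.

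\textbf{Main obstacle.} The delicate point is matching the two error sources $q^{-2}$ and $(N\eta)^{-1}$ in the local law against the $\sqrt{\eta}$ edge profile of $\im\wt m$: the critical scale $\eta_*$ is precisely where both $q^{-2}\sim\sqrt{\eta_*}$ (producing the $q^{-4}$ threshold) and $(N\eta_*)^{-1}\sim\sqrt{\eta_*}$ (producing the $N^{-2/3}$ threshold) are saturated. Carefully tracking both contributions inside the Helffer--Sj\"ostrand integral over the various subregions of the extension $\wt\chi$, and verifying the cancellations that suppress the naive $O(\eta_*^{-1})$ losses, is the main technical task; the bookkeeping parallels that in \cite{LS18} with only cosmetic modifications coming from the block structure.
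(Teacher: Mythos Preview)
Your Step~1 and the lower-bound half of Step~2 are fine and match the paper's argument (the lower bound is exactly Corollary~\ref{coro:local density} applied to the interval $(\lambda_1^H,L)$, using $n_{\wt\rho}(\lambda_1^H,L)\asymp(L-\lambda_1^H)_+^{3/2}$).

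The upper bound, however, has a real gap. The Helffer--Sj\"ostrand estimate you invoke is precisely Corollary~\ref{coro:local density}, which for an interval $(E_1,E_2)$ outside $\supp\wt\rho$ gives only
\[
\#\{\lambda_i\in(E_1,E_2)\}\ \prec\ \frac{N(E_2-E_1)}{q^2}+1.
\]
For your cutoff supported on an $O(1)$ interval above $L$ this is of order $N/q^2$, which is not $\prec 1$ unless $q\gg N^{1/2}$; for small $\phi$ it is useless. The culprit is the \emph{constant} error $q^{-2}$ in the strong local law~\eqref{thm:stronglaw}: it does not shrink as $\eta\to 0$, and the HS integral picks it up over the full horizontal support of $\chi$. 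No amount of HS bookkeeping removes this; the ``cancellations'' in \cite{LS18} that you allude to concern the $(N\eta)^{-1}$ contribution, not the $q^{-2}$ one.

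What the paper actually does for the upper bound (Lemma~\ref{lem:lambda_1^H_t}, Appendix~\ref{app:Hnorm}) is to return to the recursive moment estimate~\eqref{eq:recursive1} and exploit that \emph{outside} the spectrum one has $\alpha_1=\im\wt m\sim\eta/\sqrt{\kappa}\ll|\alpha_2|\sim\sqrt{\kappa}$. Feeding this into the stability analysis on a domain $\mathcal{D}_\epsilon$ with $\kappa\geq N^{4\epsilon}(q^{-4}+N^{-2/3})$ and $\eta=N^\epsilon/(N\sqrt{\kappa})$ bootstraps the local law to $\Lambda_t\ll(N\eta)^{-1}$ there, hence $\im m\ll(N\eta)^{-1}$. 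An eigenvalue at $E$ would force $\im m(E+\ii\eta)\geq(5N\eta)^{-1}$, a contradiction. This improved local law outside the support---not Helffer--Sj\"ostrand---is the missing ingredient, and it is also how \cite{LS18} handles the upper bound.
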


\begin{figure}[t]
	\centering
	\subfigure[]{
		\includegraphics[width=0.3\textwidth]{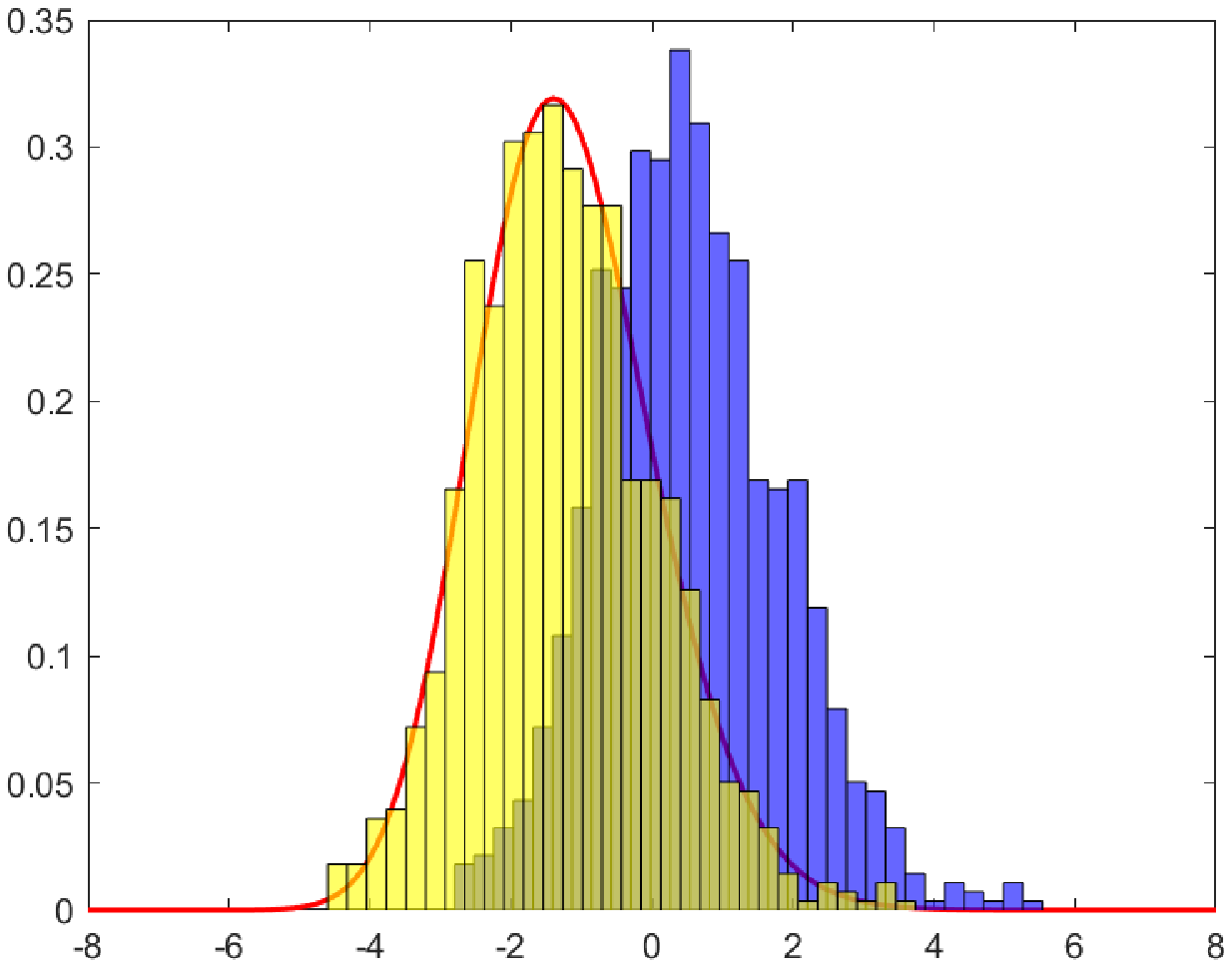}
		\label{fig:0.3}
	}
	\centering
	\subfigure[]{
		\includegraphics[width=0.3\textwidth]{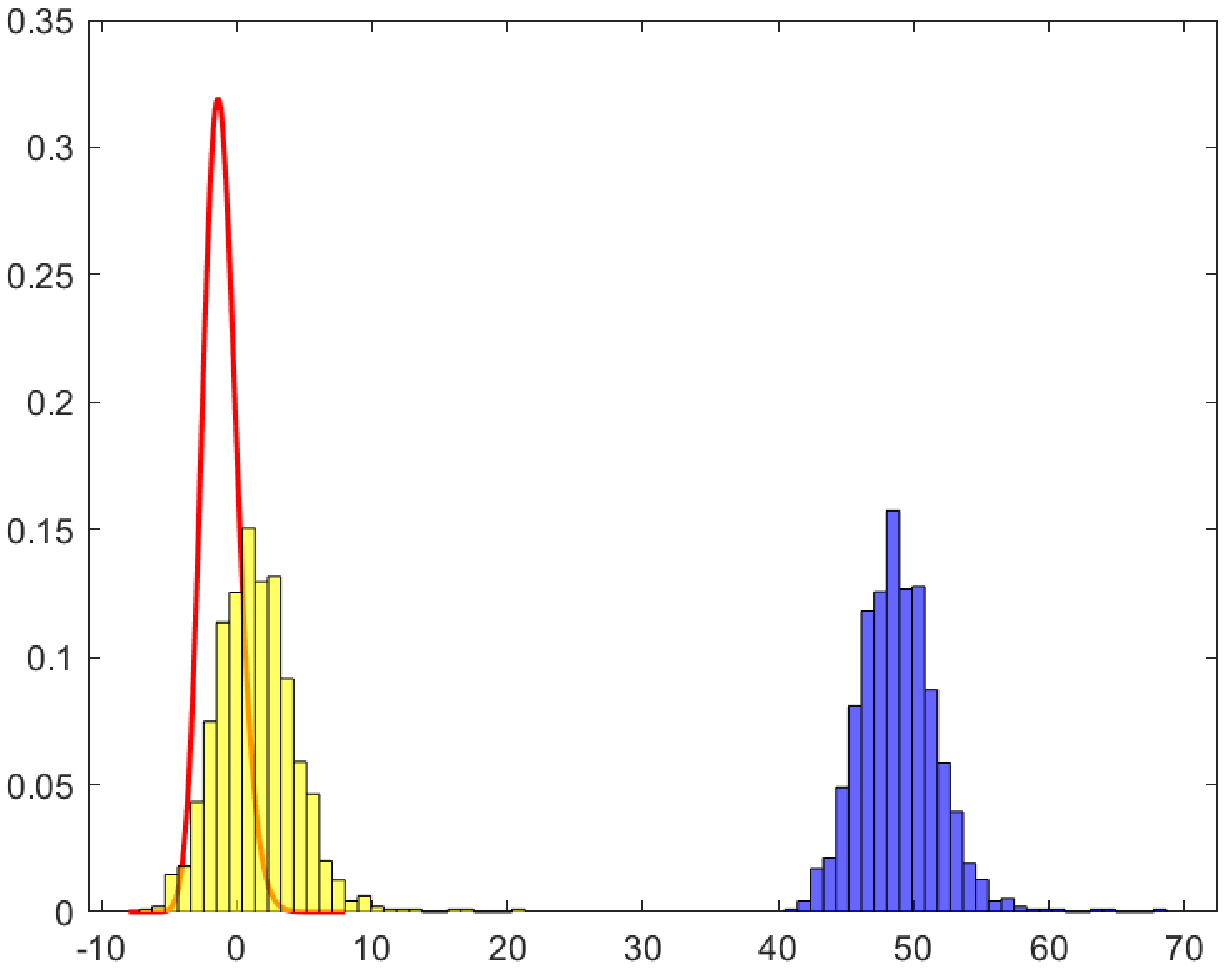}
		\label{fig:0.009}
	}
	\centering
	\subfigure[]{
		\includegraphics[width=0.3\textwidth]{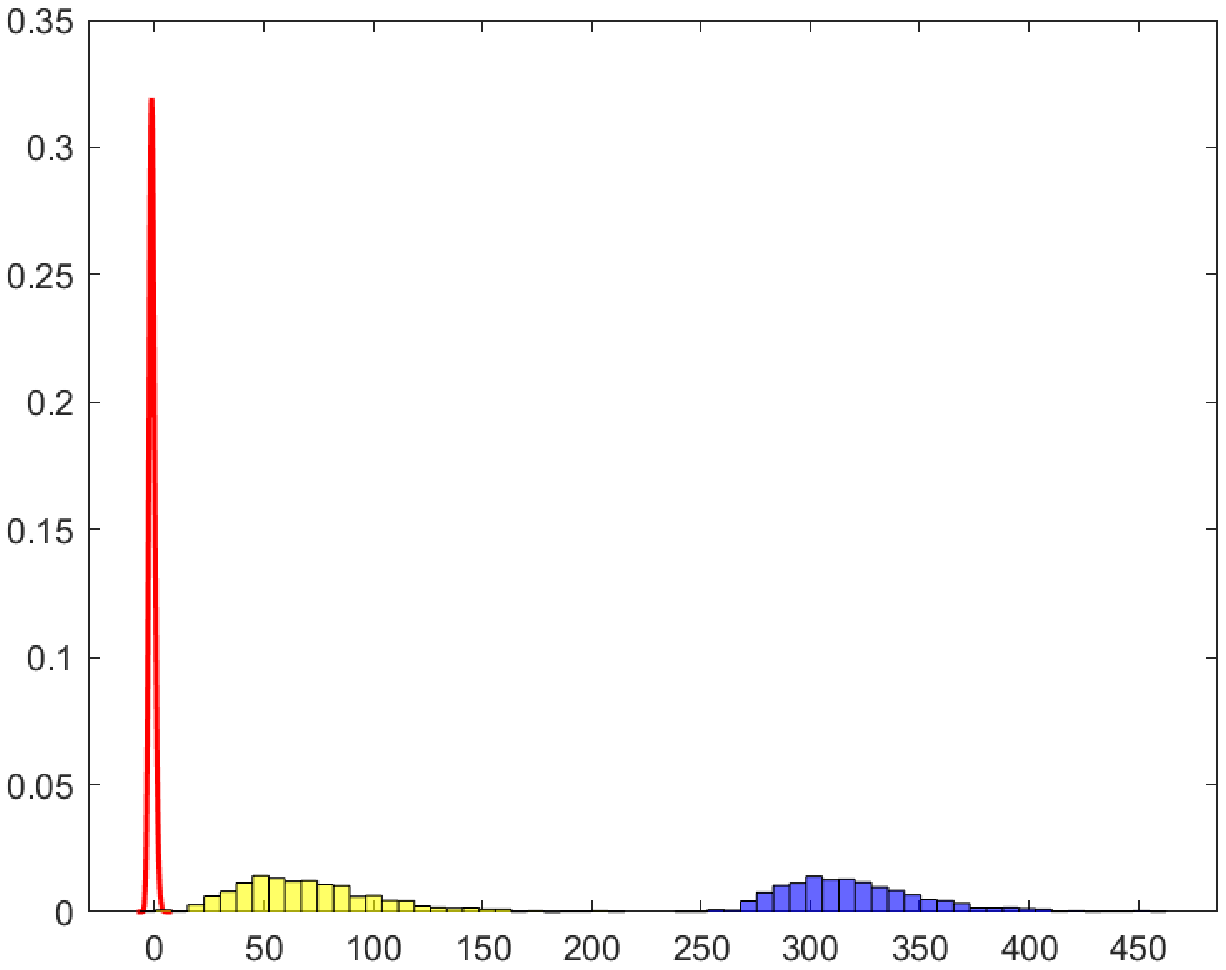}
		\label{fig:0.001}
	}
	\caption{ESD of the largest eigenvalues shifted by 2 (blue) and by $L$ (yellow) plotted against the Tracy--Widom law (red line): cgSBM with $N=27000$, $K=3$ and (a) $p_s=0.03, p_d=0.01 $ ; (b) $p_s=0.009, p_d=0.006$ ; (c) $p_s=0.002, p_d=0.001$ }
	\label{fig:p>0.05}
\end{figure}

Our last main result states that the fluctuations of the rescaled largest eigenvalue of the centered generalized stochastic block model are given by the Tracy--Widom law when the sparsity parameter $q$ satisfies $q \gg N^{1/6}$.
\begin{theorem}\label{thm:TWlimit}
	Suppose that $H$ satisfies Assumption \ref{assumption} with $\phi>1/6.$ Denote by $\lambda_1^{H}$ the largest eigenvalue of $H$. Then
	\begin{align}
	\lim_{N\rightarrow \infty}\mathbb{P}\Big( N^{2/3}\big(\lambda_1^{H} - L\big)\leq s\Big) = F_1(s)
	\end{align}
	where $L$ is given in \eqref{eq:L} and $F_1$ is the cumulative distribution function of the GOE Tracy--Widom law.
\end{theorem}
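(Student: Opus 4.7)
The plan is to apply the Green function comparison method along an Ornstein--Uhlenbeck matrix flow, in the spirit of \cite{LS18}, with the edge location $L_t$ of the intermediate ensemble moving continuously to absorb the drift. Let $W$ be an independent real symmetric Gaussian matrix with $W_{ii}=0$ and $W_{ij}\sim\mathcal{N}(0,\sigma_{ij}^2)$ for $i\neq j$, and define
$$
H_t \;:=\; \e^{-t/2}\, H + \sqrt{1-\e^{-t}}\, W, \qquad t\in[0,T], \quad T := C\log N,
$$
for a large constant $C$. This flow preserves the variance profile $(\sigma_{ij}^2)$, while the $k$-th cumulant of $(H_t)_{ij}$ for $k\geq 3$ equals $\e^{-kt/2}\kappa_{ij}^{(k)}$. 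Consequently, Theorems~\ref{thm:locallaw} and~\ref{thm:matrix norm} hold uniformly in $t$ for $H_t$, with the time-dependent quartic parameter $\xi^{(4)}_t := \e^{-2t}\xi^{(4)}$ and edge
$$
L_t \;=\; 2 + \frac{\xi^{(4)}_t}{q^2} + O(q^{-4}).
$$
At $t=T$ all cumulants of order $\geq 3$ are smaller than any negative power of $N$, so $H_T$ is indistinguishable, for our purposes, from a Gaussian generalized Wigner matrix with row variances summing to one; for such matrices the GOE Tracy--Widom limit at the edge $L_T = 2 + o(N^{-2/3})$ is known \cite{EYY12a}. The problem therefore reduces to showing that the distribution of $N^{2/3}(\lambda_1(H_t) - L_t)$ is invariant along the flow up to $o(1)$ errors.

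To carry out this comparison, pick a smooth cutoff $\chi_E$ that approximates $\mathbf{1}_{[E,\infty)}$ on scale $\eta_0 := N^{-2/3-\epsilon}$ for a small $\epsilon>0$, write $\mathcal{N}^{t}_E := \tr\chi_E(H_t)$, and represent it through the Helffer--Sj\"ostrand formula as an integral of $\im\tr G_t(z)$. A standard reduction (see, e.g., Section~6 of \cite{EYY12}) shows that the theorem follows once we establish
$$
\frac{d}{dt}\,\E\!\Bigl[F\!\bigl(\mathcal{N}^{t}_{L_t+sN^{-2/3}}\bigr)\Bigr] \;=\; o(1), \qquad t\in[0,T],
$$
for any bounded smooth $F$. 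The derivative has two contributions: an It\^{o}/cumulant-expansion part from $dH_t$, and a chain-rule part from the motion $\dot L_t$. The Gaussian piece of the first contribution cancels against the OU drift term $-\tfrac12 H_t\,dt$; the non-Gaussian piece expands into a hierarchy indexed by cumulant order $k\geq 3$. Applying the strong local law (Theorem~\ref{thm:locallaw}) to the repeatedly differentiated resolvent entries, all $k\geq 5$ terms are $o(1)$ under $q\gg N^{1/6}$; the $k=3$ term vanishes after symmetrizing in the index pair; and the $k=4$ term collapses, after grouping $(i,j)$ by community, to $(\xi^{(4)}_t/q^2)\,\wt m_t(z)^4 + o(q^{-2})$. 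Differentiating the self-consistent equation of Theorem~\ref{thm:locallaw}(2) for $\wt m_t$ in $t$ produces exactly the matching chain-rule term with the opposite sign, closing the estimate.

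The principal technical difficulty compared with \cite{LS18} lies in verifying the combinatorial cancellation in the fourth-cumulant term. Because $H_{ij}$ is not identically distributed, every term in the cumulant expansion splits into partial sums indexed by whether the endpoints lie in the same community or in different communities, carrying the distinct weights $\kappa_s^{(4)}$ and $\kappa_d^{(4)}$. Showing that the balanced assumption $|V_\ell|=N/K$ forces these partial sums to recombine into the single symmetric coefficient $\xi^{(4)} = (s_s^{(4)}+(K-1)s_d^{(4)})/K$, which is precisely the coefficient appearing in both $P_{1,z}$ and $\dot L_t$, is the main combinatorial content beyond \cite{LS18}. Establishing this identity uniformly in $t\in[0,T]$ and with error terms strictly better than $N^{-2/3}$ -- which is where the condition $\phi>1/6$ enters decisively -- relies on the fluctuation averaging machinery developed for Theorem~\ref{thm:locallaw}, now applied to partial sums of resolvent entries indexed by community labels, together with the weak local law (Theorem~\ref{thm:weak local law}) to control the off-diagonal resolvent entries that appear in the expansion.
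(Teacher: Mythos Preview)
Your overall strategy matches the paper's: Dyson matrix flow $H_t=\e^{-t/2}H+\sqrt{1-\e^{-t}}W^G$, cumulant expansion of $\frac{d}{dt}\E F(X_t)$, and cancellation of the fourth-cumulant contribution against the drift $\dot L_t$. The reduction to Gaussian generalized Wigner at $t\asymp\log N$ is also correct. However, two of your claims about the cumulant-by-cumulant bookkeeping are wrong, and the first is a genuine gap.

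\textbf{The third-cumulant term does not vanish by symmetrization.} For centered sparse entries (e.g.\ shifted Bernoulli) one has $\kappa_{jk}^{(3)}\sim (Nq_t)^{-1}\neq 0$, and nothing in the index symmetry $j\leftrightarrow k$ kills $A_2=\frac{\e^{-t}q_t^{-1}}{2N}\sum_{i}\sum_{j\neq k}s_{jk}^{(3)}\,\E[\partial_{jk}^2(F'(X)G_{ij}G_{ki})]$. After expanding, the dangerous pieces are those with exactly three off-diagonal resolvent entries, e.g.\ $\sum_{i,j,k}F'(X)G_{ij}G_{kj}G_{kk}G_{ji}$; Lemma~\ref{lemma:powercounting0} only gives the bound $q_t^{-1}N^{1+C\epsilon}$, which exceeds $N^{2/3}$ whenever $q_t<N^{1/3}$ (allowed under $\phi>1/6$). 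The paper closes this by a \emph{second} cumulant expansion on the unmatched index: write $zG_{kj}=\sum_n H_{kn}G_{nj}$, expand in $H_{kn}$, and observe that the new leading term reproduces $-\wt m$ times the original expression, so that dividing by $|z+\wt m|\sim 1$ extracts an extra $q_t^{-1}$ from the remaining orders. The same trick is needed again for $r=4$ (fifth cumulant), which you lump into the ``routine'' $k\ge 5$ range. Without this re-expansion step the comparison does not close.

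\textbf{The fourth-cumulant cancellation is not against $\wt m_t^4$.} The $r=3$ term of the expansion yields, after isolating the zero-off-diagonal part and decoupling the community sum,
\[
A_3 \;=\; 2\e^{-t}\xi^{(4)}q_t^{-2}\sum_{i,j}\E\bigl[F'(X)\,G_{ij}G_{ji}\bigr]\;+\;O(N^{2/3-\epsilon'}),
\]
and this cancels against $\dot L_t\sum_{i,j}\E[F'(X)G_{ij}G_{ji}]$, the chain-rule term from $z=x+L_t+\ii\eta_0$. The object $\sum_{i,j}G_{ij}G_{ji}=\tr G^2$ is not $\wt m_t^4$; the latter is the coefficient that arises in the \emph{local law} recursion $P_{1,z,t}(m)=0$, not in the edge comparison. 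Differentiating $P_{1,z,t}(\wt m_t)=0$ in $t$ gives the relation between $\dot L_t$ and $\xi^{(4)}$, but the cancellation in $\frac{d}{dt}\E F(X)$ happens at the level of $\tr G^2$, not $\wt m^4$. Your combinatorial remark about $\xi^{(4)}$ emerging from the balanced community structure is correct and is indeed the SBM-specific input here, but it enters through replacing $G_{kk}^2$ by $m^2$ in the zero-off-diagonal part of $A_3$ (via the entrywise law in Theorem~\ref{thm:weak local law}), not through the self-consistent equation.
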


In Figure \ref{fig:p>0.05}, we plot histograms of shifted and rescaled largest eigenvalues of the $27000 \times 27000$ cgSBM against the Tracy--Widom distribution when $p_s$ and $p_d$ are (a) between $N^{-1/3}$ and $N^{-2/3}$, (b) between $N^{-2/3}$ and $N^{-7/9}$, and (c) less then $\log N /N$. The red line is the Tracy--Widom distribution and the blue histogram is the eigenvalue histogram shifted by 2 and the yellow one is histogram shifted by $L$. In the case (a), we see that the deterministic shift given by $L-2=O(q^{-2})$ is essential so that the empirical eigenvalue distribution of the sample matrices follows the Tracy--Widom distribution when we shift it by $L$ instead of 2.

In (b), Figure \ref{fig:0.009} shows that the empirical distribution of the sample matrices does not follow the Tracy--Widom law. In \cite{HLY17}, it was shown that for Erd\H{o}s--R\'enyi graph, there exists a transition from Tracy--Widom to Gaussian fluctuations when $p \sim N^{-2/3}$. We expect that the fluctuation of extreme eigenvalues of cgSBM will also follow the Gaussian distribution in this regime. Finally, in (c) where $p_s, p_d \ll \frac{\log N}{N}$, cgSBM contains an isolated vertex almost surely and thus is disconnected. Due to this disconnectedness, properties of a cgSBM will be entirely changed. In this case, as shown in Figure \ref{fig:0.001}, the empirical distribution follows neither the Tracy--Widom distribution nor the Gaussian distribution.

\subsection{Applications to the adjacency matrix of the sparse SBM}

We first introduce the Weyl's inequality.
\begin{theorem}[Weyl's inequality]
	Let $A$ and $B$ be symmetric matrices of size $N$, where $A$ has eigenvalues $\lambda_N^A \leq \dots \leq \lambda_1^A$, $B$ has eigenvalues $\lambda_N^B \leq \dots \leq \lambda_1^B$ and $A+B$ has eigenvalues  $\lambda_N^{A+B} \leq \dots \leq \lambda_1^{A+B}$. Then the following inequality holds.
	\begin{align}
	{\lambda _{j}^A+\lambda _{k}^B\leq \lambda _{i}^{A+B} \leq \lambda _{r}^A+\lambda _{s}^B}, \qquad \quad { ( r+s-1\,\leq \,i\,\leq \, j+k-N)}.
	\end{align}
\end{theorem}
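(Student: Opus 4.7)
The plan is to deduce Weyl's inequality from the Courant--Fischer min-max characterization together with a dimension-counting argument. Recall that for a real symmetric matrix $M$ with ordered eigenvalues $\lambda_1^M \geq \cdots \geq \lambda_N^M$ and orthonormal eigenvectors $u_1^M, \ldots, u_N^M$, the Rayleigh quotient $\langle x, Mx\rangle$ is bounded above by $\lambda_\ell^M$ on $\mathrm{span}(u_\ell^M, u_{\ell+1}^M, \ldots, u_N^M)$ and bounded below by $\lambda_\ell^M$ on $\mathrm{span}(u_1^M, \ldots, u_\ell^M)$. The entire strategy is to intersect three cleverly chosen subspaces --- one attached to $A$, one to $B$, and one to $A+B$ --- and extract a unit vector in the intersection on which one can simultaneously control all three Rayleigh quotients.

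For the upper bound $\lambda_i^{A+B} \leq \lambda_r^A + \lambda_s^B$ under the hypothesis $r+s-1 \leq i$, I would define
\[
S := \mathrm{span}(u_r^A, \ldots, u_N^A), \qquad T := \mathrm{span}(u_s^B, \ldots, u_N^B), \qquad U := \mathrm{span}(u_1^{A+B}, \ldots, u_i^{A+B}),
\]
with $\dim S = N-r+1$, $\dim T = N-s+1$, $\dim U = i$. Two applications of the subspace dimension formula give
\[
\dim(S \cap T \cap U) \geq (N-r+1) + (N-s+1) + i - 2N = i - r - s + 2 \geq 1.
\]
Pick any unit vector $x \in S \cap T \cap U$. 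Since $x \in U$, one has $\langle x, (A+B)x\rangle \geq \lambda_i^{A+B}$; since $x \in S$ and $x \in T$, one has $\langle x, Ax\rangle \leq \lambda_r^A$ and $\langle x, Bx\rangle \leq \lambda_s^B$. Adding the last two and using linearity of the Rayleigh quotient in $M$ yields the claimed bound.

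The lower bound $\lambda_j^A + \lambda_k^B \leq \lambda_i^{A+B}$ under $i \leq j+k-N$ is then obtained by duality: apply the upper bound just proved to $-A$ and $-B$ and invoke the identity $\lambda_\ell^{-M} = -\lambda_{N-\ell+1}^M$. Under the substitution $i \mapsto N-i+1$, $r \mapsto N-j+1$, $s \mapsto N-k+1$ the inequality reverses sign and the hypothesis $r+s-1 \leq i$ translates precisely into $i \leq j+k-N$. Alternatively, one could repeat the intersection argument with the roles of the top and bottom eigenspaces exchanged. I expect the only obstacle in the whole argument to be careful index bookkeeping --- verifying that the dimension count is tight at the boundary $i = r+s-1$ and that the duality substitution carries the hypotheses correctly --- since no analytic input beyond finite-dimensional linear algebra is involved.
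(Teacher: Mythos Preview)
Your argument is correct and is the standard Courant--Fischer proof of Weyl's inequality; the dimension count and the duality step both go through exactly as you describe. Note, however, that the paper does not actually supply a proof of this theorem: Weyl's inequality is quoted as a classical fact in Section~2.3 and then applied to $A=\mathbb{E}A+\widetilde A$ in the derivation of Theorem~\ref{thm:perturbation}, with no proof or reference given. So there is nothing to compare against; your proposal simply fills in a result the authors treat as background.
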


Consider an adjacency matrix of the SBM with $N$ vertices and $K$ balanced communities. To make bulk eigenvalues lie in an order one interval, we may rescale this matrix ensemble and we are led to the following random matrix ensemble. Let $A$ be a real symmetric $N \times N$ matrix whose entries, $A_{ij}$,  are independent random variables satisfy
\begin{align}\label{def:A}
\P( A_{ij} =\frac{1}{\sigma} )=
\begin{cases}
p_{s} & (i \sim j) \\
p_{d} & (i\not\sim j)
\end{cases}, \qquad
\P( A_{ij} =0 )=
\begin{cases}
1-p_{s} & (i \sim j) \\
1-p_{d} & (i\not\sim j)
\end{cases},\qquad
\P( A_{ii} =0 )=1,
\end{align}
where $\sigma^2:=\frac{N}{K}p_s(1-p_s) + \frac{N(K-1)}{K}p_d(1-p_d)$.
Then we can get $\wt A:= A-\E A$ which is a centered matrix obtained from $A$. Here, $\wt A_{ij}$ have the distribution 
\begin{align}\label{def: wt A}
&\P(\wt A_{ij} =\frac{1-p_s}{\sigma} )=p_{s}, \qquad \P(\wt A_{ij} =-\frac{p_s}{\sigma} )=1-p_{s}, \qquad  (i \sim j) \notag \\
&\P(\wt A_{ij} =\frac{1-p_d}{\sigma} )=p_{d}, \qquad \P(\wt A_{ij} =-\frac{p_d}{\sigma} )=1-p_{d},  \qquad(i \not\sim j), \qquad\P(\wt A_{ii} =0 )=1.  
\end{align}
When $p_s$ and $p_d$ have order $N^{-1+2\phi}$, it can be easily shown that $\wt A$ satisfies Assumption \ref{assumption} with $q \sim N^{\phi}$. Now we apply Weyl's inequality to $A= \E A + \wt A$ and can get the following theorem.

\begin{theorem}\label{thm:perturbation}
	Fix $\phi>0$. Let $A$ and $\wt A$ satisfy \eqref{def:A} and \eqref{def: wt A} with $N^{-1+2\phi} \leq p_s, p_d \leq N^{-2\phi}$. 
	Then there is a constant $c$ such that 
	\begin{align}
	\lambda_N^{A} \leq \dots \leq \lambda_{K+1}^{A}\leq \lambda_1^{\wt A} \leq 2+c  \leq \lambda_{K}^A \leq \dots \leq \lambda_1^A.
	\end{align}
	with high probability. In other words, there is an order one gap between the $K$ largest eigenvalues and other eigenvalues.
\end{theorem}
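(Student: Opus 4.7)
The plan is to apply Weyl's inequality to the decomposition $A = \E A + \wt A$. Two ingredients suffice: (i) an explicit diagonalization of the deterministic matrix $\E A$ exploiting its block structure, and (ii) the operator-norm bound $\|\wt A\| = L + o(1) = 2 + O(q^{-2})$ provided by Theorem \ref{thm:matrix norm}, which is the only probabilistic input.

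First I would compute the spectrum of $\E A$ exactly. Writing
\[
\E A = (p_s/\sigma)(J_s - I) + (p_d/\sigma)(J - J_s),
\]
where $J$ is the $N \times N$ all-ones matrix and $J_s$ is its block-diagonal part with $K$ equal blocks of size $N/K$, one sees that the rank-$K$ matrix $M := \E A + (p_s/\sigma) I$ preserves the subspace $\mathcal{S}$ spanned by the community indicators $\mathbf{1}_{V_1},\dots,\mathbf{1}_{V_K}$ and vanishes on $\mathcal{S}^\perp$. In the basis $\{\mathbf{1}_{V_i}\}_{i=1}^K$ the restriction $M|_{\mathcal{S}}$ is represented by the $K\times K$ matrix $(N/(K\sigma))\bigl[(p_s-p_d) I_K + p_d J_K\bigr]$, whose eigenvalues are $(N/(K\sigma))(p_s+(K-1)p_d)$, simple, and $(N/(K\sigma))(p_s-p_d)$, of multiplicity $K-1$. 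Shifting by $-p_s/\sigma$ one obtains $\lambda_1(\E A) \sim N^\phi$, the block $\lambda_2(\E A) = \cdots = \lambda_K(\E A) = N(p_s-p_d)/(K\sigma) - p_s/\sigma$, and a degenerate trailing bulk $\lambda_{K+1}(\E A) = \cdots = \lambda_N(\E A) = -p_s/\sigma = O(N^{-1/2-\phi})$, which is negligible. In the standard community-detection regime where $p_s - p_d$ is comparable in order to $p_s$, all of $\lambda_1(\E A),\dots,\lambda_K(\E A)$ grow like $N^\phi$.

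Second, since $\wt A$ satisfies Assumption \ref{assumption} with sparsity parameter $q \sim N^\phi$, Theorem \ref{thm:matrix norm} gives $\|\wt A\| \leq 2 + c/2$ with high probability, for any fixed $c > 0$ and $N$ large enough. I would then choose $c > 0$ further so that $\lambda_K(\E A) \geq 4 + 2c$ for all large $N$, which is possible because $\lambda_K(\E A) \to \infty$ in the regime under discussion. Applying Weyl's inequality with $r = K+1$, $s = 1$ yields
\[
\lambda_{K+1}^A \leq \lambda_{K+1}(\E A) + \lambda_1^{\wt A} = -p_s/\sigma + \lambda_1^{\wt A} \leq \lambda_1^{\wt A} \leq 2+c,
\]
while taking $j = K$, $k = N$, $i = K$ yields
\[
\lambda_K^A \geq \lambda_K(\E A) + \lambda_N^{\wt A} \geq \lambda_K(\E A) - \|\wt A\| \geq 2 + c,
\]
which is the other half of the claim.

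The main ``obstacle'' is really an \emph{assumption} rather than a technical difficulty: the $K-1$ second-tier eigenvalues of $\E A$, of size $N(p_s-p_d)/(K\sigma)$, are sensitive to how close $p_s$ and $p_d$ are, so if $|p_s-p_d|\ll p_s$ the conclusion could degrade from $K$ outliers to a single outlier separated from the bulk. In the community-detection regime where the two probabilities are of the same order but bounded away from each other, this does not happen and the proof is exactly the elementary Weyl argument above.
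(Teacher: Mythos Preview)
Your proposal is correct and follows exactly the approach the paper indicates: the paper states Weyl's inequality immediately before Theorem~\ref{thm:perturbation} and simply remarks ``Now we apply Weyl's inequality to $A=\E A+\wt A$'' without spelling out the details, so your explicit computation of the spectrum of $\E A$ and the two applications of Weyl are precisely what is intended. Your closing observation is also on point: the statement as written does not make explicit the requirement that $|p_s-p_d|$ be large enough for $\lambda_K(\E A)=N(p_s-p_d)/(K\sigma)-p_s/\sigma$ to diverge, and without such a separation the argument yields only one outlier rather than $K$; this is an implicit assumption of the community-detection regime that the paper leaves unstated.
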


\begin{figure}[h]
	\centering
	\subfigure[]{
		\includegraphics[width=0.43\textwidth]{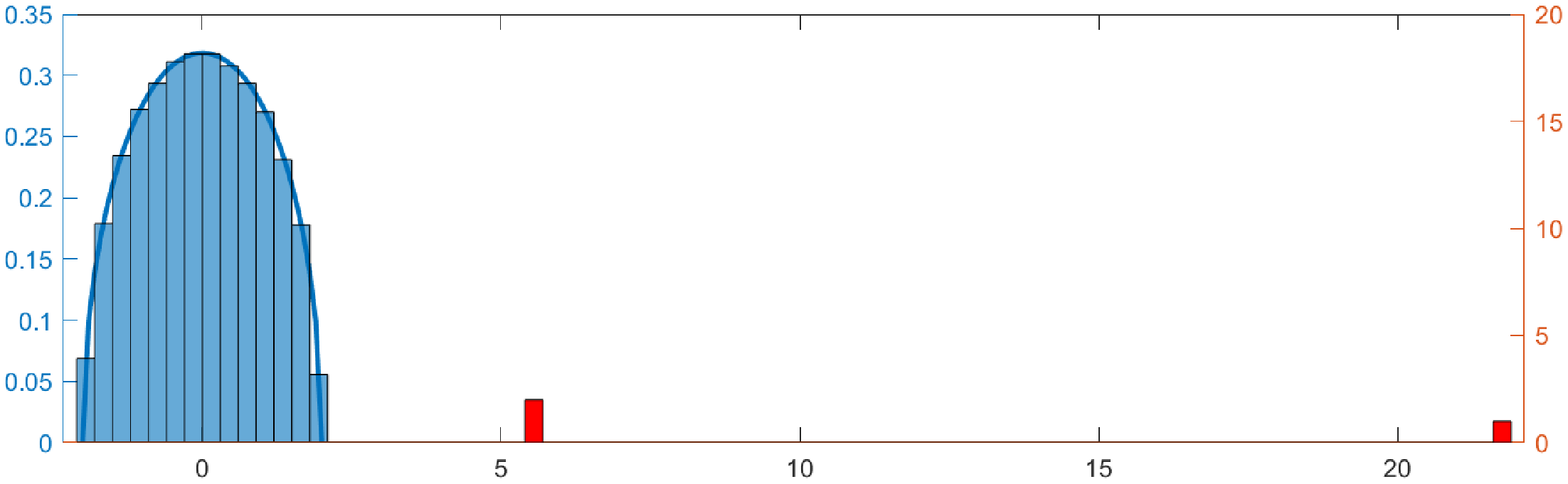}
		\label{fig:3group}
	}
	\centering
	\subfigure[]{
		\includegraphics[width=0.43\textwidth]{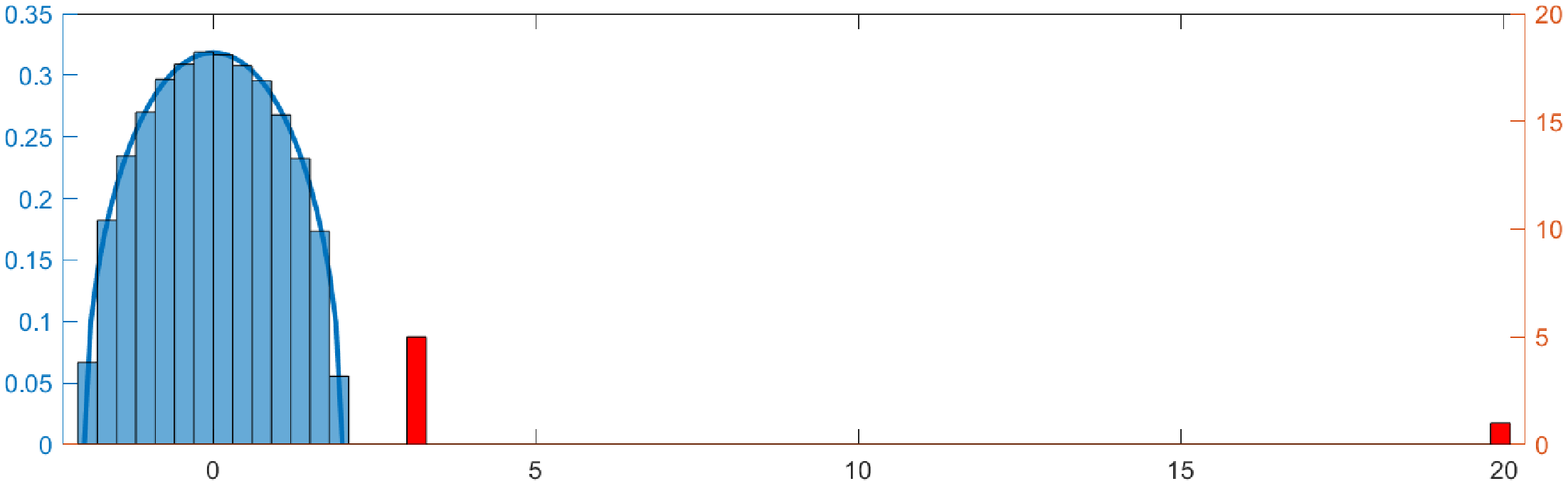}
		\label{fig:6group}
	}
	\caption{Empirical distributions of the eigenvalues of balanced SBM with (a) 3 clusters and (b) 6 clusters. Red bar shows the number of outliers.}
	\label{fig:outliers}
\end{figure}

In figure \ref{fig:outliers}, we plot the empirical distributions of the eigenvalues of the $3000 \times 3000$ balanced SBM with (a) $K=3$ and (b) $K=6$ clusters. Here, red bar implies the number of outliers from the bulk of the spectrum. As proved in Theorem \ref{thm:perturbation}, we can see a gap between the outliers and the bulk. For fixed $N$, this gap decreases with larger number of clusters since the nonzero eigenvalues of $\E A$ in \eqref{def:A} become smaller as the number of clusters increases.

\begin{remark}
	We can extend Theorem \ref{thm:perturbation} to more general matrix ensemble, which satisfies the condition that $A-\E A$ follows Assumption \ref{assumption} and entries of $\E A$ is order of $N^{-1+\epsilon}$ for any small $\epsilon$.
	We further remark that all our results also hold for complex Hermitian balanced generalized sparse random matrices without any change except that the limiting edge fluctuation is given by GUE Tracy--Widom law.
\end{remark}

\section{Strategy and outline of the proof}\label{sec:strategy}

In this section, we briefly outline the strategy of our proofs for the results in Section \ref{sec:def}.

\subsection{Main strategy for the proof} 
As illustrated in Section 3 of \cite{LS18}, a good estimate on the expectation of a sufficiently high power of the quadratic polynomial $1+zm+m^2$ is enough for the proof of the strong local law. To obtain such an estimate, we expand the term $1+zm$ by using a simple identity
\begin{align}\label{resolventId}
1+zG_{ii} = \sum_{k=1}^N H_{ik} G_{ki}.
\end{align}
In the expansion, which was called the resolvent expansion in \cite{LS18}, it is not easy to fully expand the terms with high powers, and the main idea in \cite{LS18} was to introduce the recursive moment estimate that estimates $\E| 1+zm+m^2|^D$ by the lower moments $\E|1+zm+m^2|^{D-\ell}$ for $\ell \geq 1$. When used together with the resolvent expansion, it makes the tracking of the higher order terms much simpler. 

In the actual estimate of the moments, we use a generalized version of Stein's lemma, which was introduced in \cite{Stein81}. It was used in the study of the linear eigenvalue statistics of random matrices (\cite{LP09,CL18}) and also the joint convergence of the largest eigenvalue and the linear statistics (\cite{BLW18}). 

\begin{lemma}[Cumulant expansion, generalized Stein's lemma]\label{lemma:Stein}
	Fix $\ell\in\mathbb{N}$ and let $F\in C^{\ell +1} (\mathbb{R}; \mathbb{C}^{+})$. Let $Y$ be a centered random variable with finite moments to order $\ell +2$. Then,
	\begin{equation}
	\mathbb{E}[YF(Y)] = \sum_{r=1}^{\ell} \frac{\kappa ^{(r+1)}(Y)}{r!} \mathbb{E}[F^{(r)}(Y)] + \mathbb{E}[\Omega_{\ell}(YF(Y))],
	\end{equation}
	where $\mathbb{E}$ denotes the expectation with respect to $Y$, $\kappa^{(r+1)}(Y)$ denotes the $(r+1)$-st cumulant of~$Y$ and $F^{(r)}$ denotes the $r$-th derivative of the function~$F$. The error term $ \Omega_{\ell}(YF(Y))$ satisfies
	\begin{align}
	\mathbb{E}[\Omega_{\ell}(YF(Y))]& \leq C_{\ell} \mathbb{E}[|Y|^{\ell +2}] \sup_{|t|\leq Q} |F^{(\ell +1)}(t)| + C_{\ell}\mathbb{E}[|Y|^{\ell +2} \mathds{1}(|Y|>Q) \sup_{t\in \mathbb{R}}|F^{(\ell +1)}(t)|,
	\end{align}
	where $Q>0$ is an arbitrary fixed cutoff and $C_{\ell}$ satisfies $C_{\ell}\leq(C\ell)^\ell / \ell !$ for some numerical constant $C$.
\end{lemma}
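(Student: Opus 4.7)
The plan is to establish the identity exactly when $F$ is a polynomial of degree at most $\ell$ and then pass to general $F\in C^{\ell+1}(\mathbb{R};\mathbb{C}^{+})$ by Taylor approximation, estimating the resulting remainder. For the polynomial step, by linearity it suffices to verify the claim for $F(y)=y^{m}$ with $0\leq m\leq \ell$. Since $F^{(r)}(y)=\frac{m!}{(m-r)!}y^{m-r}$ for $r\leq m$ and vanishes otherwise, the right-hand side collapses to $\sum_{r=1}^{m}\binom{m}{r}\kappa^{(r+1)}(Y)\,\mathbb{E}[Y^{m-r}]$, and the identity becomes the classical moment--cumulant recursion
\[
\mathbb{E}[Y^{m+1}]\;=\;\sum_{r=1}^{m}\binom{m}{r}\,\kappa^{(r+1)}(Y)\,\mathbb{E}[Y^{m-r}],
\]
which follows by matching coefficients in $\phi'(t)=\phi(t)(\log\phi)'(t)$ for the characteristic function $\phi(t):=\mathbb{E}[\mathrm{e}^{\ii tY}]$, using $\log\phi(t)=\sum_{s\geq 1}\kappa^{(s)}(\ii t)^{s}/s!$ together with the centering hypothesis $\kappa^{(1)}=0$.

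For general $F$, decompose $F=P+R$, where $P$ is the degree-$\ell$ Taylor polynomial of $F$ at $0$ and
\[
R(y)\;=\;\frac{1}{\ell!}\int_{0}^{y}(y-t)^{\ell}F^{(\ell+1)}(t)\,\di t
\]
is the integral remainder. Applying the polynomial step to $P$ is exact, so all the error is captured by
\[
\Omega_{\ell}(YF(Y))\;=\;Y\,R(Y)\;-\;\sum_{r=1}^{\ell}\frac{\kappa^{(r+1)}(Y)}{r!}\,R^{(r)}(Y).
\]
The key observation is that $P^{(r)}$ is itself the degree-$(\ell-r)$ Taylor polynomial of $F^{(r)}$ at $0$, so $R^{(r)}=F^{(r)}-P^{(r)}$ is the corresponding Taylor remainder and satisfies the pointwise bound
\[
|R^{(r)}(y)|\;\leq\;\frac{|y|^{\ell+1-r}}{(\ell+1-r)!}\sup_{|t|\leq|y|}|F^{(\ell+1)}(t)|,\qquad 0\leq r\leq \ell.
\]

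To produce the stated two-term estimate on $\mathbb{E}[\Omega_{\ell}(YF(Y))]$, I would split each expectation at the cutoff $Q$. On $\{|Y|\leq Q\}$ the inner supremum is dominated by $\sup_{|t|\leq Q}|F^{(\ell+1)}(t)|$, and one drops the indicator at the cost of enlarging the truncated integral to $\mathbb{E}[|Y|^{\ell+2}]$; on $\{|Y|>Q\}$ the inner supremum is bounded by $\sup_{t\in\mathbb{R}}|F^{(\ell+1)}(t)|$, producing the tail factor $\mathbb{E}[|Y|^{\ell+2}\mathds{1}(|Y|>Q)]$. Combining the uniform bound on $YR(Y)$ with the analogous bounds on the $R^{(r)}(Y)$ terms, and absorbing the prefactor $1/(\ell+1)!$ together with the combinatorial sum $\sum_{r=1}^{\ell}|\kappa^{(r+1)}|/\bigl(r!(\ell+1-r)!\bigr)$ into a single constant yields the claimed inequality.

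The main obstacle is the bookkeeping needed to show that the combinatorial constants truly aggregate into the advertised $C_{\ell}\leq (C\ell)^{\ell}/\ell!$ behaviour. This in turn relies on the standard cumulant estimate $|\kappa^{(s)}(Y)|\leq C^{s}(s-1)!\,\mathbb{E}|Y|^{s}$, obtained by inverting the moments-to-cumulants formula, to control the ratios $|\kappa^{(r+1)}|/r!$ uniformly for $r\leq \ell$. An alternative Fourier-based derivation, writing $F$ in terms of its Fourier transform and using $\mathbb{E}[Y\mathrm{e}^{\ii tY}]=\phi(t)(\log\phi)'(t)$ before truncating the cumulant series at order $\ell$, produces the identity more slickly, but it requires decay or integrability on $F$ beyond mere $C^{\ell+1}$, so the Taylor route is the appropriate one under the stated hypotheses.
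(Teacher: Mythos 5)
The paper does not prove this lemma: it is cited as a known result (Stein 1981, with later uses in Lytova--Pastur, Chung--Lee, Baik--Lee--Wu), so there is no in-paper proof to compare against. Your outline is the standard argument for this form of the cumulant expansion (essentially the one in He--Knowles) and is correct in structure: the moment--cumulant recursion handles polynomials of degree at most $\ell$ exactly, with the $r=0$ term vanishing because $Y$ is centered; the decomposition $F=P+R$ with $R^{(r)}=F^{(r)}-P^{(r)}$ and the integral-remainder bounds are the right ingredients, and the split at the cutoff $Q$ gives the two-term error shape.

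The one place where the sketch is too loose to simply ``absorb into a single constant'' is the contribution of the $R^{(r)}$ terms for $r\ge 1$. The $\sum_{r=1}^{\ell}|\kappa^{(r+1)}|/\bigl(r!(\ell+1-r)!\bigr)$ you propose to absorb is not a combinatorial quantity: it depends on the law of~$Y$. Your cumulant bound $|\kappa^{(r+1)}|\le C^{r+1}r!\,\mathbb{E}|Y|^{r+1}$ is the right fix, but it leaves you with the products $\mathbb{E}|Y|^{r+1}\cdot\mathbb{E}|Y|^{\ell+1-r}$ and $\mathbb{E}|Y|^{r+1}\cdot\mathbb{E}\bigl[|Y|^{\ell+1-r}\mathds{1}(|Y|>Q)\bigr]$, which are not automatically dominated by $\mathbb{E}|Y|^{\ell+2}$ and $\mathbb{E}\bigl[|Y|^{\ell+2}\mathds{1}(|Y|>Q)\bigr]$. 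You need the Chebyshev association inequality (for two non-negative, non-decreasing functions $f,g$ of $|Y|$ one has $\mathbb{E}[fg]\ge \mathbb{E}f\,\mathbb{E}g$), applied with $f(y)=|y|^{r+1}$ and $g(y)=|y|^{\ell+1-r}$ resp.\ $g(y)=|y|^{\ell+1-r}\mathds{1}(|y|>Q)$, to replace those moment products by $\mathbb{E}|Y|^{\ell+2}$ and its truncated version. Without naming this inequality, the ``constant'' you collect would quietly depend on the distribution of~$Y$, which the lemma forbids. Once it is in place, the residual numerical sum $\sum_{r=1}^{\ell}C^{r+1}/(\ell+1-r)!\le eC^{\ell+1}$ fits comfortably inside $(C'\ell)^\ell/\ell!$, and your argument closes. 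Your remark that the Fourier route needs integrability beyond $C^{\ell+1}$, making the Taylor route the appropriate one, is correct.
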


For more detail of the actual application of the methods explained in this subsection, we refer to \cite{LS18,Hwang2018}.

\subsection{Weak local semicircle law} 

The first obstacle we encounter in the proof of the strong local law is the lack of a priori estimates in the expansion. We follow the conventional strategy, developed in \cite{EYY12a,EYY12}, based on Schur complement formula and self-consistent equations. However, the bound in the weak local semicircle law is not as strong as the one obtained in \cite{EYY12}, due to the sparsity, but comparable with the weak local law in \cite{EKYY13}. The weak law, Theorem \ref{thm:weak local law}, is proved in Section \ref{sec:weak local law}.

\subsection{Strong local law}

The main technical difficulty of the proof the strong local law lies in that the entries of $H$ are not identically distributed. For example, if we use the cumulant expansion on the right side of \eqref{resolventId}, we get
\begin{align*}
\mathbb{E}[H_{ik}G_{ki}] &=  \sum_{r=1}^{\ell}\frac{\kappa^{(r+1)}_{ik}}{r!}\mathbb{E}[\partial^{(r+1)}_{ik}G_{ki}] + \mathbb{E}[\Omega_{\ell}(H_{ik}G_{ki})] \\
&= \mathbb{E}[\kappa_{ik}^{(2)}G_{kk}G_{ii}+ \kappa_{ik}^{(2)}G^2_{ik}] +\sum_{r=2}^{\ell}\frac{\kappa^{(r+1)}_{ik}}{r!}\mathbb{E}[\partial^{(r+1)}_{ik}G_{ki}] +\mathbb{E}[\Omega_{\ell}(H_{ik}G_{ki})],
\end{align*}
where $\Omega_{\ell}$ is the error term in the generalized Stein's lemma, Lemma \ref{lemma:Stein}. In the homogeneous case where $\kappa_{ik}^{(2)}$ are identical, the first term in the right hand side reduces to $m^2$ after averaging over indices $i$ and $k$. In our model, however, $\kappa_{ik}^{(2)}$ depends on the choice of $i$ and $k$ and hence the indices do not decouple even after the expansion. If the weak law were good enough so that the error from the substitution of $G_{kk}$ by $m$ is negligible, the analysis might work even with the absence of the decoupling mechanism, but Theorem \ref{thm:weak local law} is not enough in that purpose, and moreover, it is believed to be optimal.

In this paper, exploiting the community structure of the model, we decompose the sum into two parts, one for the case where $i$ and $k$ are in the same community and the other where $i$ and $k$ are not in the same community. For each sum, we replace the diagonal entries of the resolvent $G$ by $m$ if the error from the replacement is negligible. If the error is too large, we decompose it again by applying the community structure. The number of the diagonal entries of $G$ increases in each step, and the terms with enough diagonal entries can be handled by the substitution (by $m$) even with our local law. The detail can be found in Appendix \ref{sec:recursive}, especially in Subsection \ref{sec:subsec5}.

\subsection{Tracy--Widom limit and Green function comparison} 

The derivation of the Tracy--Widom fluctuation of the extremal eigenvalues from the strong local law is now a standard procedure in random matrix theory. In this paper, we follow the approach in \cite{LS18}, based on the Dyson matrix flow. For the sake of completeness, we briefly outline the main ideas of the proof.

To prove the Tracy--Widom fluctuation, we first notice that we can obtain the distribution of the largest eigenvalue of $H$ from the expectation of a function of $\im m(z)$. Then, we use the Green function comparison method to compare the edge statistics of the centered generalized stochastic block model and generalized Wigner matrix whose first and second moments match with our model. More precisely, for a given centered generalized stochastic block model $H_0$, we consider the \emph{Dyson matrix flow} with initial condition $H_0$ defined by
\begin{equation}
H_t := \e^{-t/2}H_0 + \sqrt{1-\e^{-t}}W^{G}, \qquad (t\geq 0),
\end{equation}
where $W^{G}$ is a generalized Gaussian Wigner matrix independent of $H_0$, with vanishing diagonal entries. The local edge statistics of $W^G$ with vanishing diagonal follows the GOE Tracy--Widom statistics; see Lemma 3.5 of \cite{LS18} and Theorem 2.7 of \cite{BEY14}.

Along the flow, we track the change of the expectation of a function of $\im m(z)$. Taking the deterministic shift of the edge into consideration, we find that fluctuation of the extremal eigenvalues of $H_t$ do not change over $t$, which establishes the Tracy--Widom fluctuation for $H_0$. The analysis along the proof requires the strong local law for the normalized trace of the Green function of $H_t$, defined as
\begin{align}\label{def:G_t}
G_t(z) = (H_t)^{-1}, \quad m_t(z) = \frac{1}{N} \sum_{i=1}^N (G_t)_{ii}(z),\quad (z\in\mathbb{C}^+).
\end{align}

We note that $H_t$ is also a balanced generalized sparse random matrix. To check this, let $\kappa_{t,ij}^{(k)}$ be the $k$-th cumulant of $(H_t)_{ij}$. Then, by the linearity of the cumulants under the addition of independent random variables, we have $\kappa_{t,(\b{\cdot})}^{(1)}=0$, $\kappa_{t,(\b{\cdot})}^{(2)}\leq \frac{C}{N}$ and $\kappa_{t,(\b{\cdot})}^{(k)}=\mathrm{e}^{-kt/2}\kappa_{(\b{\cdot})}^{(k)}$ for $k\geq 3$. In particular, we have the bound
\begin{align}
|\kappa_{t,(\b{\cdot})}^{(k)}| \leq \mathrm{e}^{-t}\frac{(Ck)^{ck}}{Nq_t^{k-2}} , \quad (k\geq 3),
\end{align}
where we introduced the time-dependent parameter
\begin{align}\label{def:q_t}
q_t:= q\mathrm{e}^{t/2}, \qquad 	\zeta_t : = \frac{N}{K} (\kappa_{t,s}^{(2)} - \kappa_{t,d}^{(2)} ).
\end{align}
We also define a polynomial $P_{z, t}$ of $m$ with parameters $z$ and $t$ by
\begin{align}\label{def:P}
P_{z,t}(m) :=& \Big(1 + zm+m^2 +\e^{-t} q_t^{-2}\xi^{(4)}m^4\Big)\Big((z+m+\zeta_tm)^2-\zeta_t(1+mz+m^2)\Big) \nonumber \\
=:&  P_{1,z,t}(m)P_{2,z,t}(m).
\end{align}
We generalize Theorem \ref{thm:locallaw} as follows:

\begin{proposition} \label{prop:locallaw}
	Let $H_0$ satisfy Assumption \ref{assumption} with $\phi>0$. Then, for any $t\geq 0$, 
	there exist deterministic number $2 \leq L_t < 3$ and an algebraic function $\wt{m}_t:\mathbb{C}^+ \rightarrow \mathbb{C}^+$ 
	such that the following hold:
	\begin{enumerate}
		\item[(1)] The function $\wt{m}_t$ is the Stieltjes transform of a deterministic probability measure $\wt{\rho}_t$, i.e., $\wt{m}_t(z) = m_{\wt{\rho}_t}(z).$ The measure $\rho_t$ is supported on $[-L_t, L_t]$ and $\wt{\rho}_t$ is absolutely continuous with respect to Lebesgue measure with a strictly positive density on $(-L_t, L_t)$.
		\item[(2)] The function $\wt{m}_t\equiv\wt{m}_t(z)$, $z\in\mathbb{C}^+$, is a solution to the polynomial equation
		\begin{align}\label{eq:P_1,t,z=0}
		P_{1,t,z}(\wt{m}_t) &:=1 + z\wt{m}_t+\wt{m}_t^2 +e^{-t}q_t^{-2}\Big(\frac{s_s^{(4)}+(K-1)s_d^{(4)}}{K}\Big)\wt{m}_t^4 \nonumber\\ 
		&=1 + z\wt{m}_t+\wt{m}_t^2 +\e^{-2t}q^{-2}\xi^{(4)}\wt{m}_t^4=0.
		\end{align}
		\item[(3)] The normalized trace of the Green function satisfies the local law
		\begin{align}\label{eq:stronglaw}
		|m_t(z) -\wt{m}_t(z)| \prec \frac{1}{q_t^2}+\frac{1}{N\eta},
		\end{align}
		uniformly on the domain $\mathcal{E}$ and uniformly in $t\in [0, 6\log N]$.
	\end{enumerate}
\end{proposition}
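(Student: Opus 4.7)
The plan is to reduce Proposition \ref{prop:locallaw} to Theorem \ref{thm:locallaw} by viewing the Dyson-interpolated matrix $H_t$ as itself a balanced generalized sparse random matrix with time-dependent sparsity parameter $q_t = q\,\e^{t/2}$, and then to upgrade the resulting pointwise-in-$t$ estimate to uniformity on $t\in[0,6\log N]$ by a grid-and-Lipschitz argument.

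First I would verify that $H_t$ satisfies Assumption \ref{assumption} with $q$ replaced by $q_t$ (capped at $N^{1/2}$ when $q_t$ exceeds that threshold near $t=6\log N$). The block structure, vanishing diagonal, independence, vanishing mean, and second-moment normalization all persist because $W^G$ is chosen with matching covariance and vanishing diagonal. For cumulants of order $k\geq 3$ the Gaussian part contributes nothing, so $\kappa_{t,(\b{\cdot})}^{(k)} = \e^{-kt/2}\kappa_{(\b{\cdot})}^{(k)}$, which directly yields $s_{t,(\b{\cdot})}^{(k)} = \e^{-t} s_{(\b{\cdot})}^{(k)}$ and hence $\xi_t^{(4)} = \e^{-t}\xi^{(4)}$; plugging into $1+zm+m^2+q_t^{-2}\xi_t^{(4)} m^4$ reproduces the polynomial \eqref{eq:P_1,t,z=0}. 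Moment bounds $\E|(H_t)_{ij}|^k \leq (Ck)^{ck}/(Nq_t^{k-2})$ then follow from the cumulant bounds by standard moment-to-cumulant conversion, augmented by Gaussian moment bounds on the $W^G$ term. Parts (1) and (2) follow by constructing $\wt m_t$ as the Stieltjes transform of a probability measure $\wt \rho_t$ exactly as $\wt m$ is constructed in Lemma \ref{lemma:rho_t} of Appendix \ref{sec:rho wt m}; the only change is that the small coefficient $\xi^{(4)}/q^2$ is replaced by $\e^{-2t}\xi^{(4)}/q^2$, which stays bounded on the whole time window, so existence, the asymptotic form of $L_t$, and strict positivity of the density on $(-L_t,L_t)$ all transfer. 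Part (3), for each fixed $t$, is then Theorem \ref{thm:locallaw} (together with its input, the weak law of Theorem \ref{thm:weak local law}) applied to $H_t$, giving $|m_t(z)-\wt m_t(z)|\prec 1/q_t^2 + 1/(N\eta)$ on $\mathcal{E}$; the truncation of $q_t$ at $N^{1/2}$ produces at worst an $O(1/N)$ mismatch between the truncated and the nominal $\wt m_t$, which is absorbed into the stated error.

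The main obstacle is promoting this pointwise-in-$t$ bound to uniformity on the long interval $[0,6\log N]$. I would handle it by the standard grid argument. Both $m_t(z)$ and $\wt m_t(z)$ are polynomially Lipschitz in $t$: for $m_t$ one differentiates $G_t$ along the flow via $\partial_t G_t = -G_t(\partial_t H_t)G_t$ and uses $\norm{G_t(z)}\leq \eta^{-1}$ together with a crude bound $\norm{\partial_t H_t}\leq N^C$ from Lemma \ref{thm:weak matrix norm} and the trivial bound on $W^G$; for $\wt m_t$ one implicitly differentiates $P_{1,t,z}(\wt m_t)=0$ and uses that $\partial_m P_{1,t,z}$ stays bounded away from zero on $\mathcal{E}$ (from the analogue of Lemma \ref{lemma:rho_t}). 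A union bound over a polynomial $t$-grid of spacing $N^{-D}$ with $D$ sufficiently large then preserves the stochastic domination, and a parallel grid in $z$ using the $1/\eta^2$-Lipschitz continuity of $G_t(z)$ extends the conclusion to all of $\mathcal{E}$, uniformly in $t\in[0,6\log N]$.
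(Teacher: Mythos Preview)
Your proposal is correct and matches the paper's approach in substance. The paper also observes (in the paragraph preceding Proposition~\ref{prop:locallaw}) that $H_t$ is itself a balanced generalized sparse random matrix with sparsity parameter $q_t$, and its proof in Section~\ref{sec:local law} carries out the recursive moment estimate and stability analysis directly for $H_t$, obtaining Theorem~\ref{thm:locallaw} as the special case $t=0$; the uniformity in $t$ is then obtained exactly as you describe, via a lattice $\mathcal{L}\subset[0,6\log N]$ with spacing $N^{-3}$ and continuity of the Dyson flow. The only cosmetic difference is the direction of the reduction: the paper proves Proposition~\ref{prop:locallaw} and reads off Theorem~\ref{thm:locallaw} by setting $t=0$, whereas you propose to prove Theorem~\ref{thm:locallaw} and then transport it to $H_t$; since the coefficient $q_t^{-2}\xi_t^{(4)}=N\cdot K^{-1}\bigl(\kappa_{t,s}^{(4)}+(K-1)\kappa_{t,d}^{(4)}\bigr)=\e^{-2t}q^{-2}\xi^{(4)}$ is intrinsic to the cumulants of $H_t$ and does not depend on which admissible sparsity parameter one assigns, your capping remark is harmless and the two packagings are equivalent.
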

\begin{remark}
	Several properties of $\wt m_t$, defined in Proposition \ref{prop:locallaw}, are crucial in the proof of the Tracy--Widom fluctuation, especially the square-root decay at the edge of the spectrum and the deterministic shift of the edge, where the upper edge of the support of $\wt{\rho_t}$ given by
	\begin{align}\label{eq:L_t}
	L_t = 2 + e^{-t}q_t^{-2}\xi^{(4)} + O(e^{-2t}q_t^{-4}).
	\end{align}
	In Appendix \ref{sec:rho wt m}, we collect some important properties of $\wt m_t$ and some basic properties of $m_{sc}$, the Stieltjes transform of the semicircle measure.
\end{remark}

\section{Proof of weak local law}\label{sec:weak local law}
\subsection{Preliminaries}
In this section, we prove Theorem \ref{thm:weak local law}. Unlike strong local semicircle law for cgSBM, we can prove the weak local semicircle law under weaker condition. More precisely, we do not need any assumption about the community structure. Therefore, in this section, we consider generalized sparse random matrices defined as follows:
\begin{assumption}[Generalized sparse random matrix]\label{def:weak_H}
	Fix sufficiently small $\phi>0$. A generalized sparse random matrix, $H=(H)_{ij}$ is a symmetric $N \times N$ matrix whose diagonal entries are almost surely zero and whose off-diagonal entries are independent, up to symmetry constraint $H_{ij}=H_{ji}$, random variables. We further assume that each $H_{ij}$ satisfy the moment conditions
	\begin{align}\label{def:weak_momentcondition}
	\E H_{ij}=0, \qquad \E|H_{ij}|^2=\sigma_{ij}^2,\qquad\E|H_{ij}|^k \leq \frac{(Ck)^{ck}}{Nq^{k-2}},\qquad (k\geq 2),
	\end{align}
	with sparsity parameter $q$ satisfying
	\begin{align} \label{def:q}
	N^{\phi}\leq q \leq N^{1/2}.
	\end{align}
	We further assume the normalize condition that
	\begin{equation}
	\sum_{i=1}^{N}\sigma_{ij}^2 =1.
	\end{equation}
\end{assumption}

Recall that 
$$\mathcal{D}_\ell := \{z=E+\ii \eta \in \C^+ : |E|<3, N^{-1+\ell} <\eta \leq 3\}.$$
Throughout this section, we use the factor $N^\epsilon$ and allow $\epsilon$ to increase by a tiny amount from line to line to absorb numerical constants in the estimates. Moreover, we choose $\ell$ satisfying $ 4\phi \leq \ell \leq 1 $ and $\epsilon>0$ strictly smaller than the fixed parameter $\phi >0$ appearing in \eqref{def:q}. If we take $\phi$ sufficiently small, then Theorem \ref{thm:weak local law} states that \eqref{thm:weak law 1}, \eqref{thm:weak law 2} and \eqref{thm:weak law 3} holds on $\mathcal{D_\ell}$ for sufficiently small $\ell$. In other words we can claim that Theorem \ref{thm:weak local law} valid on $\mathcal{D_\ell}$ for any (small) $\ell$.

We define the $z$-dependent quantities
\[ v_k := G_{kk}-m_{sc} , \quad  [v] := \frac{1}{N}\sum_{k=1}^{N}v_k = m-m_{sc}. \]
Our goal is to estimate the following quantities,
\begin{equation}\label{def:Lambdas}
\Lambda_d := \max_{k}|v_k|=\max_{k}|G_{kk}-m_{sc}|, \qquad \Lambda_o:=\max_{k\neq l}|G_{kl}|, \qquad \Lambda:= |m-m_{sc}|.
\end{equation}

\begin{definition}[Minors]
	Consider general matrices whose indices lie in subsets of $\{1,\dots , N \}$. For $T \subset \{1,\dots , N \}$ we define $H^{(T)}$ as the $(N-|T|)\times (N-|T|)$ matrix 
	\[H^{(T)}=(H_{ij})_{i,j \in \{1,\dots , N \} \backslash T} .\] 
	It is important to keep the original values of the matrix indices in the minor $H^{(T)}$, not to identify $\{1,\dots , N \} \backslash T$ with $\{1,\dots , N -|T|\}$.
	We set \[ \sum_{i}^{(T)}:= \sum_{i:i\notin T} .\] If $T=\{a\}$, we abbreviate $(\{a\})$ by $(a)$ in the above definition; similarly, write $(ab)$ instead of $(\{a,b\}).$
	We also define the Green function of $H^{(T)}$ as
	\[  G^{(T)}_{ij}(z) := (H^{(T)}-z)^{-1}_{ij} .\]
\end{definition}

\begin{definition}[Partial expectation]
	Let $X \equiv X(H)$ be a random variable and $\textbf{h}_i  = (H_{ij})_{j=1}^N $. For $i \in \{1, \dots ,N\}$ we define the operations $\mathbb{E}_i $ and $\mathbb{IE}_i$ through
	\begin{equation}\label{eq:partial}
	\mathbb{E}_i  X := \E(X|\textbf{h}_i), \qquad \mathbb{IE}_i X := X- \mathbb{E}_i X.
	\end{equation}
\end{definition}

For $T \subset \{1,\dots , N \}$, we introduce the following notations:
\begin{equation}\label{def:Zij Kij}
Z^{(T)}_{ij} := \sum_{k,l}^{T}H_{ik}G^{(T)}_{kl}H_{lj} , \qquad K^{(T)}_{ij} := H_{ij}-z\delta_{ij}- Z^{(T)}_{ij}. 
\end{equation}		
We abbreviate 
\begin{equation}\label{def:Zi}
Z_i := \mathbb{IE}_i Z^{(i)}_{ii} = \mathbb{IE}_i \sum_{k,l}^{(i)}H_{ik}G^{(i)}_{kl}H_{li}.
\end{equation}

The following formulas with these notations were proved in Lemma 4.2 of \cite{EYY12a}.
\begin{lemma}[Self-consistent permutation formulas]\label{lem:self consistent for G} For any Hermitian matrix $H$ and $T \subset \{1, \dots , N\}$ the following identities hold. If $i,j,k \notin T$ and $i,j \neq k$ then
	\begin{equation}\label{eq:self cons 1}
	G^{(T)}_{ij}=G^{(Tk)}_{ij}+{G^{(T)}_{ik}G^{(T)}_{kj}}({G^{(T)}_{kk}})^{-1}.
	\end{equation}
	If $i,j \notin T$ satisfy $i \neq j$, then
	\begin{align}
	G^{(T)}_{ii}=&(K^{(iT)}_{ii})^{-1} = (H_{ii}-z-Z^{(iT)}_{ii})^{-1},\label{eq:self cons 2}\\
	G^{(T)}_{ij}=& -G^{(T)}_{jj}G^{(jT)}_{ii}K^{(ijT)}_{ij}=-G^{(T)}_{ii}G^{(iT)}_{jj}K^{(ijT)}_{ij}\label{eq:self cons 3}.
	\end{align}
	
\end{lemma}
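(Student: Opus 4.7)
The plan is to derive all three identities from a single tool, the Schur complement formula, applied to different block decompositions of $H^{(T)} - zI$. All three statements are purely algebraic, so the work consists mainly of bookkeeping: identifying the correct blocks and matching the resulting inverses to the notations $G^{(\cdot)}$, $K^{(\cdot)}$ and $Z^{(\cdot)}$.

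First I would prove \eqref{eq:self cons 2} and \eqref{eq:self cons 1} simultaneously by the same $2\times 2$ block inversion. Fix $k \notin T$ and decompose
\[
H^{(T)} - zI = \begin{pmatrix} H^{(Tk)} - zI & \mathbf{h}_k \\ \mathbf{h}_k^{*} & H_{kk} - z \end{pmatrix},
\]
where $\mathbf{h}_k = (H_{ik})_{i \notin T \cup \{k\}}$. The standard Schur-complement inversion gives
\[
G^{(T)} = \begin{pmatrix} G^{(Tk)} + G^{(Tk)} \mathbf{h}_k s^{-1} \mathbf{h}_k^{*} G^{(Tk)} & -G^{(Tk)} \mathbf{h}_k s^{-1} \\ -s^{-1} \mathbf{h}_k^{*} G^{(Tk)} & s^{-1} \end{pmatrix},
\]
with scalar Schur complement $s = H_{kk} - z - \mathbf{h}_k^{*} G^{(Tk)} \mathbf{h}_k$. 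Since $\mathbf{h}_k^{*} G^{(Tk)} \mathbf{h}_k = \sum_{r,l}^{(kT)} H_{kr} G^{(kT)}_{rl} H_{lk} = Z^{(kT)}_{kk}$, the $(k,k)$ entry yields \eqref{eq:self cons 2} (with $i=k$), and in particular $G^{(T)}_{kk} = s^{-1}$. Reading off $G^{(T)}_{ik}$ and $G^{(T)}_{kj}$ from the off-diagonal blocks gives $(G^{(Tk)} \mathbf{h}_k)_i = -G^{(T)}_{ik}/G^{(T)}_{kk}$ and $(\mathbf{h}_k^{*} G^{(Tk)})_j = -G^{(T)}_{kj}/G^{(T)}_{kk}$; substituting these into the $(i,j)$ entry of the top-left block produces exactly $G^{(T)}_{ij} = G^{(Tk)}_{ij} + G^{(T)}_{ik} G^{(T)}_{kj} / G^{(T)}_{kk}$, which is \eqref{eq:self cons 1}.

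For \eqref{eq:self cons 3}, I would use the analogous block decomposition that separates \emph{two} rows/columns, indexed by $i$ and $j$, from their complement $H^{(ijT)} - zI$. The $2 \times 2$ Schur complement is
\[
M = \begin{pmatrix} H_{ii} - z - Z^{(ijT)}_{ii} & H_{ij} - Z^{(ijT)}_{ij} \\ H_{ji} - Z^{(ijT)}_{ji} & H_{jj} - z - Z^{(ijT)}_{jj} \end{pmatrix} = \begin{pmatrix} K^{(ijT)}_{ii} & K^{(ijT)}_{ij} \\ K^{(ijT)}_{ji} & K^{(ijT)}_{jj} \end{pmatrix},
\]
and the $\{i,j\} \times \{i,j\}$ block of $G^{(T)}$ equals $M^{-1}$. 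The explicit $2 \times 2$ inverse gives $G^{(T)}_{jj} = M_{11}/\det M$ and $G^{(T)}_{ij} = -M_{12}/\det M$. By the already-proved \eqref{eq:self cons 2} applied with $T$ replaced by $jT$, we have $G^{(jT)}_{ii} = 1/K^{(ijT)}_{ii} = 1/M_{11}$. Multiplying these identities produces $-G^{(T)}_{jj} G^{(jT)}_{ii} K^{(ijT)}_{ij} = -(M_{11}/\det M)(1/M_{11}) M_{12} = -M_{12}/\det M = G^{(T)}_{ij}$, establishing the first equality in \eqref{eq:self cons 3}; the second follows by interchanging the roles of $i$ and $j$.

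The proof presents no genuine analytic obstacle: invertibility of the relevant Schur complements is guaranteed because $H$ is real symmetric and $\mathrm{Im}\, z > 0$, so $H^{(S)} - zI$ is invertible for every $S$. The only point requiring care, and the one likeliest to cause a misstep, is maintaining the convention that indices are preserved as labels under minor operations rather than relabelled — so that, for instance, the $(k,k)$ entry of $G^{(T)}$ really corresponds to the Schur complement associated with removing the row/column \emph{originally} indexed by $k$, not a renumbered one.
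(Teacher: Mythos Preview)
Your argument is correct and is precisely the standard Schur-complement derivation of these identities. The paper does not give its own proof of this lemma but simply cites Lemma~4.2 of \cite{EYY12a}; what you have written is essentially the proof one finds there, so there is nothing to compare.
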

We also have the $\emph{Wald identity}$
\begin{equation}\label{eq:Wald}
\sum_{j}|G_{ij}|^2 = \frac{\text{Im } G_{ii}}{\eta}.
\end{equation}

The following trivial estimate provide the bound for the matrix element of $H$.
\begin{lemma}\label{lem:bound Hij} We have 
	\[ |H_{ij}| \prec \frac{1}{q}.\]
\end{lemma}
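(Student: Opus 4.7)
The claim is a straightforward consequence of the sub-exponential tail implicit in the moment bound \eqref{def:weak_momentcondition} of Assumption \ref{def:weak_H}, combined with the sparsity constraint $q \leq N^{1/2}$ in \eqref{def:q}. My plan is to prove it by a high-moment Markov inequality, choosing the moment order as a function of the desired stochastic-domination parameters $\varepsilon$ and $D$.

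First, fix arbitrary $\varepsilon > 0$ and $D > 0$; the goal is to show that $\P(|H_{ij}| > N^{\varepsilon}/q) \leq N^{-D}$ for all $N$ large enough. For an integer $k \geq 2$ to be chosen, Markov's inequality gives
\[
\P\pb{|H_{ij}| > N^{\varepsilon}/q} \;\leq\; \frac{\E|H_{ij}|^k}{(N^{\varepsilon}/q)^k} \;\leq\; \frac{(Ck)^{ck}\, q^{k}}{N\, q^{k-2}\, N^{\varepsilon k}} \;=\; \frac{(Ck)^{ck}\, q^{2}}{N^{1+\varepsilon k}},
\]
where the second inequality uses the moment bound in \eqref{def:weak_momentcondition}. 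The sparsity assumption $q \leq N^{1/2}$ then yields $q^2/N \leq 1$, so
\[
\P\pb{|H_{ij}| > N^{\varepsilon}/q} \;\leq\; \frac{(Ck)^{ck}}{N^{\varepsilon k}}.
\]

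Finally, I choose $k$ to be a (large) integer depending only on $\varepsilon$ and $D$, namely any $k \geq (D+1)/\varepsilon$. Since $(Ck)^{ck}$ is then a fixed constant (independent of $N$), for all $N$ sufficiently large we have $(Ck)^{ck} \leq N^{\varepsilon k - D}$, giving $\P(|H_{ij}| > N^{\varepsilon}/q) \leq N^{-D}$ as required. This proves $|H_{ij}| \prec 1/q$, with the threshold $N_0(\varepsilon, D)$ depending only on $\varepsilon$ and $D$ (and on the constants $C, c$ in \eqref{def:weak_momentcondition}), so the estimate is uniform in the indices $i,j$.

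There is no serious obstacle here: the bound is a direct application of Markov, and the only arithmetic point is that the factor $q^k$ introduced by rescaling to $N^{\varepsilon}/q$ is exactly cancelled by $q^{k-2}$ from the moment bound, up to a harmless $q^2/N \leq 1$. The role of Assumption \ref{def:weak_H} is to supply moment bounds at every order $k$ with slowly growing constants $(Ck)^{ck}$, which is precisely what allows us to absorb any polynomial $N^{-D}$ by taking $k$ large.
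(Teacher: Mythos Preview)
Your proof is correct and follows exactly the approach the paper indicates: Markov's inequality applied with the moment bounds \eqref{def:weak_momentcondition}, using $q^2 \le N$ to absorb the leftover factor. The paper's proof is merely the one-line remark ``the proof follows from Markov's inequality and the moment conditions for $H$,'' so you have simply supplied the details of that same argument.
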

\begin{proof}
	The proof follows from the Markov's inequality and the moment conditions for $H$.
\end{proof}

\subsection{Self-consistent perturbation equations}

Following \cite{EYY11}, we define the following quantities:
\begin{align}
&A_i := \sigma^2_{ii}G_{ii} + \sum_{j\neq i}\sigma^2_{ij}\frac{G_{ij}G_{ji}}{G_{ii}},\label{def:Ai}\\
&\Upsilon_i := A_i + h_{ii} - Z_i\label{def:Upsilon}
\end{align}
and recall the definition of $Z_i$ in \eqref{def:Zi}.
Using \eqref{eq:self cons 1} and \eqref{eq:self cons 2}, we can easily obtain the self-consistent equations for the deviation $m_{sc}$ of the diagonal matrix elements of the Green function;
\begin{equation}\label{eq:vi self consistent}
v_i = G_{ii}-m_{sc} = \frac{1}{-z-m_{sc}-\big(\sum_{j}\sigma^2_{ij}v_j -\Upsilon_i\big)}-m_{sc}.
\end{equation}
Now we define the exceptional (bad) event 
\begin{equation}\label{def:Bad set}
\textbf{B}=\textbf{B}(z):=\{\Lambda_d(z)+\Lambda_o(z) \geq (\log N)^{-2}\},
\end{equation}
and the control parameter 
\begin{equation}\label{def:Psi}
\Psi(z) := \sqrt{\frac{\Lambda(z)+\text{Im }m_{sc}(z)}{N\eta}}.
\end{equation}
On $\textbf{B}^c$, we have $\Psi(z) \leq CN^{-2\phi}$ by definition of $\mathcal{D_\ell}$.
We collect some basic properties of the Green function in the following elementary lemma which were proved in Lemma 3.5 of \cite{EYY12} and Lemma 3.12 of \cite{EYY11}.
\begin{lemma}\label{lem:estimate basic quantities}
	Let $\mathbb{T}$ be a subset of $\{1,\dots,N\}$ and $i \notin \mathbb{T}$. Then there exists a constant $C=C_\mathbb{T}$ depending on $|\mathbb{T}|$, such that the following hold in $\textbf{B}^c$
	\begin{align}
	&|G^{(\mathbb{T})}_{kk} - m_{sc}| \leq \Lambda_d+C\Lambda_o^2   &\text{for all } k \notin \mathbb{T}\label{eq:Greenlemma1}\\
	&\frac{1}{C} \leq |G^{(\mathbb{T})}_{kk}| \leq C   &\text{for all } k \notin \mathbb{T}\label{eq:Greenlemma2}\\
	&\max_{k\neq l} |G^{(\mathbb{T})}_{kl}| \leq C\Lambda_o \label{eq:Greenlemma3}\\
	&\max_{i} |A_i| \leq \frac{C}{N} + C\Lambda_o^2 \label{eq:Greenlemma4}
	\end{align}
	for any fixed $|\mathbb{T}|$ and for sufficiently large $N$.	
\end{lemma}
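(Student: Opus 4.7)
The plan is to prove the four estimates by induction on $|\mathbb{T}|$, exploiting that on the good event $\textbf{B}^c$ the a priori control $\Lambda_d + \Lambda_o \leq (\log N)^{-2}$ makes every correction in the resolvent perturbation formula negligible relative to the quantities we are trying to bound.

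For the base case $\mathbb{T} = \emptyset$, bounds \eqref{eq:Greenlemma1} and \eqref{eq:Greenlemma3} are immediate from the definitions \eqref{def:Lambdas}. For \eqref{eq:Greenlemma2}, one first observes that on the domain $\mathcal{D}_\ell$ the Stieltjes transform $m_{sc}$ satisfies $c \leq |m_{sc}(z)| \leq C$; combining this with $|G_{kk} - m_{sc}| \leq \Lambda_d \leq (\log N)^{-2}$ gives $\tfrac{1}{C} \leq |G_{kk}| \leq C$ for $N$ large. For \eqref{eq:Greenlemma4}, expand the definition \eqref{def:Ai}:
\begin{equation*}
|A_i| \leq \sigma_{ii}^2 |G_{ii}| + \frac{1}{|G_{ii}|} \sum_{j \neq i} \sigma_{ij}^2 |G_{ij}|^2 \leq \frac{C}{N} + C \Lambda_o^2 \sum_{j \neq i} \sigma_{ij}^2 \leq \frac{C}{N} + C\Lambda_o^2,
\end{equation*}
where we used $\sigma_{ii}^2 = O(1/N)$ (in fact $\sigma_{ii}^2 = 0$ here), the lower bound from \eqref{eq:Greenlemma2}, and the normalization $\sum_j \sigma_{ij}^2 = 1$.

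For the inductive step, suppose the four bounds hold for all subsets of size $\leq n$, and let $|\mathbb{T}| = n+1$. Fix any $m \in \mathbb{T}$ and set $\mathbb{T}' := \mathbb{T} \setminus \{m\}$. The self-consistent perturbation formula \eqref{eq:self cons 1} gives
\begin{equation*}
G^{(\mathbb{T})}_{kl} = G^{(\mathbb{T}')}_{kl} - \frac{G^{(\mathbb{T}')}_{km} G^{(\mathbb{T}')}_{ml}}{G^{(\mathbb{T}')}_{mm}}
\end{equation*}
for $k, l \notin \mathbb{T}$. By the inductive hypothesis applied to $\mathbb{T}'$, the off-diagonal factors $G^{(\mathbb{T}')}_{km}, G^{(\mathbb{T}')}_{ml}$ are bounded by $C_n \Lambda_o$ and $|G^{(\mathbb{T}')}_{mm}| \geq 1/C_n$, so the correction term is $O(\Lambda_o^2)$. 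Taking $k = l$ yields \eqref{eq:Greenlemma1} with a slightly enlarged constant $C_{n+1}$; taking $k \neq l$ and absorbing the $O(\Lambda_o^2)$ correction into $O(\Lambda_o)$ yields \eqref{eq:Greenlemma3}. Bound \eqref{eq:Greenlemma2} for $G^{(\mathbb{T})}_{kk}$ then follows from \eqref{eq:Greenlemma1} and $|m_{sc}| \sim 1$ exactly as in the base case, while \eqref{eq:Greenlemma4} is not iterated over $\mathbb{T}$ and only uses the $|\mathbb{T}| = 0$ argument.

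Expected obstacles: this is a deterministic, purely algebraic lemma once one has restricted to $\textbf{B}^c$, so no probabilistic input is needed. The only bookkeeping issue is that the constant $C_\mathbb{T}$ grows with $|\mathbb{T}|$ through the induction, which is acceptable since the lemma permits $|\mathbb{T}|$-dependent constants and in later applications $|\mathbb{T}|$ is always bounded (typically by two or three). A minor point to verify cleanly is that the constants remain finite uniformly in $z \in \mathcal{D}_\ell$, which is immediate from the uniform two-sided bound on $|m_{sc}|$ on $\mathcal{D}_\ell$.
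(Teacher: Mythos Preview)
Your proposal is correct and is precisely the standard induction on $|\mathbb{T}|$ via the resolvent identity \eqref{eq:self cons 1} that underlies the cited proofs in \cite{EYY12} (Lemma~3.5) and \cite{EYY11} (Lemma~3.12); the paper itself does not reprove the lemma but defers to those references, so your argument matches the intended approach.
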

We note that all quantities depend on the spectral parameter $z$ and the estimates are uniform in $z = E + \ii\eta$.

\subsection{Estimate of the exceptional events and analysis of the self-consistent equation}
We introduce three lemmas which estimate some exceptional events. Their proofs are given in Appendix \ref{app:3 lemmas}.
\begin{lemma}\label{lem:esteimate Lambda o}
	For fixed $z \in D_\ell$ and any small $\epsilon>0 $, we have on $\textbf{B}^c$ with high probability
	\begin{equation}\label{eq:estimate Lambda o}
	\Lambda_o \leq C\left(\frac{N^\epsilon}{q}+N^{\epsilon}\Psi\right).
	\end{equation}
\end{lemma}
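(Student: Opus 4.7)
The plan is as follows. Fix distinct indices $i\neq j$. By the identity \eqref{eq:self cons 3} applied with $T = \emptyset$,
\[
G_{ij} = -G_{jj}\,G_{ii}^{(j)}\,K_{ij}^{(ij)}, \qquad K_{ij}^{(ij)} = H_{ij} - Z_{ij}^{(ij)}.
\]
On $\textbf{B}^c$, Lemma \ref{lem:estimate basic quantities} gives the uniform bound $|G_{jj}|, |G_{ii}^{(j)}| \leq C$, so it suffices to show
\[
|H_{ij}| + |Z_{ij}^{(ij)}| \prec \frac{1}{q} + \Psi(z)
\]
uniformly in $i\neq j$; a union bound over the $O(N^2)$ off-diagonal pairs then produces the stated estimate, the polynomial loss being absorbed by the $N^\epsilon$ factor implicit in $\prec$. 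The bound on $|H_{ij}|$ is immediate from Lemma \ref{lem:bound Hij}, so the substantive content of the lemma is the estimate of the bilinear form $Z_{ij}^{(ij)} = \sum_{k,l}^{(ij)} H_{ik}\,G^{(ij)}_{kl}\,H_{lj}$.

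Here the key structural input is that, because $i\neq j$, the two random vectors $(H_{ik})_{k\neq i,j}$ and $(H_{jl})_{l\neq i,j}$ are independent of one another and of the minor $G^{(ij)}$, and moreover $\E[H_{ik}H_{jl}] = 0$ for every admissible pair $(k,l)$ (since the two entries cannot coincide when $i\neq j$). Consequently $\E[Z_{ij}^{(ij)}\mid G^{(ij)}]$ vanishes, and the sparse large deviation estimate for bilinear forms of independent centered vectors satisfying \eqref{def:weak_momentcondition} (of the type proved in, e.g., Lemma 3.8 of \cite{EKYY13}) yields
\[
|Z_{ij}^{(ij)}| \prec \frac{1}{q}\,\max_{k,l}|G^{(ij)}_{kl}| + \sqrt{\frac{1}{N^2}\sum_{k,l}|G^{(ij)}_{kl}|^2}.
\]
I will then control the two right-hand factors on $\textbf{B}^c$ using Lemma \ref{lem:estimate basic quantities} applied to the minor $H^{(ij)}$: the $L^\infty$ factor is at most $C + C\Lambda_o$, contributing $C/q$; for the Hilbert--Schmidt term, the Ward identity \eqref{eq:Wald} applied to $H^{(ij)}$ gives $\sum_l|G^{(ij)}_{kl}|^2 = \eta^{-1}\im G^{(ij)}_{kk}$, and \eqref{eq:Greenlemma1} upgrades this to
\[
\frac{1}{N^2}\sum_{k,l}|G^{(ij)}_{kl}|^2 \;\leq\; \frac{\im m_{sc}(z) + \Lambda + C\Lambda_o^2}{N\eta}.
\]
On $\textbf{B}^c$, $\Lambda_o \leq (\log N)^{-2}$ and $N\eta \geq N^\ell$, so the $\Lambda_o^2/(N\eta)$ contribution is dominated by $\Psi^2 = (\Lambda+\im m_{sc})/(N\eta)$, giving the desired $|Z_{ij}^{(ij)}| \prec q^{-1} + \Psi$.

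The main obstacle is the sparse large deviation inequality itself, which is the only step where the moment hypothesis \eqref{def:weak_momentcondition} and the sparsity $q \geq N^\phi$ genuinely enter. Its justification is a combinatorial moment calculation: one estimates $\E|Z_{ij}^{(ij)}|^{2p}$ for arbitrarily large $p$, expands via independence of $(H_{ik})_k$ and $(H_{jl})_l$, and controls the resulting Wick-type pairings using the bound $\E|H_{ij}|^k \leq (Ck)^{ck}/(Nq^{k-2})$; the two terms in the stated estimate correspond respectively to maximally paired ``Gaussian'' contributions and to the unpaired worst-case sparse contributions. Once this quadratic-form bound is available, all the remaining pieces (Lemmas \ref{lem:estimate basic quantities}, \ref{lem:bound Hij}, identity \eqref{eq:self cons 3} and the Ward identity) combine mechanically to conclude.
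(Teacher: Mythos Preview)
Your approach is essentially identical to the paper's: both use the Schur-complement identity \eqref{eq:self cons 3}, the sparse large deviation bound for bilinear forms (stated in the paper as \eqref{lem:LDE 4} of Lemma \ref{lem:LDE}), and the Ward identity applied to the minor $G^{(ij)}$ to control the Hilbert--Schmidt term.

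There is one minor technical slip. Your claim that the $\Lambda_o^2/(N\eta)$ contribution is dominated by $\Psi^2$ would require $\Lambda_o^2 \leq C(\Lambda + \im m_{sc})$, which is not available a priori on $\textbf{B}^c$: outside the limiting spectrum $\im m_{sc}(z)$ can be as small as $c\eta$, while on $\textbf{B}^c$ one only knows $\Lambda_o \leq (\log N)^{-2}$. The paper handles this term differently: after taking square roots one has $\sqrt{\Lambda_o^2/(N\eta)} = \Lambda_o/\sqrt{N\eta}$, and since $N\eta \geq N^{\ell}$ on $\mathcal{D}_\ell$ this is $o(1)\Lambda_o$. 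Taking the maximum over $i\neq j$ then gives
\[
\Lambda_o \leq C\frac{N^\epsilon}{q} + o(1)\,\Lambda_o + CN^{\epsilon}\Psi,
\]
and the $o(1)\Lambda_o$ term is absorbed into the left-hand side. This self-improvement step is the correct way to close the argument; once you make that adjustment your proof matches the paper's.
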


\begin{lemma}\label{lem:bound Zi Zij}
	For any $z \in D_\ell$, we have on $\textbf{B}^c$ with high probability
	\begin{equation}\label{eq:bound Zi}
	|Z_i| \leq C \left(\frac{N^{\epsilon}}{q} + N^{\epsilon}\Psi\right),
	\end{equation}
	\begin{equation}\label{eq:bound Zij}
	|Z^{ij}_{(ij)}| \leq C \left(\frac{N^{\epsilon}}{q} + N^{\epsilon}\Psi\right) \quad 	 (i\neq j).
	\end{equation}
\end{lemma}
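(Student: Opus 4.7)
The plan is to view $Z_i$ and $Z^{(ij)}_{ij}$ as, respectively, a quadratic and a bilinear form in independent sparse random variables, conditionally on an appropriate minor of $H$, and then to invoke a sparse Hanson--Wright--type large-deviation inequality. Conditioning on $H^{(i)}$ (resp.\ $H^{(ij)}$) makes the Green-function entries $G^{(i)}_{kl}$ (resp.\ $G^{(ij)}_{kl}$) deterministic, while the remaining randomness $\{H_{ik}\}_{k\neq i}$ (resp.\ the two independent families $\{H_{ik}\}_k$ and $\{H_{jl}\}_l$) still satisfies the sparse moment bound \eqref{def:weak_momentcondition}; this decoupling is what makes the estimate possible.

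For $Z_i$, the first step is to split off the diagonal,
\begin{equation*}
Z_i = \sum_{k\neq l}^{(i)} H_{ik}\,G^{(i)}_{kl}\,H_{il} \;+\; \sum_{k}^{(i)} \bigl(H_{ik}^2 - \sigma_{ik}^2\bigr)\,G^{(i)}_{kk},
\end{equation*}
and then apply the sparse large-deviation lemma---the analogue of the ones used in \cite{EKYY13,EKYY12,LS18} for Erd\H{o}s--R\'enyi matrices, which tracks the moment growth $\E|H_{ik}|^p \le (Cp)^{cp}/(Nq^{p-2})$. Conditionally on $H^{(i)}$ this gives
\begin{equation*}
|Z_i| \;\prec\; \frac{1}{q}\max_{k}\bigl|G^{(i)}_{kk}\bigr| \;+\; \Biggl(\frac{1}{N^2}\sum_{k,l}^{(i)} \bigl|G^{(i)}_{kl}\bigr|^2\Biggr)^{1/2}.
\end{equation*}
On $\mathbf{B}^c$, Lemma \ref{lem:estimate basic quantities} provides $|G^{(i)}_{kk}|\le C$, which disposes of the first term. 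For the second, the Wald identity \eqref{eq:Wald} applied to the minor $H^{(i)}$ rewrites it as $(N\eta)^{-1}\cdot N^{-1}\sum_{k}^{(i)} \im G^{(i)}_{kk}$, and then combining \eqref{eq:Greenlemma1} with the standard identity $m^{(i)} = m + O(1/N)$ produces an upper bound of order $(\im m_{sc}+\Lambda)/(N\eta) = \Psi(z)^2$. Absorbing the $N^\epsilon$ coming from stochastic domination into the prefactor yields \eqref{eq:bound Zi}.

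The bound on $Z^{(ij)}_{ij}$ follows the same strategy, with one simplification: because $\E[H_{ik}H_{jk}]=0$ for $i\neq j$, the diagonal $k=l$ in the bilinear form contributes no extra centering term. Applying the sparse Hanson--Wright--type inequality to $\sum_{k,l}^{(ij)} H_{ik}G^{(ij)}_{kl}H_{lj}$ then gives a bound of the same shape, with the variance piece controlled by $N^{-2}\sum_{k,l}^{(ij)}|G^{(ij)}_{kl}|^2 \le C\Psi^2$ via Wald applied to $H^{(ij)}$, and the $1/q$ piece arising from the sparse moments of a typical paired contribution.

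The hard part will be the sparse Hanson--Wright estimate itself: the usual sub-Gaussian version is not strong enough here, and one has to track the $q$-dependence carefully through a moment/cumulant expansion---this is precisely where the $1/q$ piece in both bounds is generated. Fortunately this ingredient is by now standard in the sparse random-matrix literature, so once it is granted, the rest is the routine bookkeeping above: pick the correct minor to condition on, invoke Lemma \ref{lem:estimate basic quantities} to keep the diagonal entries of $G^{(i)}$ and $G^{(ij)}$ bounded on $\mathbf{B}^c$, and apply the Wald identity to convert the off-diagonal $\ell^2$-sum into $\im m/\eta$.
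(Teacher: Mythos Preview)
Your proposal is correct and matches the paper's proof essentially line for line: the paper also splits $Z_i$ into its diagonal and off-diagonal parts, applies the sparse large-deviation estimates of Lemma~\ref{lem:LDE} (what you call the ``sparse Hanson--Wright--type inequality''), bounds $\max_k|G^{(i)}_{kk}|$ via Lemma~\ref{lem:estimate basic quantities}, and controls the $\ell^2$-sum through the Wald identity; the treatment of $Z^{(ij)}_{ij}$ likewise uses the bilinear estimate~\eqref{lem:LDE 4}. The only cosmetic difference is that the paper's LDE bound for the off-diagonal quadratic form carries an additional $\Lambda_o/q$ term which you absorbed, and the paper closes by invoking Lemma~\ref{lem:esteimate Lambda o} to handle the residual $\Lambda_o$ contributions---both harmless on $\mathbf{B}^c$.
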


\begin{lemma}\label{lem:estimate Upsilon}For any $z \in D_\ell$, we have on $\textbf{B}^c$ with high probability
	\begin{equation}
	|\Upsilon_i| \leq C \left(\frac{N^{\epsilon}}{q} + N^{\epsilon}\Psi\right).
	\end{equation}
\end{lemma}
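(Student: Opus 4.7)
The plan is to simply decompose $\Upsilon_i$ into its three constituent pieces using the definition \eqref{def:Upsilon} and estimate each one with the results already assembled. Since $\Upsilon_i = A_i + H_{ii} - Z_i$, the triangle inequality gives
\[
|\Upsilon_i| \le |A_i| + |H_{ii}| + |Z_i|.
\]
First I would invoke \eqref{eq:Greenlemma4} of Lemma \ref{lem:estimate basic quantities} to control $|A_i|\le C/N + C\Lambda_o^2$; next, Lemma \ref{lem:bound Hij} for $|H_{ii}|\prec 1/q$ (this term is actually zero under Assumption \ref{def:weak_H}, but I would keep the estimate for robustness); and finally, the bound \eqref{eq:bound Zi} of Lemma \ref{lem:bound Zi Zij} for $|Z_i|\le C(N^\epsilon/q + N^\epsilon\Psi)$. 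All of these estimates hold on $\mathbf{B}^c$ with high probability, and the intersection over the finite collection of events we use is still a high-probability event.

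The small technical point I would address carefully is the $\Lambda_o^2$ contribution coming from $A_i$. Using Lemma \ref{lem:esteimate Lambda o}, on $\mathbf{B}^c$ we have $\Lambda_o\le C(N^\epsilon/q + N^\epsilon\Psi)$. On $z\in\mathcal D_\ell$ with $\ell\ge 4\phi$ and on $\mathbf{B}^c$ where $\Psi\le CN^{-2\phi}$, together with $1/q\le N^{-\phi}$, we see that $\Lambda_o\le C N^{\epsilon-\phi}\ll 1$. Hence
\[
\Lambda_o^2 \;\le\; \Lambda_o \cdot C N^{\epsilon-\phi} \;\le\; C\!\left(\frac{N^{2\epsilon-\phi}}{q} + N^{2\epsilon-\phi}\Psi\right) \;\le\; C\!\left(\frac{N^\epsilon}{q} + N^\epsilon\Psi\right),
\]
after a harmless adjustment of $\epsilon$ (as announced at the beginning of Section \ref{sec:weak local law}). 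Similarly, since $\eta\ge N^{-1+\ell}$ on $\mathcal D_\ell$, the deterministic term $1/N\le 1/(N\eta)\le \Psi^2\le \Psi$ on $\mathbf{B}^c$, and the bound $1/q\prec 1/q$ of Lemma \ref{lem:bound Hij} for $|H_{ii}|$ is already of the right form.

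Collecting the three bounds gives
\[
|\Upsilon_i| \;\le\; \frac{C}{N} + C\Lambda_o^2 + \frac{N^\epsilon}{q} + C\!\left(\frac{N^\epsilon}{q} + N^\epsilon\Psi\right) \;\le\; C\!\left(\frac{N^\epsilon}{q} + N^\epsilon\Psi\right),
\]
which is exactly the asserted estimate. There is no real obstacle here: the lemma is essentially a corollary of Lemmas \ref{lem:estimate basic quantities}, \ref{lem:bound Hij} and \ref{lem:bound Zi Zij}, and the only minor care is the absorption of the quadratic remainder $\Lambda_o^2$ into a linear bound, which is done by exploiting the smallness of $\Lambda_o$ on $\mathbf{B}^c$ for $z\in\mathcal D_\ell$.
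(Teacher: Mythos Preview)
Your proof is correct and follows essentially the same route as the paper: triangle inequality on $\Upsilon_i = A_i + H_{ii} - Z_i$, then Lemmas \ref{lem:estimate basic quantities}, \ref{lem:bound Hij}, \ref{lem:bound Zi Zij}, and finally Lemma \ref{lem:esteimate Lambda o} to absorb the $\Lambda_o^2$ term. Your treatment is slightly more explicit about why $\Lambda_o^2$ and $1/N$ can be absorbed into $N^\epsilon/q + N^\epsilon\Psi$, but the argument is the same.
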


We define the events
\begin{align}
\Omega_h &:= \left\{ \max_{1\leq i,j \leq N} |H_{ij}| \geq \frac{N^\epsilon}{q}  \right\} \cup \left\{ 
\big| \sum_{i=1}^{N} H_{ii} \big| \geq N^{\epsilon} \big(\frac{1}{q}+1\big)\right\}, \nonumber\\
\Omega_d &:=\left\{ \max_{i}|Z_i| \geq C \left(\frac{N^{\epsilon}}{q} + N^{\epsilon}\Psi\right)  \right\}  ,\\
\Omega_o &:=\left\{ \max_{i\neq j}|Z^{(ij)}_{ij}| \geq C \left(\frac{N^{\epsilon}}{q} + N^{\epsilon}\Psi\right) \right\},  \nonumber
\end{align}
and let

\begin{equation}
\Omega(z) := \Omega_h \cup [(\Omega_d\cup \Omega_o) \cap \textbf{B}^c].
\end{equation}

Then by $\eqref{eq:bound Zi}$, $\eqref{eq:bound Zij}$ and the large deviation estimate $\eqref{lem:LDE 1}$ we can show that $\Omega^c$ holds with high probability.

From ($\ref{eq:vi self consistent}$), we obtain the following equation for $v_i$
\begin{equation}
v_i = m_{sc}^2\bigg(\sum_{j} \sigma_{ij}^2 v_j -\Upsilon_i\bigg)+m_{sc}^3\bigg(\sum_{j} \sigma_{ij}^2 v_j -\Upsilon_i\bigg)^2+O\bigg(\sum_{j} \sigma_{ij}^2 v_j -\Upsilon_i\bigg)^3.
\end{equation}
By assumption, $\sum_{j}\sigma_{ij}^2 =1$,  $\textbf{e}=(1,1,\dots,1)$ is the unique eigenvector of $B=({\sigma_{ij}})$ with simple eigenvalue $1$. Define the parameter 
\begin{equation}
g=g(z) := \max\{\delta_+, |1-\text{Re } m_{sc}^2(z)| \}. 
\end{equation}
and we recall the following basic lemma that was proven in Lemma 4.8 of \cite{EYY11}.
\begin{lemma}\label{lem:bound norm}
	The matrix $I-m_{sc}^2(z)B$ is invertible on the subspace orthogonal to $\textbf{e}$. let $\textbf{u}$ be a vector which is orthogonal to $\textbf{e}$ and let 
	\[\textbf{w}=(I-m_{sc}^2(z)B)\textbf{u}, \]
	then 
	\[ ||\textbf{u}||_\infty  \leq \frac{C\log N}{g(z)} ||\textbf{w}||_\infty \]
	for some constant $C$ that only depends on $\delta_-$.
\end{lemma}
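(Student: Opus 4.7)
The proof rests on three structural properties of $B=(\sigma_{ij}^2)$: it is symmetric, it is stochastic (row sums equal $1$ by the normalization in Assumption \ref{def:weak_H}), and (standing assumption behind $\delta_\pm$) on $\textbf{e}^\perp$ its spectrum lies in $[-1+\delta_-,\,1-\delta_+]$. Invertibility of $I-m_{sc}^2(z)B$ on $\textbf{e}^\perp$ follows once we rule out $1-m_{sc}^2(z)\mu_k=0$ for each non-Perron eigenvalue $\mu_k$, which is immediate for $z\in\mathbb{C}^+$ by examining $\operatorname{Im}m_{sc}^2(z)$ together with $|\mu_k|<1$.

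For the quantitative bound the plan is a truncated Neumann expansion. Writing, for some $k$ to be chosen,
\[
\textbf{u}\;=\;\sum_{j=0}^{k-1}\bigl(m_{sc}^2 B\bigr)^{j}\textbf{w}\;+\;\bigl(m_{sc}^2 B\bigr)^{k}\textbf{u},
\]
and using stochasticity ($\|B\textbf{v}\|_\infty\le\|\textbf{v}\|_\infty$) together with $|m_{sc}(z)|\le 1$ on $\mathcal{D}_\ell$, each term of the finite sum is $\|\cdot\|_\infty$-bounded by $\|\textbf{w}\|_\infty$, contributing at most $k\|\textbf{w}\|_\infty$ in total. For the tail I switch to $\|\cdot\|_2$ and exploit $B$-invariance of $\textbf{e}^\perp$:
\[
\bigl\|(m_{sc}^2 B)^{k}\textbf{u}\bigr\|_\infty\;\le\;\bigl\|(m_{sc}^2 B)^{k}\textbf{u}\bigr\|_2\;\le\;r(z)^{k}\sqrt{N}\,\|\textbf{u}\|_\infty,
\]
where $r(z):=\max_{k\ge 2}|m_{sc}^2(z)\mu_k|$ is the spectral radius of $m_{sc}^2 B$ restricted to $\textbf{e}^\perp$. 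Choosing $k\asymp\log N/(1-r(z))$ forces the tail to be at most $\tfrac12\|\textbf{u}\|_\infty$, which, absorbed into the left-hand side, yields
\[
\|\textbf{u}\|_\infty\;\le\;\frac{C\log N}{1-r(z)}\,\|\textbf{w}\|_\infty.
\]

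To finish I need $1-r(z)\ge c\,g(z)$ with $c$ depending only on $\delta_-$. A short case analysis on the sign of $\operatorname{Re}m_{sc}^2(z)$ does the job: when $\operatorname{Re}m_{sc}^2\ge 0$, the dangerous eigenvalues are the positive $\mu_k\le 1-\delta_+$, and $|1-m_{sc}^2\mu_k|\ge\min\{\delta_+,|1-\operatorname{Re}m_{sc}^2|\}$ up to a constant; when $\operatorname{Re}m_{sc}^2<0$, the dangerous eigenvalues are the negative ones, where the lower bound $\mu_k\ge -1+\delta_-$ takes over (hence the $\delta_-$-dependence of $C$). Combining the two regimes yields $1-r(z)\ge c\,g(z)$ and the lemma. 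The main obstacle I expect is precisely this last comparison: $r(z)$ is defined spectrally while $g(z)$ mixes a hard spectral-gap constant $\delta_+$ with the analytic quantity $|1-\operatorname{Re}m_{sc}^2|$, and care is needed to patch together the different regions of $m_{sc}^2(z)\in\mathbb{C}$ without letting the constants blow up near the spectral edge, where $m_{sc}^2(z)$ approaches $1$ and both $g(z)$ and $1-r(z)$ degenerate simultaneously.
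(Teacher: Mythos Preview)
The paper does not supply its own proof of this lemma; it merely cites Lemma~4.8 of \cite{EYY11}. Your overall scheme---a truncated Neumann expansion, bounding the finite partial sum in $\ell^\infty$ via stochasticity of $B$ and $|m_{sc}|\le 1$, and the tail in $\ell^2$ via the spectral gap on $\mathbf{e}^\perp$---is precisely the argument given there, so the approach is correct and matches the cited source.

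That said, your last paragraph conflates two distinct quantities. The tail bound requires control of the spectral radius $r(z)=|m_{sc}^2(z)|\max_{k\ge 2}|\mu_k|$, and you correctly need $1-r(z)\ge c\,g(z)$. But your case analysis on the sign of $\operatorname{Re}m_{sc}^2$ is an argument for lower-bounding $\min_k|1-m_{sc}^2\mu_k|$, which is the quantity governing the $\ell^2\!\to\!\ell^2$ norm of $(I-m_{sc}^2B)^{-1}$ on $\mathbf{e}^\perp$, not $1-r(z)$. Since $|m_{sc}^2\mu_k|=|m_{sc}^2|\,|\mu_k|$, the sign of $\operatorname{Re}m_{sc}^2$ plays no role in $r(z)$; the ``dangerous'' eigenvalue is simply the one of largest absolute value, independent of $z$. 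The clean bound is
\[
1-r(z)\;=\;(1-|m_{sc}^2|)+|m_{sc}^2|\bigl(1-\max_{k\ge2}|\mu_k|\bigr)\;\ge\;\min(\delta_+,\delta_-),
\]
using $|m_{sc}|\le 1$, while $g(z)\le 2$. This yields the stated inequality with $c=\tfrac12\min(\delta_+,\delta_-)$, so the constant in fact depends on both gaps; obtaining dependence on $\delta_-$ alone (as asserted in the lemma) would require a more careful treatment near $\operatorname{Re}m_{sc}^2\approx -1$, but this refinement is immaterial for the downstream applications in the paper, where both $\delta_\pm$ are fixed order-one constants.
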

We introduce the following lemma which estimates the deviation of $v_i$ from its average $[v]$.
\begin{lemma}\label{lem:lem analysi of }
	Fix $z \in D_\ell$. If in some set $\Xi$ it holds that 
	\begin{equation}\label{eq:Lambda d bound assume}
	\Lambda_d \leq \frac{q}{(\log N)^{3/2}}, 
	\end{equation}
	then in the set $\Xi \cap \textbf{B}^c$, we have 
	\begin{align}\label{eq:bound max vi-[v]}
	\max_{i}|v_i-[v]|&\leq \frac{C\log N }{g}\left( \Lambda^2 + \frac{N^{\epsilon}}{q}+N^{\epsilon} \Psi + \frac{(\log N)^2}{g^2}\bigg(\frac{N^{\epsilon}}{q}+N^{\epsilon} \Psi \bigg)^2 \right)\nonumber\\
	&\leq \frac{C \log N}{g^3}\left(\Lambda^2 + \frac{N^{\epsilon}}{q}+N^{\epsilon} \Psi  \right),
	\end{align}
	with high probability.
\end{lemma}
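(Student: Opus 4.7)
The plan is to linearize the self-consistent equation \eqref{eq:vi self consistent} around the semicircle solution and then invert the resulting linear operator on $\textbf{e}^\perp$ using Lemma \ref{lem:bound norm}. I would first Taylor expand \eqref{eq:vi self consistent} in the quantity $\sum_j \sigma_{ij}^2 v_j - \Upsilon_i$, which is small on $\textbf{B}^c \cap \Xi$ by Lemma \ref{lem:estimate Upsilon} combined with the hypothesis \eqref{eq:Lambda d bound assume}. This yields
\[
v_i = m_{sc}^2 \Big(\sum_j \sigma_{ij}^2 v_j - \Upsilon_i\Big) + m_{sc}^3 \Big(\sum_j \sigma_{ij}^2 v_j - \Upsilon_i\Big)^2 + O\!\left(\Big(\sum_j \sigma_{ij}^2 v_j - \Upsilon_i\Big)^3\right).
\]
Writing $B = (\sigma_{ij}^2)$ and $\textbf{v} = (v_1,\ldots,v_N)^\top$, this rearranges to $((I - m_{sc}^2 B)\textbf{v})_i = -m_{sc}^2 \Upsilon_i + E_i$, where $E_i$ collects the quadratic and higher-order remainders.

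Next I would decompose $\textbf{v} = [v]\textbf{e} + \textbf{u}$ with $\sum_i u_i = 0$ (so $u_i = v_i - [v]$) and project the identity onto $\textbf{e}^\perp$, using $B\textbf{e} = \textbf{e}$, to obtain
\[
((I - m_{sc}^2 B)\textbf{u})_i = -m_{sc}^2(\Upsilon_i - [\Upsilon]) + (E_i - [E]).
\]
Lemma \ref{lem:bound norm} then gives $\norm{\textbf{u}}_\infty \leq (C\log N / g)(\max_i|\Upsilon_i| + \max_i|E_i|)$. From Lemma \ref{lem:estimate Upsilon} we have $\max_i|\Upsilon_i| \leq C(N^\epsilon/q + N^\epsilon \Psi)$, and rewriting the argument of the quadratic term as $[v] - \Upsilon_i + (B\textbf{u})_i$ (using $\sum_j \sigma_{ij}^2 = 1$) and estimating each piece separately yields
\[
\max_i |E_i| \leq C\Big(\Lambda^2 + \norm{\textbf{u}}_\infty^2 + (N^\epsilon/q + N^\epsilon \Psi)^2\Big).
\]

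Combining these produces a self-referential inequality in $\norm{\textbf{u}}_\infty$. I would close it by iterating once: the a priori bound $\norm{\textbf{u}}_\infty \leq 2\Lambda_d$ together with the hypothesis \eqref{eq:Lambda d bound assume} justifies substituting the first-pass bound back into the quadratic term, which produces precisely the correction $(\log N)^2 g^{-2} (N^\epsilon/q + N^\epsilon \Psi)^2$ appearing in the first inequality of \eqref{eq:bound max vi-[v]}. The second inequality then follows by observing that $(N^\epsilon/q + N^\epsilon\Psi) \ll 1$ on $\textbf{B}^c$ while $g \leq 1$, so the squared correction is dominated by the linear one up to an extra factor of $(\log N)^2/g^2$ that combines with the prefactor $C\log N/g$ to give $C\log N/g^3$.

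The main obstacle is the bootstrap that absorbs $\norm{\textbf{u}}_\infty^2$: this is where the hypothesis \eqref{eq:Lambda d bound assume} is used crucially, both to justify the geometric-series Taylor expansion and to ensure the iterative step closes with the stated powers of $\log N$ and $g$.
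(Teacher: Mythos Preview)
Your approach is correct and essentially identical to the paper's: both Taylor-expand the self-consistent equation, apply Lemma~\ref{lem:bound norm} on $\textbf{e}^\perp$, and use the hypothesis~\eqref{eq:Lambda d bound assume} to close the resulting quadratic self-improvement. The only cosmetic difference is that the paper tracks the quadratic error as $O(\Lambda_d^2)$ and bootstraps through the intermediate inequality $\Lambda_d \leq \Lambda + \max_i|v_i - [v]|$, whereas you decompose $\sum_j \sigma_{ij}^2 v_j = [v] + (B\textbf{u})_i$ upfront and bootstrap directly in $\norm{\textbf{u}}_\infty$; the two routes are interchangeable.
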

With Lemma \ref{lem:lem analysi of }, we can show the following.
\begin{lemma}\label{lem:analysis of self consis}
	Fix $z \in D_\ell$. Define $[Z]:= N^{-1}\sum_{i=1}^{N}Z_i$. Then in the set $\textbf{B}^c$ we have
	\begin{equation}
	(1-m_{sc}^{2})[v]=m_{sc}^3[v]^2+m_{sc}^2[Z]+O\left(\frac{\Lambda^2}{\log N} \right) +O\left((\log N)^3\left(\frac{N^{\epsilon}}{q}+N^{\epsilon} \Psi \right)^2 \right),
	\end{equation}
	with high probability.
\end{lemma}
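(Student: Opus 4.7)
The plan is to derive the scalar equation by summing the individual self-consistent equations \eqref{eq:vi self consistent} over $i$ and dividing by $N$. First I would write \eqref{eq:vi self consistent} in the expanded form
$$v_i = m_{sc}^2(s_i - \Upsilon_i) + m_{sc}^3(s_i-\Upsilon_i)^2 + O\bigl(|s_i-\Upsilon_i|^3\bigr), \qquad s_i := \sum_j \sigma_{ij}^2 v_j,$$
which is valid on $\textbf{B}^c$ since $|s_i-\Upsilon_i| \leq \Lambda_d + |\Upsilon_i|$ is small (using Lemma \ref{lem:estimate Upsilon} and the definition of $\textbf{B}$). Averaging over $i$ yields an equation of the form
$$[v] = m_{sc}^2\bigl(N^{-1}\sum_i s_i\bigr) - m_{sc}^2 [\Upsilon] + m_{sc}^3 N^{-1}\sum_i (s_i-\Upsilon_i)^2 + (\text{cubic remainder}).$$

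The crucial observation is that the matrix $B = (\sigma_{ij}^2)$ is doubly stochastic: by symmetry and the normalization $\sum_j \sigma_{ij}^2 = 1$ we also have $\sum_i \sigma_{ij}^2 = 1$, whence
$$N^{-1}\sum_i s_i \;=\; N^{-1}\sum_j v_j \sum_i \sigma_{ij}^2 \;=\; [v].$$
This gives the exact linear contribution $m_{sc}^2 [v]$, so after moving it to the left side we obtain $(1-m_{sc}^2)[v]$. For $[\Upsilon]$, the decomposition $\Upsilon_i = A_i + H_{ii} - Z_i$ together with $H_{ii}=0$ a.s., the bound $|A_i| \leq C/N + C\Lambda_o^2$ from \eqref{eq:Greenlemma4}, and Lemma \ref{lem:esteimate Lambda o} shows that $[\Upsilon] = -[Z] + O((N^\epsilon/q + N^\epsilon \Psi)^2)$, which fits inside the stated error.

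For the quadratic term I would write $s_i = [v] + r_i$ with $r_i := \sum_j \sigma_{ij}^2(v_j - [v])$, so $(s_i-\Upsilon_i)^2 = [v]^2 + 2[v]r_i + r_i^2 - 2(s_i)\Upsilon_i + \Upsilon_i^2$. Averaging, the cross term $N^{-1}\sum_i 2[v]r_i$ vanishes by the double-stochastic cancellation applied a second time (since $\sum_i \sigma_{ij}^2 = 1$ makes $\sum_i r_i = 0$), producing the desired $m_{sc}^3 [v]^2$ term. The residual $N^{-1}\sum_i r_i^2$ is bounded by $\max_j |v_j - [v]|^2$, to which I would apply Lemma \ref{lem:lem analysi of } to get a bound of order $(\log N)^2 g^{-6}(\Lambda^4 + (N^\epsilon/q + N^\epsilon\Psi)^2)$. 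The remaining terms $N^{-1}\sum_i s_i \Upsilon_i$ and $N^{-1}\sum_i \Upsilon_i^2$ are controlled by $\max_i |\Upsilon_i|^2$ from Lemma \ref{lem:estimate Upsilon}, and the cubic remainder is of even lower order.

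The main obstacle will be the bookkeeping to fit all error contributions into the two error buckets in the statement. The key mechanism is that on $\textbf{B}^c$ we have $\Lambda \leq \Lambda_d \leq (\log N)^{-2}$, so $\Lambda^4 \leq \Lambda^2 (\log N)^{-4}$, which converts the $\Lambda^4$ piece from Lemma \ref{lem:lem analysi of } into an $O(\Lambda^2/\log N)$ contribution; all remaining pieces carry only powers of $(\log N)$ and are absorbed into $O((\log N)^3(N^\epsilon/q + N^\epsilon\Psi)^2)$. The double-stochastic cancellation $\sum_i \sigma_{ij}^2 = 1$ is what makes this lemma work cleanly even though the variances $\sigma_{ij}^2$ are not identical, and I would emphasize that this is the only structural input from the normalization assumption in Assumption \ref{def:weak_H}.
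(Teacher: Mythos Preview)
Your proposal is correct and follows essentially the same route as the paper: average the expanded self-consistent equation, use the double stochasticity of $B=(\sigma_{ij}^2)$ to turn the linear term into $m_{sc}^2[v]$, split the quadratic term around $[v]$, and absorb the remainders via Lemma~\ref{lem:lem analysi of} together with $\Lambda\le(\log N)^{-2}$ on $\mathbf{B}^c$. Your explicit use of $\sum_i r_i=0$ from double stochasticity is a slightly cleaner presentation of what the paper does implicitly, and note that the $g^{-1}$ factors from Lemma~\ref{lem:lem analysi of} are harmless since $g\ge\delta_+>0$ is bounded below by a fixed constant.
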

Proofs of Lemmas \ref{lem:lem analysi of } and \ref{lem:analysis of self consis} are given in Appendices \ref{app:lem analysi of } and \ref{app:analysis of self}, respectively.

\subsection{Dichotomy estimate for $\Lambda$ and  continuity argument}
In $\textbf{B}^c$, $\max_{i}|Z_i| = O(\frac{N^{\epsilon}}{q}+N^{\epsilon} \Psi)$ holds with high probability. Therefore using $\frac{N^{\epsilon}}{q}+N^{\epsilon} \Psi \leq N^{-\epsilon}$, with high probability, we have 
\begin{equation}\label{eq:weak self consistent equation}
(1-m_{sc}^{2})[v]=m_{sc}^3[v]^2+O\left(\frac{\Lambda^2}{\log N}\right)+O\bigg(\frac{N^{\epsilon}}{q}+N^{\epsilon} \Psi\bigg).
\end{equation}
\begin{lemma}\label{lem:inital estimate}
	Let $\eta \geq 2$. Then for $z \in D_\ell$ we have
	\begin{equation}\label{eq:estimate Lambda_d + Lambda_o}
	\Lambda_d(z)+\Lambda_o(z) \leq C\left(\frac{N^\epsilon}{q} + \frac{N^{\epsilon}}{\sqrt{N}} \right) \leq (\log N)^{-2}.
	\end{equation}
\end{lemma}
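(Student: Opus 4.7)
The plan is to exploit the fact that $\eta \geq 2$ places $z$ far above the spectrum, so the deterministic operator-norm bound $\|G^{(T)}(z)\| \leq \eta^{-1} \leq 1/2$ already supplies enough regularity to rerun all the probabilistic estimates of Lemmas \ref{lem:esteimate Lambda o}--\ref{lem:estimate Upsilon} without invoking the conditional set $\textbf{B}^c$. The key observation is that the restriction to $\textbf{B}^c$ in those three lemmas enters only as a device to secure the size bounds of Lemma \ref{lem:estimate basic quantities}, which at $\eta \geq 2$ hold unconditionally since $|G^{(T)}_{ij}(z)| \leq \eta^{-1} \leq 1/2$ for every $T$ and $i,j$. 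Tracing this substitution through their proofs, the conclusions $\Lambda_o \leq C(N^\epsilon/q + N^\epsilon \Psi)$, $|Z_i| \leq C(N^\epsilon/q + N^\epsilon \Psi)$, and $|\Upsilon_i| \leq C(N^\epsilon/q + N^\epsilon \Psi)$ remain valid with high probability, with no restriction to $\textbf{B}^c$.

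Starting from the trivial a priori bound $\Lambda \leq |m|+|m_{sc}| \leq 1$ and $\im m_{sc}(z) \leq \eta^{-1} \leq 1/2$, the control parameter obeys $\Psi(z) \leq C/\sqrt{N}$. The unconditional versions of the three lemmas then deliver $\max_i |\Upsilon_i| \prec q^{-1} + N^{-1/2}$ and $\Lambda_o \prec q^{-1} + N^{-1/2}$. To bound $\Lambda_d$, I would turn to the self-consistent equation \eqref{eq:vi self consistent} and exploit the smallness of $m_{sc}$ at large $\eta$: using the algebraic identity $-z - m_{sc} = m_{sc}^{-1}$, one rewrites \eqref{eq:vi self consistent} as $v_i = m_{sc}^2 x_i / (1 - m_{sc} x_i)$ with $x_i := \sum_j \sigma_{ij}^2 v_j - \Upsilon_i$. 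Since $|x_i| \leq \Lambda_d + \max_j |\Upsilon_j|$ and $|m_{sc}|^2 \leq \eta^{-2} \leq 1/4$, maximizing over $i$ and rearranging gives $\Lambda_d \leq C \max_j |\Upsilon_j|$, hence $\Lambda_d \prec q^{-1} + N^{-1/2}$. Summing produces $\Lambda_d + \Lambda_o \prec q^{-1} + N^{-1/2}$, and since $q \geq N^\phi$ the right-hand side is eventually much smaller than $(\log N)^{-2}$, completing the proof.

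The principal technical point is the first paragraph: carefully checking that each $\textbf{B}^c$-conditional estimate of the preceding subsection passes to an unconditional one in the regime $\eta \geq 2$. Once this is justified, the remainder is essentially an algebraic manipulation of the self-consistent equation that uses the decay $|m_{sc}(z)|^2 \leq \eta^{-2}$ to absorb the $\Lambda_d$ on the right-hand side into that on the left, so no continuity argument is required at this stage --- which is natural, since this lemma is precisely the initialization for the continuity argument performed in the rest of Section \ref{sec:weak local law}.
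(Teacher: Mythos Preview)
Your proposal is correct and matches the paper's approach: both rederive the estimates on $\Lambda_o$, $Z_i$, $Z^{(ij)}_{ij}$ and $\Upsilon_i$ using only the trivial bound $|G^{(T)}_{ij}|\le\eta^{-1}\le 1/2$, and then close the self-consistent equation via the contraction coming from $|z+m_{sc}|=|m_{sc}|^{-1}\ge 2$ (your identity $v_i=m_{sc}^2 x_i/(1-m_{sc}x_i)$ is just a rearrangement of the paper's formula for $v_n$). One small caution: not all of Lemma~\ref{lem:estimate basic quantities} follows from $|G^{(T)}_{ij}|\le 1/2$ alone---in particular the lower bound $|G^{(T)}_{kk}|\ge 1/C$ does not---so when bounding $A_i$ in the rerun of Lemma~\ref{lem:estimate Upsilon} you should, as the paper does explicitly, use \eqref{eq:self cons 3} to rewrite $G_{ij}G_{ji}/G_{ii}=-G_{ji}G^{(i)}_{jj}K^{(ij)}_{ij}$ and thereby avoid the division by $G_{ii}$.
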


Lemma \ref{lem:inital estimate} is an initial estimates on $\Lambda_d$ and $\Lambda_o$ for large $\eta \sim 1$ to get the continuity argument started.

We further introduce the following notations
\begin{equation}
\alpha := \left|\frac{1-m_{sc}^2}{m_{sc}^3} \right|, \qquad \qquad \beta:= \frac{N^\epsilon}{\sqrt{q}} + \frac{N^{\epsilon}}{(N\eta)^{1/3}},
\end{equation}
where $\alpha$ and $\beta$ depend on the parameter $z$. For any $z \in D_\ell$ we have the bound $\beta \leq N^{-\frac{1}{2}\phi}$.

From Lemma $\ref{lem:basic properties of m}$, it follows that for any $z \in D_\ell$ there is a constant $K \geq 1 $ such that 
\begin{equation}
\frac{1}{K} \sqrt{\kappa + \eta} \leq \alpha(z) \leq K\sqrt{\kappa + \eta}.
\end{equation}

Since $\sqrt{\kappa + \eta}$ is increasing and $\beta(E+i\eta)$ is decreasing in $\eta$, we know that, for any fixed $E$ and $U>1$, $\sqrt{\kappa + \eta}=2U^2K\beta(E+i\eta)$ has a unique solution $\tilde{\eta}=\tilde{\eta}(U,E)$ which satisfies $\tilde{\eta} \ll 1$.

\begin{lemma}[Dichotomy]\label{lem:dichtomy_weak}
	There exist a constant $U_0$ such that for any fixed $U\geq U_0$, there exists constant $C_1(U)$ such that the following hold for any $z \in D_\ell$. 
	\begin{align}
	\Lambda(z) &\leq U\beta(z) \qquad   or \qquad   \Lambda(z) \geq \frac{\alpha(z)}{U} & \text{if  }  \eta \geq
	\tilde{\eta}(U,E)\label{eq:dicho1}\\
	\Lambda(z) &\leq C_1(U)\beta(z)    & \text{if  } \eta < \tilde{\eta}(U,E)\label{eq:dicho2}
	\end{align}
	on $\textbf{B}^c(z)$ with high probability.
\end{lemma}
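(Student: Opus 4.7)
The starting point is the reduced self-consistent equation of Lemma~\ref{lem:analysis of self consis}. Writing $a := (1-m_{sc}^2)/m_{sc}^3$ so $|a|=\alpha$, and factoring the equation as a quadratic in $[v]$, one obtains on $\textbf{B}^c$
\[
\Lambda\,\bigl|[v]-a\bigr| \;\leq\; \frac{C\Lambda^2}{\log N}+C\pa{\frac{N^\epsilon}{q}+N^\epsilon\Psi}, \qquad (\star)
\]
with high probability. Combined with the elementary splitting $\Psi\leq \sqrt{\Lambda/(N\eta)}+\sqrt{\mathrm{Im}\,m_{sc}/(N\eta)}$ and the bound $\mathrm{Im}\,m_{sc}\leq C\alpha$ from Lemma~\ref{lem:basic properties of m}, the inequality $(\star)$ is the single estimate from which both parts of the dichotomy will be extracted by a case analysis on $\Lambda$ versus $\alpha$.

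For the regime $\eta\geq\tilde\eta(U,E)$, I argue \eqref{eq:dicho1} by contradiction: assume $U\beta<\Lambda<\alpha/U$, a non-empty window since the defining relation $\sqrt{\kappa+\tilde\eta}=2U^2K\beta$ together with $\alpha\geq\sqrt{\kappa+\eta}/K$ forces $\alpha\geq 2U^2\beta$ in this regime. The reverse triangle inequality gives $|[v]-a|\geq\alpha(1-1/U)\geq\alpha/2$ for $U\geq 2$, while the self-quadratic term on the right of $(\star)$ satisfies $C\Lambda^2/\log N\leq C\Lambda\alpha/(U\log N)\leq\Lambda\alpha/4$ for $N$ large. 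Therefore $\Lambda\alpha\lesssim N^\epsilon/q+N^\epsilon\Psi$, and after bounding $\Psi$ as above and absorbing $N^\epsilon\sqrt{\Lambda/(N\eta)}$ into a further $\Lambda\alpha$ contribution on the left by AM--GM, I reach
\[
\Lambda \;\leq\; \frac{CN^\epsilon}{q\alpha}+\frac{CN^{2\epsilon}}{\alpha^2 N\eta}+\frac{CN^\epsilon}{\sqrt{\alpha N\eta}}.
\]
Inserting $\alpha\geq 2U^2\beta$ together with the elementary consequences $\beta\geq N^\epsilon/\sqrt{q}$ and $\beta^3 N\eta\geq N^{3\epsilon}$ of the definition of $\beta$, each of the three summands is at most $U\beta/3$ once $U\geq U_0$ is chosen sufficiently large. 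This contradicts $\Lambda>U\beta$, establishing \eqref{eq:dicho1}.

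For the regime $\eta<\tilde\eta(U,E)$, the defining relation $\sqrt{\kappa+\tilde\eta}=2U^2K\beta$ forces $\alpha\leq 2U^2K^2\beta$, so $\alpha$ and $\beta$ are comparable. If $\Lambda\leq 2\alpha$, then $\Lambda\leq C_1(U)\beta$ trivially. Otherwise $|[v]-a|\geq\Lambda-\alpha\geq\Lambda/2$, and $(\star)$ (after absorbing the $\Lambda^2/\log N$ error into the left-hand side) reduces to $\Lambda^2\lesssim N^\epsilon/q+N^\epsilon\Psi$. In this sub-case $\mathrm{Im}\,m_{sc}\leq C\alpha\leq C\Lambda$, so $\Psi\lesssim\sqrt{\Lambda/(N\eta)}$, and solving the two resulting sub-inequalities $\Lambda^2\lesssim N^\epsilon/q$ and $\Lambda^{3/2}\lesssim N^\epsilon/\sqrt{N\eta}$ separately yields $\Lambda\lesssim N^{\epsilon/2}/\sqrt{q}+N^{2\epsilon/3}/(N\eta)^{1/3}\leq C_1(U)\beta$, which is \eqref{eq:dicho2}. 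The main technical obstacle throughout is the simultaneous calibration of $U$ and $\tilde\eta$ so that all three competing error terms in the first regime fall below $U\beta$ at once; this is precisely what the quantitative choice $\sqrt{\kappa+\tilde\eta}=2U^2K\beta$ in the definition of $\tilde\eta$ is designed to guarantee.
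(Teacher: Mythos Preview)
Your argument is correct and rests on the same quadratic self-consistent relation as the paper, but the execution differs. The paper makes one preliminary simplification you omit: it bounds the entire error $\frac{N^\epsilon}{q}+N^\epsilon\Psi$ by $C^*(\beta\Lambda+\alpha\beta+\beta^2)$ once, using $N^\epsilon/q\le\beta^2$ and $N^\epsilon\Psi\le\sqrt{\beta^3(\alpha+\Lambda)}\le C(\alpha\beta+\beta\Lambda+\beta^2)$. With the error already expressed in $\alpha,\beta,\Lambda$, both regimes collapse to one-line algebra: in Case~1 the triangle inequality gives $\alpha\Lambda\le 4\Lambda^2+4C^*\alpha\beta$, and the dichotomy is immediate; in Case~2 one gets directly the quadratic $\Lambda^2\le C'(U)\beta\Lambda+C'(U)\beta^2$, with no need to split into $\Lambda\le 2\alpha$ versus $\Lambda>2\alpha$. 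Your route---contradiction in Case~1 with a three-term bound on $\Lambda$, and a sub-case analysis in Case~2 driven by whether $\mathrm{Im}\,m_{sc}$ or $\Lambda$ dominates $\Psi$---reaches the same conclusion but at the cost of the term-by-term calibrations against $U\beta$ you describe. The paper's packaging of the error is what makes the choice $U_0=9(C^*+1)$ transparent, whereas in your version the role of $U_0$ is somewhat obscured by the fact that several of your inequalities are actually satisfied for large $N$ regardless of $U$.
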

Proofs of Lemma \ref{lem:inital estimate} and \ref{lem:dichtomy_weak} are given in  \ref{subsec:proof dichtomy}.

Now choose a decreasing finite sequence $\eta_k \in D_\ell$, $k=1,2,\dots,k_0$, with $k_0 \leq CN^8$, $|\eta_k-\eta_{k+1}|\leq N^{-8}$, $\eta_1=2$, and $\eta_{k_0}=N^{-1+l}$. Fix $E$ with $|E|\leq 3$ and set $z_k = E + i\eta_k$. We fix $U\geq U_0$ and recall the definition of $\tilde{\eta}$ from Lemma $\ref{lem:dichtomy_weak}$.

Consider the first case of $z_1$. For large $N$, it is easy to show that $\eta_1 \geq \tilde{\eta}$ for any $|E|\leq 3$. Therefore Lemma $\ref{lem:inital estimate}$ and Lemma $\ref{lem:dichtomy_weak}$ imply that 
$\textbf{B}^c(z_1)$ and $\Lambda(z_1) \leq U\beta(z_1) $
hold with high probability. For general $k$ we have the following : 
\begin{lemma}\label{lem:bound Omega_k}
	Define $\Omega_k := \textbf{B}^c(z_k) \bigcap \{\Lambda(z_k)\leq C^{(k)}(U)\beta(z_k) \} $ where 
	\begin{equation}
	C^{(k)}(U) =  \begin{cases}
	U &  \text{if  $ \eta_k \geq \tilde{\eta}(U,E) $}\\
	C_1(U) &  \text{if  $ \eta_k \leq \tilde{\eta}(U,E) $}.\\
	\end{cases}
	\end{equation}
	Then
	\begin{equation}\label{eq:Omega_k high probability}
	\P(\Omega_k^c) \leq 2kN^{-D}.
	\end{equation}
\end{lemma}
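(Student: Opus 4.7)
The plan is an inductive continuity argument along the grid $\eta_1 > \eta_2 > \dots > \eta_{k_0}$, combining the dichotomy of Lemma \ref{lem:dichtomy_weak} with the elementary Lipschitz estimate
\begin{equation*}
\absb{\partial_{\eta} G_{ij}(z)} \leq \frac{1}{\eta^2},
\end{equation*}
which, together with $|\eta_k - \eta_{k+1}| \leq N^{-8}$ and $\eta \geq N^{-1+\ell}$, forces the controlling quantities $\Lambda$, $\Lambda_d$, $\Lambda_o$ to vary by at most $N^{-6}$ between two consecutive grid points. This deterministic control is the bridge that lets us transfer the small branch of the dichotomy from $z_k$ to $z_{k+1}$.

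For the base case $k=1$, Lemma \ref{lem:inital estimate} gives $\Lambda_d(z_1) + \Lambda_o(z_1) \leq (\log N)^{-2}$ with high probability, so $\textbf{B}^c(z_1)$ holds and $\Lambda(z_1)$ is small. For large $N$, $\eta_1 = 2 \geq \tilde{\eta}(U,E)$; Lemma \ref{lem:dichtomy_weak} offers $\Lambda(z_1) \leq U\beta(z_1)$ or $\Lambda(z_1) \geq \alpha(z_1)/U$, and since $\alpha(z_1) \sim 1$ while $\Lambda(z_1) \leq (\log N)^{-2}$, only the first branch survives. Hence $\Omega_1$ holds with probability at least $1 - 2N^{-D}$. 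For the inductive step, assume $\P(\Omega_k^c) \leq 2kN^{-D}$. On $\Omega_k$ the bound $\Lambda(z_k) \leq C^{(k)}(U)\beta(z_k) \leq N^{-\phi/3}$ feeds through Lemma \ref{lem:esteimate Lambda o} and Lemma \ref{lem:lem analysi of } to give $\Lambda_d(z_k) + \Lambda_o(z_k) \ll (\log N)^{-2}$, i.e., we are in $\textbf{B}^c(z_k)$ with ample margin. The Lipschitz estimate then propagates $\textbf{B}^c(z_{k+1})$ deterministically and yields $\Lambda(z_{k+1}) \leq C^{(k)}(U)\beta(z_k) + N^{-6}$.

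Apply Lemma \ref{lem:dichtomy_weak} at $z_{k+1}$ on a further event of probability at least $1 - N^{-D}$. If $\eta_{k+1} < \tilde{\eta}(U,E)$, alternative \eqref{eq:dicho2} directly gives $\Lambda(z_{k+1}) \leq C_1(U)\beta(z_{k+1})$. If $\eta_{k+1} \geq \tilde{\eta}(U,E)$, the defining relation $\sqrt{\kappa + \tilde{\eta}} = 2U^2 K \beta$ combined with the two-sided comparison $K^{-1}\sqrt{\kappa + \eta} \leq \alpha \leq K\sqrt{\kappa + \eta}$ yields $\alpha(z_{k+1})/U \geq 2U\beta(z_{k+1})$, which is strictly larger than $C^{(k)}(U)\beta(z_k) + N^{-6}$ once $U$ is taken large enough (using $\beta(z_k) \sim \beta(z_{k+1})$ by continuity). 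The large branch is thereby ruled out and $\Lambda(z_{k+1}) \leq U\beta(z_{k+1})$. Thus on $\Omega_k$ intersected with the dichotomy event, $\Omega_{k+1}$ holds, giving $\P(\Omega_{k+1}^c) \leq \P(\Omega_k^c) + N^{-D} \leq 2(k+1)N^{-D}$ after a mild inflation of $D$.

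The main obstacle is the delicate transition near $\eta_k \approx \tilde{\eta}(U,E)$, where the small-branch bound $U\beta$ and the dichotomy gap $\alpha/U$ are only separated by a factor of order $U$; the argument works precisely because $\tilde{\eta}$ is tuned so that $\alpha/U \geq 2U\beta$ throughout $\eta \geq \tilde{\eta}$, leaving room to absorb both the Lipschitz error $N^{-6}$ and the discrepancy between $\beta(z_k)$ and $\beta(z_{k+1})$. A union bound over the $k_0 \leq CN^8$ mesh points then completes the proof of \eqref{eq:Omega_k high probability}.
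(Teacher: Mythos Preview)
Your argument is correct and follows essentially the same approach as the paper's proof: induction on $k$, using the Lipschitz bound $|G_{ij}(z_{k+1})-G_{ij}(z_k)|\le N^{-6}$ to transport the quantitative control from $z_k$ to $z_{k+1}$, then invoking the dichotomy Lemma~\ref{lem:dichtomy_weak} to land in the small branch. Two minor remarks: the refined bound $\Lambda_d(z_k)+\Lambda_o(z_k)\ll(\log N)^{-2}$ is a high-probability statement (from Lemmas~\ref{lem:esteimate Lambda o} and~\ref{lem:lem analysi of }), not a deterministic consequence of $\Omega_k$, so the propagation of $\textbf{B}^c(z_{k+1})$ costs one $N^{-D}$ rather than none---this is already absorbed in your ``mild inflation of $D$''; and the final union bound over the $k_0\le CN^8$ mesh points belongs to the proof of Theorem~\ref{thm:weak local law}, not to the present lemma, whose conclusion~\eqref{eq:Omega_k high probability} is already for each fixed $k$.
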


Proof of the Lemma \ref{lem:bound Omega_k} is given in Appendix \ref{app:proof of Lemma lem bound Omega_k}. Now we are ready to prove our main theorem. 
\begin{proof}[Proof of Theorem \ref{thm:weak local law}]
	Take a lattice $L \subset D_\ell$ such that $|L| \leq CN^6$ and for any $z \in D_\ell$ there exist $\tilde{z} \in L$ satisfying $|z-\tilde{z}|\leq N^{-3}$. From the Lipschitz continuity of the map $z\mapsto G_{ij}(z)$ and $z\mapsto m_{sc}(z)$ with a Lipschitz constant bounded by $\eta^{-2}\leq N^2$, we have
	\begin{equation}\label{eq:Lipshitz1}
	|G_{ij}(z)-G_{ij}(\tilde{z})|\leq \frac{|z-\tilde{z}|}{\eta^2} \leq \frac{1}{N} .
	\end{equation}
	We also have 
	\begin{equation}\label{eq:Lipshitz2}
	|m(z)-m(\tilde{z})|\leq \frac{|z-\tilde{z}|}{\eta^2} \leq \frac{1}{N} .
	\end{equation}
	By Lemma $\ref{lem:bound Omega_k}$, we have for some large constant $C$ 
	\begin{equation}
	\P \left[ \bigcap_{\tilde{z} \in L} \left\{ \Lambda(\tilde{z}) \leq C\beta(\tilde{z})  \right\}\right] \geq 1-N^{-D}.
	\end{equation}
	Hence with $\eqref{eq:Lipshitz1}$ and $\beta \gg N^{-1}$ we find 
	\begin{equation}
	\P \left[ \bigcup_{z \in D_\ell} \left\{ \Lambda(z) > C\beta(z)  \right\}\right] \leq N^{-D},
	\end{equation}
	for some constant $C$.
	Using similar argument, we can also get
	\begin{equation}
	\P \left[ \bigcap_{z \in D_\ell} \textbf{B}^c(z)  \right] \leq N^{-D}.
	\end{equation}
	In other words, we proved ($\ref{thm:weak law 3}$). Using $\eqref{eq:estimate Lambda o}$, $\eqref{eq:bound max vi-[v]}$ and $\eqref{eq:Omega_k high probability}$ with similar lattice arguments, we can conclude the proof of Theorem \ref{thm:weak local law}.
\end{proof}

\section{Proof of Proposition \ref{prop:locallaw} and Theorem \ref{thm:matrix norm}} \label{sec:local law}
\subsection{Proof of Proposition \ref{prop:locallaw}}

In this section we prove Proposition \ref{prop:locallaw}. Recall the subdomain $\mathcal{D_\ell}$ of $\mathcal{E}$ and the definition of the polynomial $P\equiv P_{t,z}, P_1\equiv P_{1,t,z}$ and $P_2\equiv P_{2,t,z}$ in \eqref{def:P}. We have the following lemma called recursive moment estimate.
\begin{lemma}[Recursive moment estimate] \label{lemma:recursive} Fix $\phi>0$ and fix any $t\geq 0$. Let $H_0$ satisfies Assumption \ref{assumption}. Then, for any $D>10$ and small $\epsilon>0$, the normalized trace of the Green function, $m_t\equiv m_t(z)$, of the matrix $H_t$ satisfies
	\begin{align}\label{eq:recursive}
	&\mathbb{E}|P(m_t)|^{2D}\leq N^{\epsilon}\mathbb{E}\Big[\Big(\frac{1}{q_t^4} +\frac{\im m_t}{N\eta} )|P(m_t)|^{2D-1}\Big] + N^{-\epsilon/8}q_t^{-1}\mathbb{E}\Big[|m_t -\wt{m}_t|^2|P(m_t)|^{2D-1}\Big]  \notag \\
	&\qquad +N^{\epsilon} q_t^{-1}\sum_{s=2}^{2D}\sum_{s'=0}^{s-2}\mathbb{E}\Big[\Big(\frac{\im m_t}{N\eta}\Big)^{2s-s'-2}|P'(m_t)|^{s'}|P(m_t)|^{2D-s}\Big]  \\
	&\qquad +N^{\epsilon}\sum_{s=2}^{2D}\mathbb{E}\Big[\Big(\frac{1}{N\eta}+\frac{1}{q_t}\Big(\frac{\im m_t}{N\eta} \Big)^{1/2}+\frac{1}{q_t^2}\Big) \Big(\frac{\im m_t}{N\eta} \Big)^{s-1}|P'(m_t)|^{s-1}|P(m_t)|^{2D-s}\Big] +N^{\epsilon}q_t^{-8D}, \notag 
	\end{align}
	uniformly on the domain $\mathcal{D_\ell}$, for sufficiently large $N$.
\end{lemma}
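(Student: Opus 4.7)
The plan is to follow the recursive moment method of \cite{LS18} for Erd\H{o}s--R\'enyi graphs, adapted to accommodate the block asymmetry that forces the additional factor $P_2(m_t)$ in the polynomial $P=P_1 P_2$. The starting point is the resolvent identity $(H_t G_t)_{ii} = 1+z(G_t)_{ii}$, which upon averaging over $i$ yields $1+zm_t = N^{-1}\sum_{i,k}(H_t)_{ik}(G_t)_{ki}$. Writing $|P(m_t)|^{2D} = P_1(m_t)P_2(m_t)\cdot Q$ with $Q := P(m_t)^{D-1}\overline{P(m_t)}^D$ and substituting the resolvent identity into the $1+zm_t$ summand of $P_1(m_t)$, one decomposes
\begin{equation}
\mathbb{E}|P(m_t)|^{2D} = \frac{1}{N}\sum_{i,k}\mathbb{E}\bigl[(H_t)_{ik}(G_t)_{ki}P_2(m_t) Q\bigr] + \mathbb{E}\bigl[(m_t^2 + \e^{-t}q_t^{-2}\xi^{(4)}m_t^4)P_2(m_t) Q\bigr].
\end{equation}
The generalized Stein's lemma (Lemma~\ref{lemma:Stein}) would then be applied to each $(H_t)_{ik}$ in the first sum, truncated at a large order $\ell$ proportional to $D$.

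The decisive step is the second-cumulant contribution. Since $\kappa_{t,ik}^{(2)}$ takes two values according as $i\sim k$ or $i\not\sim k$, the sum must be split by community: setting $m_{t,\alpha}:=(K/N)\sum_{i\in V_\alpha}(G_t)_{ii}$ and using the normalization $s_{t,s}^{(2)}+(K-1)s_{t,d}^{(2)}=K$ together with $\zeta_t=(s_{t,s}^{(2)}-s_{t,d}^{(2)})/K$, a direct computation yields
\begin{equation}
\frac{1}{N}\sum_{i,k}\kappa_{t,ik}^{(2)}(G_t)_{ii}(G_t)_{kk} = m_t^2 + \frac{\zeta_t}{K}\sum_{\alpha=1}^{K}(m_{t,\alpha} - m_t)^2.
\end{equation}
The analogous fourth-cumulant calculation reconstructs the $\e^{-t}q_t^{-2}\xi^{(4)} m_t^4$ term, so that the leading pieces rebuild $P_1(m_t)P_2(m_t)Q = P(m_t)Q$, up to three residuals: the off-diagonal piece $\sum_{i,k}\kappa_{t,ik}^{(2)}(G_t)_{ki}^2$, bounded by $O(\im m_t/(N\eta))$ via the Ward identity $\sum_j|(G_t)_{ij}|^2 = \im(G_t)_{ii}/\eta$ and producing the first summand of \eqref{eq:recursive}; the off-diagonal remnants of the fourth-cumulant step contributing the $O(q_t^{-4})$ term; and the community-discrepancy term $\frac{\zeta_t}{K}\sum_\alpha(m_{t,\alpha}-m_t)^2\cdot P_2(m_t)Q$.

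The community discrepancy is the main obstacle, having no counterpart in the homogeneous Erd\H{o}s--R\'enyi setting of \cite{LS18}. The polynomial $P_2$ is designed precisely so that it vanishes at the balanced self-consistent fixed point $m_{t,\alpha}\equiv m_t=\wt m_t$; exploiting this, the plan is to iterate the resolvent expansion with indices tracked separately by community (as detailed in Appendix~\ref{sec:recursive}). Each iteration feeds a factor of $(m_{t,\alpha}-m_t)$ into the $P_2(m_t)$ multiplier already present in $Q$, and self-consistency eventually contracts $\sum_\alpha(m_{t,\alpha}-m_t)^2\cdot P_2(m_t)$ into $|m_t-\wt m_t|^2$ with the prefactor $N^{-\epsilon/8}q_t^{-1}$, producing the second term of \eqref{eq:recursive}. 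The bookkeeping for this community-split expansion, which generates additional off-diagonal resolvent entries and $P'$-factors at each stage, is the most technically intricate part of the argument.

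The higher-order cumulant contributions ($r\geq 3$) are organized into the polynomial error terms. Each derivative $\partial_{ik}^r(G_t)_{ki}$ expands into $O(r)$ products of resolvent entries, estimated through the weak local law (Theorem~\ref{thm:weak local law}) and the Ward identity, which converts each pair of off-diagonal entries into one factor of $\im m_t/(N\eta)$. Derivatives acting on $Q$ yield factors of $P'(m_t)$ via $\partial_{ik} m_t = -N^{-1}(G_t^2)_{ik}$; tracking how $s$ derivatives distribute between $(G_t)_{ki}$ and $Q$ produces the polynomial summations of \eqref{eq:recursive}, with the three prefactors $1/(N\eta)$, $q_t^{-1}(\im m_t/(N\eta))^{1/2}$, and $q_t^{-2}$ corresponding to the distinct ways of pairing off-diagonal $G$-entries against powers of $\kappa_{t,ik}^{(r+1)}$. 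The Stein truncation error $\mathbb{E}\Omega_\ell$ is controlled using $|(H_t)_{ik}|\prec q_t^{-1}$ and $\|G_t\|\leq\eta^{-1}$, yielding the $N^\epsilon q_t^{-8D}$ residual upon choosing $\ell$ large enough in terms of $D$.
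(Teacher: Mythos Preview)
Your overall architecture (resolvent identity plus cumulant expansion, truncation at $\ell\sim D$, Ward identities for off-diagonal entries) matches the paper's, and your computation $\frac{1}{N}\sum_{i,k}\kappa^{(2)}_{t,ik}G_{ii}G_{kk}=m_t^2+\frac{\zeta_t}{K}\sum_\alpha(m_{t,\alpha}-m_t)^2$ is correct and is indeed the crux of the difficulty. But the mechanism you propose for handling this community discrepancy has a real gap.

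First, the assertion that ``$P_2$ is designed precisely so that it vanishes at the balanced self-consistent fixed point $m_{t,\alpha}\equiv m_t=\wt m_t$'' is simply false: $P_2$ depends only on $m_t$, and the paper records explicitly that $|P_2(\wt m_t)|\sim 1$ on $\mathcal{D}_\ell$. So there is no vanishing to exploit, and the sentence ``each iteration feeds a factor of $(m_{t,\alpha}-m_t)$ into the $P_2(m_t)$ multiplier'' has no mathematical content. Second, you attribute the $N^{-\epsilon/8}q_t^{-1}\,\mathbb{E}[|m_t-\wt m_t|^2|P|^{2D-1}]$ term to this discrepancy; in the paper that term arises from an entirely different place, namely the third-cumulant piece $I_{2,0}^{(1)}$, where one expands $zG_{ij}$ once more to gain an extra $q_t^{-1}$, replaces the resulting $m$ by $\wt m$, and bounds the difference $(m-\wt m)G_{ij}$ by Schwarz (see \eqref{eq:hatI^1 2term}). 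There is no route from $\sum_\alpha(m_{t,\alpha}-m_t)^2$ to $|m_t-\wt m_t|^2$ indicated in your sketch, and none exists in the paper.

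What the paper actually does is structurally different from what you outline. It does \emph{not} multiply by $P_2(m_t)$ and then expand $1+zm_t$. Instead it introduces a separate auxiliary polynomial $Q:=(z+m+\zeta m)^2+2\zeta^2 m^2-\zeta$ (which differs from $P_2$ by $-\zeta m(z+m+2\zeta m)$), writes $|P|^{2D}=(1+zm)\,Q\,P^{D-1}\overline{P^D}+\text{(explicit remainder)}$, and applies the cumulant expansion to $(1+zm)Q\,P^{D-1}\overline{P^D}$. The second-cumulant contribution then leaves a community-restricted sum $\frac{\zeta K}{N^2}\sum_{i\sim j}G_{ii}G_{jj}\cdot(\cdots)$, and the point of the specific form of $Q$ is that this sum carries a prefactor $z(z+m+3\zeta m)$, so one can use $zG_{ii}=(HG)_{ii}-1$ \emph{again} on it (generating the $J_{r,s}$ terms), and then a \emph{third} time on the resulting $\sum_{i\sim j\sim k}G_{ii}G_{jj}G_{kk}$ sum (the $\hat J_{r,s}$ terms). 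After these two further iterations there are enough diagonal entries that the replacement $G_{ii}G_{jj}G_{kk}G_{uu}\to m^4+O(\psi^2)$ is finally affordable, and the surviving polynomial pieces cancel \emph{algebraically} against the explicit remainder in the decomposition of $|P|^{2D}$ (this is the identity displayed just before \eqref{eq:expand P^2D} in the paper). The role of $P_2$ is to make that final algebraic cancellation go through, not to vanish anywhere.
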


We give the detailed proof of Lemma \ref{lemma:recursive} in Appendix \ref{sec:recursive}. In this section, we only sketch the idea of the proof.
We estimate the expectation of $|P(m_t)|^{2D}$ using Lemma \ref{lemma:Stein}. For example, consider the cumulant expansion of $1+zm$, a part of $P_{1,t,z}(m)$, computed by
\begin{align*}
\mathbb{E}[1+zm] = \mathbb{E}\Big[\frac{1}{N}\sum_{i=1}^{N}(1+zG_{ii})\Big] = \mathbb{E}\Big[\frac{1}{N}\sum_{i=1}^{N}(HG)_{ii}\Big] = \mathbb{E}\Big[\frac{1}{N}\sum_{i,j}(H_{j i}G_{j i})\Big],
\end{align*}
where we used the definition of Green function to get the second equation. Then by Lemma \ref{lemma:Stein}, we get
\begin{align*}
\mathbb{E}\Big[\frac{1}{N}\sum_{i,j}(H_{j i}G_{j i})\Big]&=\frac{1}{N} \sum_{i\neq j} \kappa_{ij}^{(2)}\E \qbb{ \pB{\partial_{ij} G_{ij}} }\nonumber \\
&=- \frac{1}{N} \E \qbb{ \sum_{i,j} \kappa_{ij}^{(2)} G_{ii}G_{jj}}+\frac{1}{N} \E \qbb{ \sum_{i} \kappa_{ii}^{(2)} G_{ii}^2}-\frac{1}{N} \E \qbb{ \sum_{i\neq j} \kappa_{ij}^{(2)} G_{ij}^2},
\end{align*}
and it can be easily shown that the terms containing at least one off-diagonal Green function entries are sufficiently small.
Thus, the main order term we need to estimate is 
\begin{align*}
\frac{1}{N} \E \qbb{ \sum_{i,j} \kappa_{ij}^{(2)} G_{ii}G_{jj}}&=\frac{1}{N}\E \qbb{ \sum_{i,j} \kappa_d^{(2)}G_{ii}G_{jj}}+\frac{1}{N}\E\qbb{\sum_{i\sim j} (\kappa_s^{(2)}-\kappa_d^{(2)})G_{ii}G_{jj}} \nonumber \\
&= \E \qbb{(1-\zeta )m^2  } + \frac{\zeta K}{N^2}\E\qbb{\sum_{i\sim j} G_{ii}G_{jj}}. \notag 
\end{align*}
To compute with sufficiently small error, using Lemma \ref{lemma:Stein} again, we multiply $z$ to the second term to obtain
\begin{align*}
\frac{\zeta K}{N^2}\E\qbb{\sum_{i\sim j} zG_{ii}G_{jj}}&= \frac{\zeta K}{N^2}\E\qbb{\sum_{i\sim j} \pB{\sum_{k}H_{ik}G_{ki} -1 } G_{jj}}=  \sum_{r=1}^{l}  \frac{1}{r!} \E J_{r} - \zeta \E[m] + O(\Phi_\epsilon),
\end{align*}
where 
\begin{align*}
J_{r}=\frac{\zeta K}{N^2}\sum_{i\sim j}\sum_{k\neq i}\kappa_{ik}^{(r+1)}\E \qbb{ \pB{\partial_{ik}^{r} G_{ik}G_{jj}}},
\end{align*}
and $O(\Phi_\epsilon)$ is a sufficiently small error term defined by the right side of \eqref{lemma:recursive}.
One of the main order term which only consists of the diagonal entries of the Green function is
\begin{align}\label{eq:J1}
\E J_{1}=\frac{\zeta K}{N^2}\sum_{i\sim j}\sum_{k}\kappa_{d}^{(2)}\E \qbb{ \pB{G_{ii} G_{jj}G_{kk}} {}}-\frac{\zeta K}{N^2}\sum_{i\sim j\sim k}(\kappa_{s}^{(2)}-\kappa_{d}^{(2)})\E \qbb{ \pB{G_{ii} G_{jj}G_{kk}} {}} .
\end{align}
The first term of the right hand side of \eqref{eq:J1} can be estimated by
\begin{align*}
-\frac{\zeta  K}{N^2}\sum_{i\sim j}\sum_{k\neq i}\kappa_{d}^{(2)}\E \qbb{ \pB{G_{ii} G_{jj}G_{kk}} {}}=-\frac{\zeta  (1-\zeta  )K}{N^2}\sum_{i\sim j}\E \qbb{ mG_{ii} G_{jj} }.
\end{align*}
Thus, if we can estimate the second term of the right hand side of \eqref{eq:J1} with sufficiently small error, then we get the good estimation for
\begin{align*}
\frac{\zeta  K}{N^2}\E\qbb{\sum_{i\sim j} zG_{ii}G_{jj}}+\frac{\zeta  (1-\zeta  )K}{N^2}\sum_{i\sim j}\E \qbb{ mG_{ii} G_{jj} } = \frac{\zeta K}{N^2}\E \qbb{\big(z+(1-\zeta )m\big)\sum_{i\sim j} G_{ii} G_{jj}  }.
\end{align*}
Still it is not easy to handle the second term of the right hand side of \eqref{eq:J1} due to its community structure. We abbreviate 
\begin{align*}
\hat{J}:=\frac{\zeta  K}{N^2}\sum_{i\sim j\sim k}(\kappa_{s}^{(2)}-\kappa_{d}^{(2)})\E \qbb{ \pB{zG_{ii} G_{jj}G_{kk}} {}},
\end{align*}
and use Lemma \ref{lemma:Stein} once more. With some expansions and calculations, one can get the estimation for $\hat{J}$ and with small error terms. We apply a similar strategy to the expectation of $|P(m_t)|^{2D}$ and we can obtain the recursive moment estimate stated in \eqref{eq:recursive}.


With Lemma \ref{lemma:recursive}, we can prove Proposition \ref{prop:locallaw}.

\begin{proof}[Proof of Proposition \ref{prop:locallaw} and Theorem \ref{thm:locallaw}]
	Fix $t\in [0,6\log N]$. Let $\wt m_t$ be the solution $w_t$ of the equation $P_{1,t,z}(w_t)=0$. One can show the first two parts directly from the properties of $\wt m_t$ and its Stieltjes inversion $\rho_t$; see Appendix \ref{sec:rho wt m}. It remains to prove the third part of the proposition. 
	Since $|m_t| \sim 1$,  $|\wt m_t| \sim 1$, there exist positive constant $c_1, c_2$ such that
	\begin{align}
	&\frac{1}{c_1} \leq P_2(\wt{m_t}) \leq c_1 \\
	&\frac{1}{c_2} + O(\frac{1}{q^2})\leq P''(\wt{m_t}) \leq {c_2} + O(\frac{1}{q^2}).
	\end{align}
	We introduce the following $z$- and $t$-dependent deterministic parameters
	\begin{align}
	\alpha_1(z) := \im \widetilde{m}_t(z), \quad \alpha_2(z):=P'(\widetilde{m}_t(z)), \quad \beta:= \frac{1}{N\eta} + \frac{1}{q_t^2},
	\end{align}
	with $z=E+\ii\eta$. 
	We note that 
	\[
	\abs{\alpha_2} \geq \abs{P_2(\wt m_t)}\abs{P'_1(\wt m_t)} \geq \frac{1}{c_1}\im P'_1(\wt m_t) \geq \frac{1}{c_1} \im \wt m_t =  \frac{1}{c_1}\alpha_1
	\]
	Further let 
	\begin{align}\label{def:Lambda_t}
	\Lambda_t(z):=|m_t(z) - \widetilde{m}_t(z) |,\quad (z\in\mathbb{C}^+).
	\end{align}
	Note that from weak local law for the cgSBM \eqref{thm:weak law 3}, we have that $\Lambda_t(z)\prec 1$ uniformly on $\mathcal{D}_\ell$. Since $P_1(\wt m_t) =0$, we have $P'(\wt m_t) = P_1'(\wt m_t) P_2 (\wt m_t)$.
	Similar as in the proof of Lemma 5.1 of \cite{LS18}, we have  
	\[
	\abs{\alpha_2} = \abs{P' (\wt m_t)} = |P_1'(\wt m_t)|| P_2 (\wt m_t)| \sim \sqrt{\kappa_t + \eta}.
	\]
	
	Recall that Young's inequality states that for any $a,b >0$ and $x,y>1$ with $x^{-1}+y^{-1}=1$,
	\begin{equation}\label{eq:young}
	ab \leq \frac{a^x}{x}+\frac{b^y}{y}.
	\end{equation}

	Let $D\geq 10$. Choose any small $\epsilon >0$. The strategy is now as follows. We apply Young's inequality \eqref{eq:young} to split up all the terms on the right hand side of~\eqref{eq:recursive} and absorb resulting factors of $\mathbb{E}|P(m_t)|^{2D}$ into the left hand side. For the first term on the right of~\eqref{eq:recursive}, we get, upon using applying \eqref{eq:young} with $x=2D$ and $y=2D/(2D-1)$, that
	\begin{align}\label{eq:recursive line1}
	N^{\epsilon}\Big(&\frac{\im m_t}{N\eta} + q_t^{-4}\Big)|P(m_t)|^{2D-1}\\ &\leq  N^{\epsilon} \frac{\alpha_1 + \Lambda_t}{N\eta}|P(m_t)|^{2D-1}+ N^{\epsilon}  q_t^{-4}|P(m_t)|^{2D-1} \notag\\
	&\leq \frac{N^{(2D+1)\epsilon}}{2D}\beta^{2D}(\alpha_1+\Lambda_t )^{2D} + \frac{N^{(2D+1)\epsilon}}{2D}q_t^{-8D}+\frac{2(2D-1)}{2D}N^{-\frac{\epsilon}{2D-1}}|P(m_t)|^{2D}, \notag 
	\end{align}
	since $(N\eta)^{-1} \leq \beta $ and note that the last term can be absorbed into the left side of~\eqref{eq:recursive}. The same idea can be applied to the second term on the right side of~\eqref{eq:recursive}. Hence we have 
	\begin{equation}\label{eq:recursive line1.5}
	N^{-\epsilon/8}q_t^{-1}\Lambda^2_t|P(m_t)|^{2D-1} \leq \frac{N^{-(D/4-1)\epsilon}}{2D}q^{-2D}_t\Lambda_t^{4D}+\frac{2D-1}{2D}N^{-\frac{\epsilon}{2D-1}}\abs{P(m_t)}^{2D}.
	\end{equation}
	To handle the other terms, we Taylor expand $P'(m_t)$ around $\widetilde{m}_t$ as
	\begin{align}
	|P'(m_t) - \alpha_2 -P''(\widetilde{m}_t)(m_t-\widetilde{m}_t)| \leq C\Lambda_t^2.
	\end{align}
	Therefore, for some constant $C_1$, we get
	\begin{equation}
	\abs{P'(m)} \leq \abs{\alpha_2} + C_1\Lambda_t,
	\end{equation}
	for all $z \in \mathcal{D_\ell}$, with high probability.
	Note that for any fixed $s \geq 2$, 
	\begin{align*}
	(\alpha_1+\Lambda_t)^{2s-s'-2} (\abs{\alpha_2} + C_1\Lambda_t)^{s'} &\leq N^{\epsilon/2}(\alpha_1+\Lambda_t)^{s-1} (\abs{\alpha_2} + C_1\Lambda_t)^{s-1}\\
	&\leq N^{\epsilon}(\alpha_1+\Lambda_t)^{s/2} (\abs{\alpha_2} + C_1\Lambda_t)^{s/2}
	\end{align*}
	with high probability, uniformly in $\mathcal{D}_\ell$, since $\alpha_1 \leq c_1 \abs{\alpha_2} \leq C$ and $\Lambda_t \prec 1$.
	Also note that $2s-s'-2 \geq s$ since $s' \leq s-2$. Therefore for the second line in \eqref{eq:recursive}, for $2\leq s\leq 2D$, 
	\begin{align}\label{eq:recursive line2}
	N^{\epsilon} q_t^{-1}&\Big(\frac{\im m_t}{N\eta}\Big)^{2s-s'-2}|P'(m_t)|^{s'}|P(m_t)|^{2D-s} \notag \\
	&\leq N^{\epsilon}q_t^{-1}\beta^s(\alpha_1+\Lambda_t)^{2s-s'-2} (\abs{\alpha_2} + C_1\Lambda_t)^{s'}|P(m_t)|^{2D-s}  \\
	&\leq N^{2\epsilon}q_t^{-1}\beta^s(\alpha_1+\Lambda_t)^{s/2} (\abs{\alpha_2} + C_1\Lambda_t)^{s/2}|P(m_t)|^{2D-s}  \notag\\
	&\leq  N^{2\epsilon}q_t^{-1}\frac{s}{2D}\beta^{2D}(\alpha_1+\Lambda_t)^{D} (\abs{\alpha_2} + C_1\Lambda_t)^{D} + N^{2\epsilon}q_t^{-1}\frac{2D-s}{2D}|P(m_t)|^{2D} \notag
	\end{align}
	uniformly on $\mathcal{D}_\ell$ with high probability.
	Similarly, for the last term in \eqref{eq:recursive}, for $2\leq s\leq 2D$, we obtain
	\begin{align}\label{eq:recursive line3}
	N^{\epsilon}&\Big(\frac{1}{N\eta}+\frac{1}{q_t}\Big(\frac{\im m_t}{N\eta} \Big)^{1/2}+\frac{1}{q_t^2}\Big) \Big(\frac{\im m_t}{N\eta} \Big)^{s-1}|P'(m_t)|^{s-1}|P(m_t)|^{2D-s} \notag \\
	& \leq  N^{2\epsilon}\beta\cdot \beta^{s-1}(\alpha_1+\Lambda_t)^{s/2} (\abs{\alpha_2} + C_1\Lambda_t)^{s/2}|P(m_t)|^{2D-s} \\ 
	&\leq \frac{s}{2D}\pB{N^{2\epsilon}N^{\frac{(2D-s)\epsilon}{4D^2}}}^\frac{2D}{s}\beta^{2D}(\alpha_1+\Lambda_t)^{D} (\abs{\alpha_2} + C_1\Lambda_t)^{D} + \frac{2D-s}{2D}\pB{N^{-\frac{(2D-s)\epsilon}{4D^2}}}^{\frac{2D}{2D-s}}|P(m_t)|^{2D} \notag \\
	& \leq N^{(2D+1)\epsilon}\beta^{2D}(\alpha_1+\Lambda_t)^{D} (\abs{\alpha_2} + C_1\Lambda_t)^{D} + N^{-\frac{\epsilon}{2D}}|P(m_t)|^{2D}, \notag
	\end{align}
	uniformly on $\mathcal{D_\ell}$ with high probability where we used 
	\begin{equation}\label{eq:beta bound}
	\frac{1}{N\eta}+\frac{1}{q_t}\Big(\frac{\im m_t}{N\eta} \Big)^{1/2}+\frac{1}{q_t^2} \prec \beta.
	\end{equation}
	From \eqref{eq:recursive}, \eqref{eq:recursive line1}, \eqref{eq:recursive line1.5}, \eqref{eq:recursive line2} and \eqref{eq:recursive line3}
	\begin{align}
	\mathbb{E}[|P(m_t)|^{2D}] \leq &N^{(2D+1)\epsilon}\mathbb{E}[\beta^{2D}(\alpha_1 +\Lambda_t)^D(|\alpha_2|+C_1\Lambda_t)^D]+\frac{N^{(2D+1)\epsilon}}{2D}q_t^{-8D} \\
	& +\frac{N^{-(D/4-1)\epsilon}}{2D}q_t^{-2D}\mathbb{E}[\Lambda_t^{4D}] +CN^{-\frac{\epsilon}{2D}}\E [|P(m_t)|^{2D}],\notag 
	\end{align}
	for all $z\in\mathcal{D}_\ell$.
	Since the last term can be absorbred into the left side, we eventually find
	\begin{align}\label{eq:recursive1}
	&\mathbb{E}[|P(m_t)|^{2D}] \notag\\
	&\leq CN^{(2D+1)\epsilon}\mathbb{E}[\beta^{2D}(\alpha_1 +\Lambda_t)^D(|\alpha_2|+C_1\Lambda_t)^D]+C\frac{N^{(2D+1)\epsilon}}{2D}q_t^{-8D} +C\frac{N^{-(D/4-1)\epsilon}}{2D}q_t^{-2D}\mathbb{E}[\Lambda_t^{4D}]  \\
	&\leq N^{3D\epsilon}\beta^{2D}|\alpha_2|^{2D} +N^{3D\epsilon}\beta^{2D}\mathbb{E}[\Lambda_t^{2D}]+N^{3D\epsilon}q_t^{-8D}+N^{-D\epsilon/8}q_t^{-2D}\mathbb{E}[\Lambda_t^{4D}], \notag
	\end{align}
	uniformly on $\mathcal{D}_\ell$, where we used $\alpha_1\le c_1 |\alpha_2|$ and the inequality
	\begin{equation}\label{eq:schwarz}
	(a+b)^x \leq 2^{x-1}(a^x+b^x)
	\end{equation}
	for any $a,b \geq 0$ and $x\geq 1$ with $D>10$  to get the last line.

	Now, we aim to control $\Lambda_t$ in terms of $|P(m_t)|$. For that, from the third order Taylor expansion of $P(m_t)$ around $\widetilde m_t$ to get
	\begin{align} \label{eq:3rdTaylor}
	\Big|P(m_t)-P' (\wt m_t)(m_t-\widetilde{m}_t)-\frac{1}{2}P''(\widetilde{m}_t)(m_t-\widetilde{m}_t)^2\Big|\leq C\Lambda_t^3,
	\end{align}
	since $P(\widetilde{m}_t)=0$ and $P'''(\widetilde{m}_t) \sim 1$. Then using $\Lambda_t\prec 1$ and $P''(\widetilde{m}_t) \geq C+O(q_t^{-2})$ we obtain
	\begin{align} \label{eq:Lambdasq}
	\Lambda_t^2\prec |\alpha_2|\Lambda_t +|P(m_t)|, \quad (z\in\mathcal{D}_\ell).
	\end{align}	
	Taking the $2D$-power of \eqref{eq:Lambdasq}, using \eqref{eq:schwarz} again, and taking the expectation, we get 
	\begin{equation}
	\mathbb{E}[\Lambda_{t}^{2D}]\leq 4^{2D}N^{\epsilon/2}|\alpha_2|^{2D}\E[\Lambda_{t}^{2D}]+4^{2D}N^{\epsilon/2}\E[|P(m_t)|^{2D}]
	\end{equation}	
	Replacing form \eqref{eq:recursive1} for $\E[|P(m_t)|^{2D}]$, for sufficiently large $N$, we obtain
	\begin{align}
	\E [\Lambda_t ^{4D}]\leq &N^{\epsilon}|\alpha_2|^{2D}\E[\Lambda_{t}^{2D}] + N^{(3D+1)\epsilon}\beta^{2D}|\alpha_2|^{2D} +N^{(3D+1)\epsilon}\beta^{2D}\mathbb{E}[\Lambda_t^{2D}]+N^{(3D+1)\epsilon}q_t^{-8D}\notag\\
	&+N^{-D\epsilon/8+\epsilon}q_t^{-2D}\mathbb{E}[\Lambda_t^{4D}], 
	\end{align}
	uniformly on $\mathcal{D}_\ell$. Using Schwarz inequality for the first term and the third term on the right, absorbing the terms $o(1)\E[\Lambda_t^{4D}]$ into the left side and using \eqref{eq:beta bound} we get 
	\begin{equation}
	\mathbb{E}[\Lambda_{t}^{4D}]\leq N^{2\epsilon}|\alpha_2|^{4D} N^{(3D+2)\epsilon}\beta^{2D}|\alpha_2^{2D}|+N^{(3D+2)\epsilon}\beta^{4D},
	\end{equation}
	uniformly on $\mathcal{D}_\ell$.
	This estimate can be fed back into \eqref{eq:recursive1}, to get the bound
	\begin{align}
	\mathbb{E}[|P(m_t)|^{2D}]& \leq N^{3D\epsilon}\beta^{2D}|\alpha_2|^{2D}+N^{3D\epsilon}\beta^{4D}\E[\Lambda^{2D}]+N^{(3D+1)\epsilon}\beta^{4D}+ q_t^{-2D}|\alpha_2|^{4D} \notag \\
	&\leq N^{5D\epsilon}\beta^{2D}|\alpha_2|^{2D}+N^{5D\epsilon}\beta^{4D}+q_t^{-2D}|\alpha_2|^{4D},
	\end{align}
	uniformly on $\mathcal{D}_\ell$ for sufficiently large $N$. 
	
	For any fixed $z\in\mathcal{D}_\ell$, Markov's inequality then yields $|P(m_t)|\prec|\alpha_2|\beta+\beta^2+q_t^{-1}|\alpha_2|^2$. Then we can obtain from the Taylor expansion of $P(m_t)$ around $\wt m_t$ in \eqref{eq:3rdTaylor} that 
	\begin{equation}
	|\alpha_2(m_t-\widetilde{m}_t)+\frac{P''(\wt m)}{2}(m_t-\widetilde{m}_t)^2|\prec\psi\Lambda_t^2 +|\alpha_2|\beta+\beta^2+q_t^{-1}|\alpha_2|^2,
	\end{equation}
	for each fixed $z\in\mathcal{D}_\ell$, where $\psi$, defined in Theorem \ref{thm:weak local law}, satisfies $\psi\geq q_t^{-2}$.
	Uniformity in $z$ is easily achieved using a lattice argument and the Lipschitz continuity of $m_t(z)$ and $\widetilde{m}_t(z)$ on $\mathcal{D}_\ell$. Furthermore, for any (small) $\epsilon >0$ and (large) $D$ there is an event $\wt \Xi$ with $\P(\wt \Xi)\geq 1-N^D$ such that for all $z \in \mathcal{D}_\ell$,
	\begin{equation}\label{eq:2ndTaylor}
	|\alpha_2(m_t-\widetilde{m}_t)+\frac{P''(\wt m)}{2}(m_t-\widetilde{m}_t)^2|\leq N^\epsilon\psi\Lambda_t^2 +N^\epsilon|\alpha_2|\beta+N^\epsilon\beta^2+N^\epsilon q_t^{-1}|\alpha_2|^2,
	\end{equation}
	on $\wt \Xi$, for $N$ sufficiently large.

	Recall that there exists a constant $C_0>1$ which satisfies $C_0^{-1}\sqrt{\kappa_t(E)+\eta}\leq|\alpha_2|\leq C_0\sqrt{\kappa_t(E)+\eta}$, where we can choose $C_0$ uniform in $z\in\mathcal{D}_\ell$. Note that, for a fixed $E$, $\beta=\beta(E+\ii\eta)$ is a decreasing function of $\eta$ whereas $\sqrt{\kappa_t(E)+\eta}$ is increasing. Hence there is $\widetilde{\eta_0}\equiv\widetilde{\eta_0}(E)$ such that $\sqrt{\kappa(E)+\widetilde{\eta_0}}=C_0q_t\beta(E+\ii\widetilde{\eta}_0)$. We consider the subdomain $\widetilde{\mathcal{D}}\subset\mathcal{D}_\ell$ defined by
	\begin{align}
	\widetilde{\mathcal{D}}:=\{z=E+\ii\eta\in\mathcal{D}_\ell:\eta>\widetilde{\eta}_0(E)\}.
	\end{align}
	On this subdomain $\widetilde{\mathcal{D}}$, $\beta\leq q_t^{-1}|\alpha_2|$, hence we get from \eqref{eq:2ndTaylor} that there is a high probability event $\widetilde{\Upxi}$ such that
	\begin{align*}
	|\alpha_2(m_t-\widetilde{m}_t)+\frac{P''(\wt m)}{2}(m_t-\widetilde{m}_t)^2|\leq o(1)\Lambda_t^2+3N^\epsilon q_t^{-1}|\alpha_2|^2
	\end{align*}
	and thus
	\begin{align*}
	|\alpha_2|\Lambda_t\leq (\frac{c_2}{2}+o(1))\Lambda_t^2 +3N^\epsilon q_t^{-1}|\alpha_2|^2
	\end{align*}
	uniformly on $\widetilde{\mathcal{D}}$ on $\widetilde{\Upxi}$. Hence, on $\widetilde{\Upxi}$, we have either
	\begin{align}
	|\alpha_2|\leq 2c_2\Lambda_t\quad \textrm{or}\quad \Lambda_t\leq 6N^\epsilon q_t^{-1}|\alpha_2|,\quad(z\in\widetilde{\mathcal{D}}).
	\end{align}
	When $\eta=3$, it is easy to see that
	\begin{align}
	|\alpha_2|\geq |P_2(\wt m)||P'_1(\wt m)| \geq \frac{1}{c_1} \pB{|z+2\wt m|-C\frac{1}{q^2}} \geq \frac{\eta}{c_1}=\frac{3}{c_1} \gg 6N^\epsilon q_t^{-1}|\alpha_2|,
	\end{align}
	for sufficiently large $N$. From the a priory estimate, we know that $|\Lambda_t|\prec \psi$, we hence find that
	\begin{align}\label{eq:dicho}
	\Lambda_t\leq 6N^\epsilon q_t^{-1}|\alpha_2|,
	\end{align}
	holds for $z \in \wt{\mathcal{D}}$ on the event $\widetilde{\Upxi}$. 	
	Putting~\eqref{eq:dicho} back into~\eqref{eq:recursive1}, we obtain that
	\begin{align}
	\mathbb{E}[|P(m_t)|^{2D}]&\leq N^{4D\epsilon}\beta^{2D}|\alpha_2|^{2D} +N^{3D\epsilon}q_t^{-8D}+q_t^{-6D}|\alpha_2|^{4D} \notag \\
	&\leq N^{6D\epsilon}\beta^{2D}|\alpha_2|^{2D}+N^{6D\epsilon}\beta^{4D},
	\end{align}
	for any small $\epsilon>0$, and large $D$, uniformly on $\widetilde{\mathcal{D}}$. For $z\in\mathcal{D}_\ell\backslash\widetilde{\mathcal{D}}$, it is direct to check the estimate $\mathbb{E}[|P(m_t)|^{2D}]\leq N^{6D\epsilon}\beta^{2D}|\alpha_2|^{2D}+N^{6D\epsilon}\beta^{4D}$. Using a lattice argument and the Lipschitz continuity, we find from a union bound that for any small $\epsilon>0$ and large $D$ there exists an event $\Upxi$ with $\mathbb{P}(\Upxi)\geq 1-N^{-D}$ such that
	\begin{align}\label{eq:2ndTaylor2}
	|\alpha_2(m_t-\widetilde{m}_t)+\frac{P''(\wt m)}{2}(m_t-\widetilde{m}_t)^2|\leq N^\epsilon\psi\Lambda_t^2 +N^\epsilon|\alpha_2|\beta+N^\epsilon\beta^2
	\end{align}
	on $\Upxi$, uniformly on $\mathcal{D}_\ell$ for any sufficiently large $N$.

	Recall that for fixed $E$, $\beta=\beta(E+\ii\eta)$ is a decreasing function of $\eta$, $\sqrt{\kappa_{t}(E) +\eta}$ is an increasing function of $\eta$, and $\eta_0\equiv \eta_0(E)$ satisfies that $\sqrt{\kappa(E)+\eta_0} = 10C_0 N^{\epsilon}\beta(E+\ii\eta_0)$. Further notice that $\eta_0(E)$ is a continuous function. We consider the following subdomains of $\mathcal{E}$:
	\begin{align*}
	\mathcal{E}_1&:=\{z=E+\ii\eta\in\mathcal{E} : \eta\leq \eta_0(E), 10N^{\epsilon}\leq N\eta\}, \\
	\mathcal{E}_2&:=\{z=E+\ii\eta\in\mathcal{E} : \eta>\eta_0(E), 10N^{\epsilon}\leq N\eta\}, \\
	\mathcal{E}_3&:=\{z=E+\ii\eta\in\mathcal{E} :  10N^{\epsilon}\geq N\eta\}.
	\end{align*}
	We consider the cases $z\in\mathcal{E}_1$, $z\in\mathcal{E}_2$ and $z\in\mathcal{E}_3$, and split the stability analysis accordingly. Let $\Upxi$ be a high probability event such that~\eqref{eq:2ndTaylor2} holds. Note that we can choose $\ell$ sufficiently small that satisfies $\mathcal{D_\ell} \supset \mathcal{E}_1 \cup \mathcal{E}_2$.

	{\it Case 1:} If $z\in\mathcal{E}_1$, we note that $|\alpha_2|\leq C_0\sqrt{\kappa(E)+\eta}\leq 10C_0^2 N^{\epsilon}\beta(E+\ii\eta)$. Then, we find that
	\begin{align*}
	\absB{\frac{P''(\wt m)}{2}}\Lambda_t^2 &\leq|\alpha_2|\Lambda_t + N^{\epsilon}\psi\Lambda_t^2 + N^{\epsilon}|\alpha_2|\beta + N^{\epsilon}\beta^2 \\ 
	&\leq 10C_0^2 N^{\epsilon}\beta\Lambda_t + N^{\epsilon}\psi\Lambda_t^2 + (10C_0^2 N^{\epsilon}+1) N^{\epsilon}\beta^2,
	\end{align*}
	on $\Upxi$. Hence, there is some finite constant $C$ such that on $\Upxi$, we have $\Lambda_t\leq CN^{\epsilon}\beta$, $z\in\mathcal{E}_1$.
	
	{\it Case 2:} If $z\in\mathcal{E}_2$, we obtain that
	\begin{align}
	|\alpha_2|\Lambda_t\leq (\absB{\frac{P''(\wt m)}{2}}+N^{\epsilon}\psi)\Lambda_t^2 + |\alpha_2|N^{\epsilon}\beta + N^{\epsilon}\beta^2,
	\end{align}
	on $\Upxi$. We then notice that $C_0|\alpha_2|\geq \sqrt{\kappa_t(E)+\eta}\geq 10C_0 N^{\epsilon}\beta$, \emph{i.e.} $N^{\epsilon}\beta\leq |\alpha_2|/10$, so that
	\begin{align}
	|\alpha_2|\Lambda_t\leq (\absB{\frac{P''(\wt m)}{2}}+N^{\epsilon}\psi)\Lambda_t^2 + (1+N^{-\epsilon})|\alpha_2|\beta 
	\leq  c_2\Lambda_t^2 + (1+N^{-\epsilon})|\alpha_2|\beta
	\end{align}
	on $\Upxi$, where we used that $N^{\epsilon}\psi\leq 1$. Hence, on $\Upxi$, we have either
	\begin{align}
	|\alpha_2|\leq 2c_2\Lambda_t \quad\textrm{or}\quad \Lambda_t\leq 3N^{\epsilon}\beta.
	\end{align}
	We use the dichotomy argument and the continuity argument similarly to the strategy to get~\eqref{eq:dicho}. Since $3N^{\epsilon}\beta\leq |\alpha_2|/8$ on $\mathcal{E}_2$, by continuity, we find  that on the event $\Upxi$, $\Lambda_t\leq 3N^{\epsilon}\beta$ for $z\in\mathcal{E}_2$.
	
	{\it Case 3:} For $z\in  \mathcal{E}_3$ we use that $|m'_t(z)|\leq \frac{\im m_t(z)}{\im z}, z\in\mathbb{C}^{+}$. Note that $m_t$ is a Stieltjes transform of a probability measure. Set $\widetilde{\eta}:=10N^{-1+\epsilon}$ and observe that
	\begin{align}
	|m_t(E+\ii\eta)|
	&\leq \int_{\eta}^{\widetilde{\eta}}\frac{s\im m_t(E+\ii s)}{s^2} \di s+ \Lambda_t(E+\ii\widetilde{\eta})+|\widetilde{m}_t(E+\ii \widetilde{\eta})|.
	\end{align}
	It is easy to check that $s\rightarrow s\im m_t(E+\ii s)$ is monotone increasing. Thus, we find that
	\begin{align}
	|m_t(E+\ii\eta)| &\leq\frac{2\widetilde{\eta}}{\eta}\im m_t(E+\ii\widetilde{\eta})+\Lambda_t(E+\ii\widetilde{\eta}) + |\widetilde{m}_t(E+\ii\widetilde{\eta})| \notag \\
	&\leq C\frac{N^{\epsilon}}{N\eta}\big(\im \widetilde{m}_t(E+\ii\widetilde{\eta})+\Lambda_t(E+\ii\widetilde{\eta})\big) + |\widetilde{m}_t(E+\ii\widetilde{\eta})|,
	\end{align}
	for some $C$ where we used $\widetilde{\eta}=10N^{-1+\epsilon}$ to obtain the second inequality. Since $z=E+\ii\widetilde{\eta}\in\mathcal{E}_1 \cup \mathcal{E}_2$, we have $\Lambda_t(E+\ii\widetilde{\eta})\leq CN^{\epsilon}\beta(E+\ii\widetilde{\eta})\leq C$ on $\Upxi$. Since $\widetilde{m}_t$ is uniformly bounded on $\mathcal{E}$, we obtain that $\Lambda_t\leq CN^{\epsilon}\beta$ on $\Upxi$, for all $z\in \mathcal{E}_3$.

	To sum up, we get $\Lambda_t\prec\beta$ uniformly on $\mathcal{E}$ for fixed $t\in[0,6\log N]$. Choosing $t=0$, we have proved Theorem \ref{thm:locallaw}. Now we use the continuity of the Dyson matrix flow to prove that this result holds for all $t\in[0,6\log N]$. Consider a lattice $\mathcal{L}\subset[0,6\log N]$ with spacings of order $N^{-3}$. Then we obtain that $\Lambda_t\prec\beta$, uniformly on $\mathcal{E}$ and on $\mathcal{L}$, by a union bound. Thus, by continuity, we can extend the conclusion to all $t\in[0,6\log N]$ and conclude the proof of Proposition~\ref{prop:locallaw}.
\end{proof}

\subsection{Proof of Theorem \ref{thm:matrix norm}} 
Theorem \ref{thm:matrix norm} follows directly from the following result.
\begin{lemma}\label{lem:Hnorm}
	Suppose that $H_0$ satisfy Assumption \ref{assumption} with $\phi >0$. Then,
	\begin{equation}\label{eq:norm H_t}
	\abs{\norm{H_t}-L_t} \prec \frac{1}{q_t^4}+\frac{1}{N^{2/3}},
	\end{equation}
	uniformly in $t \in [0, 6\log N]$.
\end{lemma}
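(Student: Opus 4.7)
The plan is to derive Lemma~\ref{lem:Hnorm} from the strong local law (Proposition~\ref{prop:locallaw}) by the standard edge argument, as carried out for closely related models in Lemma~4.3 of~\cite{EKYY13}, Lemma~7.2 of~\cite{EYY12a}, and Section~4 of~\cite{LS18}. Theorem~\ref{thm:matrix norm} is then the special case $t=0$. The essential deterministic input is the square-root edge behavior of $\wt{\rho}_t$ at $\pm L_t$ and the corresponding asymptotic of its Stieltjes transform, namely
\begin{equation*}
\im \wt{m}_t(E+\ii \eta) \sim
\begin{cases} \sqrt{\kappa+\eta}, & E \leq L_t,\\ \eta/\sqrt{\kappa+\eta}, & E \geq L_t,\end{cases} \qquad \kappa := |E-L_t|,
\end{equation*}
uniformly in $t\in[0,6\log N]$; these properties are to be supplied by Lemma~\ref{lemma:rho_t} in Appendix~\ref{sec:rho wt m}.

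For the upper bound $\lambda_1(H_t)\leq L_t+N^{\epsilon}(q_t^{-4}+N^{-2/3})$, I would set $\kappa_0:=N^{\epsilon}(q_t^{-4}+N^{-2/3})$ and $\eta_0:=N^{-2/3-\epsilon/2}$, and apply the local law \eqref{eq:stronglaw} at $z=E+\ii \eta_0$ with $E$ ranging over an $\eta_0$-net of $[L_t+\kappa_0,\, 2+o(1)]$, where the upper endpoint is furnished by the weak norm bound of Lemma~\ref{thm:weak matrix norm} applied to $H_t$. The edge asymptotic yields $\im \wt m_t(z)\lesssim \eta_0/\sqrt{\kappa_0}$, and hence $\im m_t(z)\prec \eta_0/\sqrt{\kappa_0}+q_t^{-2}+1/(N\eta_0)$. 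Combining this with the spectral lower bound $\im m_t(z)\geq \#\{i:|\lambda_i-E|\leq\eta_0\}/(2N\eta_0)$, multiplying by $N\eta_0$, and using the choice of $\kappa_0$ to render both $N\eta_0^2/\sqrt{\kappa_0}$ and $N\eta_0/q_t^2$ of order $o(1)$, I conclude via a union bound along the net that no eigenvalue lies above $L_t+\kappa_0$.

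For the lower bound I would use the integrated density of states estimate (Corollary~\ref{coro:local density}) together with the square-root edge behavior, which gives $n_{\wt{\rho}_t}(L_t-\kappa,L_t)\sim \kappa^{3/2}$: choosing $\kappa$ of order $N^{\epsilon}(q_t^{-4}+N^{-2/3})$ makes $\kappa^{3/2}$ dominate the allowed deviation $\kappa/q_t^2+1/N$, and hence forces at least one eigenvalue of $H_t$ into $[L_t-\kappa,L_t]$. Uniformity in $t\in[0,6\log N]$ is then obtained by the Lipschitz continuity of $t\mapsto H_t$ and a union bound on a sufficiently fine grid in $t$. The main obstacle is the delicate scale-matching in the chaining argument of the upper bound: the $1/(N\eta)$ term in the local law is a stochastic domination bound that carries an intrinsic $N^{\epsilon}$ slack, so ruling out eigenvalues strictly above $L_t+\kappa_0$ requires carefully tuning $\eta_0\sim N^{-2/3}$ and $\kappa_0\sim N^{-2/3}$ so that the smoothed count on each window of the net is forced strictly below one; this matching also explains the $N^{-2/3}$ term in the final bound, while the $q_t^{-4}$ term arises from inverting the $q_t^{-2}$ error in $|m_t-\wt m_t|$ through the square-root relation $\wt{m}_t(E)-\wt{m}_t(L_t)\sim \sqrt{L_t-E}$ at the edge.
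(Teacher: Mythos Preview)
Your lower bound argument and the uniformity-in-$t$ step match the paper's proof essentially verbatim. The gap is in your upper bound.

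Applying the strong local law~\eqref{eq:stronglaw} directly only gives
\[
\im m_t(z)\;\leq\;\im \wt m_t(z)+|m_t-\wt m_t|\;\prec\;\frac{\eta_0}{\sqrt{\kappa_0}}+\frac{1}{q_t^2}+\frac{1}{N\eta_0},
\]
and after multiplying by $2N\eta_0$ the third term contributes a factor $2$ (indeed $2N^{\epsilon'}$ once you unpack the $\prec$). No choice of $\eta_0$ and $\kappa_0$ makes this term $o(1)$: it is the local-law error evaluated at the same scale as the spectral lower bound $\im m_t\geq 1/(2N\eta_0)$, so the two never separate. Your own closing paragraph flags exactly this obstruction, but ``carefully tuning'' cannot fix it---the problem is that \eqref{eq:stronglaw} is genuinely too weak by a factor $N^{\epsilon}$ for this purpose.

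The paper does \emph{not} use Proposition~\ref{prop:locallaw} here. Instead it returns to the recursive moment estimate~\eqref{eq:recursive1} and redoes the stability analysis on a special domain
\[
\mathcal{D}_\epsilon=\Bigl\{\,E+\ii\eta:\;N^{4\epsilon}\wt\beta^2\leq\kappa_t\leq q_t^{-1/3},\ \eta=\tfrac{N^\epsilon}{N\sqrt{\kappa_t}}\,\Bigr\},\qquad \wt\beta^2=q_t^{-4}+N^{-2/3},
\]
on which $\alpha_1=\im\wt m_t\sim\eta/\sqrt{\kappa_t}$ is a genuine power of $N$ \emph{smaller} than $|\alpha_2|\sim\sqrt{\kappa_t}\sim N^{\epsilon}\beta$. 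Feeding this gain back into~\eqref{eq:recursive1} (together with the a~priori bound $\Lambda_t\prec\beta$) yields $\E|P(m_t)|^{2D}\leq C N^{6D\epsilon'}\beta^{4D}$ and then, via the Taylor expansion $|\alpha_2|\Lambda_t\leq C'\Lambda_t^2+|P(m_t)|$, the strict improvement $\Lambda_t\leq N^{-\epsilon/2}/(N\eta)$ on $\mathcal{D}_\epsilon$. This, and not the local law itself, is what makes $\im m_t\ll 1/(N\eta)$ and produces the contradiction with $\im m_t\geq 1/(5N\eta)$. The extra $N^{-\epsilon/2}$ saving is the whole point, and it is not available from~\eqref{eq:stronglaw} alone.
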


For the proof of Lemma \ref{lem:Hnorm}, the similar strategy to the ones in \cite{LS18} and \cite{Hwang2018} can be applied. We establish the upper bound on the largest eigenvalue of $H_t$, using a stability analysis starting from \eqref{eq:recursive1} and the fact that $\alpha_1(z) = \im\wt{m}_t$ behaves as $\eta/\sqrt{\kappa_t (E)+\eta}$, for $E\geq L_+$. The detailed proof is given in Appendix \ref{app:Hnorm}.

\section{Proof of Tracy--Widom limit}\label{sec:TW}
In this section, we prove the Theorem \ref{thm:TWlimit}, the Tracy--Widom limiting distribution of the largest eigenvalue. Following the idea from \cite{EYY12}, we consider the imaginary part of the normalized trace of the Green function $m\equiv m^{H}$ of $H$. For $\eta>0$, define
\begin{align}
\theta_\eta(y)=\frac{\eta}{\pi(y^2+\eta^2)},\quad(y\in\mathbb{R}).
\end{align}
From the definition of the Green function, one can easily check that	
\begin{align}
\im m(E+\ii\eta)=\frac{\pi}{N}\tr \theta_\eta(H-E).
\end{align}

The first proposition in this section shows how we can approximate the distribution of the largest eigenvalue by using the Green function. Recall that $L_+$ is the right endpoint of the deterministic probability measure in Theorem \ref{thm:locallaw}. 
\begin{proposition} \label{prop:Greenftn}
	Let $H$ satisfy Assumption \ref{assumption}, with $\phi>1/6$. Denote by $\lambda_1^{H}$ the largest eigenvalue of $H$. Fix $\epsilon>0$ and let $E\in\mathbb{R}$ be such that $|E-L|\leq N^{-2/3+\epsilon}$. Set $E_{+} :=L+2N^{-2/3+\epsilon}$ and define $\chi_E:=\mathds{1}_{[E,E_{+}]}.$ Let $\eta_1:=N^{-2/3-3\epsilon}$ and $\eta_2:=N^{-2/3-9\epsilon}$. Let $K:\mathbb{R}\rightarrow[0,\infty)$ be a smooth function satisfying
	\begin{align}
	K(x)=\left\{\begin{array}{ll}
	1&\textrm{if $|x|<1/3$} \\
	0&\textrm{if $|x|>2/3$},
	\end{array}\right.
	\end{align}
	which is a monotone decreasing on $[0,\infty)$. Then, for any $D>0$,
	\begin{align}
	\mathbb{P}(\lambda_1^{H}\leq E-\eta_1 )-N^{-D}<\mathbb{E}[K(\tr(\chi_E *\theta_{\eta_2})(H))]<\mathbb{P}(\lambda_1^{H}\leq E+\eta_1 )+N^{-D}
	\end{align}
	for $N$ sufficiently large, with $\theta_{\eta_2}$.
\end{proposition}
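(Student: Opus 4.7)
The plan is to write $\tr(\chi_E\ast\theta_{\eta_2})(H)=\sum_i f(\lambda_i)$ where
\[
f(x):=(\chi_E\ast\theta_{\eta_2})(x)=\frac{1}{\pi}\Big[\arctan\tfrac{E_+-x}{\eta_2}-\arctan\tfrac{E-x}{\eta_2}\Big],
\]
so that $0\le f\le 1$, $f$ differs from $\mathbf 1_{[E,E_+]}$ only on a scale $\eta_2$ near the endpoints, and $f(x)$ decays like $\min\big((E_+-E)\eta_2/(E-x)^2,\eta_2/(E-x)\big)$ for $x\le E-\eta_1$ (and symmetrically above $E_+$). Since $K\ge 0$, $K\equiv 1$ on $[0,1/3]$ and $K\equiv 0$ on $[2/3,\infty)$, it suffices to prove, on a high-probability event $\Upxi$, the dichotomy
\[
\big\{\lambda_1^H\le E-\eta_1\big\}\ \Longrightarrow\ \sum_i f(\lambda_i)\le 1/3,\qquad \big\{\lambda_1^H>E+\eta_1\big\}\ \Longrightarrow\ \sum_i f(\lambda_i)\ge 2/3.
\]
Then $\mathbb E[K(\sum_i f(\lambda_i))]$ is sandwiched between $\mathbb P(\lambda_1^H\le E-\eta_1)$ and $\mathbb P(\lambda_1^H\le E+\eta_1)$ up to $\mathbb P(\Upxi^c)\le N^{-D}$.

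The upper bound is the easier of the two. I would use Theorem~\ref{thm:matrix norm} to get the rigidity at the top, $\lambda_1^H\le L+N^{-2/3+\epsilon/2}$ with high probability, so that on the event $\{\lambda_1^H>E+\eta_1\}$ the top eigenvalue lies well inside $[E,E_+]$, at distance at least $N^{-2/3+\epsilon}$ from each endpoint. The explicit arctan formula then gives $f(\lambda_1^H)\ge 1-O(\eta_2/N^{-2/3+\epsilon})=1-O(N^{-10\epsilon})\ge 2/3$, and the remaining terms are nonnegative.

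The lower bound is the technical heart of the argument. On $\{\lambda_1^H\le E-\eta_1\}$, I need to bound $\sum_i f(\lambda_i)$ by a small quantity. For $\lambda_i=E-t$ with $t\ge\eta_1$, the arctan identity yields $f(\lambda_i)\le \eta_2/(\pi t)\wedge (E_+-E)\eta_2/(\pi t^2)$. I would split the sum dyadically into the scales $\eta_1\le t\le E_+-E$ and $t\ge E_+-E$, and use the local density estimate (Corollary~\ref{coro:local density} together with the edge behaviour $\tilde\rho(y)\lesssim\sqrt{L-y}$ from Lemma~\ref{lemma:rho_t}) to count the eigenvalues in each dyadic annulus. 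Integrating the bound against this density gives, in both regimes, a total of order $N\eta_2\sqrt{E_+-E}\lesssim N^{-17\epsilon/2}\ll 1/3$. One must also discard the far tail $t\gtrsim 1$, which contributes $\lesssim (E_+-E)\eta_2\cdot N=O(N^{-17\epsilon/2})$ by the bound $\|H\|\prec 1$.

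The main obstacle is precisely this last step: one needs a quantitative version of eigenvalue rigidity that is uniform over the whole spectrum and strong enough near the edge to handle the $\eta_2^{-1}$-scale singularity of $\theta_{\eta_2}$. The local law $|m-\wt m|\prec q^{-2}+(N\eta)^{-1}$ (Theorem~\ref{thm:locallaw}) combined with Helffer--Sj\"ostrand calculus and the square-root edge behaviour of $\wt\rho$ is exactly what makes the dyadic counting go through; the bound on $\|H\|$ from Theorem~\ref{thm:matrix norm} closes off the possibility of an eigenvalue escaping above $E_+$, and the whole argument hinges on having $\eta_1\gg\eta_2$ and $E_+-E\gg\eta_1$, which are built into the choices $\eta_1=N^{-2/3-3\epsilon}$, $\eta_2=N^{-2/3-9\epsilon}$, $E_+-E\sim N^{-2/3+\epsilon}$.
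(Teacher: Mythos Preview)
Your proposal is correct and follows the same overall architecture as the proof the paper defers to (Proposition~7.1 of \cite{LS18}, itself modeled on \cite{EYY12}): reduce the two-sided inequality to the dichotomy
\[
\{\lambda_1^H\le E-\eta_1\}\cap\Upxi\ \Rightarrow\ \tr(\chi_E\ast\theta_{\eta_2})(H)<1/3,\qquad
\{\lambda_1^H> E+\eta_1\}\cap\Upxi\ \Rightarrow\ \tr(\chi_E\ast\theta_{\eta_2})(H)>2/3,
\]
on a high-probability event $\Upxi$, and then use $0\le K\le 1$ with the plateau structure of $K$.

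The only notable difference is in how you carry out the ``hard'' implication. In \cite{LS18,EYY12} one writes
\[
\tr(\chi_E\ast\theta_{\eta_2})(H)=\frac{N}{\pi}\int_E^{E_+}\im m(y+\ii\eta_2)\,\di y
\]
and bounds $\im m(y+\ii\eta_2)$ directly via the local law (Theorem~\ref{thm:locallaw}) and the edge behaviour of $\im\wt m$ from Lemma~\ref{lemma:rho_t}. You instead sum $f(\lambda_i)$ and control the eigenvalue count dyadically via Corollary~\ref{coro:local density}. These are two sides of the same coin (both ultimately rest on the strong local law and the square-root edge), and your version goes through with the same final estimate $O(N^{-c\epsilon})$. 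Two small numerical slips: on $\{\lambda_1^H>E+\eta_1\}$ the distance from $\lambda_1^H$ to the left endpoint $E$ is only $\ge\eta_1$, not $\ge N^{-2/3+\epsilon}$, but since $\eta_1/\eta_2=N^{6\epsilon}\to\infty$ the arctan estimate still gives $f(\lambda_1^H)=1-O(N^{-6\epsilon})$; and the far-tail bound is $N(E_+-E)\eta_2=N^{-1/3-8\epsilon}$, not $N^{-17\epsilon/2}$. Neither affects the conclusion.
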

We refer to Proposition 7.1 of \cite{LS18} for the proof. We remark that the lack of the improved local law near the lower edge does not alter the proof of Proposition \ref{prop:Greenftn}. 

Define $W^{G}$ be a $N\times N$ generalized Wigner matrix independent of $H$ with Gaussian entries $W^G_{ij}$ satisfying 
\begin{align*}
\mathbb{E}W^G_{ij} = 0, \qquad \mathbb{E}|W^G_{ij}|^2 = \mathbb{E}|H_{ij}|^2, 
\end{align*}
and denote by $m^{G}\equiv m^{W^{G}}$ the normalized trace of its Green function. The following is the Green function comparison for our model.
\begin{proposition} \label{prop:Greencomparison}
	Under the assumptions of Proposition~\ref{prop:Greenftn} the following holds. Let $\epsilon>0$ and set $\eta_0=N^{-2/3-\epsilon}$. Let $E_1, E_2\in\mathbb{R}$ satisfy $|E_1|,|E_2|\leq N^{-2/3+\epsilon}$. Consider a smooth function $F:\mathbb{R}\rightarrow\mathbb{R}$ such that
	\begin{align}
	\max_{x\in\mathbb{R}}|F^{(l)}(x)|(|x|+1)^{-C}\leq C,\quad(l\in\mathbf{[}1, 11\mathbf{]}).
	\end{align}
	Then, for any sufficiently small $\epsilon>0$, there exists $\delta>0$ such that
	\begin{align}
	\left| \mathbb{E}F\Big(N\int^{E_2}_{E_1}\im m(x+L_++\ii\eta_0)dx\Big) -\mathbb{E}F\Big(N\int^{E_2}_{E_1}\im m^{G}(x+\lambda_{+}+\ii\eta_0)dx\Big) \right|\leq N^{-\delta},
	\end{align}
	for large enough $N$.
\end{proposition}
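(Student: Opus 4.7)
The plan is to follow the continuous-interpolation Green function comparison scheme of \cite{LS18,Hwang2018}, based on the Dyson matrix flow $H_t = \e^{-t/2} H_0 + \sqrt{1-\e^{-t}}\, W^G$ with $H_0 := H$. Set
\[
X_t := N\int_{E_1}^{E_2} \im m_t(x + L_t + \ii\eta_0)\, \di x,
\]
so that $X_0$ is the quantity appearing on the left side of the proposition, while, using $q_{6\log N} = q N^3 \gg N^{1/2}$ and $L_{6\log N} \to 2 = \lambda_{+}$, the value $X_{6\log N}$ matches, up to an $O(N^{-3})$ negligible error, the quantity appearing on the right side. The central task is therefore to establish
\[
\absB{ \frac{\di}{\di t} \E F(X_t) } \;\leq\; N^{-c}
\]
uniformly for $t \in [0, 6\log N]$, for some $c = c(\epsilon,\phi) > 0$, and then integrate in $t$.

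Differentiating $\E F(X_t)$, two contributions arise: one from the evolution of the matrix entries of $H_t$, and one from the time derivative of the shift $L_t$. For the former I would apply the generalized Stein lemma (Lemma~\ref{lemma:Stein}) to each summand $(H_t)_{ij}\partial_{ij} m_t$ arising from the differentiation of $m_t$ along the matrix OU flow. Because the second cumulant is preserved by the flow, the order-one term cancels exactly against the drift, leaving only contributions from cumulants of order $r+1 \geq 3$. The third-cumulant contribution vanishes in expectation after parity considerations, exactly as in Section~7 of \cite{LS18}. The fourth-cumulant contribution, after using Proposition~\ref{prop:locallaw} to replace the diagonal entries of $G_t$ by $\wt m_t$ up to errors of size $q_t^{-2} + (N\eta_0)^{-1}$, reduces to a deterministic multiple of $\e^{-2t} q_t^{-2}\xi^{(4)}$ times a polynomial in $\wt m_t(L_t+\ii\eta_0)$; differentiating \eqref{eq:L_t}, the quantity $\di L_t/\di t$ has precisely the same leading structure, and a direct calculation using the defining relation $P_{1,t,z}(\wt m_t) = 0$ shows that the two leading contributions cancel. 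This matching is the reason for introducing the $t$-dependent shift $L_t$ at all, and it separates the argument from the standard Wigner comparison.

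The remaining terms originate from higher cumulants $\kappa_{t,ij}^{(k)}$ with $k\geq 5$ and from subleading error terms in the fourth-cumulant expansion. Each such contribution carries a factor $\e^{-kt/2} q_t^{-(k-2)} = \e^{-t} q^{-(k-2)}$, so these terms are uniformly summable in $t$ once combined with the polynomial growth bound on $F^{(l)}$ and the strong local law. The requirement $\phi > 1/6$ enters here: it guarantees $q^{-4} \ll N^{-2/3}$, so that the fourth-cumulant error and the subleading shift correction both sit safely below the Tracy--Widom scale $N^{-2/3}$ and yield $N^{-c}$ after $x$-integration. Integrating from $0$ to $6\log N$ and comparing to the Gaussian endpoint completes the argument. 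The main technical obstacle I anticipate is the precise bookkeeping of the fourth-cumulant matching, where one must expand both the Stein-derived polynomial in $G_t$ and the expression for $\di L_t/\di t$ in terms of $\wt m_t$, repeatedly invoking $P_{1,t,z}(\wt m_t) = 0$ to identify matching coefficients; once this cancellation is verified, the rest of the estimates follow the template established in \cite{LS18,Hwang2018}.
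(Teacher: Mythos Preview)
Your outline matches the paper's proof closely: Dyson matrix flow interpolation, differentiation of $\E F(X_t)$, generalized Stein expansion via Lemma~\ref{lemma:appStein}, and the crucial cancellation between the fourth-cumulant leading term and the $\dot L_t$ contribution, followed by integration over $t\in[0,6\log N]$.

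Two points where your sketch deviates from what is actually done (and needed). First, the third-cumulant ($r=2$) contribution does \emph{not} vanish by parity. The terms in $\partial_{jk}^2(F'(X)G_{ij}G_{ki})$ with three off-diagonal Green function entries yield only $q_t^{-1}N^{1+C\epsilon}$ by direct power counting, which is insufficient; one must re-expand on an unmatched index using $zG_{kj}=\sum_n H_{kn}G_{nj}$ and apply Lemma~\ref{lemma:Stein} once more to extract the missing factor of $q_t^{-1}$, exactly as in the treatment of $I_{2,0}^{(1)}$ in the recursive moment estimate. Second, the fourth-cumulant cancellation is verified in the paper at the level of $\sum_{i,j}G_{ij}G_{ji}$ rather than through $\wt m_t$ and the relation $P_{1,t,z}(\wt m_t)=0$: one computes (Lemma~\ref{lemma:A_r}) that $A_3 = 2\e^{-t}\xi^{(4)}q_t^{-2}\sum_{i,j}\E[F'(X)G_{ij}G_{ji}] + O(N^{2/3-\epsilon'})$, which cancels against $-\dot L_t\sum_{i,j}\E[F'(X)G_{ij}G_{ji}]$ directly since $\dot L_t = -2\e^{-t}\xi^{(4)}q_t^{-2} + O(\e^{-2t}q_t^{-4})$. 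Your route via $\wt m_t$ polynomials is workable but more circuitous.
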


From Theorem 2.7 of \cite{BEY14}, we know that the largest eigenvalue of the generalized Wigner matrix follows the Tracy--Widom distribution.
Thus, Proposition \ref{prop:Greencomparison} directly implies Theorem \ref{thm:TWlimit}, the Tracy--Widom limit for the largest eigenvalue.  A detailed proof is found, {\it e.g.}, with the same notation in~\cite{LS18}, Section 7.

In the remainder of the section, we prove Proposition \ref{prop:Greencomparison}. We begin by the following application of the generalized Stein lemma.
\begin{lemma}\label{lemma:appStein}
	Fix $\ell \in \mathbb{N}$ and let $F\in C^{\ell+1} (\mathbb{R}; \mathbb{C}^+).$ Let $Y\equiv Y_0$ be a random variable with finite moments to order $\ell+2$ and let $W$ be a Gaussian random variable independent of $Y$. Assume that $\mathbb{E}[Y]=\mathbb{E}[W]=0$ and $\mathbb{E}[Y^2] =\mathbb{E}[W^2]$. Introduce
	\begin{align}
	Y_t := \mathrm{e}^{-t/2} Y_0 +\sqrt{1-\mathrm{e}^{-t}}W,
	\end{align}
	and let $\dot Y_t\equiv \di Y_t /\di t$.  Then,
	\begin{align}\label{eq:genStein}
	\mathbb{E}\Big[ \dot Y_t F(Y_t)\Big] = -\frac{1}{2} \sum_{r=2}^{\ell} \frac{\kappa^{(r+1)}(Y_0)}{r!} \mathrm{e}^{-\frac{(r+1)t}{2}}\mathbb{E} \big[F^{(r)}(Y_t)\big] + \mathbb{E}\big[\Omega_\ell (\dot Y_t F(Y_t))\big],
	\end{align}
	where $\mathbb{E}$ denotes the expectation with respect to $Y$ and $W$, $\kappa^{(r+1)}(Y)$ denotes the $(r+1)$-th cumulant of $Y$ and $F^{(r)}$ denotes the $r$-th derivative of the function $F$. The error term $\Omega_\ell$ in \eqref{eq:genStein} satisfies
	\begin{align}
	\big|\mathbb{E}\big[\Omega_\ell (\dot Y_t F(Y_t))\big]\big|&
	\leq C_\ell \mathbb{E}[|Y_t|^{|\ell+2}] \sup_{|x|\leq Q} |F^{(\ell+1)} (x)|\notag\\ &\qquad+ C_\ell \mathbb{E}[|Y_t|^{\ell+2}\mathbf{1}(|Y_t|>Q)]\sup_{x\in \mathbb{R}}|F^{(\ell+1)} (x)|, 
	\end{align}
	where $Q> 0$ is an arbitrary fixed cutoff and $C_\ell$ satisfies $C_\ell \leq \frac{(C\ell)^{\ell}}{\ell !}$ for some numerical constant $C$.
\end{lemma}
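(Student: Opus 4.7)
The plan is to exploit the fact that $Y_t$ is an explicit affine combination of $Y_0$ and an independent Gaussian $W$, so that $\mathbb{E}[\dot Y_t F(Y_t)]$ naturally splits into two pieces, one in $Y_0$ and one in $W$, each of which can be handled by a cumulant/Stein expansion. A direct differentiation gives
\begin{equation*}
\dot Y_t = -\tfrac{1}{2}\e^{-t/2}Y_0 + \tfrac{1}{2}\tfrac{\e^{-t}}{\sqrt{1-\e^{-t}}}\,W,
\end{equation*}
so that $\mathbb{E}[\dot Y_t F(Y_t)] = -\tfrac{1}{2}\e^{-t/2}\mathbb{E}[Y_0 F(Y_t)] + \tfrac{\e^{-t}}{2\sqrt{1-\e^{-t}}}\mathbb{E}[W F(Y_t)]$. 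Note that $W$ and $Y_0$ are independent and centered, and that $\partial_{Y_0}Y_t = \e^{-t/2}$ while $\partial_W Y_t = \sqrt{1-\e^{-t}}$; these chain-rule factors are what will produce the $\e^{-(r+1)t/2}$ weights in the final answer.

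Next, I would apply Lemma~\ref{lemma:Stein} (the generalized Stein lemma) to $\mathbb{E}[Y_0 F(Y_t)]$ viewed as a function of $Y_0$ with $W$ frozen, picking up $\sum_{r=1}^{\ell}\frac{\kappa^{(r+1)}(Y_0)}{r!}\e^{-rt/2}\mathbb{E}[F^{(r)}(Y_t)]$ plus an error $\mathbb{E}[\Omega_\ell^{(Y_0)}]$. Separately, for $\mathbb{E}[W F(Y_t)]$ I would use ordinary Gaussian integration by parts (Stein's identity for a Gaussian), which terminates after one term because the cumulants of $W$ of order $\ge 3$ vanish. Since $\mathbb{E}[W^2] = \mathbb{E}[Y_0^2] = \kappa^{(2)}(Y_0)$, this yields
\begin{equation*}
\mathbb{E}[W F(Y_t)] = \kappa^{(2)}(Y_0)\sqrt{1-\e^{-t}}\,\mathbb{E}[F'(Y_t)].
\end{equation*}
There is no error term from the Gaussian side, so Lemma~\ref{lemma:Stein}'s remainder is the only source of $\Omega_\ell$.

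Substituting both expansions into the splitting of $\mathbb{E}[\dot Y_t F(Y_t)]$, the $r=1$ term from the $Y_0$-expansion contributes $-\tfrac{1}{2}\kappa^{(2)}(Y_0)\e^{-t}\mathbb{E}[F'(Y_t)]$, and the $W$-term contributes $+\tfrac{1}{2}\kappa^{(2)}(Y_0)\e^{-t}\mathbb{E}[F'(Y_t)]$; these cancel exactly. This cancellation is the whole point of the lemma and the reason the sum in~\eqref{eq:genStein} starts at $r=2$: it reflects the fact that adding $\sqrt{1-\e^{-t}}\,W$ preserves the second moment along the flow and only higher cumulants are perturbed. After this cancellation one is left with precisely $-\tfrac{1}{2}\sum_{r=2}^{\ell}\frac{\kappa^{(r+1)}(Y_0)}{r!}\e^{-(r+1)t/2}\mathbb{E}[F^{(r)}(Y_t)]$ as the main sum, with error $-\tfrac{1}{2}\e^{-t/2}\mathbb{E}[\Omega_\ell^{(Y_0)}]$.

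The remaining task is the error bound. I would feed the bound from Lemma~\ref{lemma:Stein} into $-\tfrac{1}{2}\e^{-t/2}\mathbb{E}[\Omega_\ell^{(Y_0)}]$, converting derivatives in $Y_0$ into derivatives of $F$ via $\partial_{Y_0}^{\ell+1}F(Y_t) = \e^{-(\ell+1)t/2}F^{(\ell+1)}(Y_t)$, and replace the moments of $Y_0$ by moments of $Y_t$ at the cost of an innocuous constant (since $Y_t$ is a linear combination of $Y_0$ and $W$ with bounded coefficients and $W$ has Gaussian tails, any $(\ell+2)$-th moment of $Y_t$ is controlled by the corresponding moment of $Y_0$). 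The only mildly delicate point is the truncation at level $Q$, which is inherited directly from Lemma~\ref{lemma:Stein}; the constant $C_\ell$ satisfying $C_\ell\le (C\ell)^\ell/\ell!$ is likewise inherited. I do not foresee a genuine obstacle; the main care is bookkeeping of the exponential factors $\e^{-kt/2}$ so that the announced $\e^{-(r+1)t/2}$ weights and the cancellation of the $r=1$ term emerge cleanly.
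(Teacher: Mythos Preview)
Your proposal is correct and is exactly the approach the paper has in mind: the lemma is introduced there as ``the following application of the generalized Stein lemma'' (Lemma~\ref{lemma:Stein}) and no separate proof is given, so splitting $\dot Y_t$ into its $Y_0$- and $W$-components, applying Lemma~\ref{lemma:Stein} to the former and Gaussian integration by parts to the latter, and observing the cancellation of the $r=1$ term is precisely the intended derivation. One small slip: to match the stated error bound you need moments of $Y_0$ (with the factor $\e^{-(\ell+2)t/2}$) controlled by moments of $Y_t$, not the other way around; this follows from $\e^{-t/2}Y_0 = Y_t - \sqrt{1-\e^{-t}}\,W$ together with $\mathbb{E}[|W|^{\ell+2}] \le C_\ell(\mathbb{E}[Y_t^2])^{(\ell+2)/2} \le C_\ell\,\mathbb{E}[|Y_t|^{\ell+2}]$.
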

\begin{proof}[Proof of Proposition \ref{prop:Greencomparison}] Fix a (small) $\epsilon>0$. Consider $x\in [E_1, E_2]$. For simplicity, let
	\begin{align}
	G\equiv G_t(x+L_t +\ii\eta_0), \quad m\equiv m_t(x+L_t+\ii\eta_0),
	\end{align}
	with $\eta_0=N^{-2/3-\epsilon}$, and define
	\begin{align}
	X\equiv X_t:=N\int^{E_2}_{E_1}\im m(x+L_t +\ii\eta_0) \di x.
	\end{align}
	Note that $X\prec N^{\epsilon}$ and $|F^{(l)}(X)|\prec N^{C\epsilon}$ for $l\in[1,11]$. From \eqref{eq:L_t} we can obtain that
	\begin{align*}
	L_t=2+\mathrm{e}^{-t}\xi^{(4)}q_t^{-2}+O(\mathrm{e}^{-2t}q_t^{-4}),\qquad
	\dot{L_t}=-2\mathrm{e}^{-t}\xi^{(4)}q_t^{-2}+O(\mathrm{e}^{-2t}q_t^{-4}),
	\end{align*}
	with $q_t=\mathrm{e}^{t/2}q_0$, where $\dot{L_t}$ denotes the derivative with respect to $t$ of $L_t$. Let $z=x+L_t+\ii\eta_0$ and $G\equiv G(z)$. Differentiating $F(X)$ with respect to $t$, we obtain
	\begin{align}
	\frac{\textrm{d}}{\textrm{d}t}\mathbb{E}F(X) &= \mathbb{E}\Big[F'(X)\frac{\textrm{d}X}{\textrm{d}t}\Big]=\mathbb{E}\Big[F'(X)\im\int^{E_2}_{E_1}\sum^{N}_{i=1}\frac{\textrm{d}G_{ii}}{\textrm{d}t}\textrm{d}x\Big] \notag \\
	&=\mathbb{E}\Big[F'(X)\im\int^{E_2}_{E_1}\Big(\sum_{i,j,k}\dot{H}_{jk}\frac{\partial G_{ii}}{\partial H_{jk}}+\dot{L_t}\sum_{1\leq i,j\leq N} G_{ij}G_{ji}\Big)\textrm{d}x\Big], \label{eq:F_derivative}
	\end{align}
	where by definition
	\begin{align}
	\dot{H}_{jk}\equiv(\dot{H}_{t})_{jk}=-\frac{1}{2}\mathrm{e}^{-t/2}(H_0)_{jk}+\frac{\mathrm{e}^{-t}}{2\sqrt{1-\mathrm{e}^{-t}}}W^{G}_{jk}.
	\end{align}
	Thus, we find that
	\begin{align}\label{eq:F'term1}
	&\sum_{i,j,k}\mathbb{E}\Big[\dot{H}_{jk}F'(X)\frac{\partial G_{ii}}{\partial H_{jk}}\Big] = -2 \sum_{i,j,k}\mathbb{E}\Big[\dot{H}_{jk}F'(X)G_{ij}G_{ki}\Big] \notag \\
	&=\frac{\mathrm{e}^{-t}}{N}\sum_{r=2}^{\ell}\frac{q_t^{-(r-1)}}{r!}\sum_{i}\sum_{j\neq k}\mathbb{E}[s_{(jk)}^{(r+1)}\partial_{jk}^r (F'(X)G_{ij}G_{ki})] + O(N^{1/3 +C\epsilon}),
	\end{align}
	for $\ell=10$, by Lemma \ref{lemma:appStein}, where we use the short hand $\partial_{jk}=\partial/\partial H_{jk}$. Here, the error term $O(N^{1/3 +C\epsilon})$ in \eqref{eq:F'term1} corresponds to $\Omega_\ell$ in \eqref{eq:genStein}, which is $O(N^{C\epsilon} N^2 q_t^{-10})$ for $X=H_{ji}$. To estimate the right-hand side of \eqref{thm:locallaw}, we use the following lemma whose proof is in Appendix \ref{app:TW}.
	
	\begin{lemma}\label{lemma:A_r}
		For an integer $r\geq 2$, let
		\begin{align}
		A_r:=\frac{\mathrm{e}^{-t}}{N}\frac{q_t^{-(r-1)}}{r!}\sum_{i}\sum_{j\neq k}\mathbb{E}[s_{(jk)}^{(r+1)}\partial_{jk}^r (F'(X)G_{ij}G_{k i})].
		\end{align}
		Then, for any $r\neq 3$,
		\begin{align}	
		A_r=O(N^{2/3-\epsilon'}),
		\end{align}
		and
		\begin{align}
		A_3=2\mathrm{e}^{-t}\xi^{(4)}q_t^{-2}\sum_{i,j}\mathbb{E}[F'(X)G_{ij}G_{ji}]+O(N^{2/3-\epsilon'}).
		\end{align}
	\end{lemma}
	
	Assuming Lemma \ref{lemma:A_r}, we find that there exists $\epsilon'>2\epsilon$ such that, for all $t\in [0,6\log N]$,
	\begin{align}
	\sum_{i,j,k}\mathbb{E}\Big[\dot{H}_{jk} F'(X)\frac{\partial G_{ii}}{\partial H_{jk}}\Big]=-\dot{L_t}\sum_{i,j}\mathbb{E}[G_{ij}G_{ji}F'(X)]+O(N^{2/3-\epsilon'}),
	\end{align}
	which implies that the right-hand side of \eqref{eq:F_derivative} is $O(N^{-\epsilon'/2})$. Integrating Equation \eqref{eq:F_derivative} from $t=0$ to $t=6 \log N$, we get
	\begin{align*}
	\left|\mathbb{E}F \Big( N\int_{E_1}^{E_2} \im m(x+L_t+\ii\eta_0) \textrm{d}x\Big)_{t=0} -\mathbb{E}F\Big(  N\int_{E_1}^{E_2} \im m(x+L_t+\ii\eta_0) \textrm{d}x\Big)_{t=6 \log N}\right| \leq N^{-\epsilon'/4}.
	\end{align*}
	By comparing largest eigenvalues of $H$ and~$\lambda_i^{G}$, we can get desired result. Let $\lambda_i(6 \log N)$ be the $i$-th largest eigenvalue of $H_{6 \log N}$ and $\lambda_i^{G}$ the $i$-th largest eigenvalue of $W^G$, then $|\lambda_i(6\log N)-\lambda_i^G|\prec N^{-3}$. Then we find that
	\begin{align}
	\left| \im m|_{t=6 \log N} - \im m^{G} \right| \prec N^{-5/3}.
	\end{align}
	This completes the proof of Proposition \ref{prop:Greencomparison}.
\end{proof}

\appendix

\section{Properties of $\rho$ and $\wt m_t$}\label{sec:rho wt m}
We first introduce some basic properties of $m_{sc}$.
\begin{lemma}[Basic properties of $m_{sc}$]\label{lem:basic properties of m} Define the distance to the spectral edge
	\begin{equation}
	\kappa \equiv \kappa(E) := \big| |E| -2 \big|.
	\end{equation}
	Then for $z \in D_\ell$ we have
	\begin{equation}\label{eq:m_z}
	|m_{sc}(z)| \sim 1, \qquad \qquad |1-m_{sc}^2| \sim \sqrt{\kappa+\eta}
	\end{equation}
	and 
	\begin{equation}\label{eq:imm_z}
	\emph{Im }  m_{sc}(z) \sim \begin{cases}
	\sqrt{\kappa+\eta} &\text{if $E \leq 2$}\\
	\frac{\eta}{\sqrt{\kappa+\eta}} &\text{if $E \geq 2$}.\\
	\end{cases}
	\end{equation}
	Moreover, for all $z$ with $\emph{Im }z >0$, 
	\begin{equation}
	|m_{sc}(z)|=|m_{sc}(z)+z|^{-1} \leq 1.
	\end{equation}
\end{lemma}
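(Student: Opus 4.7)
The entire lemma reduces to direct analysis of the explicit formula $m_{sc}(z) = (-z+\sqrt{z^2-4})/2$ together with the defining identity $m_{sc}(z)(m_{sc}(z)+z) = -1$ stated in the excerpt. Taking absolute values in the latter gives the equality $|m_{sc}(z)| = |m_{sc}(z)+z|^{-1}$ immediately. For the bound $|m_{sc}| \leq 1$, I would write polar forms $m_{sc} = r\e^{\ii\alpha}$ and $m_{sc}+z = s\e^{\ii\beta}$ with $r,s>0$ and $\alpha,\beta \in (0,\pi)$, since both factors lie in $\C^+$ (using $\im m_{sc} > 0$ and $\im z > 0$). The relation $m_{sc}(m_{sc}+z) = -1$ then forces $rs=1$ and $\alpha+\beta=\pi$, so $\sin\beta = \sin\alpha > 0$. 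The constraint $\im(m_{sc}+z) - \im m_{sc} = \eta \geq 0$ becomes $(s-r)\sin\alpha \geq 0$, hence $s \geq r$; combined with $rs=1$ this gives $s \geq 1 \geq r$, i.e.\ $|m_{sc}| \leq 1$.

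The estimate $|m_{sc}(z)| \sim 1$ now follows from this upper bound together with the trivial lower bound $|m_{sc}| = |m_{sc}+z|^{-1} \geq (|m_{sc}|+|z|)^{-1} \geq (1+3\sqrt{2})^{-1}$ on $\mathcal{D}_\ell$. For $|1-m_{sc}^2|$, the key algebraic identity is
\[ 1 - m_{sc}(z)^2 = -m_{sc}(z)\sqrt{z^2-4}, \]
verified by direct substitution of the explicit formula. Since $|m_{sc}| \sim 1$, this reduces matters to $|1-m_{sc}^2| \sim |\sqrt{z^2-4}| = |z-2|^{1/2}|z+2|^{1/2}$. On $\mathcal{D}_\ell$, one of $|z \pm 2|$ is always comparable to $1$ (because $|E|<3$ prevents both from being small), while the other satisfies $|z \mp 2| = \sqrt{(E \mp 2)^2 + \eta^2} \sim \kappa + \eta$ when $E$ is near $\pm 2$, and $\sim 1 \sim \kappa + \eta$ when $E$ is bounded away from $\pm 2$. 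Either way $|z^2-4| \sim \kappa+\eta$, giving $|1-m_{sc}^2| \sim \sqrt{\kappa+\eta}$.

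For $\im m_{sc}$, I would use the decomposition $\im m_{sc}(z) = \tfrac{1}{2}\bigl(\im\sqrt{z^2-4} - \eta\bigr)$ together with the explicit formula $\im\sqrt{a+\ii b} = \mathrm{sign}(b)\sqrt{(|w|-a)/2}$ applied to $w = z^2-4 = a + \ii b$, where $a = E^2 - \eta^2 - 4$ and $b = 2E\eta$. In the regime $|E| \leq 2$, expansion around $E = \pm 2$ gives $|w|-a \sim \kappa + \eta$, hence $\im\sqrt{z^2-4} \sim \sqrt{\kappa+\eta}$; since $\sqrt{\kappa+\eta} \gtrsim \eta$ on $\mathcal{D}_\ell$, subtracting $\eta$ preserves the order and $\im m_{sc} \sim \sqrt{\kappa+\eta}$. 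In the regime $|E| \geq 2$, a more delicate expansion using $|w| = |z-2||z+2|$ (where the first-order $\eta$-contribution to $|w|$ cancels against $a$, leaving a second-order remainder) yields $|w|-a \sim \eta^2/(\kappa+\eta)$, hence $\im\sqrt{z^2-4} \sim \eta/\sqrt{\kappa+\eta}$, which again dominates $\eta$. The main obstacle is precisely this branch-and-regime analysis near the spectral edges $E = \pm 2$: one has to keep careful track of the competing orders $|z \mp 2|^2 = \kappa^2 + \eta^2$ versus $\kappa+\eta$, and of $a$ versus $|w|$, to extract the correct square-root behaviour in each regime.
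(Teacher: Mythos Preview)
Your proposal is correct. The paper itself does not give a proof at all: it simply records that ``the proof is an elementary calculation'' and refers to Lemma~4.2 of \cite{EYY11}. Your sketch is precisely such an elementary calculation, organized around the explicit formula for $m_{sc}$, the quadratic relation $m_{sc}(m_{sc}+z)=-1$, and the identity $1-m_{sc}^2=-m_{sc}\sqrt{z^2-4}$; this is exactly the route taken in the cited reference, so there is nothing to contrast.
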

\begin{proof}
	The proof is an elementary calculation; see Lemma 4.2 in \cite{EYY11}.
\end{proof}
Now we introduce some properties of $\wt m_t$. Recall that $\wt m_t$ is a solution to the polynomial equation $P_{1,t,z}(\wt m_t)=0$ in \eqref{eq:P_1,t,z=0}. The proofs are similar to that of Lemma 4.1 of \cite{LS18}.
\begin{lemma}\label{lemma:rho_t}
	For any fixed $z = E + \ii \eta \in \mathcal{E}$ and any $t \geq 0$, the polynomial equation $P_{1,t,z}(w_t)=0$ has a unique solution $w_t \equiv w_t(z)$ satisfying $\im w_t >0$ and $|w_t|\leq 5$. Moreover, $w_t$ has the following properties : 
	\begin{enumerate}
		\item[(1)] There exists a probability measure $\wt \rho_t$ such that the analytic continuation of $w_t$ coincides with the Stieltjes transform of $\wt \rho_t$.
		\item[(2)] The probability measure $\wt \rho_t$ is supported on $[-L_t,L_t]$, for some $L_t \geq 2$, has a strictly positive density inside its support and vanishes as a square-root at the edges, i.e. letting
		\begin{equation}
		\kappa_t \equiv \kappa_t(E) := \min \{|E+L_t|,|E-L_t| \},
		\end{equation}
		we have 
		\begin{equation}
		\wt \rho_t(E) \sim \kappa_t^{1/2}(E), \qquad \qquad (E\in (-L_t, L_t))
		\end{equation}
		Moreover, $L_t = 2 + e^{-t}q_t^{-2}\xi^{(4)} + O(e^{-2t}q_t^{-4}) $ and $\dot{L}_t = -2e^{-t}q_t^{-2}\xi^{(4)} + O(e^{-2t}q_t^{-4}),$ where $\dot{L_t}$ denotes the derivative with respect to $t$ of $L_t$.
		\item[(3)] The solution $w_t$ satisfies that 
		\begin{align}
		&\im w_t(E+\ii \eta) \sim \sqrt{\kappa_t+\eta} \qquad \text{if  } E \in [-L_t,L_t], \nonumber \\
		&\im w_t(E+\ii \eta) \sim \frac{\eta}{\sqrt{\kappa_t+\eta}} \qquad \text{if  } E \notin [-L_t,L_t].
		\end{align}
	\end{enumerate}	
\end{lemma}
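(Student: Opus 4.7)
The strategy is to treat $P_{1,t,z}(w) = 1 + zw + w^2 + \tau_t w^4$, where $\tau_t := e^{-t}q_t^{-2}\xi^{(4)} = e^{-2t}q^{-2}\xi^{(4)}$ is a small parameter, as a quartic perturbation of the semicircle equation $1 + zw + w^2 = 0$, whose physical root is $m_{sc}$. After dividing by $w$, the equation rewrites as
\[
z = f_t(w) := -w - w^{-1} - \tau_t w^3,
\]
an $O(\tau_t)$-perturbation of the Joukowski-type semicircle inverse $w \mapsto -w - w^{-1}$, and I would carry out the analysis directly in terms of $f_t$.

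For part (1), I would first apply Rouché's theorem on a disk of radius $C\tau_t$ around $m_{sc}(z)$ (uniformly on compact subsets of $\mathbb{C}^+$) to produce a unique root $w_t(z)$ with $|w_t(z) - m_{sc}(z)| = O(\tau_t)$, and verify that the other three roots of the quartic have modulus of order $\tau_t^{-1/2}$ by the dominant balance $\tau_t w^4 + w^2 \approx 0$, so that the constraint $|w_t| \leq 5$ singles out $w_t$. The implicit function theorem, applicable because $\partial_w P_{1,t,z}(w_t)$ stays non-zero (close to $\partial_w P_{1,0,z}(m_{sc}) \ne 0$ for $z \in \mathbb{C}^+$), yields analyticity of $w_t$ on $\mathbb{C}^+$, and $\im w_t > 0$ on $\mathbb{C}^+$ follows by continuity in $\tau_t$ from the same property of $m_{sc}$, together with the fact that the $w$-discriminant of $P_{1,t,z}$ is a polynomial in $z$ that is non-vanishing off $\mathbb{R}$ for small $\tau_t$. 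From $P_{1,t,z}(w_t) = 0$ one reads off $w_t(z) = -z^{-1} + O(z^{-3})$ as $z \to \infty$; the Nevanlinna--Herglotz representation theorem then produces a probability measure $\wt{\rho}_t$ with $w_t = m_{\wt{\rho}_t}$.

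For part (2), the edges of $\mathrm{supp}(\wt{\rho}_t)$ are the critical values of $f_t|_{\mathbb{R}}$. Solving $f_t'(w) = -1 + w^{-2} - 3\tau_t w^2 = 0$ via $u = w^2$ yields the quadratic $3\tau_t u^2 + u - 1 = 0$, whence $u_* = 1 - 3\tau_t + O(\tau_t^2)$ and critical points $w_*^\pm = \pm(1 - \tfrac{3}{2}\tau_t + O(\tau_t^2))$. A direct expansion gives
\[
L_t = f_t(w_*^-) = 2 + \tau_t + O(\tau_t^2) = 2 + e^{-t}q_t^{-2}\xi^{(4)} + O(e^{-2t}q_t^{-4}),
\]
and since $\dot\tau_t = -2\tau_t$, also $\dot L_t = -2\tau_t + O(\tau_t^2) = -2e^{-t}q_t^{-2}\xi^{(4)} + O(e^{-2t}q_t^{-4})$. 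A sign analysis of $f_t'$ on the four real intervals separated by $0$ and $w_*^\pm$ shows that $f_t$ maps $\mathbb{R}\setminus\{0, w_*^\pm\}$ onto $\mathbb{R}\setminus[-L_t, L_t]$, so $w_t$ acquires a non-zero imaginary part precisely when $E \in (-L_t, L_t)$, giving the support assertion and strict positivity of the density inside.

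The square-root decay of $\wt{\rho}_t$ at the edges and the asymptotics of $\im w_t$ in part (3) both follow from a Taylor expansion at $w_*^-$:
\[
z - L_t = \tfrac{1}{2} f_t''(w_*^-)(w - w_*^-)^2 + O\big((w - w_*^-)^3\big),
\]
with $f_t''(w_*^-) = -2(w_*^-)^{-3} - 6\tau_t w_*^- = 2 + O(\tau_t)$, bounded away from $0$. Inverting with the branch dictated by $\im w_t > 0$ gives $w_t(z) - w_*^- \sim \sqrt{z - L_t}$, hence $\wt{\rho}_t(E) = \pi^{-1}\im w_t(E) \sim \sqrt{L_t - E}$ near the right edge; the same argument at $w_*^+$ handles the left edge. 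Writing $z - L_t = (E - L_t) + \ii\eta$ and using the principal branch of the square root, the two cases $E \in [-L_t, L_t]$ and $E \notin [-L_t, L_t]$ deliver $\im w_t \sim \sqrt{\kappa_t + \eta}$ and $\im w_t \sim \eta/\sqrt{\kappa_t + \eta}$ respectively. The main obstacle I anticipate is the global single-valued branch selection for $w_t$ on $\mathbb{C}^+$: the analytic continuation of the Rouché branch found near each compact set must agree with the Nevanlinna branch globally, which requires a monodromy argument exploiting that the $w$-discriminant locus of $P_{1,t,z}$ is contained in $\mathbb{R}$. Uniformity in $t \in [0, 6\log N]$ of the error constants in $L_t$, $\dot L_t$, and the square-root asymptotics must also be tracked, but since $\tau_t$ is monotone decreasing in $t$, this is straightforward.
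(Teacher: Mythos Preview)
Your approach is correct and is essentially the method of Lemma~4.1 in \cite{LS18}, which is precisely what the paper invokes here (the paper gives no independent proof beyond citing that reference). One small slip: the quartic $P_{1,t,z}$ has \emph{two} large roots of order $\tau_t^{-1/2}$, not three; the fourth root is the perturbation of the unphysical semicircle root $1/m_{sc}(z)=-z-m_{sc}(z)$, which is bounded but lies in the lower half-plane, so it is the condition $\im w_t>0$ (not $|w_t|\le 5$ alone) that rules it out. With that correction, your Rouch\'e/implicit-function construction, the critical-point computation of $L_t$ via $f_t(w)=-w-w^{-1}-\tau_t w^3$, and the local square-root inversion at the edge all go through exactly as in \cite{LS18}.
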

\begin{remark}
	For $z \in \mathcal{E}$, we can check the stability condition $|z+w_t|>\frac{1}{6}$  in the proof of the lemma since
	\begin{equation}
	|z+w_t|=\frac{1+e^{-2t}q^{-2}\xi^{(4)}|w_t|^4}{|w_t|}
	\end{equation}
	and $|w_t|<5$.
\end{remark}

\section{Proof of Lemmas in Section \ref{sec:weak local law}}\label{app:weak}
In this appendix, we provide the proofs of the series of lemmas stated in Section \ref{sec:weak local law}, including Lemmas \ref{lem:esteimate Lambda o}, \ref{lem:bound Zi Zij}, \ref{lem:estimate Upsilon}, \ref{lem:lem analysi of }, \ref{lem:analysis of self consis}, \ref{lem:inital estimate} and \ref{lem:bound Omega_k}.

We introduce the large deviation estimates concerning independent random variables whose moments decay slowly. Their proofs are given in Appendix of \cite{EKYY13}.

\begin{lemma}[Large deviation estimates]\label{lem:LDE} Let $(a_{i})$ and $(b_{i})$ be independent families of centered random variables satisfying 
	\begin{equation}
	\E |a_i|^p \leq \frac{C^p}{Nq^{p-2}} , \qquad \quad \E |b_i|^p \leq \frac{C^p}{Nq^{p-2}}
	\end{equation}
	for $2\leq p \leq (\log N)^{A_0\log\log N} $. Then for all $\epsilon > 0$, any $A_i \in \C$ and $B_{ij} \in \C$  we have with high probability
	\begin{align}
	\left| \sum_{i=1}^{N} A_i a_i \right| &\leq N^{\epsilon}\left[\frac{\max_{i}|A_i|}{q} + \left(\frac{1}{N}\sum_{i=1}^{N}|A_i|^2 \right)^{1/2} \right],\label{lem:LDE 1}\\
	\left| \sum_{i=1}^{N} \ol{a_i}B_{ii} a_i -\sum_{i=1}^{N} \sigma^2_i B_{ii} \right| &\leq N^{\epsilon}\frac{B_d}{q},\label{lem:LDE 2}\\
	\left| \sum_{1\leq i\neq j \leq N} \ol{a_i}B_{ij} a_j \right| &\leq N^{\epsilon}\left[\frac{B_o}{q} + \bigg(\frac{1}{N^2}\sum_{i\neq j}|B_{ij}|^2 \bigg)^{1/2} \right],\label{lem:LDE 3}\\
	\left| \sum_{i,j=1}^{N} a_i B_{ij} b_j \right| &\leq N^{\epsilon}\left[\frac{B_d}{q^2}+ \frac{B_o}{q} + \bigg(\frac{1}{N^2}\sum_{i\neq j}|B_{ij}|^2 \bigg)^{1/2} \right],\label{lem:LDE 4}
	\end{align}
	where $\sigma^2_{i}$ denotes the variance of $a_i$ and \[B_d := \max_{i}|B_{ii}|, \qquad B_o=\max_{i\neq j}|B_{ij}|. \]
\end{lemma}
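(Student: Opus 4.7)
The plan is to prove all four estimates by the standard moment method: for each, I write the left-hand side as a random variable $X$, bound $\E|X|^{2p}$ for a large integer $p$ using the slow-moment-decay hypothesis, and conclude by Markov's inequality. Because $p$ can be taken as large as polylogarithmic in $N$, any polynomially small deviation threshold yields a high-probability statement, and a factor $N^{\epsilon}$ absorbs the combinatorial constants $(Cp)^{Cp}$.

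For (\ref{lem:LDE 1}), I would expand
\begin{equation*}
\E\Big|\sum_i A_i a_i\Big|^{2p} = \sum_{i_1,\ldots,i_{2p}} A_{i_1}\overline{A_{i_2}}\cdots \E\big[a_{i_1}\overline{a_{i_2}}\cdots a_{i_{2p-1}}\overline{a_{i_{2p}}}\big]
\end{equation*}
and use independence together with $\E a_i = 0$ to reduce to tuples in which each distinct index appears at least twice. Grouping terms by the set partition $\pi$ of $\{1,\ldots,2p\}$ that the tuple induces and applying the block-wise bound $|\E\prod_{\ell\in B}a_{i_\ell}|\leq C^{|B|}/(Nq^{|B|-2})$, the pair-blocks alone produce at most $(\tfrac{1}{N}\sum_i|A_i|^2)^{p}$ after summing over index values, while each block of size $k\geq 3$ pays a factor $q^{-(k-2)}$ and contributes at most $\max_i|A_i|^{k-2}$ from the coefficient side. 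Reorganising the sum over partitions, the bound factors as the $2p$-th power of $\max_i|A_i|/q + (\tfrac{1}{N}\sum_i|A_i|^2)^{1/2}$, up to the combinatorial prefactor. Estimates (\ref{lem:LDE 2})--(\ref{lem:LDE 4}) follow the same recipe on the doubly-indexed sum, with $B$ in place of $A$: in (\ref{lem:LDE 2}), the subtraction $-\sigma_i^2 B_{ii}$ exactly removes the contribution of pair-blocks on the diagonal, so only blocks of size $\geq 3$ survive, producing the $B_d/q$ term; in (\ref{lem:LDE 3}), the restriction $i\neq j$ forces each block to carry two index labels, giving simultaneously the $B_o/q$ term and the $\ell^2$ term $(N^{-2}\sum_{i\neq j}|B_{ij}|^2)^{1/2}$; in (\ref{lem:LDE 4}) the independence of $(a_i)$ from $(b_j)$ allows pair-matches within each family or across them, producing both the $B_d/q^2$ and the $B_o/q$ terms.

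The main obstacle, and the reason the result is not a one-line Hanson--Wright application, is that blocks of size $k\geq 3$ in the partition expansion do not vanish; they are merely suppressed by $q^{-(k-2)}$, which is only a tiny gain when $q$ is small. The combinatorial bookkeeping must therefore carefully balance three ingredients: the number of distinct index tuples realising a partition of block sizes $(k_1,\ldots,k_\ell)$, the per-block moment bound, and the $\ell^2$ versus $\ell^\infty$ mass of the coefficient array consumed by each block. The key verification is that, once these are combined, the sum over partitions factorises as the $2p$-th power of the target bound, so that taking $2p$-th roots produces the claimed tail. After this factorisation, the rest is routine: choose $p = \lceil 1/\epsilon \rceil$ or take $p$ up to the allowed range $(\log N)^{A_0\log\log N}$ depending on the sharpness needed, and apply Markov to conclude.
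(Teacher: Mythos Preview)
Your high-moment approach is correct and is precisely the method used in the reference the paper cites: the paper does not prove this lemma itself but simply refers to the Appendix of \cite{EKYY13}, where exactly this partition-by-blocks moment computation is carried out. So your proposal and the paper's (cited) proof coincide.
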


\subsection{Proof of Lemma \ref{lem:esteimate Lambda o}, Lemma \ref{lem:bound Zi Zij} and Lemma \ref{lem:estimate Upsilon}}\label{app:3 lemmas}
\begin{proof}[Proof of Lemma \ref{lem:esteimate Lambda o}]
	Using $\eqref{eq:self cons 3}, \eqref{eq:Greenlemma2}, \eqref{eq:Greenlemma3} ,\eqref{eq:self cons 1}$ and Lemma $\ref{lem:bound Hij}$ we get on $\textbf{B}^c$, with high probability  
	\begin{align}\label{eq:estimate G_ij}
	|G_{ij}|=|G_{ii}G^{(i)}_{jj}(H_{ij}-Z_{ij})|&\leq C(|H_{ij}|+|Z_{ij}|)\nonumber\\
	&\leq C\frac{N^\epsilon}{q} + C\left| \sum_{k,l}^{(ij)}H_{ik}G^{ij}_{kl}H_{lj}\right|\nonumber\\
	&\leq C\frac{N^\epsilon}{q} + N^{\epsilon}\left[\frac{\Lambda_o}{q}+\frac{C}{q^2}+\bigg(\frac{1}{N^2}\sum_{k,l}^{(ij)}|G^{(ij)}_{kl}|^2 \bigg)^{1/2} \right]\nonumber\\
	&\leq C\frac{N^\epsilon}{q} + C \frac{N^{\epsilon}\Lambda_o}{q}+CN^{\epsilon}\bigg(\frac{1}{N^2}\sum_{k,l}^{(ij)}|G^{(ij)}_{kl}|^2 \bigg)^{1/2}
	\end{align}	
	where the last step follows using $\eqref{def:weak_H}$ and $\eqref{lem:LDE 4}$.
	From $\eqref{eq:self cons 1}$ and $\eqref{eq:Greenlemma2}$, on $\textbf{B}^c$ we have
	\begin{equation}
	G^{(ij)}_{kk}=G_{kk}+O(\Lambda^2_o).
	\end{equation} 
	Thus on $\textbf{B}^c$, 
	\begin{equation}\label{eq:estimate sum of G^2}
	\frac{1}{N^2}\sum_{k,l}^{(ij)}|G^{(ij)}_{kl}|^2 =\frac{1}{N^2\eta}\sum_{k}^{(ij)}\text{Im }G^{(ij)}_{kk} \leq \frac{\text{Im }m}{N\eta}+\frac{C\Lambda^2_o}{N\eta}
	\end{equation}
	by $\eqref{eq:Wald}$.
	Therefore, taking the maximum over $i\neq j$, we get 
	\begin{equation}
	\max_{i\neq j}|G_{ij}|=\Lambda_o\leq C\frac{N^\epsilon}{q}+o(1)\Lambda_o+CN^{\epsilon}\sqrt{\frac{\text{Im }m}{N\eta}},
	\end{equation}
	since $N\eta \geq N^{\ell}$. Hence we obtain on $\textbf{B}^c$ with high probability
	\begin{equation}
	\Lambda_o \leq C\left(\frac{N^\epsilon}{q}+N^{\epsilon}\Psi\right).
	\end{equation}\\
\end{proof}
\begin{proof}[Proof of Lemma \ref{lem:bound Zi Zij}]
	By definition of $Z_i$ we have 
	\begin{equation*}
	Z_i = \sum_{k}^{(i)}(|H_{kk}|^2-\E|H_{ik}|^2)G^{(i)}_{kk}+\sum_{k\neq l}^{(i)}H_{ik}G^{(i)}_{kl}H_{li}.
	\end{equation*}
	By the large deviation estimate $\eqref{lem:LDE 2}$ and  $\eqref{lem:LDE 3}$, we can get
	\begin{equation*}
	\begin{split}
	|Z_i| &\leq \frac{CN^\epsilon}{q} + CN^{\epsilon} \left[\frac{\Lambda_o}{q} +  \bigg(\frac{1}{N^2}\sum_{k\neq l}^{(i)}\left|G^{(i)}_{kl}\right|^2 \bigg)^{1/2}\right]\\
	&\leq \frac{CN^{\epsilon}}{q} + \frac{CN^{\epsilon} \Lambda_o}{q} + \frac{CN^{\epsilon}\Lambda_o}{\sqrt{N\eta}} + CN^{\epsilon}\sqrt{\frac{\text{Im }m}{N\eta}},
	\end{split}
	\end{equation*}
	with high probability where we used ($\ref{eq:estimate sum of G^2}$).
	Using Lemma $\ref{lem:esteimate Lambda o}$ we can finish the proof of ($\ref{eq:bound Zi}$).
	Similarly using ($\ref{lem:LDE 4}$) we have
	\begin{align*}
	Z^{(ij)}_{ij} &=\sum_{k,l}^{(ij)}H_{ik}G^{(ij)}_{kl}H_{li}\\
	&\leq N^{\epsilon}\left[\frac{\max|G^{(ij)}_{kk}|}{q^2}+ \frac{\max|G^{(ij)}_{kl}|}{q} + \left(\frac{1}{N^2}\sum_{i\neq j}|G^{(ij)}_{kl}|^2 \right)^{1/2} \right]\\
	&\leq N^{\epsilon} \left( \frac{C}{q^2}+ \frac{\Lambda_o}{q} +\sqrt{\frac{\text{Im }m + C\Lambda_o^2}{N\eta}} \right)\\
	&\leq N^{\epsilon} \left( \frac{C}{q^2}+ \frac{\Lambda_o}{q} + \Psi + \frac{ C\Lambda_o}{\sqrt{N\eta}} \right)\\
	&\leq CN^{\epsilon} \left( \frac{1}{q}+\Psi \right),
	\end{align*}
	with high probability where we used Lemma $\ref{lem:esteimate Lambda o}$ at the last step.\\
\end{proof}
\begin{proof}[Proof of Lemma \ref{lem:estimate Upsilon}]
	On $\textbf{B}^c$, we have $|A_i| \leq \frac{C}{N} + C\Lambda_o^2$ and $|H_{ij}| \prec \frac{1}{q}$.
	Thus, with high probability, we have
	\begin{align}
	|\Upsilon_i| &\leq |A_i|+|H_{ii}|+|Z_i| \nonumber \\
	&\leq  \frac{C}{N} + C\Lambda_o^2 +\frac{CN^\epsilon}{q} + CN^{\epsilon} \left( \frac{1}{q}+\Psi \right). 
	\end{align}
	Invoking ($\ref{eq:estimate Lambda o}$) we can get 
	\begin{align}
	|\Upsilon_i| &\leq C\left(\frac{1}{N} + \frac{N^\epsilon}{q} +\frac{N^{\epsilon}}{q} +N^{\epsilon}\Psi \right)\nonumber\\
	&\leq C \left(\frac{N^{\epsilon}}{q} + N^{\epsilon}\Psi\right) \ll 1
	\end{align}
	with high probability.
\end{proof}

\subsection{Proof of Lemma \ref{lem:lem analysi of } }\label{app:lem analysi of }
Recall that 
\[v_i = m_{sc}^2 (\sum_{j}\sigma_{ij}^2v_j -\Upsilon_i ) +m_{sc}^3 (\sum_{j}\sigma_{ij}^2v_j -\Upsilon_i )^2+O(\sum_{j}\sigma_{ij}^2v_j -\Upsilon_i )^3. \]
Using previous bound for $\Upsilon_i$, we can write
\begin{align}\label{eq:vi expand}
v_i &= m_{sc}^2\sum_{j}\sigma_{ij}^2v_j + O(\frac{N^{\epsilon}}{q}+N^{\epsilon} \Psi)+O(\frac{N^{\epsilon}}{q}+N^{\epsilon} \Psi +\Lambda_d^2)\nonumber\\
&=m_{sc}^2\sum_{j}\sigma_{ij}^2v_j+ O(\frac{N^{\epsilon}}{q}+N^{\epsilon}\Psi)+O(\Lambda_d^2),
\end{align}
on some set $\Xi$ such that $\Lambda_d \leq \frac{q}{(\log N)^{3/2}}$.
Taking the average over $i$, we have 
\begin{equation}
(1-m_{sc}^2)[v]=O(\frac{N^{\epsilon}}{q}+N^{\epsilon} \Psi)+O(\Lambda_d^2),
\end{equation}
and it follows from $\eqref{eq:vi expand}$ that 
\begin{equation}
v_i-[v]=m_{sc}^2\sum_{j}\sigma_{ij}^2(v_j-[v])+O(\frac{N^{\epsilon}}{q}+N^{\epsilon} \Psi)+O(\Lambda_d^2).
\end{equation}
Applying Lemma \ref{lem:bound norm} for $u_i = v_i - [v]$, we get
\begin{equation}\label{eq:bound max vi-[v] 2}
\max_{i}|v_i-[v]|\leq \frac{C\log N}{g}\left(\frac{N^{\epsilon}}{q}+N^{\epsilon} \Psi+\Lambda_d^2\right),
\end{equation}
therefore
\begin{equation*}
\Lambda_d \leq \Lambda +\frac{C\log N}{g}\left(\frac{N^{\epsilon}}{q}+N^{\epsilon} \Psi+\Lambda_d^2\right).
\end{equation*}
With \eqref{eq:Lambda d bound assume}, this implies 
\begin{equation}\label{eq:bound Lambda_d}
\Lambda_d \leq \Lambda + \frac{C\log N}{g}\left(\frac{N^{\epsilon}}{q}+N^{\epsilon} \Psi+\Lambda^2\right).
\end{equation}
Using ($\ref{eq:bound Lambda_d}$) to bound ($\ref{eq:bound max vi-[v] 2}$), we get the first inequality of  $\eqref{eq:bound max vi-[v]}$.
For the second inequality, since we have 
\[\frac{N^{\epsilon}}{q}+N^{\epsilon} \Psi\leq N^{-\epsilon}, \]
the second one follows. 
\subsection{Proof of Lemma \ref{lem:analysis of self consis}}\label{app:analysis of self}
From Lemma \ref{lem:lem analysi of }, we can observe that 
\begin{equation}
\max_{i}|v_i-[v]| \leq C \log N \left(\Lambda^2 + \frac{N^{\epsilon}}{q}+N^{\epsilon}\Psi\right),
\end{equation}
and 
\begin{equation}\label{eq:bound Lambda_d 2}
\Lambda_d \leq \Lambda + C \log N \left(\Lambda^2 + \frac{N^{\epsilon}}{q}+N^{\epsilon}\Psi\right),
\end{equation}
holds with high probability.

Define $\wt\Psi := \frac{N^{\epsilon}}{q}+N^{\epsilon} \Psi$. 
From $\Lambda \leq (\log N)^{-2}$ and Lemma $\ref{lem:basic properties of m}$, we have $\text{Im }  m_{sc}(z) \geq c\eta$ with some positive constant $c$. Hence 
\begin{equation}\label{eq:bound phi}
\frac{N^{\epsilon}}{\sqrt{N}} \leq \wt \Psi=\frac{N^{\epsilon}}{q}+N^{\epsilon} \Psi \leq N^{-\epsilon}.	
\end{equation}
By definition of $\Upsilon_i$, Lemma $\ref{lem:estimate basic quantities}$ and Lemma $\ref{lem:esteimate Lambda o}$, on  $\textbf{B}^c$ we have 
\begin{align}
\Upsilon_i & = A_i + h_{ii} - Z_i \nonumber\\ 
&=h_{ii}-Z_i+O\left(\frac{1}{N}+\Lambda_o^2 \right) \nonumber\\
&=h_{ii}-Z_i+O(\wt \Psi^2).
\end{align}
with high probability.
Recall that 
\[v_i = m_{sc}^2 (\sum_{j}\sigma_{ij}^2v_j -\Upsilon_i ) +m_{sc}^3 (\sum_{j}\sigma_{ij}^2v_j -\Upsilon_i )^2+O(\sum_{j}\sigma_{ij}^2v_j -\Upsilon_i )^3. \]
From previous observation, we can bound the last term of this equation by $O(\wt \Psi^3+\Lambda_d^3)$ which is bounded by $O(\wt \Psi^2+\Lambda^3)$, using $\eqref{eq:bound Lambda_d 2}$ and $\eqref{eq:bound phi}$. Hence in $\textbf{B}^c$ 
\begin{align}
v_i &= m_{sc}^2 (\sum_{j}\sigma_{ij}^2v_j -\Upsilon_i ) +m_{sc}^3 (\sum_{j}\sigma_{ij}^2v_j -\Upsilon_i )^2+O(\sum_{j}\sigma_{ij}^2v_j -\Upsilon_i )^3\nonumber \\
&=m_{sc}^2 \left(\sum_{j}\sigma_{ij}^2v_j+Z_i-h_{ii}+O(\wt \Psi^2)\right) + m_{sc}^3 \left(\sum_{j}\sigma_{ij}^2v_j +O(\wt \Psi)\right)^2+O(\wt \Psi^2+\Lambda^3),
\end{align}
holds with high probability.
Summing up $i$ and dividing by $N$, we have
\begin{equation}\label{eq:expand [v]}
[v]=m_{sc}^2[v] +m_{sc}^2[Z]+O(\wt \Psi^2+\Lambda^3)+\frac{1}{N}m_{sc}^3\sum_{i}\left(\sum_{j}\sigma_{ij}^2v_j+O(\wt \Psi) \right)^2,
\end{equation}
where we used $\frac{1}{N}\big| \sum_{i=1}^{N} h_{ii} \big| \leq \frac{2N^{\epsilon}}{N}\leq (N^{\epsilon}\Psi)^2 \leq \wt \Psi^2$ with high probability by ($\ref{lem:LDE 1}$).
Writing $v_j=(v_j-[v])+[v]$, with AM-GM inequality, the last term of ($\ref{eq:expand [v]}$) can be estimated by 
\begin{equation}
\begin{split}
\frac{1}{N}&m_{sc}^3\sum_{i}\left(\sum_{j}\sigma_{ij}^2v_j+O(\wt \Psi)  \right)^2 \\ 
&=  m_{sc}^3 \bigg( [v]^2 +O\big(( (\log N)(\Lambda^2 + \wt \Psi))^2\big) +O(\wt \Psi^2) 
+ [v](\log N)(\Lambda^2+\wt \Psi) + [v]\wt \Psi+(\log N)(\Lambda^2+\wt \Psi)\wt \Psi   \bigg)\\
&= m_{sc}^3[v]^2+O\big((\log N)\Lambda(\Lambda^2 +\wt \Psi) \big)+O\big( (\log N)^2\wt \Psi^2\big),
\end{split}
\end{equation}
where we used ($\ref{eq:bound max vi-[v]}$) and $|[v]|=\Lambda$.
Thus 
\begin{align}
(1-m_{sc}^{2})[v] & =m_{sc}^3[v]^2+m_{sc}^2[Z]+O\big((\log N)\Lambda(\Lambda^2 +\wt \Psi) \big)+O\big( (\log N)^2\wt \Psi^2\big) +O(\Lambda^3 + \wt \Psi^2) \nonumber \\
&=m_{sc}^3[v]^2+m_{sc}^2[Z]+O\left(\frac{\Lambda^2}{\log N} \right) +O\left((\log N)^3\wt \Psi^2 \right)\nonumber \\
&=m_{sc}^3[v]^2+m_{sc}^2[Z]+O\left(\frac{\Lambda^2}{\log N} \right) +O\left((\log N)^3\left(\frac{N^{\epsilon}}{q}+N^{\epsilon} \Psi \right)^2 \right),
\end{align}
holds with high probability, using  $\Lambda \leq (\log N)^{-2}$ in $\textbf{B}^c$.

\subsection{Proof of Lemma \ref{lem:inital estimate} and Lemma \ref{lem:dichtomy_weak}}\label{subsec:proof dichtomy}
\begin{proof}[Proof of Lemma \ref{lem:inital estimate}]
	Fix $z \in D_\ell$. For arbitrary $T \subset \{1,\dots,N\}$ we have the trivial bound
	\begin{equation}\label{eq:trivial bound}
	|G_{ij}^{(T)}| \leq \frac{1}{\eta}, \qquad\qquad |m^{(T)}| \leq \frac{1}{\eta}, \qquad\qquad |m_{sc}|\leq \frac{1}{\eta}.
	\end{equation}
	First we estimate $\Lambda_o$. Recall $\eqref{eq:estimate G_ij}$. We have, with high probability, 
	\begin{equation}
	|G_{ij}|\leq C\frac{N^\epsilon}{q}+o(1)\Lambda_o+CN^{\epsilon}\sqrt{\frac{\text{Im }m^{(ij)}}{N\eta}} \leq C\frac{N^\epsilon}{q}+o(1)\Lambda_o+\frac{CN^{\epsilon}}{\sqrt{N}}.
	\end{equation}
	Taking maximum over $i\neq j$ then we get
	\begin{equation}\label{eq:estimate Lambda_o2}
	\Lambda_o \leq C\frac{N^\epsilon}{q}+\frac{CN^{\epsilon}}{\sqrt{N}}
	\end{equation}
	with high probability.
	Now we estimate $\Lambda_d$. From the definition of $\Upsilon_i$, 
	\begin{equation}
	|\Upsilon_i| \leq \frac{CN^\epsilon}{q} +|Z_i|+|A_i|.
	\end{equation}
	Using ($\ref{eq:self cons 3}$), we can estimate $A_i$ with high probability by
	\begin{align}
	|A_i| &\leq \frac{C}{N}|G_{ii}|+\left| \sum_{j\neq i}\sigma_{ij}^2 \frac{G_{ij}}{G_{ii}}G_{ji}\right| \nonumber \\
	&\leq \frac{C}{N} + \sum_{j\neq i}\sigma_{ij}^2 |G_{ji}||G_{jj}^{(i)}||K_{ij}^{(ij)}| \nonumber \\
	&\leq \frac{C}{N} + \sum_{j\neq i}\sigma_{ij}^2 \Lambda_o \frac{1}{\eta} \left(|H_{ij}|+|Z_{ij}^{(ij)}| \right) \nonumber \\
	&\leq O\left(\frac{1}{N}+\Lambda_o \frac{N^\epsilon}{q} +\Lambda_o N^{\epsilon} \left(\frac{1}{q^2} +\frac{\Lambda_o}{q} +\frac{1}{\sqrt{N}}\right) \right) \leq C \frac{N^\epsilon}{q},
	\end{align}
	where the last step we used that with high probability 
	\begin{equation}
	|Z_{ij}^{(ij)}|\leq N^{\epsilon}\left(\frac{C}{q^2}+\frac{\Lambda_o}{q} + \frac{C}{\sqrt{N}} \right),
	\end{equation}
	as follows from the large deviation estimate ($\ref{lem:LDE 4}$). Similar as ($\ref{eq:bound Zi}$), using ($\ref{lem:LDE 2}$) and ($\ref{lem:LDE 3}$), we can get
	\begin{equation}
	|Z_i|\leq C\left(\frac{N^\epsilon}{q} + \frac{N^{\epsilon}}{\sqrt{N}} \right)
	\end{equation}
	with high probability. Thus we find with high probability
	\begin{equation}
	|\Upsilon_i| \leq C\left( \frac{N^\epsilon}{q} + \frac{N^{\epsilon}}{\sqrt{N}}\right).
	\end{equation}
	Let $\Upsilon = \max_{i} |\Upsilon_i|$. Then for all $n$, we can write the self consistent equation ($\ref{eq:vi self consistent}$) as 
	\begin{equation}
	v_n = \frac{\sum_{i}\sigma_{ni}^2v_i + O(\Upsilon)}{(z+m_{sc}+\sum_{i}\sigma_{ni}^2v_i+O(\Upsilon))(z+m_{sc})}.
	\end{equation}
	We have $\Lambda_d \leq 1$ since 
	\[ \Lambda_d = \max |v_i| = \max |G_{ii}-m_{sc}|\leq \max(|G_{ii}|+|m_{sc}|) \leq \frac{2}{\eta} \leq 1. \]
	Therefore the denominator of $v_n$ is larger than
	\[ \big((2-1)+O(\Upsilon)\big)2 \geq 3/2 \]
	for sufficiently large $N$ since $|z+m_{sc}|=|m_{sc}|^{-1}\geq 2.$
	Hence 
	\begin{equation}
	v_n \leq \frac{\sum_{i}\sigma_{ni}^2v_i + O(\Upsilon)}{2/3} \leq \frac{\Lambda_d + O(\Upsilon)}{3/2}.
	\end{equation}
	Taking maximum over $n$, we get
	\begin{equation}\label{eq:estimate Lambda_d2}
	\Lambda_d \leq C\left(\frac{N^\epsilon}{q} + \frac{N^{\epsilon}}{\sqrt{N}} \right)
	\end{equation}
	with high probability. 
	Now the estimate ($\ref{eq:estimate Lambda_d + Lambda_o}$) follows from  ($\ref{eq:estimate Lambda_o2}$) and  ($\ref{eq:estimate Lambda_d2}$).
\end{proof}

\begin{proof}[Proof of Lemma \ref{lem:dichtomy_weak}]
	Fix $z= E+i\eta \in D_\ell$. From $\eqref{eq:weak self consistent equation}$, with high probability, we have 
	\begin{align}
	\frac{(1-m_{sc}^{2})}{m_{sc}^3}[v]&=[v]^2+O\left(\frac{\Lambda^2}{\log N}\right)+O\bigg(\frac{N^{\epsilon}}{q}+N^{\epsilon} \Psi\bigg) \nonumber\\
	&=[v]^2+O\left(\frac{\Lambda^2}{\log N}\right)+O\bigg(\frac{N^{\epsilon}}{q}+ \sqrt{\beta^3\alpha + \beta^3\Lambda}\bigg) \nonumber \\
	&=[v]^2+O\left(\frac{\Lambda^2}{\log N}\right)+C^*(\beta\Lambda+\alpha\beta+\beta^2),
	\end{align}
	for some constant $C^*\geq 1$. Set $U_0=9(C^*+1)\geq 18$.
	In other words, on $\textbf{B}^c$ we have with high probability 
	\begin{equation}\label{eq:dichotomy_weak}
	\left|\frac{(1-m_{sc}^{2})}{m_{sc}^3}[v]-[v]^2\right| \leq O\left(\frac{\Lambda^2}{\log N}\right)+C^*(\beta\Lambda+\alpha\beta+\beta^2).
	\end{equation}
	\textit{Case 1 : $\eta \geq \tilde{\eta}$}. From the definition of $\tilde{\eta}$ and $C^*$, 
	\begin{equation}\label{eq:compare beta and alpha}
	\beta \leq \frac{\alpha}{2U^2} \leq \frac{\alpha}{2c^*}\leq \alpha.
	\end{equation}
	Therefore from ($\ref{eq:dichotomy_weak}$) 
	\begin{align}
	\alpha\Lambda \leq 2\Lambda^2 + C^*(\beta\Lambda+\alpha\beta+\beta^2) 
	\leq 2\Lambda^2 + \frac{\alpha\Lambda}{2}+2C^*\alpha\beta.
	\end{align}
	This implies 
	\begin{equation}
	\alpha \Lambda \leq 4\Lambda^2 + 4C^*\alpha\beta.
	\end{equation}
	Hence we have either 
	\begin{equation}
	\frac{\alpha\Lambda}{2} \leq 4\Lambda^2, \qquad \text{or}\qquad \frac{\alpha\Lambda}{2} \leq 4c^*\alpha\beta,
	\end{equation}
	which implies \eqref{eq:dicho1}.\\
	\textit{Case 2 : $\eta < \tilde{\eta}$}. From the definition of $\tilde{\eta}$, 
	\[ \alpha \leq 2U^2K^2\beta. \]
	Therefore 
	\[\Lambda^2 \leq 2\alpha\Lambda+ 2+C^*(\beta\Lambda+\alpha\beta+\beta^2)\leq C'\beta \Lambda + C' \beta^2, \]
	for some constant $C'=C'(U)$. This quadratic inequality implies \eqref{eq:dicho2},
	\[\Lambda \leq C_1(U)\beta , \]
	for some $U$-dependent constant $C_1$. This finishes the proof.
\end{proof}

\subsection{Proof of Lemma \ref{lem:bound Omega_k}}\label{app:proof of Lemma lem bound Omega_k}
We will use induction on $k$, the case $k=1$ has been checked. Assume that ($\ref{eq:Omega_k high probability}$) holds for $k$. Now we need to estimate
\begin{equation}
\P(\Omega_{k+1}^c) \leq A+B+\P(\Omega_k^c),
\end{equation}
where we define
\[ A:= \P(\Omega_k \cap \textbf{B}(z_{k+1})) = \P\bigg[ \Omega_k \cap \big\{ \Lambda_d(z_{k+1}) + \Lambda_o(z_{k+1}) > (\log N)^{-2} \big\} \bigg], \]
\[ B:= \P \big(\Omega_k \cap \textbf{B}^c(z_{k+1}) \cap \Omega_{k+1}^c  \big) = \P\bigg[ \Omega_k \cap \textbf{B}^c(z_{k+1})  \cap \big\{ \Lambda(z_{k+1}) > C^{(k+1)}(U)\beta(z_{k+1}) \big\} \bigg].     \]
First we estimate $A$. For all $i,j$
\begin{equation}
|G_{ij}(z_{k+1}) -G_{ij}(z_{k})| \leq |z_{k+1}-z_k|\sup_{z\in D_\ell}\left| \frac{\partial G_{ij}(z)}{\partial z}\right| \leq N^{-8}\sup_{z\in D_\ell}\frac{1}{(\text{Im }z)^2} \leq N^{-6}.
\end{equation}
Hence on $\Omega_k$ with high probability
\begin{align}
\Lambda_d(z_{k+1}) + \Lambda_o(z_{k+1}) &\leq \Lambda_d(z_{k}) + \Lambda_o(z_{k})+2N^{-6}\nonumber \\
&\leq \Lambda(z_k) + C \log N \left(\Lambda(z_k)^2 + \frac{N^{\epsilon}}{q}+N^{\epsilon}\Psi(z_k)\right)+2N^{-6} \nonumber \\
&\leq  C^{(k)}(U)\beta(z_k)+ C \log N \left(\Lambda(z_k)^2 + \frac{N^{\epsilon}}{q}+N^{\epsilon}\Psi(z_k)\right) \nonumber \\
&\leq (\log N)^{-2},
\end{align}
where the second step we used ($\ref{eq:estimate Lambda o}$) and ($\ref{eq:bound Lambda_d 2}$). Hence we can conclude that $A$ holds with high probability.
Now we consider $B$. First, assume that $\eta_{k+1} \geq \tilde{\eta}$. Then 
\begin{equation}
|\Lambda(z_{k+1})-\Lambda(z_{k})| \leq N^{-6}\leq \frac{1}{2}U\beta(z_{k+1}).
\end{equation}
Thus on $\Omega_k$ we get 
\begin{equation} \label{eq:Lambda<Ubeta }
\Lambda(z_{k+1})\leq \Lambda(z_k)+N^{-6}\leq U\beta(z_{k})+N^{-6}\leq \frac{3}{2}U\beta(z_{k+1}).
\end{equation}
If $\eta_{k+1} \geq \tilde{\eta}$, then by ($\ref{eq:compare beta and alpha}$) and $\eqref{eq:Lambda<Ubeta }$,
\begin{equation}
\Lambda(z_{k+1}) < \frac{\alpha(z_{k+1})}{U},
\end{equation}
and from dichotomy lemma, we find that
\begin{equation}
\Lambda(z_{k+1}) \leq U\beta(z_{k+1})
\end{equation}
holds with high probability on $\Omega_k \cap \textbf{B}^c(z_{k+1})$. If $\eta_{k+1} < \tilde{\eta}$, then ($\ref{eq:Lambda<Ubeta }$) immediately implies
\begin{equation}
\Lambda(z_{k+1}) \leq C^{(k+1)}(U)\beta(z_{k+1}).
\end{equation} 
Hence when $\eta_{k+1} \geq \tilde{\eta}$, $B$ holds with high probability. For $\eta_{k} < \tilde{\eta}$ case, this implies $\eta_{k+1} < \tilde{\eta}$, so by dichotomy lemma 
\begin{equation}
\Lambda(z_{k+1}) \leq C^{(k+1)}(U)\beta(z_{k+1}).
\end{equation}
We have therefore proved that 	
\begin{equation}
\P(\Omega_{k+1}^c) \leq 2N^{-D}+\P(\Omega_k^c),
\end{equation}
and this implies \eqref{eq:Omega_k high probability}.

\section{Proof of recursive moment estimate}\label{sec:recursive}
In this section, we prove Lemma \ref{lemma:recursive}. Fix $t\in [0,6\log N]$. Recall definitions in \eqref{def:G_t}, \eqref{def:q_t}. For brevity, we drop $t$ from the notation in $H_t$, its matrix elements and its Green function through this section.  We also omit the $t$-dependence of $\kappa^{(k)}_{t,ij}$ and $\xi_t$.
We define the $z$-dependent control parameter $\Phi_\epsilon \equiv \Phi_\epsilon(z) $ by 
\begin{align}
&\Phi_\epsilon= N^{\epsilon}\mathbb{E}\Big[\Big(\frac{1}{q_t^4} +\frac{\im m_t}{N\eta} )|P(m_t)|^{2D-1}\Big] + N^{-\epsilon/8}q_t^{-1}\mathbb{E}\Big[|m_t -\wt{m}_t|^2|P(m_t)|^{2D-1}\Big] +N^{\epsilon}q_t^{-8D}  \notag \\
&\qquad +N^{\epsilon} q_t^{-1}\sum_{s=2}^{2D}\sum_{s'=0}^{s-2}\mathbb{E}\Big[\Big(\frac{\im m_t}{N\eta}\Big)^{2s-s'-2}|P'(m_t)|^{s'}|P(m_t)|^{2D-s}\Big] \notag \\
&\qquad +N^{\epsilon}\sum_{s=2}^{2D}\mathbb{E}\Big[\Big(\frac{1}{N\eta}+\frac{1}{q_t}\Big(\frac{\im m_t}{N\eta} \Big)^{1/2}+\frac{1}{q_t^2}\Big) \Big(\frac{\im m_t}{N\eta}  \Big)^{s-1}|P'(m_t)|^{s-1}|P(m_t)|^{2D-s}\Big].
\end{align}
Then Lemma \ref{lemma:recursive} states that, for any small $\epsilon >0$,
\[
\E |P|^{2D} \leq \Phi_\epsilon(z), \qquad\qquad (z\in\mathcal{D_\ell}),	
\]
for $N$ sufficiently large, where the domain $\mathcal{D_\ell}$ is defined in \eqref{def:domain D}. We say that a \emph{random variable $Z$ is negligible} if $|\E [Z]|\leq C\Phi_\epsilon$ for some $N$-dependent constant $C$.

To prove the recursive moment estimate, we return to Lemma \ref{lemma:Stein}  which reads
\begin{align}
\E \qB{\pb{&(z+m+\zeta m)^2+2\zeta^2m^2-\zeta}(1+zm)P^{D-1}\ol{P^D} } \notag \\
=& \frac{1}{N}\sum_{i\neq k}\sum_{r=1}^{l}\frac{\kappa_{ik}^{(r+1)}}{r!}\E \qB{ (\partial_{ik})^r \pB{G_{ik}\pb{(z+m+\zeta m)^2+2\zeta^2m^2-\zeta}P^{D-1}\ol{P^D}} } \notag \\
& + \E \qB{ \Omega_l \pB{\pb{(z+m+\zeta m)^2+2\zeta^2m^2-\zeta}(1+zm)P^{D-1}\ol{P^D}}  },
\end{align}
where $\partial_{ij} = \partial/(\partial H_{ik}), \kappa_{ik}^{(\b{\cdot})} \equiv \kappa_{t,ik}^{(\b{\cdot})}$ and $\zeta\equiv \zeta_t$.

Abbreviate 
\begin{align}
I\equiv I(z,m,D) &= \pb{(z+m+\zeta m)^2+2\zeta^2m^2-\zeta}(1+zm)P^{D-1}\ol{P^D} \notag \\
&= Q (1+zm)P^{D-1}\ol{P^D}
\end{align}
where $Q:=(z+m+\zeta m)^2+2\zeta^2m^2-\zeta$.
Then we can rewrite the cumulant expansion as 
\begin{align}\label{eq:cumulantexp1}
&\E I = \sum_{r=1}^{l} \sum_{s=0}^{r} w_{I_{r,s}} \E I_{r,s} + \E \Omega_l(I),
\end{align}
where we set
\begin{align}
I_{r,s}&=\frac{1}{N} \sum_{i\neq k} \kappa_{ik}^{(r+1)}{ \pB{\partial_{ik}^{r-s} G_{ik}} \pB{\partial_{ik}^{s}\pB{\pb{(z+m+\zeta m)^2+2\zeta^2m^2-\zeta}P^{D-1}\ol{P^D}}}} \notag \\
&=\frac{1}{N} \sum_{i\neq k} \kappa_{ik}^{(r+1)}{ \pB{\partial_{ik}^{r-s} G_{ik}} \pB{\partial_{ik}^{s}\pB{QP^{D-1}\ol{P^D}}}},\\
w_{I_{r,s}}& = \frac{1}{(r-s)!s!}.
\end{align}

\subsection{Truncation of the cumulant expanstion}
In this subsection, we will bound the error term $\E\Omega_{l}(I)$ defined in  \eqref{eq:cumulantexp1} for large $l$.
Let $E^{[\mf{ik}]}$ denote the $N \times N$ matrix determined by
\begin{equation}
(E^{[\mathfrak{ik}]})_\mathfrak{ab}=
\begin{cases}
\delta_{\mathfrak{ia}}\delta_\mathfrak{kb}+\delta_\mathfrak{ib}\delta_\mathfrak{ka}   & \qquad\text{ if } \mathfrak{i} \neq \mathfrak{k}, \\
\delta_\mathfrak{ia}\delta_\mathfrak{ib} & \qquad\text{ if } \mathfrak{i} =\mathfrak{k},
\end{cases}
\qquad	(\mathfrak{i,k,a,b} \in  \llbracket  1,N \rrbracket) .
\end{equation}
For each pair of indices $(\mf{i,k})$, we define the matrix $H^{(\mf{ik})}$ from $H$ through the decomposition
\begin{equation}
H=H^{(\mf{ik})}+H_\mf{ik}E^{[\mf{ik}]}.
\end{equation}
Here and after we use Fraktur fonts to denote the indices that can be in  $\llbracket  1,N \rrbracket$. With this notation we have the following estimate.
\begin{lemma}
	Suppose that $H$ satisfies Definition $\ref{assumption}$ with $\phi >0$. Let $\mf{i,k} \in \llbracket  1,N \rrbracket $, $D\in \N$ and $z \in \mathcal{D}_\ell$. Define the function $F_\mf{ki}$ by 
	\begin{align}
	F_\mf{ki}(H):=&G_\mf{ki}((z+m+\zeta m)^2+2\zeta^2m^2-\zeta)P^{D-1}\ol{P^D} \notag \\
	=&G_\mf{ki}QP^{D-1}\ol{P^D},
	\end{align}
	where $G \equiv G^H(z)$ and $P\equiv P(m(z))$. Choose an arbitrary $l \in \N$. Then for any $\epsilon >0$, 
	\begin{equation}\label{eq:truncation}
	\E \qbb{\sup_{x\in\R,|x|\leq q_t^{-1/2}} |\partial_\mf{ik}^l F_\mf{ki}(H^{(\mf{ik})}+xE^{[\mf{ik}]})|} \leq N^\epsilon,
	\end{equation}
	uniformly $z\in \mathcal{D}_\ell$, for sufficiently large $N$. 
\end{lemma}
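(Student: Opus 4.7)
The plan is to split the expectation according to a high-probability event $\Omega$ on which the weak local law of Theorem~\ref{thm:weak local law} gives good control on all entries of $G$ and on which $|H_\mf{ik}|\leq N^{\epsilon/2}/q_t$, combined with the trivial deterministic bound $\|G^{\wt H}\|\leq \eta^{-1}\leq N$ valid on the complement. By Lemma~\ref{lem:bound Hij} and Theorem~\ref{thm:weak local law} together with the usual lattice/Lipschitz argument (already used at the end of Section~\ref{sec:weak local law}), the event $\Omega$ can be arranged so that $\P(\Omega^c)\leq N^{-D''}$ for any $D''>0$. Since $\partial_\mf{ik}^l F_\mf{ki}(\wt H)$ is a polynomial of degree $O_{l,D}(1)$ in the entries of $G^{\wt H}$ and of $m^{\wt H}$, on $\Omega^c$ it is bounded by $N^{C(l,D)}$, so the contribution from $\Omega^c$ to the expectation is at most $N^{C(l,D)-D''}$, which is $o(1)$ for $D''$ chosen sufficiently large.

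On $\Omega$, the first step is to expand $\partial_\mf{ik}^l F_\mf{ki}(\wt H)$ by Leibniz's rule, using the resolvent derivative identity $\partial_\mf{ik} G_\mf{ab}=-\frac{1}{1+\delta_\mf{ik}}(G_\mf{ai}G_\mf{kb}+G_\mf{ak}G_\mf{ib})$ together with $\partial_\mf{ik} m=N^{-1}\sum_\mf{a}\partial_\mf{ik}G_\mf{aa}$. Iterating $l$ times produces a finite sum, with combinatorial weights depending only on $l$ and $D$, of products of at most $O_{l,D}(1)$ Green function entries of $\wt H$ (with indices lying in $\{\mf{i},\mf{k},\mf{a},\mf{b}\}$ and trace averages) times derivatives of $P,\ol P$ and $Q$ in $m$, which are polynomially bounded in $|m|$. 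To promote the a priori control from $G\equiv G^H$ to $\wt G\equiv G^{\wt H}$, I would use that $\wt H-H$ is a symmetric rank-$\leq 2$ perturbation supported on positions $(\mf{i,k}),(\mf{k,i})$ whose entries are bounded by $|x|+|H_\mf{ik}|\leq 2 q_t^{-1/2}$ on $\Omega$. The resolvent identity $\wt G=G-G(\wt H-H)\wt G$, iterated once, then gives
\begin{equation*}
\sup_{|x|\leq q_t^{-1/2}}\max_\mf{a,b}|\wt G_\mf{ab}-G_\mf{ab}|\leq C q_t^{-1/2},
\end{equation*}
and similarly for $\wt m$. Inserting these bounds into the Leibniz expansion yields a deterministic bound of size $N^{\epsilon/2}$ for the supremum on $\Omega$, and combining with the estimate on $\Omega^c$ gives the claimed $N^{\epsilon}$ bound.

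The main obstacle I anticipate is bookkeeping in the Leibniz expansion: the factor $P^{D-1}\ol{P^D}$ contains $m$, whose derivatives generate further $G$-factors, which themselves branch upon further differentiation, so one must verify that the resulting tree terminates with only $O_{l,D}(1)$ summands and $O_{l,D}(1)$ Green-function factors per summand. This is essentially combinatorial and does not require any probabilistic input beyond the weak law; the rest of the argument reduces to the standard resolvent-perturbation transfer described above, and the uniformity in $z\in\mathcal{D}_\ell$ follows from a routine lattice and Lipschitz-continuity argument.
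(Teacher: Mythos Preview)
Your proposal is correct and follows essentially the same route as the paper: both arguments use the rank-two resolvent perturbation $\wt G=G-G(\wt H-H)\wt G$ together with the weak local law to get $\sup_{|x|\le q_t^{-1/2}}\max_{\mf a,\mf b}|\wt G_{\mf a\mf b}|\prec 1$, and then observe that $\partial_\mf{ik}^l F_\mf{ki}$ is a polynomial of controlled degree and number of terms in the Green-function entries. Your explicit $\Omega/\Omega^c$ split with the deterministic bound $\|G\|\le\eta^{-1}$ on the bad event is precisely the mechanism the paper uses implicitly to pass from the stochastic-domination bound to the expectation bound~\eqref{eq:truncation}.
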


\begin{proof}
	Fix the pair of indices $\mf{(a,b)}$ and $\mf{(i,k)}$. By the definition of the Green function and the decomposition of the $H$ we have
	\begin{equation}
	G_\mf{ab}^{H^{\mf{(ik)}}} = 	G_\mf{ab}^{H} + H_\mf{ik} (G^{H^\mf{(ik)}}E^{\mf{[ik]}}G^H)_\mf{ab} = G^H_\mf{ab} +H_\mf{ik}G_\mf{ai}^{H^{\mf{(ik)}}}G^H_\mf{kb}+H_\mf{ik}G_\mf{ak}^{H^{\mf{(ik)}}}G^H_\mf{ib}.
	\end{equation}
	Letting $\Lambda_o^{H^\mf{(ik)}} := \max_{\mf{a,b}}|G_\mf{ab}^{H^\mf{(ik)}}|$ and
	$\Lambda_o^{H} := \max_{\mf{a,b}}|G_\mf{ab}^{H}|$, we have
	\[
	\Lambda_o^{H^\mf{(ik)}} \prec \Lambda_o^{H} + \frac{1}{q_t} \Lambda_o^{H} \Lambda_o^{H^\mf{(ik)}}.
	\]
	From Theorem \ref{thm:weak local law} and \eqref{eq:moments}, we get $ \Lambda_o^{H^\mf{(ik)}} \prec \Lambda_o^{H}\prec 1$, uniformly in $z \in \mathcal{D}_\ell$. Similarly for $x \in \R$, we get 
	\[
	G_\mf{ab}^{H^{\mf{(ik)}}+xE^\mf{[ik]}} = G_\mf{ab}^{H^{\mf{(ik)}}} - x (G^{H^{\mf{(ik)}}}E^\mf{[ik]}G^{H^{\mf{(ik)}}+xE^\mf{[ik]}})_\mf{ab},
	\]
	and therefore we have 
	\begin{equation}
	\sup_{|x|\leq q_t^{-1/2}}\max_{\mf{a,b}}|G_\mf{ab}^{H^{\mf{(ik)}}+xE^\mf{[ik]}}| \prec \Lambda_o^{H^\mf{(ik)}} \prec 1,
	\end{equation}
	uniformly in $z \in \mathcal{D}_\ell.$
	Recall that $P$ is a polynomial of degree 6 in $m$. Hence $F_\mf{ki}$ is a multivariate polynomial of degree $6(2D-1) +3 $ in the Green function entries and the normalized trace $m$ whose number of member terms is bounded by $3\cdot 7^{2D-1}$. Therefore $\partial_{\mf{ik}}F_\mf{ki}$ is a multivariate polynomial of degree $6(2D-1)+3 + l$ whose number of member terms is roughly bounded by $3\cdot 7^{2D-1}\cdot(6(2D-1)+3+2l)^l$. Hence we get 
	\begin{equation}
	\E \qbb{\sup_{|x|\leq q_t^{-1/2}} |\partial_\mf{ik}^l F_\mf{ki}(H^{(\mf{ik})}+xE^{[\mf{ik}]})|} \leq 7^{2D}(12D+l) N^{(12D+l)\epsilon'},
	\end{equation}
	for any small $\epsilon' >0$ and sufficiently large $N$. Choosing $\epsilon' = \epsilon/2(12D+l)$, we get \eqref{eq:truncation}.

\end{proof}

\begin{corollary}\label{coro:bound error} Let $\E\qb{\Omega_l(I)}$ be as in \eqref{eq:cumulantexp1}. Then for any small $\epsilon <0$, we have 
	\begin{equation}\label{eq:stein error}
	\E \qb{ \Omega_l(I) } \leq N^\epsilon \left(\frac{1}{q_t} \right)^l,
	\end{equation}
	uniformly in $z \in \mathcal{D}_\ell$ for sufficiently large $N$. In particular, if $l \geq 8D$, then $\E \qb{ \Omega_l(I) }$ is negligible.
\end{corollary}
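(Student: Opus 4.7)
The plan is to apply the error estimate from Lemma~\ref{lemma:Stein} termwise in the sum obtained from the resolvent identity. Since
\[
1+zm = \frac{1}{N}\sum_{\mathfrak{i}\neq\mathfrak{k}} H_{\mathfrak{ik}} G_{\mathfrak{ki}},
\]
we may write $\E I = \tfrac{1}{N}\sum_{\mathfrak{i}\neq\mathfrak{k}} \E[H_{\mathfrak{ik}} F_{\mathfrak{ki}}(H)]$, so that
\[
\E[\Omega_l(I)] = \frac{1}{N}\sum_{\mathfrak{i}\neq\mathfrak{k}} \E\bigl[\omega_l(H_{\mathfrak{ik}} F_{\mathfrak{ki}})\bigr],
\]
where each $\omega_l$ is the single-variable Stein error obtained by applying Lemma~\ref{lemma:Stein} to $Y = H_{\mathfrak{ik}}$ with the other matrix entries frozen. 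The idea is to choose the cutoff in Lemma~\ref{lemma:Stein} as $Q = q_t^{-1/2}$, which matches the setup of the hypothesis~\eqref{eq:truncation}, and then to estimate the resulting two error terms separately.

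For the \emph{first} term in the Stein error, I use that $H_{\mathfrak{ik}}$ is independent of $H^{(\mathfrak{ik})}$, so the expectation factorises:
\[
\E\bigl[|H_{\mathfrak{ik}}|^{l+2}\bigr]\cdot \E\Bigl[\sup_{|x|\leq q_t^{-1/2}}\bigl|\partial_{\mathfrak{ik}}^{l+1} F_{\mathfrak{ki}}(H^{(\mathfrak{ik})}+xE^{[\mathfrak{ik}]})\bigr|\Bigr] \leq \frac{(Cl)^{cl}}{N q_t^{l}}\cdot N^{\epsilon/2},
\]
where the moment bound comes from~\eqref{eq:moments} and the supremum bound from~\eqref{eq:truncation}. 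Summing over $\mathfrak{i}\neq\mathfrak{k}$ and dividing by $N$ collapses the $N^{-1}$ against $N^2$ pairs, leaving at most $N^{\epsilon}(Cl)^{cl} q_t^{-l}$, which is of the claimed size~\eqref{eq:stein error} once $\epsilon$ is relabelled to absorb the combinatorial prefactor.

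For the \emph{second} term, containing the indicator $\mathds{1}(|H_{\mathfrak{ik}}|>Q)$, I bound the indicator by $(|H_{\mathfrak{ik}}|/Q)^p$ for an arbitrarily large $p$, so that
\[
\E\bigl[|H_{\mathfrak{ik}}|^{l+2}\mathds{1}(|H_{\mathfrak{ik}}|>Q)\bigr] \leq q_t^{p/2}\,\E|H_{\mathfrak{ik}}|^{l+2+p} \leq \frac{(C(l+p))^{c(l+p)}}{N q_t^{l+p/2}}.
\]
For the accompanying unrestricted supremum of $|\partial_{\mathfrak{ik}}^{l+1} F_{\mathfrak{ki}}|$, I use the trivial deterministic bound $\|G(z)\|\leq \eta^{-1}\leq N$ valid on $\mathcal{D}_\ell$: since $F_{\mathfrak{ki}}$ is a polynomial of bounded degree in $G$-entries and in $m$, this sup is at most $N^{C(D,l)}$. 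Choosing $p = p(D,l)$ large enough that $q_t^{p/2}\gg N^{C(D,l)+1}$ (using $q_t\geq N^\phi$) makes this second contribution $o(q_t^{-l})$, and summing over pairs gives the required bound.

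The main obstacle is purely bookkeeping: keeping track of the combinatorial factor $(Cl)^{cl}/\ell!$ from Lemma~\ref{lemma:Stein} alongside the moment factor $(Cl)^{cl}$ in~\eqref{eq:moments}, and ensuring that the crude supremum bound used in the tail term is killed by a sufficiently large power of $q_t$, which is possible precisely because $q_t\geq N^\phi$ with $\phi>0$. Finally, the \emph{in particular} assertion is immediate: for $l\geq 8D$ the bound $N^\epsilon q_t^{-l}\leq N^\epsilon q_t^{-8D}$ is dominated by the term $N^\epsilon q_t^{-8D}$ already present in $\Phi_\epsilon$, so $\E[\Omega_l(I)]$ is negligible.
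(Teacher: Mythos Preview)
Your proposal is correct and follows essentially the same route as the paper: apply Lemma~\ref{lemma:Stein} termwise with cutoff $Q=q_t^{-1/2}$, handle the first error term via the moment bound~\eqref{eq:moments} together with the supremum estimate~\eqref{eq:truncation}, and kill the tail term by trading the indicator for arbitrarily many extra moments against the crude deterministic bound $\|G\|\le\eta^{-1}$. The paper phrases the factorisation via the partial expectation $\E_{\mathfrak{ik}}$ and invokes H\"older for the tail, while you use independence and a Markov-type bound $\mathds{1}(|H_{\mathfrak{ik}}|>Q)\le(|H_{\mathfrak{ik}}|/Q)^p$; these are equivalent, and the concluding ``in particular'' step is identical.
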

\begin{proof}
	Fix the pair of indices $\mf{(k,i)}, \mf{k}\neq \mf{i}$ and denote $\E_\mf{ik}$ the partial expectation with respect to $H_\mf{ik}$. Then from Lemma \ref{lemma:Stein} with $Q=q_t^{-1/2}$, we obtain
	\begin{align}
	|\E_\mf{ik}\Omega_l(H_\mf{ik}F_\mf{ki})| \leq C_l &\E_\mf{ik}[|H_\mf{ik}|^{l+2}]\sup_{|x|\leq q_t^{-1/2}}|\partial_\mf{ik}^{l+1} F_\mf{ki}(H^{(\mf{ik})} + x E^\mf{[ik]})| \notag \\
	& + C_l\E_\mf{ik}[|H_\mf{ik}|^{l+2} \mathds{1}(|H_\mf{ik}| > q_t^{-1/2}) ]\sup_{x\in\R} |\partial_\mf{ik}^{l+1} F_\mf{ki}(H^{(\mf{ik})} + x E^\mf{[ik]})|,
	\end{align}
	with $C_l \leq (Cl)^l/l!$, for some constant $C$. With the moment assumption \eqref{eq:moments} and previous Lemma, for any $\epsilon >0 $, we have
	\begin{equation}
	C_l\E_\mf{ik}[|H_\mf{ik}|^{l+2}]\sup_{|x|\leq q_t^{-1/2}}|\partial_\mf{ik}^{l+1} F_\mf{ki}(H^{(\mf{ik})} + x E^\mf{[ik]})| \leq C_l \frac{(C(l+2))^{c(l+2)}}{Nq_t^l} N^\epsilon \leq \frac{N^{2\epsilon}}{Nq^l_t},
	\end{equation}
	for sufficiently large $N$. To estimate the second line, with trivial bound $|G(z)|\leq \eta^{-1}$, we obtain
	\begin{equation}
	\sup_{x\in\R} |\partial_\mf{ik}^{l+1} F_\mf{ki}(H^{(\mf{ik})} + x E^\mf{[ik]})|\leq 7^{2D}(12D+l)\pa{\frac{C}{\eta}}^{12D+l}. \qquad \qquad (z \in \C^+)
	\end{equation}
	On the other hand, from H\"older's inequality we have for any $D' \in \N$,
	\begin{equation}
	C_l\E_\mf{ik}[|H_\mf{ik}|^{l+2} \mathds{1}(|H_\mf{ik}| > q_t^{-1/2}) ]\sup_{x\in\R} |\partial_\mf{ik}^{l+1} F_\mf{ki}(H^{(\mf{ik})} + x E^\mf{[ik]})| \leq \pa{\frac{C}{q}}^{D'},
	\end{equation}
	uniformly on $z \in \C^+$, sufficiently large $N$. Then summing over $\mf{i}$, $\mf{k}$ and choosing $D' \geq l$ sufficiently large, for any $\epsilon >0$, we get 
	\begin{equation}
	\absB{\E \qB{\Omega_l\pa{(1+zm)}QP^{D-1}\ol{P^D} }} = 	
	\absB{\E \qB{\Omega_l\pB{ \frac{1}{N}\sum_{\mf{i}\neq\mf{k}} H_\mf{ik}F_\mf{ki} }}} \leq \frac{N^\epsilon}{q_t^l},
	\end{equation} 
	uniformly on $z \in \mathcal{D}_\ell$, for $N$ sufficiently large. This concludes \eqref{eq:stein error}.
	
\end{proof}
\subsection{Truncated cumulant expansion}

We can prove Lemma \ref{lemma:recursive} directly from the following result.
\begin{lemma}\label{lemma:truncated}
	Fix $D\geq 2$ and $\ell \geq 8D$. Then we have, for any (small) $\epsilon >0$, we have 
	\begin{equation}\label{eq:estimate I_1}
	\begin{split}
	&w_{I_{1,0}}\E \qb{I_{1,0}}=-\E\qbb{ \pB{(1-\zeta)m^2Q+3\zeta^3m^4+\zeta^2 \frac{\xi^{(4)}}{q_t^{2}} m^6 -\zeta m(z+m+3\zeta m)(\frac{\xi^{(4)}}{q_t^{2}}m^4+1) }P^{D-1}\ol{P^D}}+O(\Phi_\epsilon),\\
	&w_{I_{2,0}}\E \qb{I_{2,0}}= O(\Phi_\epsilon),\\
	&w_{I_{3,0}}\E \qb{I_{3,0}}= -\E \qB{q_t^{-2}\xi^{(4)}Qm^4 P^{D-1}\ol{P^D}}+O(\Phi_\epsilon),\\
	&w_{I_{r,0}}\E \qb{I_{r,0}}=O(\Phi_\epsilon), \qquad (4\leq r\leq \ell), 
	\end{split}
	\end{equation}
	uniformly in $z \in \mathcal{D}_\ell$, for $N$ sufficiently large. Moreover, we have, for any small $\epsilon >0$,
	\begin{equation}\label{eq:estimate I_2}
	w_{I_{r,s}}\absa{\E \qb{I_{r,s}}} \leq \Phi_\epsilon, \qquad (1\leq s\leq r\leq \ell), 
	\end{equation}
	uniformly in $z \in \mathcal{D}_\ell$, for $N$ sufficiently large.
\end{lemma}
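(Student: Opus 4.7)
The plan is to handle each $I_{r,s}$ separately. First I would introduce the basic differentiation rule
\[
\partial_{ik} G_{ab} = -G_{ai}G_{kb} - G_{ak}G_{ib}, \qquad (i\neq k),
\]
so that $\partial_{ik}^r G_{ik}$ is an explicit homogeneous polynomial of degree $r+1$ in Green function entries with one index in $\{i,k\}$. For $I_{r,0}$ this expresses the derivative as a sum of products of at most $r+1$ Green function entries. I would then apply Theorem~\ref{thm:weak local law} to each term: off-diagonal entries $G_{ij}$ and deviations $G_{ii}-m$ contribute factors $\psi = q_t^{-1} + (N\eta)^{-1/2}$, so products of Green function entries involving many off-diagonal factors give $\Phi_\epsilon$-negligible contributions. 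For the $I_{r,s}$ with $s\geq 1$, the derivatives of $Q P^{D-1}\ol{P^D}$ generate factors of $P'$, $P''$, $Q'$ and of $\partial_{ik} m = -N^{-1}\sum_a(G_{ai}G_{ka}+G_{ak}G_{ia})$; the latter is bounded using the Ward identity $\sum_a|G_{ai}|^2=\eta^{-1}\im G_{ii}$, which produces the $(\im m/(N\eta))^{1/2}$ factors appearing in $\Phi_\epsilon$. Matching the number of $P'$-factors to the power $s'$ of summation indices, the bound~\eqref{eq:estimate I_2} falls out of a term-by-term application of the weak local law and H\"older.

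The main work is the case $r=1$, $s=0$. Here $\partial_{ik}G_{ik}=-G_{ii}G_{kk}-G_{ik}^2$. The $G_{ik}^2$ term is $O(\psi^2)$ and negligible. For $G_{ii}G_{kk}$, replace $G_{ii}$ and $G_{kk}$ by $m$ up to negligible error via the entrywise law~\eqref{thm:weak law 2}. Then split $\kappa_{ik}^{(2)}=\kappa_d^{(2)}+(\kappa_s^{(2)}-\kappa_d^{(2)})\mathds{1}_{i\sim k}$. Using $\sum_i \sigma_{ij}^2 = 1$ and $N(\kappa_s^{(2)}-\kappa_d^{(2)})/K=\zeta$, the first piece gives the leading $-(1-\zeta)m^2 Q P^{D-1}\ol{P^D}$ after averaging. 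The community-restricted second piece is
\[
-\frac{\zeta K}{N^2}\sum_{i\sim k, i\neq k}\E\qb{G_{ii}G_{kk}QP^{D-1}\ol{P^D}},
\]
which I cannot close by simple replacement; the community restriction breaks decoupling even after the weak local law.

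The hard part will be resolving this same-community sum, and this is where the secondary cumulant expansion enters. Following the sketch after Lemma~\ref{lemma:recursive}, multiply one factor $G_{ii}$ by $z$ using the identity $1+zG_{ii}=\sum_l H_{il}G_{li}$, apply Lemma~\ref{lemma:Stein} a second time to the resulting $\E[H_{il}G_{li}G_{kk}QP^{D-1}\ol{P^D}]$, and again split each cumulant $\kappa_{il}^{(r+1)}$ by its community type. The $r=1$ secondary term produces, after identifying $G_{ii}\approx m$ etc., the correction $-\zeta m\cdot \text{(bracketed factor)}$ together with triply-summed terms over $i\sim k\sim l$ that require a \emph{tertiary} expansion handled identically. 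The $r=3$ secondary term, weighted by $\kappa^{(4)}$ averaged as $\xi^{(4)}/(Nq_t^2)$, accounts for all $\xi^{(4)}/q_t^2$ contributions in the target formula. Collecting the surviving terms precisely reproduces
\[
(1-\zeta)m^2 Q + 3\zeta^3 m^4 + \zeta^2\frac{\xi^{(4)}}{q_t^2}m^6 - \zeta m(z+m+3\zeta m)\Big(\frac{\xi^{(4)}}{q_t^2}m^4+1\Big),
\]
with error bounded by~$\Phi_\epsilon$ after absorbing $|P'(m)|, |m-\wt m|$ factors into the terms comprising $\Phi_\epsilon$.

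For $r=2$, $s=0$, the derivative $\partial_{ik}^2 G_{ik}$ is a sum of products of three Green function entries, every one of which contains at least one off-diagonal factor or two factors whose indices force $i$ or $k$ to appear twice. Combined with $|\kappa_{ik}^{(3)}|\leq C/(Nq_t)$, the weak local law gives the required $\Phi_\epsilon$ bound. For $r=3$, $s=0$, $\partial_{ik}^3 G_{ik}$ contains the diagonal-dominant term $-6\,G_{ii}^2 G_{kk}^2$; using $\kappa_{ik}^{(4)}$ averaged over both community types yields exactly $\xi^{(4)}/(Nq_t^2)$, and replacing each $G_{ii}$ and $G_{kk}$ by $m$ gives the stated $-\E[q_t^{-2}\xi^{(4)}Q m^4 P^{D-1}\ol{P^D}]$, with the remaining terms negligible. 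For $r\geq 4$, the cumulant bound $|\kappa_{ik}^{(r+1)}|\leq (Cr)^{cr}/(Nq_t^{r-1})$ already absorbs all worst-case contributions into $\Phi_\epsilon$. The main obstacle throughout is bookkeeping the community-restricted sums arising at each level of nested expansion; I expect to handle this by a systematic notation for indices belonging to chains $i\sim k\sim l\sim\cdots$ and by stopping the recursion once the prefactor $\zeta^j$ combined with the weak local law error is smaller than $\Phi_\epsilon$.
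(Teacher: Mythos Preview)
Your overall architecture is right and matches the paper's, but there are two concrete gaps.

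First, your treatment of $I_{2,0}$ (and implicitly of $I_{2,1}$) is insufficient. You claim that for $r=2$, $s=0$, the single off-diagonal factor in $G_{ik}G_{ii}G_{kk}$ together with $|\kappa_{ik}^{(3)}|\le C/(Nq_t)$ gives a $\Phi_\epsilon$ bound. It does not: after summing with the Ward identity you get
\[
\frac{1}{q_t}\E\Big[\Big(\frac{\im m}{N\eta}\Big)^{1/2}|P|^{2D-1}\Big],
\]
and by AM--GM this is controlled only by $q_t^{-2}+\im m/(N\eta)$, whereas $\Phi_\epsilon$ allows only $q_t^{-4}+\im m/(N\eta)$ in its first line. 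The paper closes this by a \emph{secondary} cumulant expansion: write $zG_{ik}=\sum_n H_{kn}G_{ni}-\delta_{ik}$, expand again, and observe that the leading contribution is $-\wt m\,\E I_{2,0}^{(1)}$, so that $(z+\wt m)\E I_{2,0}^{(1)}$ is bounded by genuinely negligible terms; since $|z+\wt m|\ge 1/6$ on $\mathcal{E}$ this buys the missing factor $q_t^{-1}$. The same mechanism is needed for the two--off-diagonal piece of $I_{2,1}$. Your proposal does not mention this step.

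Second, your stopping rule for the nested expansion in $I_{1,0}$ is wrong. You write that the recursion terminates once ``the prefactor $\zeta^j$ combined with the weak local law error is smaller than $\Phi_\epsilon$''. But $\zeta=N(\kappa_s^{(2)}-\kappa_d^{(2)})/K$ is $O(1)$ under Assumption~\ref{assumption}, so $\zeta^j$ does not decay. The recursion in the paper terminates for a different reason: each iteration adds one more diagonal entry to the same-community product, and once you reach four factors, the expansion $(G_{ii}-m)(G_{jj}-m)(G_{kk}-m)(G_{uu}-m)=O(\psi^4)$ has an error that \emph{is} dominated by $q_t^{-4}+(N\eta)^{-2}\subset\Phi_\epsilon$. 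The remaining lower-order cross terms then feed back into sums with fewer restricted indices and the algebra closes. Without this observation the recursion you describe does not terminate.
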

\begin{proof}[Proof of Lemma \ref{lemma:recursive} ]
	Recall that $Q:=(z+m+\zeta m)^2+2\zeta^2m^2-\zeta$. Using this, we write $|P|^{2D}$ as 
	\begin{align}\label{eq:expand P^2D}
	\E |P|^{2D} &= \E \qbb{(1 + zm + m^2 +q_t^{-2}\xi^{(4)}m^4)\pb{(z+m+\zeta m)^2-\zeta(1+mz+m^2)}P^{D-1}\ol{P^D} } \notag \\
	&= \E \qbb{(1 + zm + m^2 +q_t^{-2}\xi^{(4)}m^4)\pb{Q-2\zeta^2m^2-\zeta mz-\zeta m^2}P^{D-1}\ol{P^D} } \notag \\
	&= \E \qbb{(1+zm)QP^{D-1}\ol{P^D}} +\E \qbb{q_t^{-2}\xi^{(4)}m^4QP^{D-1}\ol{P^D}} \notag \\
	&\qquad -\E\qbb{ \pa{-m^2Q+\zeta m(1+zm+m^2+q_t^{-2}\xi^{(4)}m^4)(m+z+2\zeta m)}P^{D-1}\ol{P^D}}.
	\end{align}
	By simple calculation, it can be shown that
	\begin{equation*}
	\begin{split}
	(1-\zeta )m^2Q+3\zeta ^3m^4+\zeta ^2 \frac{\xi^{(4)}}{q_t^{2}} m^6 &-\zeta m(z+m+3\zeta m)(\frac{\xi^{(4)}}{q_t^{2}}m^4+1) \\ 
	&= -\pa{-m^2Q+\zeta m(1+zm+m^2+q_t^{-2}\xi^{(4)}m^4)(m+z+2\zeta m)}.
	\end{split}
	\end{equation*}

	Therefore, $w_{I_{1,0}}\E \qb{I_{1,0}}$ cancels the last term and $w_{I_{3,0}}\E \qb{I_{3,0}}$ cancels the middle term. Thus the whole right side of \eqref{eq:expand P^2D} is negligible. This proves Lemma \ref{lemma:recursive}.
\end{proof}

We now choose an initial (small) $\epsilon >0$. Again, we use the factor $N^\epsilon$ and allow $\epsilon$ to increase by a tiny amount from line to line. We often drop $z$ from the notation; it always understood that $z \in \mathcal{D}_\ell$ and all estimates uniform on $\mathcal{D}_\ell$ and also for sufficiently large $N$. The proof of Lemma \ref{lemma:truncated} is done in remaining Subsections \ref{sec:subsec1}-\ref{sec:subsec5} where $\E I_{r,s}$ controlled.
For the estimates in Lemma \ref{lemma:truncated}, we use the following power counting argument frequently.
\begin{lemma}{(Lemma 6.5 of \cite{LS18})}\label{lemma:powercounting0} For any $\mf{i}$ and $\mf{k}$,
	\begin{equation}
	\frac{1}{N}\sum_{\mf{j}=1}^{N}\absa{G_\mf{ij}(z)G_\mf{jk}(z)} \prec \frac{\im m(z)}{N\eta}, \qquad \frac{1}{N}\sum_{\mf{j}=1}^{N}\absa{G_\mf{ij}(z)} \prec \pa{\frac{\im m(z)}{N\eta}}^{1/2}, \qquad (z\in\C^+).
	\end{equation}
	Moreover, for fixed $n\in \N$,
	\begin{equation}
	\frac{1}{N^n} \sum_{\mf{j}_1, \mf{j}_2, \cdots,\mf{j}_n =1}^{N} \absa{G_{\mf{ij}_1}(z) G_{\mf{j}_1\mf{j}_2}(z)G_{\mf{j}_2\mf{j}_3}(z) \cdots G_{\mf{j}_n\mf{k}}(z)  } \prec \pa{\frac{\im m(z)}{N\eta}}^{n/2}, \qquad (z\in\C^+).
	\end{equation}
\end{lemma}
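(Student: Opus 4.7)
The plan is to derive all three bounds from the Ward identity $\sum_{\mf{j}}|G_{\mf{ij}}(z)|^2 = \im G_{\mf{ii}}(z)/\eta$, the Cauchy--Schwarz inequality, and the \emph{uniform} a priori comparison $\im G_{\mf{ii}}(z) \prec \im m(z)$. This comparison does not follow immediately from the entrywise local law, since the error $\psi = q^{-1} + (N\eta)^{-1/2}$ can exceed $\im m$ near and above the edge. I would instead establish it via the eigenvector delocalization corollary that follows Theorem~\ref{thm:weak local law}: writing
$$\im G_{\mf{ii}}(z) = \sum_{k=1}^{N}\frac{\eta\,|u_k^H(\mf{i})|^2}{(\lambda_k - E)^2 + \eta^2}$$
and invoking $\max_k\|u_k^H\|_\infty^2 \prec 1/N$, the weight $|u_k^H(\mf{i})|^2$ may be replaced by its average $1/N$ up to $\prec$, giving $\im G_{\mf{ii}}(z) \prec N^{-1}\sum_{k}\eta/((\lambda_k-E)^2+\eta^2) = \im m(z)$, uniformly in $\mf{i}$.

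With this ingredient, the first two bounds are one line each. Cauchy--Schwarz and Ward yield
$$\frac{1}{N}\sum_{\mf{j}}|G_{\mf{ij}}G_{\mf{jk}}| \leq \frac{1}{N}\pB{\sum_{\mf{j}}|G_{\mf{ij}}|^2}^{1/2}\pB{\sum_{\mf{j}}|G_{\mf{jk}}|^2}^{1/2} = \frac{(\im G_{\mf{ii}}\,\im G_{\mf{kk}})^{1/2}}{N\eta} \prec \frac{\im m}{N\eta},$$
which is the first estimate; the inequality $\sum_{\mf{j}}|G_{\mf{ij}}| \leq N^{1/2}(\sum_{\mf{j}}|G_{\mf{ij}}|^2)^{1/2}$ combined with Ward gives the second,
$$\frac{1}{N}\sum_{\mf{j}}|G_{\mf{ij}}| \leq \pB{\frac{\im G_{\mf{ii}}}{N\eta}}^{1/2} \prec \pB{\frac{\im m}{N\eta}}^{1/2}.$$

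For the multi-index estimate I would induct on $n$, peeling off one interior summation at a time. Introducing the auxiliary quantity $R_n(\mf{i}) := N^{-n}\sum_{\mf{j}_1,\dots,\mf{j}_n}|G_{\mf{ij}_1}G_{\mf{j}_1\mf{j}_2}\cdots G_{\mf{j}_{n-1}\mf{j}_n}|$, the second bound applied with external index $\mf{j}_{n-1}$ reads $N^{-1}\sum_{\mf{j}_n}|G_{\mf{j}_{n-1}\mf{j}_n}| \prec (\im m/(N\eta))^{1/2}$ uniformly in $\mf{j}_{n-1}$, so $R_n(\mf{i}) \prec (\im m/(N\eta))^{1/2} R_{n-1}(\mf{i})$; iterating from the base case $R_1(\mf{i}) \prec (\im m/(N\eta))^{1/2}$ gives $R_n(\mf{i}) \prec (\im m/(N\eta))^{n/2}$. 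The stated multi-index bound then follows by estimating the trailing factor pointwise, $|G_{\mf{j}_n\mf{k}}| \prec 1$ uniformly in $\mf{j}_n$ by \eqref{thm:weak law 1}--\eqref{thm:weak law 2}, which preserves the power $n/2$.

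The only step requiring thought is the uniform comparison $\im G_{\mf{ii}} \prec \im m$, which I expect to be the main technical point; the entrywise weak law alone produces an error term that is not small enough in the edge regime, whereas delocalization converts the pointwise spectral weight into an average cleanly. Once that comparison is in hand, everything else is a routine combination of Cauchy--Schwarz, the Ward identity, and the uniform entrywise bound on $G$, with no need for fluctuation averaging or further resolvent identities.
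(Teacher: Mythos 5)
Your proof is correct, and the central technical observation is exactly right: the bound $\im G_{\mathfrak{ii}}(z)\prec\im m(z)$, uniformly in $\mathfrak{i}$, is the non-trivial step, and it cannot be read off the entrywise weak law alone because the error $\psi=q^{-1}+(N\eta)^{-1/2}$ can dominate $\im m$ near the edge. Routing it through the spectral decomposition $\im G_{\mathfrak{ii}}=\sum_k \eta|u_k(\mathfrak{i})|^2/((\lambda_k-E)^2+\eta^2)$ and eigenvector delocalization $\max_k\|u_k\|_\infty^2\prec N^{-1}$ is the standard and clean way to do it; since all summands are positive, replacing each $|u_k(\mathfrak{i})|^2$ by the uniform bound $N^{\epsilon-1}$ on a single high-probability event yields $\im G_{\mathfrak{ii}}\leq N^{\epsilon}\im m$ deterministically on that event. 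The remaining steps — Ward identity, Cauchy--Schwarz for the first two displays, and the induction that peels off one inner index at a time while bounding the trailing $|G_{\mathfrak{j}_n\mathfrak{k}}|\prec 1$ via \eqref{thm:weak law 1}--\eqref{thm:weak law 2} — are all routine and correctly executed; note your method could recover the sharper exponent $(n+1)/2$ by applying the first display to the final adjacent pair rather than pulling out $|G_{\mathfrak{j}_n\mathfrak{k}}|$, but the weaker $n/2$ is what the lemma asserts and what the power-counting arguments in Appendix~\ref{sec:recursive} actually use.

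For context: the present paper does not prove this lemma, it simply imports it as Lemma~6.5 of \cite{LS18}. Your argument is the same Ward-plus-delocalization mechanism that proof rests on, so there is no genuinely different route to compare. One minor phrasing caution: ``the weight $|u_k^H(\mathfrak{i})|^2$ may be replaced by its average $1/N$'' is better stated as ``bounded above by $N^{\epsilon}/N$ uniformly in $k,\mathfrak{i}$ on a high-probability event'' — you are using positivity of the summands and a uniform upper bound, not an averaging identity. If you want the lemma literally for all $z\in\C^+$ rather than just on $\mathcal{D}_\ell$, add the remark that for $\eta$ of order one the trivial bound $|G_{\mathfrak{jk}}|\leq\eta^{-1}$ already gives $\max_{\mathfrak{j},\mathfrak{k}}|G_{\mathfrak{jk}}|\prec 1$, so the entrywise bound is available on all of $\C^+$ once combined with the weak law on $\mathcal{D}_\ell$.
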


\subsection{Estimate on \texorpdfstring{$I_{2,0}$}{I2,0}}\label{sec:subsec1}
Recall the definition of $I_{r,s}$. We have
\[
I_{2,0}:= \frac{1}{N} \sum_{i\neq k} \kappa_{ij}^{(3)}{ \pB{\partial_{ij}^{2} G_{ij}} \pB{QP^{D-1}\ol{P^D}}}
\] 
We note that $I_{2,0}$ contains terms with one or three off-diagonal Green function entries $G_{ij}$.

\begin{remark}{[Power counting \MakeUppercase{\romannumeral 1}]}\label{rmk:power1}
	Consider the terms $I_{r,0}$, $r\geq 1$. For $n \geq 1$, we split
	\begin{equation}\label{eq:powercounting1}
	w_{I_{2n,0}}I_{2n,0}=\sum_{l=0}^{n}w_{I_{2n,0}^{(2l+1)}}I_{2n,0}^{(2l+1)}, \qquad w_{I_{2n-1,0}}I_{2n-1,0}=\sum_{l=0}^{n}w_{I_{2n-1,0}^{(2l)}}I_{2n-1,0}^{(2l)},
	\end{equation}
	according to the parity of $r$. For example, for $r=2$, $w_{I_{2,0}}I_{2,0} = w_{I_{2,0}^{(1)}}I_{2,0}^{(1)} +w_{I_{2,0}^{(3)}} I_{2,0}^{(3)}$ with
	\begin{align*}
	\E I_{2,0}^{(1)}=N\E \qa{\sum_{i\neq j} \frac{\kappa_{ij}^{(3)}}{N^2}G_{ij}G_{ii}G_{jj}QP^{D-1}\ol{P^D}    }, \qquad
	\E I_{2,0}^{(3)}=N\E \qa{\sum_{i\neq j} \frac{\kappa_{ij}^{(3)}}{N^2}(G_{ij})^3QP^{D-1}\ol{P^D}   }.
	\end{align*}
	Now we bound the summands in \eqref{eq:powercounting1} as follows. First, we note that each term in $I_{r,0}$ contains a factor of $q_t^{(r-1)+}$. Second, with $n\geq 1$, for $\E I_{2n,0}^{(2l+1)}$ and $\E I_{2n-1,0}^{(2l)}$ we can apply Lemma \ref{lemma:powercounting0} to extract one factor of $\frac{\im m}{N\eta}$ and other Green function entries can be bounded by $|G_{ij}| \prec 1$. Moreover $Q$ is bounded by some constant on $\mathcal{E}$. Therefore, for $n \geq 1, l \geq 1$,	
	\begin{equation}\label{eq:powercount1}
	|\E I_{2n,0}^{(2l+1)} | \leq \frac{N^\epsilon}{q_t^{2n-1}}\E \qa{\frac{\im m}{N\eta}|P|^{2D-1}}, 
	\qquad  	|\E I_{2n-1,0}^{(2l)} | \leq \frac{N^\epsilon}{q_t^{2n-2}}\E \qa{\frac{\im m}{N\eta}|P|^{2D-1}},
	\end{equation}
	for sufficiently large $N$. Hence we conclude that all these terms are negligible.
\end{remark}

Thus we only need to consider $\E I_{2,0}^{(1)}$ which is not covered by \eqref{eq:powercount1}. With Lemma \ref{lemma:powercounting0}, we have for sufficiently large $N$ 
\begin{equation}
|\E I_{2,0}^{(1)}| \leq \frac{N^\epsilon}{q_t}\E \qB{\frac{1}{N^2}\sum_{i\neq j} |G_{ij}|Q|||P|^{(2D-1)}  } \leq \frac{N^\epsilon}{q_t} \E \qB{ \pB{\frac{\im m }{N\eta} }^{1/2}|P|^{2D-1} }.
\end{equation}
Since this bound is not negligible, we need to gain an additional factor $q_t^{-1}$.  We have the following result.

\begin{lemma}\label{lemma:EI_{2,0}^1}
	For any small $\epsilon >0$ and for all $z \in \mathcal{D}_\ell$, we have
	\begin{equation}\label{eq:EI_{2,0}^1}
	|\E I_{2,0}^{(1)}| \leq \frac{N^\epsilon}{q_t^2} \E \qB{ \pB{\frac{\im m }{N\eta} }^{1/2}|P|^{2D-1} } +\Phi_\epsilon \leq N^\epsilon \E \qB{\pB{q_t^{-4} +\frac{\im m }{N\eta} }|P(m)|^{2D-1}  } + \Phi_\epsilon,
	\end{equation}
	for $N$ sufficiently large. In particular, $\E I_{2,0}$ is negligible.
\end{lemma}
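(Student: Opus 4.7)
The naive estimate is $|\E I_{2,0}^{(1)}| \leq \frac{N^\epsilon}{q_t} \E[(\frac{\im m}{N\eta})^{1/2}|P|^{2D-1}]$, so the task is to extract an additional factor of $q_t^{-1}$. My plan is to use the resolvent identity \eqref{eq:self cons 3} to manufacture a new random variable $H_{ij}$, and then perform a second cumulant expansion, exploiting the sparsity built into higher cumulants. Concretely, I write $G_{ij} = -G_{ii} G_{jj}^{(i)} H_{ij} + G_{ii} G_{jj}^{(i)} Z_{ij}^{(ij)}$, splitting $\E I_{2,0}^{(1)}$ into an $H_{ij}$-piece and a $Z_{ij}^{(ij)}$-piece.

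For the $H_{ij}$-piece I apply Lemma~\ref{lemma:Stein} to the variable $H_{ij}$; crucially, the minor $G_{jj}^{(i)}$ is independent of $H_{ij}$, so $\partial_{ij}$ does not touch it. The leading $r=1$ contribution carries the cumulant product $\kappa_{ij}^{(3)} \kappa_{ij}^{(2)}$, whose pointwise bound $C/(N^3 q_t)$ provides one extra $N^{-1}$; combined with the averaging bound $\frac{1}{N^2}\sum_{i\neq j}|G_{ij}| \prec (\frac{\im m}{N\eta})^{1/2}$ from Lemma~\ref{lemma:powercounting0} and the elementary inequality $1/N \leq 1/q_t$ (valid since $q_t \leq N^{1/2} \leq N$), this term is of order $\frac{1}{q_t^2}(\frac{\im m}{N\eta})^{1/2}|P|^{2D-1}$, exactly matching the target. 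Higher-order terms $r\geq 2$ produce extra factors of $q_t^{-1}$ per cumulant order and are absorbed into $\Phi_\epsilon$; the truncation remainder $\Omega_\ell$ with $\ell \geq 8D$ contributes $O(q_t^{-8D}) \subset \Phi_\epsilon$ by Corollary~\ref{coro:bound error}.

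For the $Z_{ij}^{(ij)}$-piece I expand $Z_{ij}^{(ij)} = \sum_{k,l\notin\{i,j\}} H_{ik} G_{kl}^{(ij)} H_{lj}$ and perform a further cumulant expansion on $H_{ik}$, which is independent of both $G_{kl}^{(ij)}$ and $H_{lj}$. The leading $r=1$ term again acquires a double cumulant factor of size $O((N^2 q_t)^{-1})$; combined with the Ward-identity bound $\frac{1}{N^2}\sum_{k,l}|G_{kl}^{(ij)}|^2 \prec \frac{\im m}{N\eta}$ and either the pointwise bound $|H_{lj}|\prec q_t^{-1}$ (Lemma~\ref{lem:bound Hij}) or a third cumulant expansion on $H_{lj}$, the contribution fits inside $\Phi_\epsilon$. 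Once the two pieces are assembled, the first inequality of \eqref{eq:EI_{2,0}^1} holds; the second inequality then follows from Young's inequality $2 q_t^{-2}\cdot(\frac{\im m}{N\eta})^{1/2} \leq q_t^{-4} + \frac{\im m}{N\eta}$, and together with the $q_t^{-4}$ and $\im m/(N\eta)$ terms already present in $\Phi_\epsilon$ yields that $\E I_{2,0}$ is negligible (noting that $\E I_{2,0}^{(3)}$ was already handled in Remark~\ref{rmk:power1}).

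The main obstacle lies in controlling the $Z_{ij}^{(ij)}$-piece: the iterated cumulant expansions generate a proliferation of derivative terms, including those where $\partial_{ij}$ or $\partial_{ik}$ acts on $X = Q P^{D-1}\overline{P^D}$, producing $\partial_{ab} m = -\frac{2}{N}(G^2)_{ab}$ factors that, upon differentiating $P^{D-1}$, introduce powers of $P'(m)$; each such term must then be matched against the precise monomials in $\Phi_\epsilon$ via Lemma~\ref{lemma:powercounting0}. A secondary technicality is the minor $G_{jj}^{(i)}$, which I handle either directly (it is $O(1)$ by the weak local law applied to $H^{(i)}$) or by substituting $G_{jj}^{(i)} = G_{jj} + G_{ij}^2/G_{ii}$ from \eqref{eq:self cons 1}, which introduces only $O(|G_{ij}|^2)$ corrections that are of strictly smaller order.
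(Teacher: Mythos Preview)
Your route via the Schur complement is genuinely different from the paper's. The paper does not decompose $G_{ij}$ at all: it multiplies $I_{2,0}^{(1)}$ by $z$, uses the identity $zG_{ij}=\sum_k H_{jk}G_{ki}$ to introduce a \emph{fresh} summation index $k$, and cumulant-expands in $H_{jk}$. The $r'=1$ term of that expansion returns $-\widetilde m\,\E I_{2,0}^{(1)}$ (after replacing $m$ by $\widetilde m$), yielding the closed relation $(z+\widetilde m)\,\E I_{2,0}^{(1)}=O(\Phi_\epsilon)$; dividing by the uniformly bounded-below factor $|z+\widetilde m|>1/6$ finishes in one stroke, with no minors in sight. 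Your scheme instead re-uses the \emph{same} pair $(i,j)$ in the $H_{ij}$-piece and postpones the real work to the $Z_{ij}^{(ij)}$-piece, where fresh indices $k,l$ enter. The trade-off is that the paper's self-consistency trick buys the extra $q_t^{-1}$ for free from the stability factor $|z+\widetilde m|^{-1}$, whereas you must earn it by an honest triple expansion.

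Two points in your sketch need repair. First, the inequality $1/N\le 1/q_t$ is false once $t$ is large (recall $q_t=e^{t/2}q$ can exceed $N$ on $[0,6\log N]$); the $H_{ij}$-piece is still fine because $\frac{1}{Nq_t}(\frac{\im m}{N\eta})^{1/2}$ is absorbed by $\Phi_\epsilon$ directly, but not for the reason you gave. Second, the ``either/or'' in the $Z$-piece is not a genuine option: the pointwise bound $|H_{lj}|\prec q_t^{-1}$ alone is \emph{insufficient}. After the $H_{ik}$-expansion the index $j$ survives only in bounded diagonal factors $G_{jj},G_{jj}^{(i)}$ and in $H_{lj}$; bounding $|H_{lj}|\prec q_t^{-1}$ therefore leaves the $j$-sum free, and the resulting estimate is too large by a full factor of $N$. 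You must carry out the third cumulant expansion in $H_{lj}$ so that $\partial_{lj}$ produces an off-diagonal linking $j$ to $l$, after which the three off-diagonal factors $G_{ik}$, $G_{kl}^{(ij)}$, $G_{jl}$ together yield $(\frac{\im m}{N\eta})^{3/2}$ and the power counting closes inside $\Phi_\epsilon$.
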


\begin{proof}
	Fix a small $\epsilon >0$. Recall the definition of $I_{2,0}^{(1)}$, we have
	\begin{equation}\label{eq:def I_{2,0}}
	\E I_{2,0}^{(1)}=N\E \qa{\sum_{i\neq j} \frac{\kappa_{ij}^{(3)}}{N^2}G_{ij}G_{ii}G_{jj}QP^{D-1}\ol{P^D}    }
	\end{equation}
	Using the resolvent formula we expand in the index $j$ to get 
	\begin{equation}\label{eq:zEI_{2,0}^{(1)}}
	z\E I_{2,0}^{(1)} = N\E \qa{\sum_{i\neq j \neq k} \frac{\kappa_{ij}^{(3)}}{N^2}H_{jk}G_{ki}G_{ii}G_{jj}QP^{D-1}\ol{P^D}    }.
	\end{equation}
	Applying the cumulatn expansion to the right side of \eqref{eq:zEI_{2,0}^{(1)}}, we will show that the leading term of \eqref{eq:zEI_{2,0}^{(1)}} is $-\E[mI_{2,0}^{(1)}]$. Then changing $m(z)$ by the deterministic quantity $\wt m (z)$ and showing all other terms in the cumulant expansion are negligible. Then we will get 
	\begin{equation}
	|z+\wt m(z)||\E I_{2,0}^{(1)}|\leq  \frac{N^\epsilon}{q_t^2} \E \qB{ \pB{\frac{\im m }{N\eta} }^{1/2}|P|^{2D-1} } +\Phi_\epsilon \leq N^\epsilon \E \qB{\pB{q_t^{-4} +\frac{\im m }{N\eta} }|P(m)|^{2D-1}  } + \Phi_\epsilon,
	\end{equation}
	for sufficiently large $N$. Since $|z+\wt m(z)| > 1/6 $ uniformly on $\mathcal{E}$, the lemma follows directly.                                                                                                                                                                                                                                                                                                                                                                                                                                                                                                                                                                                                                                                                                                                                                                                                                                                                                                                                                                                                                                                                                                                                                                                                                                                                                                                                                                                                                                                                                                                                                                                                                                                                                                                                                                                                                                                                                                                                                                                                                                                                                                                                                                                                                                                                                                                                                                                                                                                                                                                                                                                                                                                                                                                                                                                                                                                                                                                                                                                                                                                                                                                                                                                         
	
	For simplicity we abbreviate $\hat{I}\equiv I_{2,0}^{(1)}$. Using Lemma \ref{lemma:Stein}, for arbitrary $l' \in \N$ we have the cumulant expansion
	\begin{equation}\label{eq:zEhat I}
	z\E \hat{I} = \sum_{r'=1}^{l'} \sum_{s'=0}^{r'} w_{\hat{I}_{r',s'}} \E \hat{I}_{r',s'} + \E \Omega_{l'}(\hat{I})
	\end{equation}
	with
	\begin{equation}
	\hat{I}_{r',s'}=\frac{1}{N} \sum_{i\neq j\neq k} \kappa_{ij}^{(3)} \kappa_{jk}^{(r'+1)}{ \pB{\partial_{jk}^{r'-s'} G_{ki}G_{ii}G_{jj}} \pB{\partial_{jk}^{s'}\pB{QP^{D-1}\ol{P^D}}}}
	\end{equation}
	with $ w_{\hat{I}_{r',s'}}= \frac{1}{(r'-s')!s'!}$. By Corollary \ref{coro:bound error}, the error term  $\E\Omega_{l'}(\hat{I})$ is negligible with choosing $l' \geq 8D$.
	
	We first estimate $\hat{I}_{r',0}$. For $r'=1$, we compute
	\begin{align}\label{eq:hatI1 split}
	\E \hat{I}_{1,0} =-&\E \qB{\frac{1}{N} \sum_{i\neq j\neq k} \kappa_{ij}^{(3)} \kappa_{jk}^{(2)}{ G_{ji}G_{kk}G_{jj}G_{ii} \pB{QP^{D-1}\ol{P^D}}}  } \notag \\ 
	&-3\E \qB{\frac{1}{N} \sum_{i\neq j\neq k} \kappa_{ij}^{(3)} \kappa_{jk}^{(2)}{ G_{jk}G_{ki}G_{jj}G_{jj} \pB{QP^{D-1}\ol{P^D}}}  } \notag \\ 
	&-2\E \qB{\frac{1}{N} \sum_{i\neq j\neq k} \kappa_{ij}^{(3)} \kappa_{jk}^{(2)}{ G_{ij}G_{jk}G_{ki}G_{jj} \pB{QP^{D-1}\ol{P^D}}}  } \notag \\ 
	=: \E &\hat{I}_{1,0}^{(1)} + 3\E \hat{I}_{1,0}^{(2)} + 2\E \hat{I}_{1,0}^{(3)}
	\end{align}
	where we organize the terms according to the off-diagonal Green function entries. By Lemma \ref{lemma:powercounting0},
	\begin{equation}\label{eq:hatI^2,3}
	|\E\hat{I}_{1,0}^{(2)}| \leq \frac{N^\epsilon}{q_t}\E\qB{\frac{\im m }{N\eta}|P|^{2D-1}}\leq \Phi_\epsilon, \qquad |\E\hat{I}_{1,0}^{(3)}| \leq \frac{N\epsilon}{q_t}\qB{\pB{\frac{\im m }{N\eta}}^{3/2}|P|^{2D-1} } \leq \Phi_\epsilon.
	\end{equation}
	Recall $\wt m \equiv \wt m_t(z)$ defined in Proposition \ref{prop:locallaw}. We rewrite $\hat{I}_{1,0}^{(1)}$ with $\wt m$ as 
	\begin{align}\label{eq:hatI^1}
	\E \hat{I}_{1,0}^{(1)} &= -\E \qB{\frac{1}{N} \sum_{i\neq j\neq k} \kappa_{ij}^{(3)} \kappa_{jk}^{(2)}{ G_{ji}G_{kk}G_{jj}G_{ii} \pB{QP^{D-1}\ol{P^D}}}  } \notag \\
	&= -\E \qB{\frac{1}{N} \sum_{i\neq j} \kappa_{ij}^{(3)} { mG_{ji}G_{jj}G_{ii} \pB{QP^{D-1}\ol{P^D}}}  } +O(\Phi_\epsilon) \notag \\
	&= -\E \qB{\frac{1}{N} \sum_{i\neq j} \kappa_{ij}^{(3)} { \wt mG_{ji}G_{jj}G_{ii} \pB{QP^{D-1}\ol{P^D}}}  } \notag \\
	&\qquad -\E \qB{\frac{1}{N} \sum_{i\neq j} \kappa_{ij}^{(3)} { (m-\wt m)G_{ji}G_{jj}G_{ii} \pB{QP^{D-1}\ol{P^D}}}  }  +O(\Phi_\epsilon)
	\end{align}
	By Schwarz inequality and the high probability bounds $G_{ii} \prec 1$, for sufficiently large $N$, the second term in \eqref{eq:hatI^1} bounded as 
	\begin{align}\label{eq:hatI^1 2term}
	\absbb{\E \qB{\frac{1}{N} &\sum_{i\neq j} \kappa_{ij}^{(3)} { (m-\wt m)G_{ji}G_{jj}G_{ii} \pB{QP^{D-1}\ol{P^D}}}  } } \leq \frac{N^{\epsilon/4}}{q_t}\E \qB{\frac{1}{N^2} \sum_{i\neq j} |m-\wt m||G_{ij}||Q||P|^{2D-1}} \notag \\
	& \leq \frac{N^{-\epsilon/4}}{q_t} \E\qB{\frac{1}{N^2}\sum_{i\neq j} |m-\wt m|^2|P|^{2D-1}} + \frac{N^{3\epsilon/4}}{q_t}\qB{\E \frac{1}{N^2} \sum_{i\neq j}|G_{ij}|^2|P|^{2D-1}} \notag \\
	& \leq \frac{N^{-\epsilon/4}}{q_t}\E\qB{|m-\wt m|^2|P|^{2D-1} } +\frac{N^{3\epsilon/4}}{q_t}\E\qB{\frac{\im m}{N\eta}|P|^{2D-1}}.
	\end{align}
	Thus, we get from \eqref{eq:hatI1 split}, \eqref{eq:hatI^2,3}, \eqref{eq:hatI^1} and \eqref{eq:hatI^1 2term} that
	\begin{equation}\label{eq:EhatI_{1,0}}
	\E \hat{I}_{1,0} = -\wt m \E \qB{\frac{1}{N} \sum_{i\neq j} \kappa_{ij}^{(3)} { G_{ji}G_{jj}G_{ii} \pB{QP^{D-1}\ol{P^D}}}  } + O(\Phi_\epsilon) = -\E \wt m I_{2,0}^{(1)} + O(\Phi_\epsilon).
	\end{equation}
	We remark that in the expansion of $\E \hat{I} = \E I_{2,0}^{(1)} $ the only term with one off-diagonal entry is $\E\hat{I}_{2,0}^{(1)}$. All the other terms contain at least two off-diagonal entries.
	
	\begin{remark}{[Power counting \MakeUppercase{\romannumeral 2}]} We have $\hat{I}_{r',s'} = (I_{2,0}^{(1)})_{r',s'}$. Now consider the terms with $s'=0$. As in \eqref{eq:powercounting1} we organize the terms according to the number of off-diagonal Green function entries. For $r'\geq 2$, 
		\begin{equation}
		w_{\hat{I}_{r',0}}\hat{I}_{r',0}=\sum_{l=0}^{n}w_{\hat{I}_{r',0}^{(l+1)}}\hat{I}_{r',0}^{(l+1)}=\sum_{l=0}^{n}w_{\hat{I}_{r',0}^{(l+1)}}{(I_{2,0}^{(1)})}_{r',0}^{(l+1)}.
		\end{equation}
		A simple power counting as in Remark \ref{rmk:power1} then directly yields
		\begin{equation}\label{eq:powercount2}
		|\E\hat{I}_{r',0}^{(1)}| \leq \frac{N^\epsilon}{q_t^{r'}}\E \qbb{\pa{\frac{\im m}{N\eta}}^{1/2}|P|^{2D-1}}, \qquad |\E\hat{I}_{r',0}^{(l+1)}| \leq \frac{N^\epsilon}{q_t^{r'}}\E \qbb{\frac{\im m}{N\eta}|P|^{2D-1}}, \qquad (l \geq 1),
		\end{equation}
		for $N$ sufficiently large. We used that each terms contains a factor $\kappa_{(\b{\cdot})}^{(3)}\kappa_{(\b{\cdot})}^{(r'+1)}\leq CN^{-2}q_t^{-r'}$. Hence with $r' \geq 2$, we conclude that all terms in \eqref{eq:powercount2} are negligible, yet we remark that $|\E \hat{I}_{1,0}^{(1)}|$ is the leading error term in $\E I_{2,0}^{(1)}$, which is explicitly listed on the right side of \eqref{eq:EI_{2,0}^1}.
	\end{remark}
	
	\begin{remark}{[Power counting \MakeUppercase{\romannumeral 3}]}\label{rmk:power3} We consider the terms $\hat{I}_{r',s'}$ with $1\leq s'\leq r'$.
		Recall that 
		\begin{equation}\label{eq:hat I}
		\hat{I}_{r',s'}=\frac{1}{N} \sum_{i\neq j\neq k} \kappa_{ij}^{(3)} \kappa_{jk}^{(r'+1)}{ \pB{\partial_{jk}^{r'-s'} G_{ki}G_{ii}G_{jj}} \pB{\partial_{jk}^{s'}\pB{QP^{D-1}\ol{P^D}}}}.
		\end{equation}
		We claim that it's enough to show that
		\begin{equation}\label{eq:wt I}
		\wt	{I}_{r',s'}:=\frac{1}{N} \sum_{i\neq j\neq k} \kappa_{ij}^{(3)} \kappa_{jk}^{(r'+1)}{ \pB{\partial_{jk}^{r'-s'} G_{ki}G_{ii}G_{jj}} \pB{\partial_{jk}^{s'}\pB{P^{D-1}\ol{P^D}}}},
		\end{equation}
		are negligible for $1\leq s'\leq r'$.
		Assume that \eqref{eq:wt I} holds. $\partial_{jk}$ can act on $Q, P^{D-1}$ or $\ol{P^D}$. If $\partial_{jk}$ does not act on $Q$, with $|Q| \prec 1 $, we can show that those terms from \eqref{eq:hat I} are negligible by \eqref{eq:wt I}. Now consider when $\partial_{jk}$ act only on $Q$. Note that $Q$ is second order polynomial in $m$.
		
		Using $|Q'|\prec 1$, $|Q''|\prec 1 $ and Lemma \ref{lemma:powercounting0} we have 
		\begin{align}\label{eq:partial Q}
		\abs{\partial_{jk} Q} &= \absB{\pB{\frac{1}{N}\sum_{u=1}^{N}G_{uj}G_{ku}} Q'} \prec \frac{\im m }{N\eta}, \notag \\
		\abs{\partial_{jk}^2 Q} &\leq \absB{\pB{\frac{1}{N}\sum_{u=1}^{N}G_{uj}G_{ku}}^2 Q''} + \absB{\frac{1}{N}\sum_{u=1}^{N}\partial_{ik}(G_{uj}G_{ku}) Q' } \prec {\frac{\im m }{N\eta}},
		\end{align}
		where the summation index $u$ is generated from $\partial_{ik}Q$. More generally, it can be easily shown that $\partial_{jk}^{n}Q$ contains at least two off-diagonal Green function entries for $n\geq 1$. 
		Thus we conclude that if $\partial_{jk}$ acts $n$ times on $Q$ and $m$ times on $P^{D-1}\ol{P^D}$ , then that term could be bounded by the term that $\partial_{jk}$ acts $m$ times only on $P^{D-1}\ol{P^D}$ where $m\geq 1$.
		Hence we only need to show that 
		\begin{align}
		\frac{1}{N} \sum_{i\neq j\neq k} \kappa_{ij}^{(3)} \kappa_{jk}^{(r'+1)}{ \pB{\partial_{jk}^{(r'-n)} G_{ki}G_{ii}G_{jj}} \pB{\pB{\partial_{jk}^n Q}P^{D-1}\ol{P^D}}},
		\end{align}
		is negligible. In particular, we have 
		\begin{equation}
		\absa{\frac{1}{N} \sum_{i\neq j\neq k} \kappa_{ij}^{(3)} \kappa_{jk}^{(r'+1)}{ \pB{\partial_{jk}^{(r'-n)} G_{ki}G_{ii}G_{jj}} \pB{\pB{\partial_{jk}^{n}Q}P^{D-1}\ol{P^D}}}}\leq \frac{N^\epsilon}{q_t^{r'}}\E \qbb{ \pB{\frac{\im m}{N\eta}}^{3/2}|P|^{2D-1}} \leq \Phi_\epsilon,
		\end{equation}
		where we used that $\partial_{jk}^{r}( G_{ki}G_{ii}G_{jj})$, $r\geq 0$, contains at least one off-diagonal Green function entry.
		
		Now we prove $\wt	{I}_{r',s'}$, $1\leq s'\leq r'$ are negligible. For $s'=1$, since $\partial_{jk}^{r}( G_{ki}G_{ii}G_{jj})$ contains at least one off-diagonal entries and $\partial_{ik}(P^{D-1}\ol{P^D})$ contains two off-diagonal Green function entries. Explicitly,
		\begin{align}
		\wt{I}_{r',1} =& -2(D-1)\frac{1}{N} \sum_{i\neq j\neq k} \kappa_{ij}^{(3)} \kappa_{jk}^{(r'+1)}{ \pB{\partial_{jk}^{r'-1} G_{ki}G_{ii}G_{jj}} \pB{\frac{1}{N}\sum_{u=1}^{N}G_{uj}G_{ku} }}P'P^{D-2}\ol{P^D} \notag \\
		& -2D\frac{1}{N} \sum_{i\neq j\neq k} \kappa_{ij}^{(3)} \kappa_{jk}^{(r'+1)}{ \pB{\partial_{jk}^{r'-1} G_{ki}G_{ii}G_{jj}} \pB{\frac{1}{N}\sum_{u=1}^{N}G_{uj}G_{ku} }}\ol{P}'P^{D-1}\ol{P^{D-1}}.
		\end{align}
		Using Lemma \ref{lemma:powercounting0}, for $r'\geq1$, we have 
		\begin{equation}\label{eq:wt I_{r',1}}
		|\E \wt{I}_{r',1}|\leq \frac{N^\epsilon}{q_t^{r'}}\E \qbb{ \pB{\frac{\im m}{N\eta}}^{3/2}|P'||P|^{2D-2}+\pB{\frac{\im m}{N\eta}}^{3/2}|P'||P|^{2D-2} }\leq 2\Phi_\epsilon,
		\end{equation}
		for sufficiently large $N$.
		For $2\leq s'\leq r'$, we first note that, for sufficiently large $N$,
		\begin{align}\label{eq:wt I_{r,s}}
		|\E \wt{I}_{r',s'}|&\leq \frac{N^\epsilon}{q_t^{r'}} \absbb{\E \qbb{\frac{1}{N^3} \sum_{i\neq j\neq k} { \pB{\partial_{jk}^{r'-s'} G_{ki}G_{ii}G_{jj}} \pB{\partial_{jk}^{s'}\pB{P^{D-1}\ol{P^D}}}}}} \notag \\
		&\leq \frac{N^\epsilon}{q_t^{r'}}\E \qbb{ \pB{\frac{\im m}{N\eta}}^{1/2}\frac{1}{N^2}\sum_{j\neq k} \absB{\partial_{jk}^{s'}\pB{P^{D-1}\ol{P^D}}} }.
		\end{align}
		Next, since $s' \geq 2$, the partial derivative $\partial_{jk}^{s'}\pB{P^{D-1}\ol{P^D}}$ acts on $P$ and $\ol{P}$ (and on their derivatives) more than once. Generally, we consider a resulting term containing
		\begin{equation}\label{eq:fresh index}
		P^{D-s_1'}\ol{P^{D-s_2'}}(P')^{s_3'}(\ol{P'})^{s_4'}(P'')^{s_5'}\cdots (P^{(5)})^{s_{11}'}(\ol{P^{(5)}})^{s_{12}'},
		\end{equation}
		with $1\leq s_1' \leq D, 0\leq s_2' \leq D$ and $\sum_{n=1}^{12}s_n'\leq s'$. Since $P^{(6)}$ is constant we did not list it. We see that such a term above was generated from $P^{D-1}\ol{P^D}$ by letting the partial derivative $\partial_{jk}$ act $s_1' -1$-times on $P$ and $s_2' -1$-times on $\ol{P}$, which implies that $s_1' -1 \geq s_3'$ and $s_2' \geq s_4'$. If  $s_1' -1 > s_3'$, then $\partial_{jk}$ acted on the derivative of $P$ directly $(s_1'-1-s_3')$-times, and a similar argument holds for $\ol{P'}$. Whenever $\partial_{jk}$ acted on $P, \ol{P}$ and their derivatives, it generated a term $2N^{-1}\sum_{a_l}G_{ja_l}G_{a_lk}$, with $a_l, l\geq 1$, a fresh summation index. For each fresh summation index we apply Lemma \ref{lemma:powercounting0} to gain a factor $\frac{\im m}{N\eta}$. The total number of fresh summation indices in a term corresponding to \eqref{eq:fresh index} is 
		\[
		(s_1'-1)+s_2'+(s_1'-1-s_3')+(s_2'-s_4')=2s_1'+2s_2'-s_3'-s_4'-2=2\wt s_0 -\wt s -2,
		\]
		with $\wt s_0:=s_1'+s_2'$ and $\wt s:= s_3'+s_4'$ we note that this number does not decrease when $\partial_{jk}$ acts on off-diagonal Green function entries later. Thus, from \eqref{eq:wt I_{r,s}} we conclude, for $2\leq s' \leq r'$,
		\begin{align}\label{eq:wt I_{r,s}2}
		|\E \wt{I}_{r',s'}|	&\leq \frac{N^\epsilon}{q_t^{r'}}\E \qbb{ \pB{\frac{\im m}{N\eta}}^{1/2}\frac{1}{N^2}\sum_{j\neq k} \absB{\partial_{jk}^{s'}\pB{P^{D-1}\ol{P^D}}} } \notag \\
		&\leq \frac{N^{2\epsilon}}{q_t^{r'}}\sum_{\wt s_0=2}^{2D}\sum_{\wt s=1}^{\wt s_0 -2}\E \qbb{ \pB{\frac{\im m}{N\eta}}^{1/2+2\wt s_0 - \wt s -2}|P'|^{\wt s}|P|^{2D-\wt s_0} } \\
		& \qquad +\frac{N^{2\epsilon}}{q_t^{r'}}\sum_{\wt s_0=2}^{2D}\E \qbb{ \pB{\frac{\im m}{N\eta}}^{1/2+\wt s_0 - 1}|P'|^{\wt s_0-1}|P|^{2D-\wt s_0} },\notag
		\end{align}
		for $N$ sufficiently large. Here the last term on the right corresponds to $\wt s = \wt s_0 -1$. Thus, we conclude from \eqref{eq:wt I_{r,s}2} that $\E[\wt I_{r',s'}]]$, $2\leq s'\leq r'$, is negligible. To sum up, we have established that all terms $\E[\wt I_{r',s'}]$ with $1 \leq s'\leq r'$ are negligible and therefore, $\hat{I}_{r',s'}$ with $1\leq s'\leq r'$ are also negligible.
	\end{remark}
	From \eqref{eq:def I_{2,0}}, \eqref{eq:zEhat I}, \eqref{eq:EhatI_{1,0}}, \eqref{eq:powercount2}, \eqref{eq:wt I_{r',1}} and \eqref{eq:wt I_{r,s}2} we find that 
	\begin{equation}
	|z+\wt m||\E I_{2,0}^{(1)}| \leq \frac{N^\epsilon}{q_t^2}\E \qB{\pB{\frac{\im m}{N\eta}}^{1/2}|P|^{2D-1}} + \Phi_\epsilon,
	\end{equation}
	for sufficiently large $N$. Since $|z+\wt m |> 1/6$, we obtain $|\E I_{2,0}^{(1)}|\leq \Phi_\epsilon$. This concludes the proof of \eqref{eq:EI_{2,0}^1}.	
\end{proof}	
Summarizing, we showed in \eqref{eq:powercount1}and \eqref{eq:EI_{2,0}^1} that 
\begin{equation}
|EI_{2,0}|\leq \Phi_\epsilon,
\end{equation}
for $N$ sufficiently large and the second estimate in \eqref{eq:estimate I_1} is proved.

\subsection{Estimate on \texorpdfstring{$I_{3,0}$}{I3,0}} Note that $I_{3,0}$ contains terms with zero, two or four off-diagonal Green function entries. We split accordingly
\[
w_{I_{3,0}}I_{3,0} = w_{I_{3,0}^{(0)}}I_{3,0}^{(0)} + w_{I_{3,0}^{(2)}}I_{3,0}^{(2)} +w_{I_{3,0}^{(4)}}I_{3,0}^{(4)}. 
\]
When there are two off-diagonal entries, from Lemma \ref{lemma:powercounting0}, we obtain
\begin{equation}
|\E I_{3,0}^{(2)} | \leq \absbb{ N\max_{i,j}\kappa_{ij}^{(4)} \E \qbb{\frac{1}{N^2} \sum_{i\neq j} G_{ii}G_{jj}(G_{ij})^2 QP\ol{P^D} }   } \leq \frac{N^\epsilon}{q^2_t}\E \qbb{\frac{\im m}{N\eta} |Q||P|^{2D-1}} \leq \Phi_\epsilon,
\end{equation}
for sufficiently large $N$ and similar argument holds for $\E I_{3,0}^{(4)}$. Thus the only non-negligible term is $I_{3,0}^{(0)}$.

\begin{align}
&w_{I_{3,0}^{(0)}}\E I_{3,0}^{(0)} = -\frac{1}{N}\E \qbb{ \sum_{i\neq j} \kappa_{ij}^{(4)}G_{ii}^2 G_{jj}^2QP^{D-1}\ol{P^D}} \nonumber \\
&= -\frac{1}{N}\E \qbb{ \sum_{i, j} \kappa_{d}^{(4)}G_{ii}^2 G_{jj}^2QP^{D-1}\ol{P^D}} -\frac{1}{N}\E \qbb{ \sum_{i}\sum_{j\sim i} (\kappa_{s}^{(4)}-\kappa_{d}^{(4)})G_{ii}^2 G_{jj}^2QP^{D-1}\ol{P^D}} 	\nonumber \\ 
&=-\frac{1}{N}\kappa_{d}^{(4)}\E \qbb{\sum_{i, j}  G_{ii}^2 G_{jj}^2 QP^{D-1}\ol{P^D}}-\frac{1}{N}(\kappa_{s}^{(4)}-\kappa_{d}^{(4)})\E \qbb{ \sum_{i}\sum_{j\sim i} G_{ii}^2 G_{jj}^2QP^{D-1}\ol{P^D}} 	\nonumber\\
&=-\frac{1}{N}\kappa_{d}^{(4)}\E \qbb{\sum_{i, j}  G_{ii}^2 G_{jj}^2QP^{D-1}\ol{P^D}}-\frac{1}{N}(\kappa_{s}^{(4)}-\kappa_{d}^{(4)})\E \qbb{ \sum_{i}\sum_{j\sim i} G_{ii}^2 G_{jj}^2QP^{D-1}\ol{P^D}} .
\end{align}

We have
\begin{align}
G_{ii}^2 G_{jj}^2 &= (G_{ii}^2-m^2) (G_{jj}^2-m^2) + m^2 G_{ii}^2 + m^2 G_{jj}^2 -m^4 \notag \\
&= O(\psi^2) + m^2 \pB{(G_{ii}-m)^2 + 2G_{ii}m -m^2   } + m^2 \pB{(G_{jj}-m)^2 + 2G_{jj}m -m^2   } -m^4 \notag \\
&=O(\psi^2) + 2m^3(G_{ii}+G_{jj}) -3m^4,
\end{align}
where $|G_{ii}-m|\prec \psi$ by weak local semicircle law.
Therefore, for the first term, we can conclude that 
\begin{align}
\frac{1}{N}\kappa_{d}^{(4)}\E \qbb{\sum_{i, j}  G_{ii}^2 G_{jj}^2QP^{D-1}\ol{P^D}} &= 	\frac{1}{N}\kappa_{d}^{(4)}\E \qbb{\sum_{i, j}  \pa{2m^3(G_{ii}+G_{jj}) -3m^4}QP^{D-1}\ol{P^D}} +O(\frac{N^\epsilon}{q_t^2}\psi^2) \notag \\
&=N\kappa_{d}^{(4)}\E \qbb{m^4 QP^{D-1}\ol{P^D} } + O(\frac{N^\epsilon}{q_t^2}\psi^2).
\end{align}

Similarly we can estimate the second term by 	
\begin{align}
\frac{1}{N}(\kappa_{s}^{(4)}-\kappa_{d}^{(4)})&\E \qbb{ \sum_{i}\sum_{j\sim i} G_{ii}^2 G_{jj}^2QP^{D-1}\ol{P^D}} \notag \\
&= 	\frac{1}{N}(\kappa_{s}^{(4)}-\kappa_{d}^{(4)}) \E \qbb{ \sum_{i}\sum_{j\sim i}\pa{ 2m^3(G_{ii}+G_{jj}) -3m^4 }QP^{D-1}\ol{P^D}} + O(\frac{N^\epsilon}{q_t^2}\psi^2) \notag \\
&=\frac{N}{k}(\kappa_{s}^{(4)}-\kappa_{d}^{(4)})\E \qbb{m^4QP^{D-1}\ol{P^D}}+O(\Phi_\epsilon).
\end{align}

Therefore we obtain
\begin{align}
&w_{I_{3,0}^{(0)}}\E I_{3,0}^{(0)} = -\frac{1}{N}\E \qbb{ \sum_{i\neq j} \kappa_{ij}^{(4)}G_{ii}^2 G_{jj}^2QP^{D-1}\ol{P^D}} \nonumber \\
&=-N\kappa_{d}^{(4)}\E \qbb{m^4QP^{D-1}\ol{P^D}}- \frac{N}{k}(\kappa_{s}^{(4)}-\kappa_{d}^{(4)})\E \qbb{m^4QP^{D-1}\ol{P^D}}+O(\Phi_\epsilon) \nonumber \\
& -\E \qB{q_t^{-2}\xi^{(4)}Qm^4 P^{D-1}\ol{P^D}}+O(\Phi_\epsilon).
\end{align}

\subsection{Estimate on \texorpdfstring{$I_{r,0}$ for $r\geq 4$}{Ir,0}} For $r \geq 5$ we use the bound $|G_{ii}| \prec 1 $ to obtain
\begin{align}
|\E I_{r,0}| &\leq \absB{N \E \qB{\frac{1}{N^2} \kappa_{ij}^{(r+1)} \sum_{i\neq j}  (\partial_{ij}^r G_{ij})P^{D-1}\ol{P^D} } }\notag \\
&\leq \frac{N^\epsilon}{q_t^4} \E\qbb{\frac{1}{N^2}\sum_{i\neq j}|P|^{2D-1}} \leq \frac{N^\epsilon}{q_t^4} \E \qb{|P|^{2D-1}} \leq \Phi_\epsilon,
\end{align}
for sufficiently large $N$. For $r=4$, $\partial_{ij}^r G_{ij}$ contains at least one off-diagonal term. Hence 
\begin{align}
\absB{N \E \qB{\frac{1}{N^2} \kappa_{ij}^{(r+1)} \sum_{i\neq j}  (\partial_{ij}^r G_{ij})P^{D-1}\ol{P^D} } } &\leq \frac{N^\epsilon}{q_t^3}  \E \qB{\frac{1}{N^2} \sum_{i\neq j} |G_{ij}||P|^{2D-1}}   \notag \\
&=\frac{N^\epsilon}{q_t^3} \E \qB{ \pa{\frac{\im m}{N\eta} }^{1/2}|P|^{2D-1}} \leq \Phi_\epsilon,
\end{align}
for $N$ sufficiently large. Thus we can conclude that all $I_{r,0}, r\geq 4$ are negligible. 

\subsection{Estimate on \texorpdfstring{$I_{r,s}$ for $r\geq 2, s\geq 1$}{Ir,s}} Similar to Remark \ref{rmk:power3}, if $\partial_{jk}$ act only on $Q$ and does not act on $P^{D-1}$ or $\ol{P^D}$ then it can be easily shown that those terms are negligible. We leave details for the reader. Hence we only need to prove that the terms
$$\dt{I}_{r,s}  :=\frac{1}{N} \sum_{i\neq j} \kappa_{ij}^{(r+1)}{ \pB{\partial_{ij}^{r-s} G_{ij}} \pB{\partial_{ij}^{s}\pB{P^{D-1}\ol{P^D}}}},\qquad\quad (r\geq 2,  s\geq 1),$$
are negligible for $N$ sufficiently large. 

For $r\geq 2$ and $s=1$, we have 
\begin{equation*}
\E \dt{I}_{r,1} =\E \qbb{\frac{1}{N} \sum_{i\neq j} \kappa_{ij}^{(r+1)}{ \pB{\partial_{ij}^{r-1} G_{ij}} \pB{\partial_{ij}\pB{P^{D-1}\ol{P^D}}}}}.
\end{equation*}
Note that each term in 	$\E \dt{I}_{r,1}$, $r\geq2$, contains at least two off-diagonal Green function entries. For the terms with at least three off-diagonal Green function entries, we use the bound $|G_{ij}|\prec 1$ and Lemma \ref{lemma:powercounting0} to get 
\begin{align}
\E \qbb{\frac{1}{N^2} &\sum_{i,j,k} \kappa_{ij}^{(r+1)} |G_{ij}G_{jk}G_{ki}||P'||P|^{2D-2}} \leq N^\epsilon\frac{1}{q_t}\E\qbb{\pB{\frac{\im m}{N\eta}}^{3/2}|P'||P|^{2D-2}}\notag \\
&\leq N^\epsilon\E \qbb{\sqrt{\im m}\frac{\im m}{N\eta}\pB{\frac{1}{N\eta} +q_t^{-2}} |P'||P|^{2D-2}  }
\end{align}
for $N$ sufficiently large. Since $\im m \prec 1$, the right hand side is negligible.
Denoting the terms with two off-diagonal Green function entries in $\E\dt{I}_{r,1}$ by   $\E\dt{I}_{r,1}^{(2)}$, we have 
\begin{equation}\label{eq:dtI_{r,1}}
\begin{split}
\E\dt{I}_{r,1}^{(2)} = N\E\qbb{ \frac{2(D-1)}{N^2} \sum_{i\neq j} \kappa_{ij}^{(r+1)}G_{ii}^{r/2}G_{jj}^{r/2}\pB{ \frac{1}{N}\sum_{k=1}^{N}G_{ik}G_{kj} }P'P^{D-2}\ol{P^D}   }\\
+N\E\qbb{ \frac{2D}{N^2} \sum_{i\neq j} \kappa_{ij}^{(r+1)}G_{ii}^{r/2}G_{jj}^{r/2}\pB{ \frac{1}{N}\sum_{k=1}^{N}\ol{G_{ik}G_{kj}} }\ol{P}'P^{D-1}\ol{P^{D-1}}   },
\end{split}
\end{equation}
where we noted that $r$ is necessarily even in this case. Then from Lemma \ref{lemma:powercounting0} we have the upper bound
\begin{equation}
|\E\dt{I}_{r,1}^{(2)}|\leq \frac{N^\epsilon}{q_t^{r-1}}\E\qB{\frac{\im m}{N\eta}|P|'|P|^{2D-2}  }, \qquad \qquad (r >2),
\end{equation}
for sufficiently large $N$, which is negligible. However for $r=2$, we need an additional factor $q_t^{-1}$. This can be done as in the proof of Lemma \ref{lemma:EI_{2,0}^1} by considering two off-diagonal terms $G_{jk}G_{ki}$, generated from $\partial_{ij}P(m)$.
\begin{lemma}
	For any small $\epsilon>0$, we have
	\begin{equation}
	|\E\dt{I}_{2,1}^{(2)}|\leq \frac{N^\epsilon}{q_t^{2}}\E\qB{\frac{\im m}{N\eta}|P|'|P|^{2D-2}} +\Phi_\epsilon,
	\end{equation}
	for $N$ sufficiently large, uniformly on $\mathcal{D}_\ell$. In particular, $\E\dt{I}_{2,1}$ is negligible.
\end{lemma}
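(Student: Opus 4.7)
The strategy closely parallels that of Lemma \ref{lemma:EI_{2,0}^1}. Writing
\[
\hat{J} := \frac{2(D-1)}{N^2}\sum_{i\neq j}\kappa_{ij}^{(3)}\,G_{ii}G_{jj}\pbb{\frac{1}{N}\sum_{k=1}^{N}G_{ik}G_{kj}}P'P^{D-2}\ol{P^D},
\]
we have $\E\dt{I}_{2,1}^{(2)} = N\E[\hat{J}]$ up to the analogous companion term obtained by differentiating $\ol{P^D}$ rather than $P^{D-1}$, which is handled identically. A direct application of Lemma \ref{lemma:powercounting0} yields only
\[
|\E[\hat{J}]| \leq \frac{N^\epsilon}{q_t}\E\qbb{\frac{\im m}{N\eta}\,|P'||P|^{2D-2}},
\]
which is one factor of $q_t^{-1}$ short of the claim. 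The plan is to manufacture this missing $q_t^{-1}$ through the $(z+\wt m)$-trick.

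Multiplying by $z$ and employing the resolvent identity $zG_{ii} = -1 + \sum_l H_{il}G_{li}$, I obtain
\[
z\E[\hat{J}] = -\E[\hat{J}_0] + N\E\qbb{\frac{2(D-1)}{N^3}\sum_{i\neq j}\sum_{l,k}\kappa_{ij}^{(3)}\,H_{il}G_{li}G_{jj}G_{ik}G_{kj}\,P'P^{D-2}\ol{P^D}},
\]
where $\hat{J}_0$ collects the contribution of the $-1$ term; $\hat{J}_0$ is immediately negligible since it already carries the chain $G_{ik}G_{kj}$ weighted by $\kappa_{ij}^{(3)}$. To the $H_{il}$-term I would apply the generalized Stein formula (Lemma \ref{lemma:Stein}) with truncation order $\ell'\geq 8D$; the truncation error is negligible by Corollary \ref{coro:bound error}. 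The leading contribution is the $r'=1$ cumulant in which $\partial_{il}$ acts only on $G_{li}$, producing $-G_{ii}G_{ll}$; averaging over $l$ reconstructs a factor $m$, so this piece equals $-\E[m\hat{J}]$. I then substitute $m$ by $\wt m$ via the Cauchy--Schwarz manipulation of \eqref{eq:hatI^1 2term}, which absorbs the resulting remainder into $\Phi_\epsilon$ thanks to the term $N^{-\epsilon/8}q_t^{-1}\E[|m-\wt m|^2 |P|^{2D-1}]$ already present in $\Phi_\epsilon$.

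All remaining cumulant contributions should then be controlled by the power-counting scheme developed in Remarks \ref{rmk:power1} and \ref{rmk:power3}. Whenever $\partial_{il}$ falls on any of $G_{jj}$, $G_{ik}$, $G_{kj}$, or on the $P$-factors (via $\partial_{il}m = -\tfrac{2}{N}\sum_u G_{iu}G_{ul}$), it creates at least one additional pair of off-diagonal Green function entries with a fresh summation index, which by Lemma \ref{lemma:powercounting0} yields an extra factor of $\im m/(N\eta)$ or its square root. Combined with the prefactor $\kappa_{ij}^{(3)}\kappa_{il}^{(r'+1)}\leq CN^{-2}q_t^{-r'}$ and the bound $|P'|\prec 1$, every such term fits into $\Phi_\epsilon$; higher cumulants $r'\geq 2$ gain further powers of $q_t^{-1}$ and are similarly negligible. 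Collecting the pieces gives
\[
|z+\wt m|\,|\E\dt{I}_{2,1}^{(2)}|\leq \frac{N^\epsilon}{q_t^{2}}\E\qbb{\frac{\im m}{N\eta}\,|P'||P|^{2D-2}} + \Phi_\epsilon,
\]
and the lemma follows from the stability bound $|z+\wt m|\geq 1/6$ on $\mathcal{E}$ noted in the remark after Lemma \ref{lemma:rho_t}.

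The main obstacle will be the bookkeeping when $\partial_{il}$ lands on the inner chain $\tfrac{1}{N}\sum_k G_{ik}G_{kj}$ or on the polynomial $P'P^{D-2}\ol{P^D}$. Each differentiation of $P$, $P'$, or their higher derivatives spawns a fresh summation index, and, as in Remark \ref{rmk:power3}, one must check that the number of fresh indices always exceeds the number of $|P'|$- or $|P|$-factors generated, so that Lemma \ref{lemma:powercounting0} supplies enough powers of $\im m/(N\eta)$ to match the form of $\Phi_\epsilon$. This is a direct transcription of the combinatorics already carried out for $\wt I_{r',s'}$ with $s'\geq 2$, but the preexisting factor $P'$ and the extra summation index $k$ make the accounting noticeably heavier and constitute the only nontrivial verification required.
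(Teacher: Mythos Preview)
Your overall strategy---multiply by $z$, apply the cumulant expansion, and extract $-\wt m$ times the original term---is exactly right, but you expand the wrong Green function entry. You write $zG_{ii}=-1+\sum_l H_{il}G_{li}$ and claim that the resulting $\hat{J}_0$ (the contribution of the $-1$) is ``immediately negligible''. It is not: dropping the factor $G_{ii}$ does nothing, since $|G_{ii}|\prec 1$ anyway, and the same power counting that bounds $\E\dt{I}_{2,1}^{(2)}$ by $q_t^{-1}\E[\tfrac{\im m}{N\eta}|P'||P|^{2D-2}]$ bounds $\hat{J}_0$ by the very same quantity. This is precisely the bound you are trying to improve, so the $(z+\wt m)$ identity yields nothing new.

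The fix, and what the paper actually does (both here and in the proof of Lemma~\ref{lemma:EI_{2,0}^1} that you cite), is to expand one of the \emph{off-diagonal} entries in the chain: write $zG_{jk}=\sum_u H_{ju}G_{uk}$ with $j\neq k$, so there is no $-\delta_{jk}$ term to worry about. The leading cumulant then produces $-mG_{jk}$ and the argument goes through exactly as you outline. In short, your plan is correct once you move the resolvent identity from $G_{ii}$ to $G_{jk}$ (or $G_{ik}$); the rest of your bookkeeping sketch is sound.
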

\begin{proof}
	First, we consider the first term of the \eqref{eq:dtI_{r,1}}. We can write
	\begin{equation*}
	\begin{split}
	z N\E&\qbb{ \frac{1}{N^3} \sum_{i\neq j\neq k} \kappa_{ij}^{(3)}G_{jk}G_{ii}G_{jj}G_{ik} P'P^{D-2}\ol{P^D} }\\
	&=N\E\qbb{ \frac{1}{N^3} \sum_{i\neq j\neq k\neq u} \kappa_{ij}^{(3)}H_{ju}G_{uk}G_{ii}G_{jj}G_{ik} P'P^{D-2}\ol{P^D} }.
	\end{split}
	\end{equation*}
	As in the proof of Lemma \ref{lemma:EI_{2,0}^1}, we apply the cumulant expansion to the right side and get the leading term
	\begin{equation}
	N\E\qbb{ \frac{1}{N^3} \sum_{i\neq j\neq k} \kappa_{ij}^{(3)}mG_{jk}G_{ii}G_{jj}G_{ik} P'P^{D-2}\ol{P^D} }.
	\end{equation}
	
	Thanks to the additional factor of $q_t^{-1}$ from the $\kappa_{ij}^{(3)}$, all other terms in the expansion are negligible as can be checked by power counting as in the proof of Lemma \ref{lemma:EI_{2,0}^1}. Replacing in $m$ by $\wt m$ in the leading term, we obtain
	\begin{equation}
	|z+\wt m|N\absbb{\E\qbb{ \frac{1}{N^3} \sum_{i\neq j\neq k} \kappa_{ij}^{(3)}G_{jk}G_{ii}G_{jj}G_{ik} P'P^{D-2}\ol{P^D} }} \leq O(\Phi_\epsilon),
	\end{equation}
	for $N$ sufficiently large. Since $|z+\wt m |> 1/6$, we conclude that the first term of the right side of \eqref{eq:dtI_{r,1}} negligible. In the same way one can shows that the second term is negligible. We omit the details.	
\end{proof}
Hence we conclude that $\E \dt{I}_{r,1}$, $r\geq 2$ are negligible.

Consider next the terms
$$\E\dt{I}_{r,s}  =\E\qbb{\frac{1}{N} \sum_{i\neq j} \kappa_{ij}^{(r+1)}{ \pB{\partial_{ij}^{r-s} G_{ij}} \pB{\partial_{ij}^{s}\pB{P^{D-1}\ol{P^D}}}}}, \qquad\qquad (2\leq s\leq r).$$
We proceed in a similar way as in Remark \ref{rmk:power3}. We note that each term in $\partial_{ij}^{r-s} G_{ij}$ contains sat least one off-diagonal function entry when $r-s$ is even, yet when $r-s$ is odd there is a term with no off-diagonal Green function entries. Since $s\geq2$, the partial derivative $\partial_{ij}^{s}$ acts on $P$ or $\ol{P}$ (or their derivatives) more than once in total. As in Remark \ref{rmk:power3}, consider such a term with 
\[
P^{D-s_1}\ol{P^{D-s_2}}(P')^{s_3}(\ol{P'})^{s_4},
\]
for $1\leq s_1\leq D$ and $0\leq s_2 \leq D$. We do not include other higher derivative terms since $P^{(n)} \prec 1$ for $n=2,3,\cdots,6$ and $P^{(7)}=0$. We see that such a term was generated from $P^{D-1}\ol{P^D}$ by letting the partial derivative $\partial_{ij}$ act $(s_1-1)$-times on $P$ and $s_2$ times on $\ol{P}$, which implies that $s_3\leq s_1 -1$ and $s_4 \leq s_2$. If  $s_1 -1 > s_3$, then $\partial_{ij}$ acted on the derivative of $P$ directly $(s_1-1-s_3)$-times, and a similar argument holds for $\ol{P'}$. Whenever $\partial_{ij}$ acted on $P, \ol{P}$ and their derivatives, it generated a term $2N^{-1}\sum_{a_l}G_{ja_l}G_{a_lk}$, with $a_l, l\geq 1$, a fresh summation index. For each fresh summation index we apply Lemma \ref{lemma:powercounting0} to gain a factor $\frac{\im m}{N\eta}$. The total number of fresh summation indices in this case is 
\[
(s_1-1)+s_2+(s_1-1-s_3)+(s_2-s_4)=2s_1+2s_2-s_3-s_4-2.
\]
Assume first that $r=s$ so that $\partial_{ij}^{r-s} G_{ij} = G_{ij}$. Then applying Lemma \ref{lemma:powercounting0} $(2s_1+2s_2-s_3-s_4-2)$-times and letting $s_0=s_1+s_2$ and $s'=s_3+s_4$, we get an upper bound 
\begin{equation}
|\E \dt{I}_{r,r}|\leq \frac{N^\epsilon}{q_t^{r-1}}\sum_{s_0=2}^{2D}\sum_{s'=1}^{s_0-1}\E\qbb{ \pB{\frac{\im m}{N\eta} }^{1/2}\pB{\frac{\im m}{N\eta} }^{2s_0-s'-2}|P'|^{s'}|P|^{2D-s_0}  }\leq \Phi_\epsilon,
\end{equation}
for sufficiently large $N$. In other words, $\E \dt{I}_{r,r}$ is negligible for $r\geq 2$.

Second, Assume that $2\leq s <r$. Then applying Lemma \ref{lemma:powercounting0}  $(2s_1+2s_2-s_3-s_4-2)$-times, we get 
\begin{align}\label{eq:EI_{r,s}}
|\E \dt{I}_{r,s}|	&\leq \frac{N^{\epsilon}}{q_t^{r-1}}\sum_{s_0=2}^{2D}\sum_{s'=1}^{ s_0 -2}\E \qbb{ \pB{\frac{\im m}{N\eta}}^{2s_0 - s' -2}|P'|^{s'}|P|^{2D-s_0} } \\
&\qquad \qquad +\frac{N^{\epsilon}}{q_t^{r-1}}\sum_{s_0=2}^{2D}\E \qbb{ \pB{\frac{\im m}{N\eta}}^{s_0 - 1}|P'|^{ s_0-1}|P|^{2D- s_0} }, \qquad (2\leq s<r),
\end{align}
for $N$ sufficiently large. In \eqref{eq:EI_{r,s}} the second term bounds the terms corresponding to $s_0-1=s'$ obtained by acting on $\partial_{ij}$ exactly $(s_1-1)$-times on $P$ and $s_2$-times on $\ol{P}$ but never on their derivatives.

To sum up, we showed that, for $1\leq s\leq r$, $\E \dt{I}_{r,s}$ is negligible and therefore $\E I_{r,s}, 1\leq s \leq r$ is negligible. This proves \eqref{eq:estimate I_2}.

\subsection{Estimate on \texorpdfstring{$I_{1,0}$}{I1,0}}\label{sec:subsec5}
Finally we only need to estimate $\E I_{1,0}$. We have 
\begin{align}
\E I_{1,0} =& \frac{1}{N} \sum_{i\neq j} \kappa_{ij}^{(2)}\E \qbb{ \pB{\partial_{ij} G_{ij}} QP^{D-1}\ol{P^D}}\nonumber \\
=&- \frac{1}{N} \E \qbb{ \sum_{i,j} \kappa_{ij}^{(2)} G_{ii}G_{jj}QP^{D-1}\ol{P^D}}\\
&+\frac{1}{N} \E \qbb{ \sum_{i} \kappa_{ii}^{(2)} G_{ii}^2QP^{D-1}\ol{P^D}}-\frac{1}{N} \E \qbb{ \sum_{i\neq j} \kappa_{ij}^{(2)} G_{ij}^2QP^{D-1}\ol{P^D}}\nonumber\\
=:& -\E I_{1,0}^{(2)} + \E I_{1,0}^{(1)} - \E I_{1,0}^{(0)}
\end{align}

The second and third term can be bounded by	$\Phi_\epsilon$ since
\begin{equation}\label{eq:EI_{1,0}^1}
|\E I_{1,0}^{(1)}|= \absBB{\frac{1}{N} \E \qbb{ \sum_{i} \kappa_{s}^{(2)} G_{ii}^2QP^{D-1}\ol{P^D}}} \leq \frac{N^\epsilon}{N} \E \qbb{P^{D-1}\ol{P^D}}\leq \Phi_\epsilon,
\end{equation}
\begin{equation}\label{eq:EI_{1,0}^0}
|\E I_{1,0}^{(0)}|=\absBB{\frac{1}{N} \E \qbb{ \sum_{i\neq j} \kappa_{ij}^{(2)} G_{ij}^2QP^{D-1}\ol{P^D}}} \leq N^\epsilon \E \qbb{\frac{\im m}{N\eta}\abs{P}^{2D-1} }\leq \Phi_\epsilon.
\end{equation}

Hence we only need to estimate 
\begin{align}\label{eq:EI_{1,0}^2}
&\E I_{1,0}^{(2)}=\frac{1}{N} \E \qbb{ \sum_{i,j} \kappa_{ij}^{(2)} G_{ii}G_{jj}QP^{D-1}\ol{P^D}} \nonumber\\
&=\frac{1}{N}\E \qbb{ \sum_{i,j} \kappa_d^{(2)}G_{ii}G_{jj}QP^{D-1}\ol{P^D}}+\frac{1}{N}\E\qbb{\sum_{i}\sum_{j\sim i} (\kappa_s^{(2)}-\kappa_d^{(2)})G_{ii}G_{jj}QP^{D-1}\ol{P^D}} \nonumber \\
&= \E \qbb{ N\kappa_d^{(2)}m^2 QP^{D-1}\ol{P^D} } + \frac{1}{N}\E\qbb{\sum_{i}\sum_{j\sim i} (\kappa_s^{(2)}-\kappa_d^{(2)})G_{ii}G_{jj}QP^{D-1}\ol{P^D}} \nonumber \\
&= \E \qbb{(1-\zeta )m^2 QP^{D-1}\ol{P^D} } + \frac{\zeta k}{N^2}\E\qbb{\sum_{i}\sum_{j\sim i} G_{ii}G_{jj}QP^{D-1}\ol{P^D}} \notag \\
&= \E \qbb{(1-\zeta )m^2 QP^{D-1}\ol{P^D} } +\frac{\zeta k}{N^2}\E\qbb{\sum_{i}\sum_{j\sim i} G_{ii}G_{jj}z\p{z+m+3\zeta m}P^{D-1}\ol{P^D}} \notag \\ &+\frac{\zeta k}{N^2}\E\qbb{\sum_{i}\sum_{j\sim i} G_{ii}G_{jj}\pb{(1-\zeta )m(z+m+3\zeta m)+\zeta (6\zeta m^2-1)}P^{D-1}\ol{P^D}}, 
\end{align}
where we recall that  
\begin{align*}
Q&=(z+m+\zeta m)^2+2\zeta^2m^2-\zeta \\
&=(z+m+3\zeta m)z+(1-\zeta )m(z+m+3\zeta m)+\zeta (6\zeta m^2-1).
\end{align*}

Using Lemma \ref{lemma:Stein}, we expand the second term of \eqref{eq:EI_{1,0}^2} as
\begin{align}\label{eq:expand J_{r,s}}
&\frac{\zeta k}{N^2}\E\qbb{\sum_{i}\sum_{j\sim i} zG_{ii}G_{jj}(z+m+3\zeta m)P^{D-1}\ol{P^D}} \nonumber \\
&= \frac{\zeta k}{N^2}\E\qbb{\sum_{i}\sum_{j\sim i} \pB{\sum_{k}H_{ik}G_{ki} -1 } G_{jj}(z+m+3\zeta m)P^{D-1}\ol{P^D}}\nonumber \\ 
&= \frac{\zeta k}{N^2}\E\qbb{\sum_{i}\sum_{j\sim i}\sum_{k\neq i}H_{ik}G_{ki} G_{jj}(z+m+3\zeta m)P^{D-1}\ol{P^D}} - \zeta \E\qbb{m(z+m+3\zeta m)P^{D-1}\ol{P^D} }\nonumber \\
&=  \sum_{r=1}^{l} \sum_{s=0}^{r} w_{J_{r,s}} \E J_{r,s} - \zeta \E\qbb{m(z+m+3\zeta m)P^{D-1}\ol{P^D} } + O(\frac{N^\epsilon}{q_t^l}),
\end{align}
where 
\begin{align}
& w_{J_{r,s}}= \frac{1}{(r-s)!s!}\\
&J_{r,s}=\frac{\zeta k}{N^2}\sum_{i}\sum_{j\sim i}\sum_{k\neq i}\kappa_{ik}^{(r+1)}\E \qbb{ \pB{\partial_{ik}^{r-s} G_{ik}G_{jj}} \pB{\partial_{ik}^{s}\pB{RP^{D-1}\ol{P^D}}}}.
\end{align}
Here, we abbreviate $R:=z+m+3\zeta m$.
Similar as estimating $I_{r,s}$, it can be shown that all terms of $J_{r.s}$ and the error term are negligible except $J_{1,0}$ and $J_{3,0}$ by using Lemma \ref{lemma:powercounting0}. We omit the details.

\subsubsection{Estimate on \texorpdfstring{$J_{1,0}$}{J1,0}}

\begin{align}
\E J_{1,0}&=\frac{\zeta k}{N^2}\sum_{i}\sum_{j\sim i}\sum_{k\neq i}\kappa_{ik}^{(2)}\E \qbb{ \pB{\partial_{ik} G_{ik}G_{jj}} {RP^{D-1}\ol{P^D}}} \nonumber \\
&=-\frac{\zeta k}{N^2}\sum_{i}\sum_{j\sim i}\sum_{k}\kappa_{d}^{(2)}\E \qbb{ \pB{G_{ii} G_{jj}G_{kk}+G_{ik}^2G_{kk}} {RP^{D-1}\ol{P^D}}} \nonumber\\
&-\frac{\zeta k}{N^2}\sum_{i}\sum_{j\sim i}\sum_{k\sim i}(\kappa_{s}^{(2)}-\kappa_{d}^{(2)})\E \qbb{ \pB{G_{ii} G_{jj}G_{kk}+G_{ik}^2G_{kk}} {RP^{D-1}\ol{P^D}}} \nonumber \\
& -2\frac{\zeta k}{N^2}\sum_{i}\sum_{j\sim i}\sum_{k\neq i}\kappa_{ik}^{(2)}\E \qbb{ \pB{G_{ij} G_{jk}G_{ki}} {RP^{D-1}\ol{P^D}}}.
\end{align}
Then we can show that 
\begin{align}\label{eq:EJ_{1,0}}
\E J_{1,0}=&-\frac{\zeta k}{N^2}\sum_{i}\sum_{j\sim i}\sum_{k}\kappa_{d}^{(2)}\E \qbb{ \pB{G_{ii} G_{jj}G_{kk}} {RP^{D-1}\ol{P^D}}} \nonumber\\
&-\frac{\zeta k}{N^2}\sum_{i}\sum_{j\sim i}\sum_{k\sim i}(\kappa_{s}^{(2)}-\kappa_{d}^{(2)})\E \qbb{ \pB{G_{ii} G_{jj}G_{kk}} {RP^{D-1}\ol{P^D}}} +O(\Phi_\epsilon),
\end{align}
since other terms are all negligible, similar as proving \eqref{eq:EI_{1,0}^0} and \eqref{eq:EI_{1,0}^1}.
The first term can be computed by
\begin{align}
-\frac{\zeta k}{N^2}\sum_{i}\sum_{j\sim i}\sum_{k\neq i}\kappa_{d}^{(2)}\E \qbb{ \pB{G_{ii} G_{jj}G_{kk}} {RP^{D-1}\ol{P^D}}}&=	-\frac{\zeta k}{N}\sum_{i}\sum_{j\sim i}\kappa_{d}^{(2)}\E \qbb{ mG_{ii} G_{jj} {RP^{D-1}\ol{P^D}}}\notag \\
&=-\frac{\zeta (1-\zeta )k}{N^2}\sum_{i}\sum_{j\sim i}\E \qbb{ mG_{ii} G_{jj} {RP^{D-1}\ol{P^D}}}
\end{align}
To estimate the second term of \eqref{eq:EJ_{1,0}}, we abbreviate 
\[
\hat{J}:=\frac{\zeta k}{N^2}\sum_{i}\sum_{j\sim i}\sum_{k\sim i}(\kappa_{s}^{(2)}-\kappa_{d}^{(2)})\E \qbb{ \pB{G_{ii} G_{jj}G_{kk}} {P^{D-1}\ol{P^D}}}.
\]
Using Lemma \ref{lemma:Stein}, for arbitrary $\ell' \in \N$ we have the cumulant expansion
\begin{align}\label{eq:zEhat J}
z\E \hat{J}&:=\frac{\zeta k}{N^2}\sum_{i}\sum_{j\sim i}\sum_{k\sim i}(\kappa_{s}^{(2)}-\kappa_{d}^{(2)})\E \qbb{ \pB{zG_{ii} G_{jj}G_{kk}} {P^{D-1}\ol{P^D}}} \notag \\
&=\frac{\zeta ^2K^2}{N^3}\sum_{i}\sum_{j\sim i}\sum_{k\sim i}\E \qbb{ \pB{zG_{ii} G_{jj}G_{kk}} {P^{D-1}\ol{P^D}}}\notag\\
&=\frac{\zeta ^2K^2}{N^3}\E\qbb{\sum_{i}\sum_{j\sim i}\sum_{k\sim i} \pB{\sum_{u}H_{iu}G_{ui} -1 } G_{jj}G_{kk}P^{D-1}\ol{P^D}}\nonumber \\ 
&= \frac{\zeta ^2K^2}{N^3}\E\qbb{\sum_{i}\sum_{j\sim i}\sum_{k\sim i}\sum_{u\neq i}H_{iu}G_{ui} G_{jj}G_{kk}P^{D-1}\ol{P^D}} - \frac{\zeta ^2K}{N^2}\E\qbb{ \sum_{i}\sum_{j\sim i} G_{ii}G_{jj}P^{D-1}\ol{P^D} }\nonumber \\
&=  \sum_{r=1}^{l} \sum_{s=0}^{r} w_{\hat{J}_{r,s}} \E \hat{J}_{r,s} - \frac{\zeta ^2K}{N^2}\E\qbb{ \sum_{i}\sum_{j\sim i} G_{ii}G_{jj}P^{D-1}\ol{P^D} } + \E \Omega_{l}(\hat{J}),
\end{align}

where 
\begin{align}
& w_{\hat{J}_{r,s}}= \frac{1}{(r-s)!s!}\\
&\hat{J}_{r,s}=\frac{\zeta ^2K^2}{N^3}\sum_{i}\sum_{j\sim i}\sum_{k\sim i}\sum_{u\neq i}\kappa_{iu}^{(r+1)}\E \qbb{ \pB{\partial_{iu}^{r-s} G_{iu}G_{jj}G_{kk}} \pB{\partial_{ik}^{s}\pB{P^{D-1}\ol{P^D}}}},
\end{align}
By Corollary \ref{coro:bound error}, the error term  $\E\Omega_{l}(\hat{J})$ is negligible with choosing $l\geq 8D$.

By a similar argument as estimating $I_{r,s}$, we can easily check that the non-negligible terms in the cumulant expansion of $z\E \hat{J}$ are
\begin{align}
&\wt J_{1,0}:= -\frac{\zeta ^2K^2}{N^3}\sum_{i}\sum_{j\sim i}\sum_{k\sim i}\sum_{u\neq i}\kappa_{iu}^{(2)}\E \qB{G_{ii}G_{jj}G_{kk}G_{uu}P^{D-1}\ol{P^D}},\\
&\wt J_{3,0} :=-\frac{\zeta ^2K^2}{N^3}\sum_{i}\sum_{j\sim i}\sum_{k\sim i}\sum_{u\neq i}\kappa_{iu}^{(4)}\E \qB{G_{ii}^2G_{jj}G_{kk}G_{uu}^2P^{D-1}\ol{P^D}},
\end{align}
where $\wt J_{1,0}$ and $\wt J_{3,0}$ come from $\hat{J}_{1,0}$ ad $\hat{J}_{3,0}$, respectively. 

We rewrite $\wt J_{1,0}$ as
\begin{align}\label{eq:wt J_{1,0}}
\wt J_{1,0} = &-\frac{\zeta ^2K^2}{N^3}\sum_{i}\sum_{j\sim i}\sum_{k\sim i}\sum_{u}\kappa_{d}^{(2)}\E \qB{G_{ii}G_{jj}G_{kk}G_{uu}P^{D-1}\ol{P^D}}\notag \\
&\quad-  \frac{\zeta ^2K^2}{N^3}\sum_{i}\sum_{j\sim i}\sum_{k\sim i}\sum_{u\sim i}(\kappa_{s}^{(2)}-\kappa_{d}^{(2)})\E \qB{G_{ii}G_{jj}G_{kk}G_{uu}P^{D-1}\ol{P^D}}\notag\\
=& -\frac{\zeta ^2(1-\zeta )K^2}{N^3}\sum_{i}\sum_{j\sim i}\sum_{k\sim i}\E \qB{mG_{ii}G_{jj}G_{kk}P^{D-1}\ol{P^D}}\notag\\
&\quad- \frac{\zeta ^2K^2}{N^3} \frac{K\zeta }{N}\sum_{i}\sum_{j\sim i}\sum_{k\sim i}\sum_{u\sim i}\E \qB{\pb{\psi^4 + 4m G_{ii}G_{jj}G_{kk} -6m^2  G_{ii}G_{jj}+4m^3 G_{ii}   } P^{D-1}\ol{P^D}} \notag \\
=& -3\zeta ^3\E \qB{ m^4 P^{D-1}\ol{P^D}} + 6\frac{\zeta ^3K}{N^2}\sum_{i}\sum_{j\sim i}\E\qB{m^2G_{ii}G_{jj}P^{D-1}\ol{P^D}} \notag \\
&\quad-\pa{\frac{3\zeta ^3K^2}{N^3}+\frac{\zeta ^2K^2}{N^3}}\E \qB{\sum_{i}\sum_{j\sim i}\sum_{k\sim i} mG_{ii}G_{jj}G_{kk} P^{D-1}\ol{P^D} } + O(\Phi_\epsilon),
\end{align}
where we used the expansion of $(G_{ii}-m)(G_{kk}-m)(G_{kk}-m)(G_{uu}-m)$ and $i\sim j\sim k\sim u$ as in \eqref{eq:G_ii^2G_jjG_kk^2}.

Similarly, using the expansion of  $(G_{ii}^2-m^2)(G_{kk}-m)(G_{kk}-m)(G_{uu}^2-m^2)$, we obtain
\begin{align}\label{eq:wt J_{3,0}}
\wt J_{3,0}&=-\frac{\zeta ^2K^2}{N^3}\sum_{i}\sum_{j\sim i}\sum_{k\sim i}\sum_{u\neq i}\kappa_{iu}^{(4)}\E \qB{G_{ii}^2G_{jj}G_{kk}G_{uu}^2P^{D-1}\ol{P^D}}\notag \\
&= -\frac{\zeta ^2K^2}{N^3} \sum_{i\sim j \sim k}\sum_{u}\kappa_{d}^{(4)}\E \qbb{G_{ii}^2 G_{jj}G_{kk}G_{uu}^2P^{D-1}\ol{P^D}}\notag\\
&\qquad -\frac{\zeta ^2K^2}{N^3} \sum_{i\sim j \sim k \sim u}(\kappa_{s}^{(4)}-\kappa_{d}^{(4)})\E \qbb{G_{ii}^2 G_{jj}G_{kk}G_{uu}^2P^{D-1}\ol{P^D}} \nonumber\\
&=-\zeta ^2N\kappa_{d}^{(4)}\E \qbb{m^6P^{D-1}\ol{P^D}}-\frac{\zeta ^2K^2}{N^3}(\kappa_{s}^{(4)}-\kappa_{d}^{(4)})\E \qbb{ \frac{N^4}{K^3}m^6P^{D-1}\ol{P^D}} + O(N^\epsilon q^{-2}\psi^2)\nonumber\\
&= -\zeta ^2\pa{\frac{N}{k} (\kappa_s^{(4)} - \kappa_d^{(4)} ) + N \kappa_d^{(4)}}\E\qbb{m^6P^{D-1}\ol{P^D}}+ O(\Phi_\epsilon) \nonumber\\
&= -\zeta ^2\frac{\xi^{(4)}}{q_t^2}\E\qbb{m^6P^{D-1}\ol{P^D}}+ O(\Phi_\epsilon).
\end{align}

To sum up, from \eqref{eq:zEhat J}, \eqref{eq:wt J_{1,0}} and \eqref{eq:wt J_{3,0}} we get
\begin{align}
\frac{\zeta ^2 K^2}{N^3}\sum_{i}&\sum_{j\sim i}\sum_{k\sim i} \E \qB{zG_{ii}G_{jj}G_{kk}P^{D-1}\ol{P^D}} \notag\\
&= -3\zeta ^3\E \qB{ m^4 P^{D-1}\ol{P^D}} + 6\frac{\zeta ^3K}{N^2}\sum_{i}\sum_{j\sim i}\E\qB{m^2G_{ii}G_{jj}P^{D-1}\ol{P^D}} \notag \\
&-\pa{\frac{3\zeta ^3K^2}{N^3}+\frac{\zeta ^2K^2}{N^3}}\E \qB{\sum_{i}\sum_{j\sim i}\sum_{k\sim i} mG_{ii}G_{jj}G_{kk} P^{D-1}\ol{P^D} } \\
& -\zeta ^2\frac{\xi^{(4)}}{q_t^2}\E\qbb{m^6P^{D-1}\ol{P^D}} - \frac{\zeta ^2K}{N^2}\E\qbb{ \sum_{i}\sum_{j\sim i} G_{ii}G_{jj}P^{D-1}\ol{P^D} } + O(\Phi_\epsilon). \notag 
\end{align}

Collecting the terms belonging to the same summation, we finally get
\begin{align}
\E J_{1,0}&=-\frac{\zeta k}{N^2}\sum_{i}\sum_{j\sim i}\sum_{k}\kappa_{d}^{(2)}\E \qbb{ \pB{G_{ii} G_{jj}G_{kk}} {(z+m+3\zeta m)P^{D-1}\ol{P^D}}} \nonumber\\
&\qquad -\frac{\zeta k}{N^2}\sum_{i}\sum_{j\sim i}\sum_{k\sim i}(\kappa_{s}^{(2)}-\kappa_{d}^{(2)})\E \qbb{ \pB{G_{ii} G_{jj}G_{kk}} {(z+m+3\zeta m)P^{D-1}\ol{P^D}}} +O(\Phi_\epsilon) \notag \\
&=-\frac{\zeta (1-\zeta )k}{N^2}\sum_{i}\sum_{j\sim i}\E \qbb{ mG_{ii} G_{jj} {(z+m+3\zeta m)P^{D-1}\ol{P^D}}} \notag \\
&\qquad-\frac{\zeta ^2K}{N^2}\sum_{i\sim j}\E\qB{(6\zeta m^2-1)G_{ii}G_{jj}P^{D-1}\ol{P^D}}\notag \\
&\qquad+3\zeta ^3\E \qB{ m^4 P^{D-1}\ol{P^D}} +\zeta ^2\frac{\xi^{(4)}}{q_t^2}\E\qbb{m^6P^{D-1}\ol{P^D}}+O(\Phi_\epsilon).
\end{align}

\subsubsection{Estimate on \texorpdfstring{$J_{3,0}$}{J3,0}}
Recall that 
\[ 
w_{J_{3,0}}J_{3,0}=\frac{\zeta k}{N^2}\sum_{i}\sum_{j\sim i}\sum_{k\neq i}\kappa_{ik}^{(4)}\E \qbb{ \pB{\partial_{ik}^{3} G_{ik}G_{jj}} \pB{RP^{D-1}\ol{P^D}}}.
\]
Note that the terms contained in $J_{3,0}$ with more than two off-diagonal Green function entries are negligible by using Lemma \ref{lemma:powercounting0}.
The only non-negligible terms are $J_{3,0}^{(0)}$ and $J_{3,0}^{(1)}$ which contain no and one off-diagonal Green function entry respectively.
By simple calculation, we get
\begin{align}
J_{3,0}^{(0)}= -\frac{\zeta k}{N^2} \sum_{i}\sum_{j\sim i}\sum_{k}\kappa_{ik}^{(4)}G_{ii}^2 G_{jj}G_{kk}^2RP^{D-1}\ol{P^D},\\
J_{3,0}^{(1)}= -\frac{\zeta k}{N^2} \sum_{i}\sum_{j\sim i}\sum_{k}\kappa_{ik}^{(4)}G_{ii} G_{jj}G_{kk}^2G_{ij}RP^{D-1}\ol{P^D},
\end{align}
To estimate 	$J_{3,0}^{(0)}$ we expand 
\begin{align}
(G_{ii}^2-m^2)(G_{jj}-m)(G_{kk}^2-m^2)=-m^5 &+G_{jj}m^4 +(G_{ii}^2+G_{kk}^2)m^3\nonumber \\
&-G_{jj}(G_{ii}^2+G_{kk}^2)m^2 - G_{ii}^2G_{kk}^2m + G_{ii}^2G_{jj}G_{kk}^2,
\end{align}
and
\begin{align*}
G_{jj}G_{ii}^2m^2 = O(N^\epsilon\psi^2) + m^3 G_{ii}^2+m^4G_{jj}-m^5, \\ 	
G_{jj}G_{kk}^2m^2 = O(N^\epsilon\psi^2) + m^3 G_{kk}^2+m^4G_{jj}-m^5,\\ 
G_{ii}^2G_{kk}^2m = O(N^\epsilon\psi^2) + m^3 G_{ii}^2+m^3G_{kk}^2-m^5.
\end{align*}
Therefore we obtain
\begin{align}\label{eq:G_ii^2G_jjG_kk^2}
G_{ii}^2 G_{jj}G_{kk}^2 &= -2m^5 +m^4 G_{jj} +(G_{ii}^2+G_{kk}^2)m^3 +O(N^\epsilon\psi^2) \notag \\
&= -2m^5 +m^4 G_{jj} + (2G_{ii}m+2G_{kk}m-2m^2)m^3+O(N^\epsilon\psi^2) \notag\\
&= -4m^5 +m^4 (G_{jj} + 2G_{ii}+2G_{kk})+O(N^\epsilon\psi^2).
\end{align}

Hence we get 
\begin{align}
\E J_{3,0}^{(0)}&=\frac{\zeta k}{N^2} \sum_{i}\sum_{j\sim i}\sum_{k}\kappa_{ik}^{(4)}\E \qbb{G_{ii}^2 G_{jj}G_{kk}^2RP^{D-1}\ol{P^D}} \nonumber\\
&= -\frac{\zeta k}{N^2} \sum_{i}\sum_{j\sim i}\sum_{k}\kappa_{d}^{(4)}\E \qbb{G_{ii}^2 G_{jj}G_{kk}^2RP^{D-1}\ol{P^D}}-\frac{\zeta k}{N^2} \sum_{i}\sum_{j\sim i}\sum_{k\sim i}(\kappa_{s}^{(4)}-\kappa_{d}^{(4)})\E \qbb{G_{ii}^2 G_{jj}G_{kk}^2RP^{D-1}\ol{P^D}} \nonumber\\
&=-\zeta N\kappa_{d}^{(4)}\E \qbb{m^5RP^{D-1}\ol{P^D}}-\frac{\zeta k}{N^2}(\kappa_{s}^{(4)}-\kappa_{d}^{(4)})\E \qbb{ \frac{N^3}{k^2}m^5RP^{D-1}\ol{P^D}} + O(N^\epsilon q^{-2}\psi^2)\nonumber\\
&= -\zeta \pa{\frac{N}{k} (\kappa_s^{(4)} - \kappa_d^{(4)} ) + N \kappa_d^{(4)}}\E\qbb{Rm^5P^{D-1}\ol{P^D}}+ O(\Phi_\epsilon) \nonumber\\
&= -\zeta q_t^{(-2)}\xi^{(4)}\E\qbb{m^5(z+m+3\zeta m)P^{D-1}\ol{P^D}}+ O(\Phi_\epsilon).
\end{align}

Now we show that $\E J_{3,0}^{(1)}$ is also negligible. Using $|G_{ii}|, |G_{jj}|, |G_{kk}| \prec 1 $ and Lemma \ref{lemma:powercounting0}, we get 
\begin{equation}
|\E J_{3,0}^{(1)}| \leq \frac{N^\epsilon}{q_t^2}\E \qB{\pB{\frac{\im m}{N\eta}}^{1/2}|R||P|^{2D-1}  } \leq N^\epsilon \E \qB{\pB{\frac{1}{q_t^4} + \frac{\im m}{N\eta}   }|P|^{2D-1}  } \leq \Phi_\epsilon.
\end{equation}
To sum up, we conclude that 
\begin{equation}
w_{J_{3,0}}\E J_{3,0} = -\zeta q_t^{(-2)}\xi^{(4)}\E\qbb{m^5(z+m+3\zeta m)P^{D-1}\ol{P^D}}+ O(\Phi_\epsilon).
\end{equation}

Now we go back to \eqref{eq:expand J_{r,s}}. In conclusion, 
\begin{align}
&\frac{\zeta K}{N^2}\E\qbb{\sum_{i}\sum_{j\sim i} zG_{ii}G_{jj}(z+m+3\zeta m)P^{D-1}\ol{P^D}} \\
&=	w_{J_{1,0}} \E J_{1,0}+w_{J_{3,0}} \E J_{3,0} - \zeta \E\qbb{m(z+m+3\zeta m)P^{D-1}\ol{P^D} } +O(\Phi_\epsilon) \notag\\
&=-\frac{\zeta K}{N^2}\sum_{i}\sum_{j\sim i}\E \qbb{ \pb{m(1-\zeta )(z+m+3\zeta m) + \zeta (6\zeta m^2-1)}G_{ii} G_{jj}P^{D-1}\ol{P^D}}\notag \\
&\qquad+3\zeta ^3\E \qB{ m^4 P^{D-1}\ol{P^D}} +\zeta ^2\frac{\xi^{(4)}}{q_t^2}\E\qbb{m^6P^{D-1}\ol{P^D}}\notag\\
&\qquad-\zeta q_t^{-2}\xi^{(4)}\E\qbb{m^5(z+m+3\zeta m)P^{D-1}\ol{P^D}}- \zeta \E\qbb{m(z+m+3\zeta m)P^{D-1}\ol{P^D} }+O(\Phi_\epsilon).
\end{align}
Thus, we have 
\begin{align}
\E I_{1,0} &= - \E I_{1,0}^{(2)} + O(\Phi_\epsilon) \notag\\
&= -\E \qbb{(1-\zeta )m^2 QP^{D-1}\ol{P^D} } -\frac{\zeta k}{N^2}\E\qbb{\sum_{i}\sum_{j\sim i} G_{ii}G_{jj}z\p{z+m+3\zeta m}P^{D-1}\ol{P^D}} \notag \\ &-\frac{\zeta k}{N^2}\E\qbb{\sum_{i}\sum_{j\sim i} G_{ii}G_{jj}\pb{(1-\zeta )m(z+m+3\zeta m)+\zeta (6\zeta m^2-1)}P^{D-1}\ol{P^D}} +O(\Phi_\epsilon)\\
&=-\E\qbb{ \pB{(1-\zeta )m^2Q+3\zeta ^3m^4+\zeta ^2 \frac{\xi^{(4)}}{q_t^{2}} m^6 -\zeta m(z+m+3\zeta m)(\frac{\xi^{(4)}}{q_t^{2}}m^4+1) }P^{D-1}\ol{P^D}}+O(\Phi_\epsilon)\notag,
\end{align}
and this concludes the proof of Lemma \ref{lemma:truncated}.
\section{Proof of Lemma \ref{lem:Hnorm}}\label{app:Hnorm}
In this appendix we provide the proof of Lemma \ref{lem:Hnorm}. First we consider the upper bound on the largest eigenvalue $\lambda_1^{H_t}$ of $H_t$.	
\begin{lemma}\label{lem:lambda_1^H_t}
	Let $H_0$ satisfy Assumption \ref{assumption} with $\phi >0$. Let $L_t$ be deterministic number defined in Lemma \ref{lemma:rho_t}. Then,
	\begin{equation}\label{eq:lambda_1^H_t}
	\lambda_1^{H_t}-L_t \prec \frac{1}{q_t^4} +\frac{1}{N^{2/3}},
	\end{equation}
	uniformly in $t \in [0,6\log N]$.
\end{lemma}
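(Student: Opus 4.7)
The plan is to prove the bound by contradiction, adapting the edge-rigidity argument of~\cite{LS18,Hwang2018} to the block setting. Fix a small $\epsilon>0$, set $\kappa_0:=N^{\epsilon}(q_t^{-4}+N^{-2/3})$ and $\eta_0:=N^{-2/3-\epsilon}$, so that $\kappa_0\gg\eta_0$ and $\sqrt{\kappa_0+\eta_0}\sim\sqrt{\kappa_0}$. If $\lambda_1^{H_t}\geq L_t+2\kappa_0$, I would pick $E:=\lambda_1^{H_t}-\eta_0\in[L_t+\kappa_0,L_t+3]$ and $z:=E+\ii\eta_0$. The spectral decomposition of $G_t$ then yields the deterministic lower bound
\begin{equation*}
\im m_t(z)\;\geq\;\frac{1}{N}\cdot\frac{\eta_0}{(\lambda_1^{H_t}-E)^2+\eta_0^2}\;\geq\;\frac{1}{2N\eta_0},
\end{equation*}
and the goal is to contradict it by means of an upper bound on $\im m_t(z)$ that is $o((N\eta_0)^{-1})$.

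Such an upper bound requires a sharpening of Proposition~\ref{prop:locallaw} outside the support of $\wt\rho_t$. For $E>L_t+\kappa_0$ one has $|\alpha_2(z)|\sim\sqrt{\kappa_0+\eta_0}$ well separated from zero, so $P_{1,t,z}$ has a simple root at $\wt m_t$ and the leading Taylor term in the expansion of $P_{1,t,z}(m_t)$ around $\wt m_t$ is linear rather than quadratic. Revisiting the stability analysis that produced $\Lambda_t\prec\beta$ in the proof of Proposition~\ref{prop:locallaw} with this improved behaviour of $P_{1,t,z}$, and using that $\im\wt m_t(z)\sim\eta_0/\sqrt{\kappa_0+\eta_0}$ by Lemma~\ref{lemma:rho_t}(3), should yield an improved bound of the schematic form $|m_t(z)-\wt m_t(z)|\prec (q_t^{-2}+(N\eta_0)^{-1})/\sqrt{\kappa_0+\eta_0}$ outside the support, which, combined with the choice of $\kappa_0$, delivers $\im m_t(z)\ll (N\eta_0)^{-1}$ and the desired contradiction. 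The deterministic $q_t^{-4}$ term in \eqref{eq:lambda_1^H_t} is naturally absorbed by the $O(e^{-2t}q_t^{-4})$ correction to $L_t$ from Lemma~\ref{lemma:rho_t}(2), and uniformity in $t\in[0,6\log N]$ follows from a standard lattice argument in $t$ combined with the Lipschitz continuity of $t\mapsto\lambda_1^{H_t}$ along the Dyson matrix flow.

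The main obstacle is precisely this refinement of the local law outside the support: the $(N\eta)^{-1}$ term in the basic error bound from Proposition~\ref{prop:locallaw} exactly matches the contribution of a single eigenvalue to $\im m_t$, so the plain local law cannot by itself rule out an outlier near the edge. The sharpening exploits the fact that the discriminant of the polynomial $P_{1,t,z}$ scales like $\kappa_t+\eta$, but implementing it requires re-running the recursive moment estimate of Lemma~\ref{lemma:recursive} while tracking the $\sqrt{\kappa+\eta}$-dependence explicitly. In our block model this is more delicate than in the Erd\H{o}s--R\'enyi case of~\cite{LS18} because the block structure forces the additional $\zeta_t$-dependent cumulant expansion developed in Subsection~\ref{sec:subsec5} (in particular the auxiliary quantity $\hat J$) to be carried through the sharpened analysis. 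This bookkeeping is the technical crux carried out in Appendix~\ref{app:Hnorm}.
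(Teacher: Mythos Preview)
Your overall plan—derive an upper bound on $\im m_t(z)$ outside the support that beats the trivial spectral lower bound $1/(2N\eta)$ and obtain a contradiction—is exactly the paper's strategy. The execution of the key step, however, differs in a way that leaves a genuine gap.

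First, the ``improved bound'' you write, $|m_t-\wt m_t|\prec\beta/\sqrt{\kappa_0+\eta_0}$ with $\beta=q_t^{-2}+(N\eta_0)^{-1}$, is not an improvement: since $\kappa_0+\eta_0\ll1$, this is \emph{weaker} than the local-law bound $\Lambda_t\prec\beta$ from Proposition~\ref{prop:locallaw}. Even interpreting it charitably as $\beta^2/|\alpha_2|$, a direct check at $\kappa=\kappa_0$ with your fixed $\eta_0=N^{-2/3-\epsilon}$ shows the result is \emph{not} $o((N\eta_0)^{-1})$. The real mechanism is this: the stability bound $|P(m_t)|\prec|\alpha_2|\beta+\beta^2$ upgrades to $|P(m_t)|\prec\beta^2$ only when $|\alpha_2|\lesssim\beta$, while the linear inversion $\Lambda_t\prec|P(m_t)|/|\alpha_2|$ beats $\beta$ only when $|\alpha_2|\gtrsim\beta$. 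Both are needed simultaneously, i.e.\ $|\alpha_2|\sim\beta$, and your fixed $\eta_0$ does not enforce this.

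The paper engineers exactly this balance by working not at fixed $\eta_0$ but on the curve
\[
\mathcal{D}_\epsilon:=\Big\{z=E+\ii\eta:\ N^{4\epsilon}\wt\beta^{\,2}\le\kappa_t\le q_t^{-1/3},\ \eta=\frac{N^{\epsilon}}{N\sqrt{\kappa_t}}\Big\},\qquad \wt\beta:=\Big(\frac{1}{q_t^{4}}+\frac{1}{N^{2/3}}\Big)^{1/2}.
\]
On $\mathcal{D}_\epsilon$ one has $q_t^{-2}\ll(N\eta)^{-1}$ and $|\alpha_2|\sim\sqrt{\kappa_t}=N^{\epsilon}/(N\eta)\sim N^{\epsilon}\beta$, while $\alpha_1\sim\eta/\sqrt{\kappa_t}\ll\beta$. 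Feeding the a~priori $\Lambda_t\prec\beta$ back into the already-established estimate~\eqref{eq:recursive1} then yields $\E|P(m_t)|^{2D}\le C^{2D}N^{6D\epsilon'}\beta^{4D}$, hence $\Lambda_t\le N^{-\epsilon/2}\beta\ll(N\eta)^{-1}$ and $\im m_t\ll(N\eta)^{-1}$ on $\mathcal{D}_\epsilon$. Covering $[L_t+N^{4\epsilon}\wt\beta^{\,2},\,L_t+q_t^{-1/3}]$ by the $O(N)$ intervals $[E-\eta,E+\eta]$ along this curve excludes any eigenvalue there, and Lemma~\ref{thm:weak matrix norm} disposes of the range above $L_t+q_t^{-1/3}$.

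Finally, there is no need to ``re-run'' Lemma~\ref{lemma:recursive} or to carry the block-specific expansion of Subsection~\ref{sec:subsec5} through a second time: those were proved once and for all, and their output~\eqref{eq:recursive1} already encodes every $\zeta_t$-dependent effect. The proof of Lemma~\ref{lem:lambda_1^H_t} merely \emph{re-evaluates}~\eqref{eq:recursive1} on $\mathcal{D}_\epsilon$ with sharper a~priori information; the block structure plays no further role.
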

\begin{proof}
	Fix $t \in [0,6\log N]$. Recall the $z$-dependent deterministic parameters
	\begin{align}
	\alpha_1(z) := \im \widetilde{m}_t(z), \quad \alpha_2(z):=P'(\widetilde{m}_t(z)), \quad \beta:= \frac{1}{N\eta} + \frac{1}{q_t^2}.
	\end{align}
	For brevity, we mostly omit the $z$-dependence. We further introduce the $z$-dependent quantity
	\begin{equation}
	\wt \beta := \pB{ \frac{1}{q_t^4}+\frac{1}{N^{2/3}}}^{1/2}.
	\end{equation}
	
	Fix a small $\epsilon > 0$ and define 
	\begin{equation}
	\mathcal{D_\epsilon} := \hbb{z=E+\ii \eta : N^{4\epsilon} \wt \beta^2 \leq \kappa_t \leq q_t^{-1/3}, \eta = \frac{N^\epsilon}{N\sqrt{\kappa_t}}   },
	\end{equation}
	where $\kappa_t=\kappa_t(E)=E-L_t$. Note that for any sufficiently small $\epsilon>0$, on $\mathcal{D_\epsilon}$, we have
	\[
	N^{-1+\epsilon} \ll \eta \leq \frac{N^{-\epsilon}}{N\wt \beta}, \qquad \kappa \geq N^{5\epsilon}\eta.
	\]
	From the last inequality, we get $N^\epsilon \wt \beta \leq (N\eta)^{-1}$, thus $N^\epsilon q_t^{-2} \leq C(N\eta)^{-1}$ so that $q_t^{-2}$ is negligible when compared to $(N\eta)^{-1}$ and $\beta$ on $\mathcal{D_\epsilon}$. Furthermore, note that 
	\begin{equation}\label{eq:alpha2 alpha1}
	\begin{split}
	&\abs{\alpha_2} = \abs{P' (\wt m_t)} \sim \sqrt{\kappa_t + \eta} \sim \sqrt{\kappa_t} = \frac{N^\epsilon}{N\eta} \sim N^\epsilon\beta,\\
	&\alpha_1 = \im \wt m_t \sim \frac{\eta}{\sqrt{\kappa_t + \eta}}\sim  \frac{\eta}{\sqrt{\kappa_t }} \leq N^{-5\epsilon}\sqrt{\kappa_t} \sim N^{-5\epsilon}|\alpha_2| \sim N^{-4\epsilon}\beta.
	\end{split}
	\end{equation}
	In particular we have $\alpha_1 \ll \alpha_2$ on $\mathcal{D_\epsilon}$.
	
	We next claim that 
	\[
	\Lambda_{t} := |m_t - \wt m_t| \ll \frac{1}{N\eta}
	\]
	with high probability on the domain $\mathcal{D_\epsilon}$.
	Since $\mathcal{D_\epsilon} \subset \mathcal{E}$, from Proposition \ref{prop:locallaw}, we find that $\Lambda_{t}\leq N^{\epsilon'}$ for any $\epsilon' >0$ with high probability.
	Fix $0<\epsilon' < \epsilon/7$. We obtain from \eqref{eq:recursive1} that, 
	\begin{align*}
	\E [|P(m_t)|^{2D}] &\leq CN^{(4D-1)\epsilon'}\mathbb{E}[\beta^{2D}(\alpha_1 +\Lambda_t)^D(|\alpha_2|+C_1\Lambda_t)^D]+\frac{N^{(2D+1)\epsilon'}}{D}q_t^{-8D} +\frac{N^{-(D/4-1)\epsilon'}}{D}q_t^{-2D}\mathbb{E}[\Lambda_t^{4D}]\\
	&\leq C^{2D}N^{6D\epsilon'}\beta^{4D} +\frac{N^{(2D+1)\epsilon'}}{D}q_t^{-8D} + \frac{N^{4D\epsilon'}}{D}q_t^{-2}\beta^{4D}\\
	&\leq  C^{2D}N^{6D\epsilon'}\beta^{4D},
	\end{align*}
	for $N$ sufficiently large, where we used the fact that $\Lambda_{t} \leq N^{\epsilon'\beta}\ll N^\epsilon\beta$ with high probability and $\alpha_1 \ll \alpha_2$, $|\alpha_2| \leq CN^\epsilon\beta$ on $\mathcal{D_\epsilon}$  by \eqref{eq:alpha2 alpha1}. Using $(2D)$-th order Markov inequality and a lattice argument with a union bound, we obtain
	\[
	|P(m_t)| \leq CN^{3\epsilon'}\beta^2,
	\]
	with high probability and uniformly on $\mathcal{D_\epsilon}$. Then from the Taylor expansion of $P(m_t)$ around $\wt m_t$ in \eqref{eq:3rdTaylor}, we obtain that 
	\begin{equation}\label{eq:alpha2 Lamda_t}
	|\alpha_2|\Lambda_{t} \leq C'\Lambda_{t}^2 + CN^{3\epsilon'}\beta^2,
	\end{equation}
	with high probability, uniformly on $\mathcal{D_\epsilon}$  with some constant $C'$. Here we also used that $\Lambda_{t} \ll 1$ on $\mathcal{D_\epsilon}$ with high probability.
	
	Since  $\Lambda_{t} \leq N^{\epsilon'\beta}\leq CN^{\epsilon'-\epsilon} |\alpha_2|$ on $\mathcal{D_\epsilon}$ with high probability, we have $|\alpha_2|\Lambda_{t} \geq CN^{\epsilon'-\epsilon}\Lambda_{t}^2 \gg C'\Lambda_{t}^2 $. Thus the first term on the right side of \eqref{eq:alpha2 Lamda_t} can be absorbed into the left side and we conclude that 
	\[
	\Lambda_{t} \leq CN^{3\epsilon'}\frac{\beta}{|\alpha_2|}\beta \leq CN^{3\epsilon'-\epsilon}\beta,
	\]
	hold with high probability, uniformly on $\mathcal{D_\epsilon}$. Hence with $0<\epsilon' < \epsilon/7$, we get that 
	\[
	\Lambda_{t} \leq N^{-\epsilon/2}\beta\leq 2\frac{N^{-\epsilon/2}}{N\eta},
	\]
	uniformly on $\mathcal{D_\epsilon}$ with high probability. Thus we can claim that $\Lambda_{t} \ll (N\eta)^{-1}$ on $\mathcal{D_\epsilon}$ with high probability. Combining with \eqref{eq:alpha2 alpha1}, this also shows that
	\begin{equation}
	\im m_t \leq \im \wt m_t + \Lambda_{t} = \alpha_1 + \Lambda_{t} \ll \frac{1}{N\eta}.
	\end{equation}
	on $\mathcal{D_\epsilon}$ with high probability.
	
	Now we prove \eqref{eq:lambda_1^H_t}. If $\lambda_1^{H_t}\in [E-\eta, E+\eta]$ for some $E \in [L_t - N^\epsilon (q_t^{-4} +N^{-2/3}), L_t + q^{-1/3} ]$ with $z \in \mathcal{D_\epsilon}$,
	\begin{equation}
	\im m_t(z) \geq \frac{1}{N} \im \frac{1}{\lambda_1^{H_t}-E-\ii\eta}=\frac{1}{N}\frac{\eta}{(\lambda_1^{H_t}-E)^2+\eta^2}\geq \frac{1}{5N\eta},
	\end{equation}
	which contradicts the high probability bound $\im m_t \ll (N\eta)^{-1}$. Since the size of each interval $[E-\eta,E+\eta]$ is at least $N^{-1+\epsilon}q_t^{1/6}$, we can conclude by considering $O(N)$ such intervals that $\lambda_1\notin [L_t - N^\epsilon (q_t^{-4} +N^{-2/3}), L_t + q^{-1/3} ]$ with high probability. From Lemma \ref{thm:weak matrix norm} we find that $\lambda_1^{H_t} -L_t \prec q_t^{-1/3}$ with high probability. Therefore we conclude that \eqref{eq:lambda_1^H_t} holds for fixed $t \in [0,6\log N]$. We then get \eqref{eq:lambda_1^H_t} uniformly in $t \in [0,6\log N]$ using a lattice argument and the continuity of the Dyson matrix flow.
\end{proof}
\begin{proof}[Proof of Lemma \ref{lem:Hnorm} and Theorem {\ref{thm:matrix norm}}]
	Fix $t \in [0, 6\log N]$. Consider the largest eigenvalue $\lambda_1^{H_t}$. In Lemma \ref{lem:lambda_1^H_t}, we already showed that $(L_t - \lambda_1^{H_t})_-\prec q_t^{-4}+N^{-2/3}$. Therefore it is enough to consider $(L_t - \lambda_1^{H_t})_+$. By Lemma \ref{lemma:rho_t} there is $c>0$ such that  $c(L_t - \lambda_1^{H_t})_+ \leq n_{\wt \rho_t }(\lambda_1^{H_t},L_t)$. Hence by obvious generalization of Corollary \ref{coro:local density} to $H_t$, we obtain the estimate 
	\[
	(L_t - \lambda_1^{H_t})_+ \prec \frac{(L_t - \lambda_1^{H_t})_+}{q_t^2} +\frac{1}{N},
	\] 
	so that $(L_t - \lambda_1^{H_t})_+ \prec q_t^{-4}+N^{-2/3}$. Thus $\abs{\lambda_1^{H_t}-L_t} \prec q_t^{-4}+N^{-2/3}$. In same way, we can show the estimate $\abs{\lambda_N^{H_t}-L_t} \prec q_t^{-4}+N^{-2/3}$ for the smallest eigenvalue $\lambda_N^{H_t}$. Using the continuity of the Dyson matrix flow, we obtain \eqref{eq:norm H_t} uniformly in $t \in [0,6\log N]$.
\end{proof}

\section{Proof of Lemma \ref{lemma:A_r}}\label{app:TW}
In this appendix we prove Lemma \ref{lemma:A_r}. We begin by considering the case $r\geq 5$. In this case, we can see that $A_r = O(N^{\frac{2}{3}-\epsilon'})$, since it contains at least two off-diagonal entries in $\partial_{jk}^{r}(F'(X)G_{ij}G_{k i})$ and $|A_r|$ is bounded~by
\begin{align*}
N^3 N^{-1} q_t^{-4} N^{-2/3+2\epsilon} \ll N^{2/3-\epsilon'} 
\end{align*}
which can be checked by a simple power counting. Thus we only need to consider the cases $r=2,3,4$.

\subsection{Proof of Lemma \ref{lemma:A_r} for \texorpdfstring{$r=2$}{r=2}} Note that
\begin{align}
|A_{2}|&=\Big|\frac{e^{-t}q_t^{-1}}{N}\sum_i\sum_{j\neq k} \mathbb{E}[s_{jk}^{(3)}\partial_{jk}^2(F'(X)G_{ij}G_{k i}) ]\Big|\\ \notag 
&=\Big|\frac{e^{-t}q_t^{-1}}{N}\sum_{i,j,k} \mathbb{E}[s_{d}^{(3)}\partial_{jk}^2(F'(X)G_{ij}G_{k i})]+\frac{e^{-t}q_t^{-1}}{N}\sum_i\sum_{j\sim k} \mathbb{E}[(s_{s}^{(3)}-s_{d}^{(3)})\partial_{jk}^2(F'(X)G_{ij}G_{k i})] \Big|\\ \notag 
&\leq\frac{e^{-t}q_t^{-1}}{N}\sum_{i,j,k}\frac{Cq_t^{-1}}{N}\mathbb{E}[|\partial_{jk}^2(F'(X)G_{ij}G_{k i}) |],
\end{align}
for some constant $C$. Also, observe that
\begin{align}
\partial_{jk}^2(F'(X)G_{ij}G_{k i}) = F'(X)\partial_{jk}^2(G_{ij}G_{ki}) + 2\partial_{jk}F'(X)\partial_{jk}(G_{ij}G_{k i}) + (\partial^2_{jk}F'(X))G_{ij}G_{k i}.
\end{align}

We first consider the expansion of $\partial_{jk}^2(G_{ij}G_{ki})$. We can estimate the terms with four off diagonal Green function entries. For example,

\begin{align}\label{eq:4offdiag1}
\sum_{i,j,k}\mathbb{E}[|F'(X)G_{ij}G_{kj}G_{k j}G_{k i}|]\leq N^{C\epsilon}\sum_{i,j,k}|G_{ij}G_{k j}G_{k j}G_{k i}|\leq N^{C\epsilon} \Big(\frac{\im m}{N\eta_0}\Big)^2 \leq N^{-4/3+C\epsilon}, 
\end{align}
where we used Lemma \ref{lemma:powercounting0}. Thus, for sufficiently small $\epsilon$ and $\epsilon'$, we get
\begin{align} \label{eq:4offdiag2}
\frac{\mathrm{e}^{-t}q_t^{-1}}{N}\sum_{i,j,k}\mathbb{E}[|F'(X)G_{ij}G_{k j}G_{k j}G_{ki}|]\ll N^{2/3-\epsilon'}.
\end{align}

For the terms with three off-diagonal Green function entries, the bound we get from Lemma \ref{lemma:powercounting0} is
\begin{align*}
q_t^{-1}N^{-1} N^3 N^{C\epsilon} \Big(\frac{\im m}{N\eta_0 }\Big)^{3/2}\sim q_t^{-1}N^{1+C\epsilon},
\end{align*} 	
which is not sufficient. To gain an additional factor of $q_t^{-1}$, which makes the above bound $q_t^{-2}N^{1+C\epsilon}\ll N^{2/3-\epsilon'}$, we use Lemma \ref{lemma:Stein} to expand in an unmatched index. For example, such a term is of the form
\begin{align*}
G_{ij}G_{kj}G_{kk}G_{ji}
\end{align*}
and we focus on the unmatched index $\alpha$ in $G_{k j}$. We get
\begin{align}
\frac{q_t^{-1}}{N}\sum_{i,j,k}\mathbb{E}&[F'(X)G_{ij}G_{kj}G_{kk} G_{ji}] = \frac{q_t^{-1}}{N}\sum_{i,j,k,n} \mathbb{E}[F'(X)G_{ij}H_{kn}G_{nj}G_{kk}G_{ji}] \notag \\
&=\frac{q_t^{-1}}{N}\sum_{r'=1}^{\ell}\frac{1}{r'!}\sum_{i,j,k,n}\mathbb{E}[\kappa_{kn}^{(r'+1)}\partial_{kn}^{r'}(F'(X)G_{ij}G_{nj}G_{kk}G_{ji})]+O(N^{2/3-\epsilon'}),
\end{align}
for $\ell=10$.

For $r'=1$, we need to consider $\partial_{kn}(F'(X)G_{ij}G_{nj}G_{kk}G_{ji})$. When $\partial_{kn}$ acts on $F'(X)$ it creates a fresh summation index $n$, and we get a term
\begin{align}
\frac{q_t^{-1}}{N^2}&\sum_{i,j,k,n}\mathbb{E}[(\partial_{kn}(F'(X))G_{ij}G_{nj}G_{kk}G_{ji})] \notag \\
&=-\frac{2q_t^{-1}}{N^2}\int_{E_1}^{E_2}\sum_{i,j,k,n,m}\mathbb{E}[G_{ij}G_{nj}G_{kk}G_{ji}F''(X)\im(G_{mn}(y+L+\ii\eta_0)G_{km}(y+L+\ii\eta_0))] \textrm{d}y \notag \\
&=-\frac{2q_t^{-1}}{N^2}\int_{E_1}^{E_2}\sum_{i,j,k,n,m}\mathbb{E}[G_{ij}G_{nj}G_{kk}G_{ji}F''(X)\im(\widetilde{G}_{mn}\widetilde{G}_{km})] \textrm{d}y,
\end{align}
where we abbreviate $\widetilde{G}\equiv G(y+L+\ii\eta_0)$. Applying Lemma \ref{lemma:powercounting0} to the index $n$ and $\widetilde{G}$, we get
\begin{align*}
\frac{1}{N}\sum_{n=1}^{N}|\widetilde{G}_{mn}\widetilde{G}_{kn}| \prec N^{-2/3+2\epsilon},
\end{align*}	
which also shows that
\begin{align}\label{eq:partialF'}
|\partial_{kn}F'(X)|\prec N^{-1/3+C\epsilon}.
\end{align}
Applying Lemma \ref{lemma:powercounting0} to the remaining off-diagonal Green function entries, we obtain that
\begin{align}\label{eq:remainoffdiag}
\frac{q_t^{-1}}{N^2}\sum_{i,j,k,n}|\mathbb{E}[(\partial_{kn}(F'(X))G_{ij}G_{nj}G_{kk}G_{ji})]|\leq q_t^{-1}N^{-2} N^{-1/3+C\epsilon} N^{4}N^{-1+3\epsilon} = q_t^{-1}N^{2/3+C\epsilon}.
\end{align}

If $\partial_{kn}$ acts on $G_{ij}G_{nj}G_{kk}G_{ji}$, then we always get four or more off-diagonal Green function entries with the only exception being
\begin{align*}
-G_{ij}G_{nn}G_{kj}G_{kk}G_{ji}.
\end{align*}
To the terms with four or more off-diagonal Green function entries, we apply Lemma \ref{lemma:powercounting0} and obtain a bound similar to \eqref{eq:remainoffdiag} by power counting. For the term of the exception, we rewrite it as
\begin{align}
&-\frac{q_t^{-1}}{N^2}\sum_{i,j,k,n}\mathbb{E}[F'(X)G_{ij}G_{nn}G_{k j}G_{kk}G_{ji}] = -\frac{q_t^{-1}}{N}\sum_{i,j,k}\mathbb{E}[mF'(X)G_{ij}G_{k j}G_{kk}G_{ji}] \notag \\
&=-\widetilde{m}\frac{q_t^{-1}}{N}\sum_{i,j,k}\mathbb{E}[F'(X)G_{ij}G_{k j}G_{kk}G_{ji}]+\frac{q_t^{-1}}{N}\sum_{i,j,k}\mathbb{E}[(\widetilde{m}-m)F'(X)G_{ij}G_{k j}G_{kk}G_{ji}]
\end{align}
Here, the last term is bounded by $q_t^{-1}N^{2/3+C\epsilon}$ as we can easily check with Proposition \ref{prop:locallaw} and Lemma \ref{lemma:powercounting0}. We thus arrive at
\begin{align}
\frac{q_t^{-1}}{N}(z+\widetilde{m}&)\sum_{i,j,\alpha}\mathbb{E}[F'(X)G_{ij}G_{k j}G_{kk}G_{ji}] \notag \\
&=\frac{q_t^{-1}}{N}\sum_{r'=2}^{\ell}\frac{1}{r'!}\sum_{i,j,k,n}\mathbb{E}[\kappa_{kn}^{(r'+1)}\partial_{kn}^{r'}(F'(X)G_{ij}G_{nj}G_{kk}G_{ji})]+O(N^{2/3-\epsilon'}).
\end{align}
On the right side, the summation is from $r'=2$, hence we have gained a factor $N^{-1}q_t^{-1}$ from $\kappa_{(kn)}^{(r'+1)}$ and added a fresh summation index $k$, so the net gain is $q_t^{-1}$. Since $|z+\widetilde{m}|\sim 1$, this shows that
\begin{align}
\frac{q_t^{-1}}{N}\sum_{i,j,k}\mathbb{E}[F'(X)G_{ij}G_{kj}G_{kk}G_{ji}]=O(N^{2/3-\epsilon'}).
\end{align}
Together with \eqref{eq:4offdiag2}, this takes care of the first term on the right side of \eqref{eq:4offdiag1}.
For the second term on the right side of \eqref{eq:4offdiag1}, we focus on
\begin{align}
\partial_{jk}F'(X) = -\int_{E_1}^{E_2} \sum_{a=1}^N [F''(X)\im (\widetilde{G}_{ja}\widetilde{G}_{ak})]\textrm{d}y
\end{align}
and apply the same argument to the unmatched index $k$ in $\widetilde{G}_{ak}$. For the third term, we focus on $G_{ij}G_{k i}$ and again apply the same argument with the index $k$ in $G_{ki}$.

\subsection{Proof of Lemma \ref{lemma:A_r} for \texorpdfstring{$r=3$}{r=3}}

If $\partial_{jk}$ acts on $F'(X)$ at least once, then that term is bounded by
\begin{align*}
N^{\epsilon}N^{-1}q_t^{-2}N^{3}N^{-1/3+C\epsilon}N^{-2/3+2\epsilon}=q_t^{-2}N^{1+C\epsilon}\ll N^{2/3-\epsilon'},
\end{align*}
where we used \eqref{eq:partialF'} and the fact that $G_{ij}G_{ki}$ or $\partial_{jk}(G_{ij}G_{ki})$ contains at least two off-diagonal entries. Moreover, in the expansion $\partial_{jk}^3(G_{ij}G_{ki})$, the terms with three or more off-diagonal Green function entries can be bounded by
\begin{align*}
N^{\epsilon}N^{-1}q_t^{-2}N^{3}N^{C\epsilon}N^{-1+3\epsilon}=q_t^{-2}N^{1+C\epsilon}\ll N^{2/3-\epsilon'}.
\end{align*}
Thus,
\begin{align}\label{eq:B2main}
&\frac{\mathrm{e}^{-t}q_t^{-2}}{3!N}\sum_{i,j,k}\mathbb{E}[s_{jk}^{(4)}\partial_{jk}^3 F'(X)G_{ij}G_{k i}] \notag \\ \notag
&= \frac{\mathrm{e}^{-t}q_t^{-2}}{3!N}\sum_{i,j,k}\mathbb{E}[s_{d}^{(4)}\partial_{jk}^3 F'(X)G_{ij}G_{k i}] +\frac{\mathrm{e}^{-t}q_t^{-2}}{3!N}\sum_{i,j}\sum_{k\sim j}\mathbb{E}[(s_{s}^{(4)}-s_{d}^{(4)})\partial_{jk}^3 F'(X)G_{ij}G_{k i}] \\ \notag
&=-\frac{4!}{2}\frac{\mathrm{e}^{-t}s_d^{(4)}q_t^{-2}}{3!N}\sum_{i,j,k}\mathbb{E}[F'(X)G_{ij}G_{jj}G_{ji}G_{kk}^2] \notag \\
&\qquad-\frac{4!}{2}\frac{\mathrm{e}^{-t}(s_s^{(4)}-s_d^{(4)})q_t^{-2}}{3!N}\sum_{i,j}\sum_{k\sim j}\mathbb{E}[F'(X)G_{ij}G_{jj}G_{ji}G_{kk}^2]+O(N^{2/3-\epsilon'}),
\end{align}
where $(4!/2)$ is the combinatorial factor. If we split $G_{kk}^2$ into $G_{kk}^2= (G_{kk}- m)^2 +2G_{kk}m - m^2$, then we obtain
\begin{align}
&\frac{\mathrm{e}^{-t}q_t^{-2}}{3!N}\sum_{i,j,k}\mathbb{E}[s_{jk}^{(4)}\partial_{jk}^3 F'(X)G_{ij}G_{k i}] \notag \\
&= -2\mathrm{e}^{-t}s_d^{(4)}q_t^{-2}N^{-1}\sum_{i,j,k}\mathbb{E}[F'(X)G_{ij}G_{jj}G_{ji}((G_{kk}- m)^2 +2G_{kk}m - m^2)]  \notag \\
&\qquad -2\mathrm{e}^{-t}(s_s^{(4)}-s_d^{(4)} )q_t^{-2}N^{-1}\sum_{i,j}\sum_{k\sim j}\mathbb{E}[F'(X)G_{ij}G_{jj}G_{ji}((G_{kk}- m)^2 +2G_{kk}m - m^2)] +O(N^{2/3-\epsilon'}) \notag \\
&= -2\mathrm{e}^{-t}s_d^{(4)}q_t^{-2}\sum_{i,j}\mathbb{E}[F'(X)G_{ij}G_{jj}G_{ji}(O(\psi^2 N^{\epsilon}) + 2m^2 -m^2)]  \notag \\
&\qquad -2\mathrm{e}^{-t}(s_s^{(4)}-s_d^{(4)} )q_t^{-2}K^{-1}\sum_{i,j}\mathbb{E}[F'(X)G_{ij}G_{jj}G_{ji}(O(\psi^2 N^{\epsilon}) + 2m^2 -m^2)] +O(N^{2/3-\epsilon'}) \notag \\
&= -2\mathrm{e}^{-t}s_d^{(4)}q_t^{-2}\sum_{i,j}\mathbb{E}[F'(X)G_{ij}G_{jj}G_{ji}m^2]  \notag \\
&\qquad -2\mathrm{e}^{-t}(s_s^{(4)}-s_d^{(4)} )q_t^{-2}K^{-1}\sum_{i,j}\mathbb{E}[F'(X)G_{ij}G_{jj}G_{ji}m^2] +O(N^{2/3-\epsilon'}) \notag \\
&= -2\mathrm{e}^{-t}\xi^{(4)}q_t^{-2}\sum_{i,j}\mathbb{E}[F'(X)G_{ij}G_{jj}G_{ji}m^2] +O(N^{2/3-\epsilon'}).
\end{align}
where we used \eqref{thm:weak law 2} and Lemma \ref{lemma:powercounting0} for the third equality. Since $m=-1+O(N^{-1/3+\epsilon})$ with high probability, we finally have	 
\begin{align}\label{eq:B2exp1}
\frac{\mathrm{e}^{-t}q_t^{-2}}{3!N}&\sum_{i,j,k}\mathbb{E}[s_{jk}^{(4)}\partial_{jk}^3 F'(X)G_{ij}G_{k i}] \notag \\
&= -2\mathrm{e}^{-t}\xi^{(4)}q_t^{-2}\sum_{i,j}\mathbb{E}[F'(X)G_{ij}G_{jj}G_{ji}m^2] +O(N^{2/3-\epsilon'}) \notag\\
&= -2\mathrm{e}^{-t}\xi^{(4)}q_t^{-2}\sum_{i,j}\mathbb{E}[F'(X)G_{ij}G_{jj}G_{ji}] +O(N^{2/3-\epsilon'}).
\end{align}

Next we consider
\begin{align}\label{eq:B2exp2}
q_t^{-2}\sum_{i,j}\mathbb{E}[zF'(X)G_{ij}G_{jj}G_{ji}]=2 q_t^{-2}\sum_{i,j}\mathbb{E}[F'(X)G_{ij}G_{jj}G_{ji}] +O(N^{2/3-\epsilon'}) .
\end{align}
Expanding the left hand side using the resolvent expansion, we obtain
\begin{align*}
q_t^{-2}\sum_{i,j}\mathbb{E}[zF'(X)G_{ij}G_{jj}G_{ji}]=-q_t^{-2}\sum_{i,j}\mathbb{E}[F'(X)G_{ij}G_{ji}]+q_t^{-2}\sum_{i,j,k}\mathbb{E}[F'(X)H_{jk}G_{ij}G_{kj}G_{ji}].
\end{align*}
Applying Lemma \ref{lemma:Stein} to the second term on the right side, most of the terms are $O(N^{2/3-\epsilon'})$ either due to three (or more) off-diagonal entries, the partial derivative $\partial_{jk}$ acting on $F'(X)$, or higher cumulants. Thus we find that
\begin{align*}
-q_t^{-2}\sum_{i,j,k} \mathbb{E}[\kappa_{jk}^{(2)}F'(X)G_{ij}G_{kk}G_{jj}G_{ji}] = -q_t^{-2}\sum_{i,j} \mathbb{E}[F'(X)G_{ij}G_{jj}G_{ji}m]
\end{align*}
is the only non-negligible term, which is generated when $\partial_{jk}$ acts on $G_{kj}$. From this argument we obtain
\begin{align*}
q_t^{-2}\sum_{i,j}\mathbb{E}[zF'(X)G_{ij}G_{jj}G_{ji}]=-&q_t^{-2}\sum_{i,j}\mathbb{E}[F'(X)G_{ij}G_{ji}] \\&-q_t^{-2}\sum_{i,j}\mathbb{E}[mF'(X)G_{ij}G_{jj}G_{ji}] +O(N^{2/3-\epsilon'}).
\end{align*}
Combining with \eqref{eq:B2exp2} and the fact that $m(z)=-1+O(N^{-1/3+\epsilon})$ with high probability, we get
\begin{align}\label{eq:B2term1}
q_t^{-2}\sum_{i,j}\mathbb{E}[F'(X)G_{ij}G_{jj}G_{ji}]=-q_t^{-2}\sum_{i,j}\mathbb{E}[F'(X)G_{ij}G_{ji}]+O(N^{2/3-\epsilon'}).
\end{align}
By combining \eqref{eq:B2exp1} and \eqref{eq:B2term1}, we conclude that
\begin{align}
\frac{\mathrm{e}^{-t}q_t^{-2}}{3!N}\sum_{i,j,k}\mathbb{E}[s^{(4)}_{jk}\partial_{jk}^3 F'(Y)G_{ij}G_{ki}] &=2\mathrm{e}^{-t}\xi^{(4)}q_t^{-2}\mathbb{E}[ F'(X)G_{ij}G_{j i}] + O(N^{2/3-\epsilon'}).
\end{align}

\subsection{Proof of Lemma \ref{lemma:A_r} for \texorpdfstring{$r=4$}{r=4}} We estimate the term by using similar argunment as in the case $r=2$ and one can get
\begin{align}
\frac{q_t^{-3}}{N}\sum_{i,j,k} \big|\mathbb{E}\big[\partial_{jk}^4 (F'(Y)G_{ij}G_{ki})\big]\big| =O(N^{2/3-\epsilon'}). 
\end{align}	
We leave the details to the interested reader.

\section*{Acknowledgements}
This work was partially supported by the Samsung Science and Technology Foundation project number SSTF-BA1402-04.


\begin{thebibliography}{}
	
	\bibitem[\protect\citeauthoryear{Abbe}{2017}]{Abbe17}
	Abbe, E. (2017) Community detection and stochastic block models: recent developments.
	\textit{J. Mach. Learn. Res.,} \textbf{18}, 1--86.
	
	\bibitem[\protect\citeauthoryear{Abbe \emph{et al.}}{2016}]{ABH16}
	Abbe, E. Bandeira, A. S. and Hall, G. (2016)
	Exact recovery in the stochastic block model. \textit{IEEE Trans. Inform. Theory}, \textbf{62}, 471--487.
	
	\bibitem[\protect\citeauthoryear{Abbe and Sandon}{2018}]{AS18}
	Abbe, E. and Sandon, C. (2018)
	Proof of the achievability conjectures for the general stochastic
	block model. \textit{Comm. Pure Appl. Math.}, \textbf{71}, 1334--1406.
	
	\bibitem[\protect\citeauthoryear{Baik \emph{et al.}}{2018}]{BLW18}
	Baik, J., Lee, J. O. and Wu, H. (2018)
	Ferromagnetic to paramagnetic transition in spherical spin glass.
	\textit{J. Stat. Phys.}, \textbf{173}, 1484--1522.
	
	\bibitem[\protect\citeauthoryear{Bickel and Sarkar}{2016}]{BS16}
	Bickel, P. J. and Sarkar, P. (2016)
	Hypothesis testing for automated community detection in networks.
	\textit{J. R. Stat. Soc. Ser. B. Stat. Methodol.}, \textbf{78}, 253--273.
	
	\bibitem[\protect\citeauthoryear{Bourgade \emph{et al.}}{2014}]{BEY14}
	Bourgade, P., Erd\H{o}s, L. and Yau, H.-T. (2014)
	Edge universality of beta ensembles.
	\textit{Comm. Math. Phys.}, \textbf{332}, 261--353.
	
	\bibitem[\protect\citeauthoryear{Chen and Yuan}{2006}]{CY06}
	Chen, J. and Yuan, B. (2006)
	Detecting functional modules in the yeast protein-protein interaction network.
	\textit{Bioinformatics}, \textbf{22}, 2283--2290.
	
	\bibitem[\protect\citeauthoryear{Chung and Lee}{2018}]{CL18}
	Chung, H. W. and Lee, J. O. (2018)
	Weak detection of signal in the spiked wigner model. \textit{arXiv:1809.10827}
	
	\bibitem[\protect\citeauthoryear{Cline \emph{et al.}}{2007}]{CSC07}
	Cline, M. S., Smoot, M., Cerami, E., Kuchinsky, A., Landys, N., Workman, C., Christmas, R., Avila-Campilo, I., Creech, M., Gross, B., Hanspers, K., Isserlin, R., Kelley, R., Killcoyne, S., Lotia, S., Maere, S., Morris, J., Ono, K., Pavlovic, V., Pico, A. R., Vailaya, A., Wang, P. L., Adler, A., Conklin, B. R., Hood, L., Kuiper, M., Sander, C., Schmulevich, I., Schwikowski, B., Warner, G. J., Ideker, T. and Bader, G. D. (2007)
	Integration of biological networks and gene expression data using Cytoscape.
	\textit{Nat. Protoc.}, \textbf{2}, 2366--2382.
	
	\bibitem[\protect\citeauthoryear{Erd\H{o}s \emph{et al.}}{2012}]{EKYY12}
	Erd\H{o}s, L., Knowles, A., Yau, H.-T. and Yin, J. (2012)
	Spectral statistics of {E}rd{\H o}s-{R}\'{e}nyi {G}raphs {II}:
	{E}igenvalue spacing and the extreme eigenvalues.
	\textit{Comm. Math. Phys.}, \textbf{314}, 587--640.
	
	\bibitem[\protect\citeauthoryear{Erd\H{o}s \emph{et al.}}{2013a}]{EKYY13_2}
	Erd\H{o}s, L., Knowles, A., Yau, H.-T. and Yin, J. (2013)
	The local semicircle law for a general class of random matrices.
	\textit{Electron. J. Probab.}, \textbf{18}, 1--58.
	
	\bibitem[\protect\citeauthoryear{Erd\H{o}s \emph{et al.}}{2013b}]{EKYY13}
	Erd\H{o}s, L., Knowles, A., Yau, H.-T. and Yin, J. (2013)
	Spectral statistics of {E}rd{\H o}s-{R}\'{e}nyi graphs {I}: {L}ocal
	semicircle law.
	\textit{Ann. Probab.}, \textbf{41}, 2279--2375.
	
	\bibitem[\protect\citeauthoryear{Erd\H{o}s \emph{et al.}}{2011}]{EYY11}
	Erd\H{o}s, A., Yau, H.-T. and Yin, J. (2011)
	Universality for generalized {W}igner matrices with {B}ernoulli
	distribution.
	\textit{J. Comb.}, \textbf{2}, 15--81.
	
	\bibitem[\protect\citeauthoryear{Erd\H{o}s \emph{et al.}}{2012a}]{EYY12a}
	Erd\H{o}s, A., Yau, H.-T. and Yin, J. (2012)
	Bulk universality for generalized {W}igner matrices.
	\textit{Probab. Theory Related Fields}, \textbf{154}, 341--407.
	
	\bibitem[\protect\citeauthoryear{Erd\H{o}s \emph{et al.}}{2012b}]{EYY12}
	Erd\H{o}s, A., Yau, H.-T. and Yin, J. (2012)
	Rigidity of eigenvalues of generalized {W}igner matrices.
	\textit{Adv. Math.}, \textbf{229},1435--1515.
	
	\bibitem[\protect\citeauthoryear{Girvan and Newman}{2002}]{GN02}
	Girvan, M. and Newman, M. E. J. (2002)
	Community structure in social and biological networks.
	\textit{Proc. Natl. Acad. Sci. USA}, \textbf{99}, 7821--7826.
	
	\bibitem[\protect\citeauthoryear{Gu\'{e}don and Vershynin}{2016}]{GV16}
	Gu\'{e}don, O. and Vershynin, R. (2016)
	Community detection in sparse networks via {G}rothendieck's
	inequality.
	\textit{Probab. Theory Related Fields}, \textbf{165}, 1025--1049.
	
	\bibitem[\protect\citeauthoryear{Guimer\'{a} and Amaral}{2005a}]{GA05a}
	Guimer\`{a}. R. and Amaral, L. A. N. (2015)
	Functional cartography of complex metabolic networks.
	\textit{Nature}, \textbf{433}, 895--900.
	
	\bibitem[\protect\citeauthoryear{Guimer\'{a} and Amaral}{2005b}]{GA05b}
	Guimer\`{a}. R. and Amaral, L. A. N. (2015)
	Cartography of complex networks: modules and universal roles.
	\textit{J. Stat. Mech.}, P02001.
	
	\bibitem[\protect\citeauthoryear{Hajek \emph{et al.}}{2016}]{HWX16}
	Hajek, B., Wu, Y. and Xu, J. (2016)
	Achieving exact cluster recovery threshold via semidefinite
	programming.
	\textit{IEEE Trans. Inform. Theory}, \textbf{62}, 2788--2797.
	
	\bibitem[\protect\citeauthoryear{Holland \emph{et al.}}{1983}]{HLL83}
	Holland, P. W., Laskey, K. B. and Leinhardt, S. (1983)
	Stochastic blockmodels: first steps.
	\textit{Social Networks}, \textbf{5}, 109--137.
	
	\bibitem[\protect\citeauthoryear{Huang \emph{et al.}}{2017}]{HLY17}
	Huang, J., Landon, B.,  Yau, H.-T. (2017)
	Transition from Tracy--Widom to Gaussian fluctuations of extremal eigenvalues of sparse {E}rd{\H o}s-{R}\'{e}nyi graphs.
	\textit{Preprint arXiv:1712.03936}.
	
	
	
	
	
	\bibitem[\protect\citeauthoryear{Hwang \emph{et al.}}{2018}]{Hwang2018}
	Hwang, J. Y., Lee, J. O. and Schnelli, K. (2018)
	Local law and Tracy--Widom limit for sparse sample covariance
	matrices.
	\textit{Ann. Appl. Prob.} To appear.
	
	\bibitem[\protect\citeauthoryear{Jiang \emph{et al.}}{2004}]{JTZ04}
	Jiang, D., Tang, C. and Zhang, A. (2004)
	Cluster analysis for gene expression data: a survey.
	\textit{IEEE Trans. Knowl. Data. Eng.}, \textbf{16}, 1370--1386.
	
	\bibitem[\protect\citeauthoryear{Krzakala \emph{et al.}}{2013}]{KMM13}
	Krzakala, F., Moore, C., Mossel, E., Neeman, J., Sly, A., Zdeborov\'{a}, L. and Zhang, P. (2013)
	Spectral redemption in clustering sparse networks.
	\textit{Proc. Natl. Acad. Sci. USA}, \textbf{110}, 20935--20940.
	
	\bibitem[\protect\citeauthoryear{Lee and Schnelli}{2018}]{LS18}
	Lee, J. O. and Schnelli, K. (2018)
	Local law and {T}racy-{W}idom limit for sparse random matrices.
	\textit{Probab. Theory Related Fields}, \textbf{171}, 543--616.
	
	\bibitem[\protect\citeauthoryear{Lei}{2016}]{Lei16}
	Lei, J. (2016)
	A goodness-of-fit test for stochastic block models.
	\textit{Ann. Statist.}, \textbf{44}, 401--424.
	
	\bibitem[\protect\citeauthoryear{Lytova and Pastur}{2009}]{LP09}
	Lytova, A. and Pastur, L. (2009)
	Central limit theorem for linear eigenvalue statistics of random
	matrices with independent entries.
	\textit{Ann. Probab.}, \textbf{37}, 1778--1840.
	
	\bibitem[\protect\citeauthoryear{Newman}{2001}]{Newman01}
	Newman, M. E. J. (2001)
	The structure of scientific collaboration networks.
	\textit{Proc. Natl. Acad. Sci. USA}, \textbf{98}, 404--409.
	
	\bibitem[\protect\citeauthoryear{Rohe \emph{et al.}}{2011}]{RCY11}
	Rohe, K., Chatterjee, S. and Yu, B. (2011)
	Spectral clustering and the high-dimensional stochastic blockmodel.
	\textit{Ann. Statist.}, \textbf{39}, 1878--1915.
	
	\bibitem[\protect\citeauthoryear{Stein}{1981}]{Stein81}
	Stein, C. M. (1981)
	Estimation of the mean of a multivariate normal distribution.
	\textit{Ann. Statist.}, \textbf{9}, 1135--1151.
	
	\bibitem[\protect\citeauthoryear{Traud \emph{et al.}}{2011}]{TKMP11}
	Traud, A., Kelsic, E., Mucha, P. and Porter, M. (2011)
	Comparing Community Structure to Characteristics in Online Collegiate Social Networks.
	\textit{SIAM Review}, \textbf{53}, 526--543.
	
	\bibitem[\protect\citeauthoryear{Traud \emph{et al.}}{2012}]{TMP12}
	Traud, A., Mucha, P. and Porter, M. (2012)
	Social structure of Facebook networks.
	\textit{Physica A}, \textbf{391}, 4165--4180.
\end{thebibliography}
\end{document}